\DeclareMathAlphabet{\mathpzc}{OT1}{pzc}{m}{it}
\theoremstyle{theorem}
\newtheorem{theorem}{Theorem}[section]
\newtheorem*{theorem*}{Theorem}
\newtheorem{corollary}[theorem]{Corollary}
\newtheorem*{corollary*}{Corollary}
\newtheorem{proposition}[theorem]{Proposition}
\newtheorem{lemma}[theorem]{Lemma}
\theoremstyle{remark}
\newtheorem{remark}[theorem]{Remark}
\newtheorem{example}[theorem]{Examples}
\theoremstyle{definition}
\newtheorem{definition}[theorem]{Definition}
\newcommand{\alg}[1]{\operatorname{#1-\mathbf{Alg}}}
\newcommand{\bemph}[1]{\textbf{\emph{#1}}}
\newcommand{\theory}[1]{\mathcal{#1}}
\newcommand{\eqc}[1]{[#1]}
\newcommand{\ltil}[1]{\widetilde{#1}}
\newcommand{\ovln}[1]{\overline{#1}}
\newcommand{\Ef}[1]{\mathpzc{Ef}_{#1}}                
\newcommand{\tripostotopos}[1]{\mathcal{T}_{#1}}
\newcommand{\compex}[1]{{#1}^{\existential}}
\newcommand{\gencompex}[2]{\mathsf{Ex}^{#2}(#1)}
\newcommand{\purecompex}[1]{\mathsf{pEx}(#1)}
\newcommand{\fullcompex}[1]{\mathsf{fEx}(#1)}
\newcommand{\angbr}[2]{\langle #1,#2 \rangle} 
\newcommand{\comprl}{\ensuremath{ \{ \hspace{-0.6ex} |}} 
\newcommand{\comprr}{  \ensuremath{ | \hspace{-0.6ex}\}}} 
\newcommand{\comp}[1]{\comprl #1 \comprr}
\newcommand{\freccia}[3]{\xymatrix{#2 \colon #1  \ar[r] &  #3}}
\newcommand{\frecciainj}[3]{\xymatrix{#2 \colon #1  \ar@{^{(}->}[r] &  #3}}
\newcommand{\cover}[3]{\xymatrix{#2 \colon #1  \ar@{-|>}[r] &  #3}}
\newcommand{\frecciasopra}[3]{\xymatrix{ #1  \ar[r]^{#2} &  #3}}
\newcommand{\pbmorph}[2]{#1^{\ast}#2} 
\newcommand{\duefreccia}[3]{\xymatrix@C=0.5cm{#2 \colon #1  \ar@{=>}[r] &  #3}}
\newcommand{\modificazione}[3]{\xymatrix@C=0.5cm{#2 \colon #1  \ar@{~>}[r] &  #3}}
\newcommand{\doctrine}[2]{\xymatrix{#2 \colon #1^{\op}  \ar[r] & \infsl }}
\newcommand{\duemorfismo}[6]{\xymatrix{
#1^{\op} \ar[rrd]^#2_{}="a" \ar[dd]_{#3^{\op}}\\
&& \infsl\\
#5^{\op}  \ar[rru]_#6^{}="b"
\ar_{#4}  "a";"b"}}
\newcommand{\comsquare}[8]{ \xymatrix@+1pc{ 
#1 \ar[r]^{#5} \ar[d]_{#6} & #2 \ar[d]^{#7} \\
#3 \ar[r]_{#8} & #4 
}}
\newcommand{\pullback}[8]{ \xymatrix@+2pc{ 
#1 \pullbackcorner \ar[r]^{#5} \ar[d]_{#6} & #2 \ar[d]^{#7} \\
#3 \ar[r]_{#8} & #4 
}}
\newcommand{\quadratocomm}[8]{ \xymatrix@+1pc{ 
#1 \ar[r]^{#5} \ar[d]_{#6} & #2 \ar[d]^{#7} \\
#3 \ar[r]_{#8} & #4 
}}
\newcommand{\comsquarelargo}[8]{ \xymatrix@+1pc{ 
#1 \ar[rr]^{#5} \ar[d]_{#6} && #2 \ar[d]^{#7} \\
#3 \ar[rr]_{#8} && #4 
}}
\newcommand{\parallelmorphisms}[4]{\xymatrix@+1pc{
#1 \ar @<+4pt>[r]^{#2} \ar @<-4pt>[r]_{#3} & #4
}}
\newcommand{\relation}[4]{\xymatrix@+1pc{
\angbr{#2}{#3}\colon #1 \ar @<+4pt>[r] \ar @<-4pt>[r] & #4
}}
\newcommand{\frecceparalleleopposte}[4]{\xymatrix@+1pc{
#1 \ar@<+4pt>[r]^{#2} \ar@<-4pt>@{<-}[r]_{#3} & #4
}}
\newcommand{\equalizer}[6]{\xymatrix@+1pc{
#1 \ar[r]^{#2} & #3 \ar @<+4pt>[r]^{#4} \ar @<-4pt>[r]_{#5} & #6
}}
\newcommand{\coequalizer}[6]{\xymatrix@+1pc{
 #1 \ar @<+4pt>[r]^{#2} \ar @<-4pt>[r]_{#3} & #4 \ar[r]^{#5} & #6
}}
\newcommand{\sottoggetto}[2]{\xymatrix{
#1 \ar@{>->}[r] & #2
}}
\newcommand{\subobject}[3]{\xymatrix{
#1 \ar@{>->}[r]^{#2} & #3
}}
\newcommand{\pullbackcorner}[1][ul]{\save*!/#1+1.2pc/#1:(1,-1)@^{|-}\restore}
\def\ED{\operatorname{\mathbf{ExD}}}            
\def\PD{\operatorname{\mathbf{PD}}}             
\def\EED{\operatorname{\mathbf{EED}}}           
\def\Reg{\operatorname{\mathbf{Reg}}}
\def\excat{\operatorname{\mathbf{Xct}}}        
\def\mR{\mathbb{\mathcal{R}}}
\def\mA{\mathcal{A}}
\def\mC{\mathcal{C}}
\def\mX{\mathcal{X}}
\def\mD{\mathcal{D}}
\def\mM{\mathcal{M}}
\def\mN{\mathcal{N}}
\def\mH{\mathcal{H}}
\def\mG{\mathcal{G}}
\def\cT{\mathbb{T}}    
\def\mT{\mathrm{T}}
\newbox\erove \setbox\erove=\hbox{\reflectbox{E}}
\def\Einv{\exists}
\def\pr{\operatorname{ pr}}
\def\id{\operatorname{ id}}
\def\op{\operatorname{ op}}
\def\mono{\operatorname{ mono}}
\def\existential{\operatorname{ ex}}
\def\theory{\operatorname{ \cT}}
\def\Sub{\operatorname{ Sub}}
\newcommand{\exactcomp}[1]{(#1)_{\ex / \reg}}      
\newcommand{\regularcomp}[1]{(#1)_{\reg / \lex}}   
\newcommand{\exactcomplex}[1]{(#1)_{\ex / \lex}}   
\def\reg{\operatorname{reg}}
\def\ex{\operatorname{ex}}
\def\lex{\operatorname{lex}}
\def\Set{\operatorname{\mathrm{Set}}}
\def\Rec{\operatorname{\mathbf{Rec}}}
\def\signature{\operatorname{\mathbf{Sg}}}
\def\:{\colon}
\def\infsl{\operatorname{\mathbf{InfSL}}}
\def\2cellidentity{i}
\def\1cellidentity{1}
\def\Mcat{\operatorname{\mathcal{M}-\mathbf{Cat}}}
\def\CompAdj{\operatorname{\mathbf{CE}_c}}
\newcommand{\subobjdoctrine}[2]{\xymatrix{\Sub_{#2} \colon #1^{\op}  \ar[r] & \infsl }}
\newcommand{\Msubobj}[2]{\Sub_{#1}(#2)}
\def\Pred#1{{\ensuremath{{\mathpzc{Prd}\kern-.4ex_{{#1}}}}}}
\def\lang{\mathcal{L}}
\def\syntdoc{\mathrm{LT}}
\def\singleton{\mathrm{sing}}
\newcommand{\pca}[1]{\mathbb{#1}}
\def\mP{\mathcal{P}}
\def\set{\mathrm{Set}}
\newcommand{\pwset}[1]{\mathrm{P}(#1)}
\newcommand{\pwsetnonempty}[1]{\mathrm{P}^{\ast}(#1)}
\newcommand{\quantfree}[1]{$#1$-existential-free}
\newcommand{\qflamb}{$\Lambda$-existential-free}
\newcommand{\eqflamb}{enough-$\Lambda$-existential-free objects}
\newcommand{\Pqf}{\ovln{P}}
\newcommand{\cm}{\operatorname{comp}}
\newcommand{\trdc}{\Upsilon}
\newcommand{\loc}{\mathcal{A}}
\newcommand{\framecat}{\mathbf{Frm}}
\newcommand{\meetcat}{\mathbf{M}}
\newcommand{\supercohframe}{\mathbf{SCohFrm}}
\newcommand{\supercomp}{\mathcal{N}}
\newcommand{\sheaves}[1]{\mathbf{Sh}(#1)}
\newcommand{\presheaves}[1]{\mathbf{PreSh}(#1)}
\newcommand{\assembly}[1]{\mathpzc{Asm}(#1)}
\newcommand{\parassembly}[1]{\mathpzc{PAsm}(#1)}
\def\Eff{\mathsf{RT}}
\def\unvpropmap{\varrho}
\newcommand{\regularcompdoctrine}[1]{\mathsf{Reg}(#1)}
\newcommand{\lclass}{left class of morphisms}
\def\conjdoctrine{conjunctive doctrine}
\def\Lamdoctrine{left class doctrine}
\def\Lamdoctrines{left class doctrines}
\def\LamPR{\operatorname{\mathbf{Lc}-\mathbf{CD}}}  
\def\LamEX{\operatorname{\mathbf{Lc}-\mathbf{ExD}}}  
\begin{document}
\begin{frontmatter}

\title{Generalized existential completions and their regular and exact completions}
\author{Maria Emilia Maietti\fnref{milly}}
 
\ead{maietti@math.unipd.it}

\address[milly]{Department of Mathematics, Via Trieste, 63, 35131 Padova, Italy}
\author{Davide Trotta\fnref{davide}}


\ead{trottadavide92@gmail.com}
\address[davide]{University of Pisa, Largo Bruno Pontecorvo, 3, 56127 Pisa, Italy}
\cortext[davide]{The research of the second author is partially supported by the MIUR PRIN 2017FTXR "IT-MaTTerS".
}
\begin{abstract}

%
%
%

This paper aims to apply the tool of generalized existential completions of conjunctive doctrines, concerning a class  $\Lambda$ of morphisms of their base category,
 to deepen the study of  regular and exact completions of existential  elementary Lawvere's doctrines.
 
 After providing a characterization of generalized existential completions,  we observe that   both the subobjects doctrine  $\Sub_{\mC}$ and the weak subobjects doctrine $\Psi_{\mC}$  of a category
 $\mC$  with finite limits
 are generalized existential completions of the constant true doctrine,  the first  along the class of all the monomorphisms of  $\mC$ while the latter 
 along all the morphisms of $\mC$.  We then  name {\it full existential completion} a generalized completion of a conjunctive doctrine  along
 the class of all the morphisms of its base.
 
 From this we immediately deduce that  both the regular and  the exact completion of a finite limit category are regular and exact completions
 of full existential doctrines since it is known  that both  the  regular completion $(\mD)_{\reg/\lex}$ and  the exact completion $(\mD)_{\ex/\lex}$  of a finite limit category $\mD$ are respectively the regular completion
 $\mathsf{Reg}(\Psi_{\mD})$ and the exact completion  ${\cal T}_{\Psi_{\mD} }$  (as an instance of the tripos-to-topos construction)
 of the weak subobjects doctrine $\Psi_{\mD}$ of $\mD$.

 Here we  prove  that  the condition of being a generic  full  existential completion is also sufficient to produce a regular/exact completion equivalent to a regular/exact completion of a  finite limit category.
 
 Then,  we show more specialized characterizations from which we derive known results  as well as  remarkable examples of  exact completions of full existential completions, including all realizability toposes and supercoherent localic toposes.

%
%
%
%
%
%
\end{abstract}

\begin{keyword}

\MSC[2010] 00-01\sep  99-00

\end{keyword}

\end{frontmatter}

\tableofcontents

\section{Introduction}
The process of completing a category with quotients by producing an exact or regular category   introduced in \cite{SFEC,REC} has been widely studied in the literature of  category theory, with applications both
to mathematics and computer science, see \cite{Lack1999,phdMenni,MENNI2002187,QCFF}.

In particular, the notion of exact completion of a finite limit (or lex) category, for short ex/lex completion, as well as that of exact completion of a regular category, for short ex/reg completion,  have been proved to be an instance of a more general  exact completion  ${\cal T}_P$  relative to an  elementary existential Lawvere's doctrine $P$, see \cite{UEC}. The construction of the exact category $\tripostotopos{P}$ is, essentially, provided by the tripos-to-topos construction  of J.M.E. Hyland, P.T. Johnstone and A.M. Pitts, see \cite{TT,TTT}. In particular
the ex/lex completion   $\exactcomplex{\mD}$ of a category $\mD$ is the exact completion ${\cal T}_{\Psi_{\mD} }$     of the weak subobjects doctrine $\Psi_{\mD}$ of $\mD$ while the ex/reg completion  $\exactcomp{\mC}$ of a regular category $\mC$
is the exact completion ${\cal T}_{\Sub_{\mC} }$ of the subobjects doctrine $\Sub_{\mC}$ of $\mC$.

Similarly, also  the regular completion $\regularcomp{\mD}$ of a  lex category $\mD$  introduced in \cite{FECLEO} can be seen as an instance  of the  regular completion $\regularcompdoctrine{P}$ of an elementary existential doctrine $P$ introduced in \cite{TECH}, namely the regular completion  $\regularcompdoctrine{\Psi_{\mD}}$ of the weak subobjects doctrine $\Psi_{\mD}$ of $\mD$. 
 
Moreover, in \cite{UEC} it has been shown  that   the exact completion of an existential  doctrine $P$  can be decomposed as the ex/reg completion of the regular completion of $P$.
Such a decomposition
generalizes the well-known  decomposition shown in \cite{REC} stating that   the  ex/lex completion of a  lex category $\mC$  can be seen as the ex/reg completion of the regular completion of $\mC$.

Then, this analysis has been pushed further  in \cite{TECH} by observing that  the exact completion of an existential elementary doctrine $P$ can be also  decomposed as the  regular completion $\mathsf{Reg}(Q_P)$ of  the elementary quotient completion   $Q_P$ of $P$ introduced in \cite{QCFF,EQC}.

This paper contributes to the problem of establishing when the exact or  regular completion of an elementary existential doctrine happens to coincide respectively with the  ex/lex or reg/lex completion
of a lex category.

To reach our purposes we employ  the construction of the
 generalized existential completion of a conjunctive doctrine $P$, concerning a suitable class $\Lambda$ of morphisms of their base category,  originally introduced in  \cite{ECRT}.

We start by providing  a characterization of generalized existential completions based on an   algebraic description of the concept of {\it existential free formulas} by means of choice principles. By employing such a characterization, we show that examples of generalized existential completions include:
 every subobjects doctrine of a lex category, every $\mM$-subobjects doctrine relative to a $\mM$-category \cite{RCICPM,DDC}, every weak subobjects doctrine of a lex category,  every realizability tripos and, among localic triposes, exactly those  associated to a supercoherent locale in the sense of \cite{BANASCHEWSKI199145}.
In particular  both the subobjects doctrine  $\Sub_{\mC}$ and the weak subobjects doctrine $\Psi_{\mC}$  of a category
 $\mC$  with finite limits
 are generalized existential completions of the constant true doctrine,  the first  along the class of all the monomorphisms of  $\mC$ while the latter 
 along all the morphisms of $\mC$.  We then  name {\it full existential completion} a generalized completion of a conjunctive doctrine  along
 the class of all the morphisms of its base.

From this we immediately deduce  that  both the regular completion  $\regularcomp{\mD}$  and  the exact completion $\exactcomplex{\mD}$ of a finite limit category are regular and exact completions
 of  full existential completions  thanks to their above mentioned doctrinal  presentation through the weak subobjects doctrine $\Psi_{\mD}$.

Here we  prove  that  the condition of being a generic  full  existential completion is also sufficient to produce a regular/exact completion equivalent to a regular/exact completion of a  finite limit category.

%
%
For  a better exposure of  this  and related results, we introduce the notion of \emph{regular Morita-equivalence} and  \emph{exact Morita-equivalence} for elementary existential doctrines, motivated by the notion of Morita equivalence between regular theories in  \cite{SAE}. In more detail, we declare  that two elementary and existential doctrines $P$ and $R$ are  {\it regular Morita-equivalent}/ {\it exact Morita-equivalent} when their regular/exact completions are equivalent.

Employing these notions we show that 
 the regular completion   of an elementary existential doctrine $P$ is equivalent to the reg/lex completion of a lex category
  if and only if $P$ is regular Morita-equivalent to a full existential completion, or equivalently  $P$ is  regular Morita-equivalent to the weak subobjects doctrine $\Psi_{\mD}$ of  a lex category $\mD$ if and only if
  $P$ is regular Morita-equivalent to a full existential completion.
 
Furthermore, from this result and the decomposition of exact completions presented in \cite{UEC},  we also derive that the exact completion  $\tripostotopos{P}$ of an elementary existential doctrine $P$ is equivalent  to the ex/lex completion of  a lex category
 if and only if   $P$ is exact Morita-equivalent to a full existential completion, or equivalently,  $P$ is  exact Morita-equivalent to the weak subobjects doctrine $\Psi_{\mD}$ of a  lex category $\mD$ if and only if
  $P$ is exact Morita-equivalent to a full existential completion.

More in detail,  these  characterizations have been derived by   two other characterizations  identifying  regular/exact completions of an elementary existential doctrine which coincide respectively with regular /exact 
 completions of specific categories.

Indeed, we show that  the regular completion of an elementary existential doctrine $P$ on a lex category $\cal C$  coincides with the regular completion   $(\mG_{P'})_{\mathrm{ex/lex}}$
 of  the Grothendieck category of a primary subdoctrine $P'$ of $P$  (via an equivalence preserving the embedding of $P'$ in $P$ and in the weak subobjects doctrine of
 $\mG_{P'}$)  if and only if $P$
is the full existential completion of $P'$.
Combining this with  the decomposition of exact completions in \cite{UEC} we also derive that the exact completion of an existential doctrine $P$ on a lex category $\cal C$ 
coincides with the exact completion  $(\mG_{P'})_{\mathrm{ex/lex}}$  of    the Grothendieck category of a primary subdoctrine $P'$ of $P$ (via an equivalence preserving
 the embedding of $P'$ in $P$ and in the weak subobjects doctrine of
 $\mG_{P'}$)  if and only if $P$
is the   full existential completion of $P'$.
%

Relevant applications of the above results to tripos-to-topos constructions   include all  realizability toposes and localic toposes associated to a supercoherent locale \cite{BANASCHEWSKI199145}.

 Furthermore,  we show similar characterizations  for the case in which $P$ is a {\it pure existential completion} of a primary doctrine $P'$,  namely when
$P$ is  a generalized existential completion of $P'$  only with respect to the class of projections of a finite product category, a concept  introduced under the name of  ``existential completion" in  \cite{ECRT}.

More in detail,  we show that  the regular  completion of an elementary existential doctrine $P$ coincides with the regular completion  $(\Pred{P'})_{\mathrm{reg/lex}}$   of   the category of predicates of  a primary subdoctrine $P'$ of $P$ (via an equivalence preserving the embedding of $P'$ in $P$ and in the weak subobjects doctrine of
 $\Pred{P'}$) if and only if $P$
is a  pure existential completion of $P'$.
Combining again the decomposition of exact completions in \cite{UEC} with our result, we  deduce that the  exact completion of an elementary  existential doctrine $P$
coincides with the exact completion  $(\Pred{P'})_{\mathrm{ex/lex}}$   of   the category of predicates of  a primary subdoctrine $P'$ of $P$  (via an equivalence preserving the embedding of $P'$ in $P$ and in the weak subobjects doctrine of
 $\Pred{P'}$)
 if and only if $P$
is a pure existential completion of $P'$.

Finally we show that the the above characterization generalizes the characterization  in \cite{TECH} asserting that a necessary and sufficient condition for a tripos $P$ to produce  a topos $\tripostotopos{P}$ coinciding with  the exact completion of the lex category $\Pred{P}$ (via an equivalence given by the canonical exact functor from $(\Pred{P})_{\mathrm{ex/lex}}$ to $\tripostotopos{P}$) is to be equipped with Hilbert's $\epsilon$-operators.  
The crucial fact linking the two characterizations is that   $P$ is a pure existential completion of itself if and only if $P$ is equipped with Hilbert's $\epsilon$-operators.

We conclude by underlying that a  preliminary version of our characterization of generalized existential completion was presented in \url{https://www.youtube.com/watch?v=sAMVU_5RQJQ&t=688s} in 2020 and a similar notion was independently presented in \cite{Frey2020}. 
Furthermore, such a characterization  together with the notion of existential free object have already been fruitfully employed in recent works  \cite{hofstra2011,TSPGF,trottapasquale2020,dialecticalogicalprinciples} to  give a categorical version of G\"odel's dialectica interpretation \cite{goedel1986} in terms of quantifier-completions.
Instead  another related study of triposes via different completions can be found in   \cite{PHDHofstra,Hofstra06}.

%
%

\section{Preliminary notions of doctrines}\label{section notions of doctrines}
The notion of hyperdoctrine was introduced by F.W. Lawvere in a series
of seminal papers \cite{AF,EHCSAF}. We recall from \textit{loc. cit.}
some definitions which will be useful in the following. The reader can
find all the details about the theory of elementary and existential
doctrine also in \cite{QCFF,EQC,UEC,TECH,EMMENEGGER2020}.

In the following  we adopt the notation  $fg$ to mean the composition of  a morphism  $f: Y\rightarrow Z$ with another $g: X\rightarrow Y$ within a category.

We indicate with  $\set$  the category of sets formalizable within the classical axiomatic set theory  ZFC.

We introduce the following basic notion  of ``\conjdoctrine "~as a generalization of that of ``primary doctrine" in \cite{QCFF}.
\begin{definition}
A \bemph{\conjdoctrine}~is a functor
$\doctrine{\mC}{P}$ from the opposite of the category $\mC$ to the category of inf-semilattices.
\end{definition}

\begin{definition}
A  conjunctive doctrine $\doctrine{\mC}{P}$  
 is a  \bemph{primary doctrine} if the  category $\mC$ has finite products.
\end{definition}

We add the definition of \bemph{fibred subdoctrine} for conjunctive subdoctrines of conjunctive doctrines on the same base category:
\begin{definition}
A conjunctive doctrine $\doctrine{\mC}{P'}$  is said a \bemph{a fibred subdoctrine} of a conjunctive doctrine $\doctrine{\mC}{P}$ 
if each fibre of $P'(A)$ is a full subposet of $P(A)$ for every object $A$.
\end{definition}

\begin{definition}
A primary doctrine $\doctrine{\mC}{P}$   is \bemph{elementary} if for every $A$ in $\mC$ there exists an object $\delta_A$ in $P(A\times A)$ such that
\begin{enumerate}
\item the assignment
\[\Einv_{\angbr{\id_A}{\id_A}}(\alpha):=P_{\pr_1}(\alpha)\wedge \delta_A\]
for an element $\alpha$ of $P(A)$ determines a left adjoint to $\freccia{P(A\times A)}{P_{\angbr{\id_A}{\id_A}}}{PA}$;
\item for every morphism $e$ of the form $\freccia{X\times A}{\angbr{\pr_1,\pr_2}{\pr_2}}{X\times A\times A}$ in $\mC$, the assignment
\[ \Einv_{e}(\alpha):= P_{\angbr{\pr_1}{\pr_2}}(\alpha)\wedge P_{\angbr{\pr_2}{\pr_2}}(\delta_A)\]
for $\alpha$ in $P(X\times A)$ determines a left adjoint to $\freccia{P(X\times A \times A)}{P_e}{P(X\times A)}$.
\end{enumerate}
\end{definition}
\begin{definition}\label{def existential doctrine}
A primary doctrine $\doctrine{\mC}{P}$ is \bemph{existential} if, for every object $A_1$ and $A_2$ in $\mC$, for any projection $\freccia{A_1\times A_2}{{\pr_i}}{A_i}$, $i=1,2$, the functor
\[ \freccia{P(A_i)}{{P_{\pr_i}}}{P(A_1\times A_2)}\]
has a left adjoint $\Einv_{\pr_i}$, and these satisfy:
\begin{enumerate}
\item[(BCC)] \bemph{Beck-Chevalley condition:} for any pullback diagram
\[
\pullback{X'}{A'}{X}{A}{{\pr'}}{f'}{f}{{\pr}}
\]

with $\pr$ and $\pr'$ projections, for any $\beta$ in $P(X)$ the canonical arrow 
\[ \Einv_{\pr'}P_{f'}(\beta)\leq P_f \Einv_{\pr}(\beta)\]
is an isomorphism;
\item[(FR)] \bemph{Frobenius reciprocity:} for any projection $\freccia{X}{{\pr}}{A}$, for any object $\alpha$ in $P(A)$ and $\beta$ in $P(X)$, the canonical arrow
\[ \Einv_{\pr}(P_{\pr}(\alpha)\wedge \beta)\leq \alpha \wedge \Einv_{\pr}(\beta)\]
in $P(A)$ is an isomorphism.
\end{enumerate}
\end{definition}

\begin{remark}\label{rem left adjoint elementary existential doc}
In an existential elementary doctrine, for every map $\freccia{A}{f}{B}$ in $\mC$ the functor $P_f$ has a left adjoint $\Einv_f$ that can be computed as
\[ \Einv_{\pr_2}(P_{f\times {\id}_B}(\delta_B)\wedge P_{\pr_1}(\alpha))\]
for $\alpha$ in $P(A)$, where $\pr_1$ and $\pr_2$ are the projections from $A\times B$.
However, observe that such a  definition  guarantees only the validity of the corresponding Frobenius reciprocity condition for $\Einv_f$ but  it does not guarantee that  of Beck-Chevalley conditions with respect to pullbacks along $f$.
\end{remark}

\begin{example}\label{example doctrines}
The following examples are discussed in \cite{AF}.
\begin{enumerate}
\item Let $\mC$ be a category with finite limits. The functor \[\doctrine{\mC}{{\Sub_{\mC}}}\]
assigns to an object $A$ in $\mC$ the poset $\Sub_{\mC}(A)$ of subobjects of $A$ in $\mC$ and, for an arrow $\frecciasopra{B}{f}{A}$ the morphism $\freccia{\Sub_{\mC}(A)}{\Sub_{\mC}(f)}{\Sub_{\mC}(B)}$ is given by pulling a subobject back along $f$. The fiber equalities are the diagonal arrows. This is an existential elementary doctrine if and only if the category $\mC$ is regular. See \cite{FSF}. 
\item Consider a category $\mD$ with finite products and weak pullbacks: the doctrine is given by the functor of weak subobjects (or variations)
\[\doctrine{\mD}{{\Psi_{\mD}}}\]
where $\Psi_{\mD}(A)$ is the poset reflection of the slice category $\mD/A$, whose objects are indicated with $[f]$ for any arrow $\frecciasopra{B}{f}{A}$ in $\mD$, and for an arrow $\frecciasopra{B}{f}{A}$, the homomorphism $\freccia{\Psi_{\mD}(A)}{\Psi_{\mD}([f])}{\Psi_{\mD}(B)}$ is given by  the equivalence class of a weak pullback of an arrow $\frecciasopra{X}{g}{A}$ with $f$. This doctrine is existential, and the existential left adjoint are given by the post-composition.
\item Let $\theory$ be a theory in a first order language $\lang$. We define a primary doctrine 
\[\doctrine{\mC_{\theory}}{LT}\]
where $\mC_{\theory}$ is the category of lists of variables and term substitutions:
\begin{itemize}
\item \bemph{objects} of $\mC_{\theory}$ are finite lists of variables $\vec{x}:=(x_1,\dots,x_n)$, and we include the empty list $()$;
\item a \bemph{morphisms} from $(x_1,\dots,x_n)$ into $(y_1,\dots,y_m)$ is a substitution $[t_1/y_1,\dots, t_m/y_m]$ where the terms $t_i$ are built in $\signature$ on the variable $x_1,\dots, x_n$;
\item the \bemph{composition} of two morphisms $\freccia{\vec{x}}{[\vec{t}/\vec{y}]}{\vec{y}}$ and $\freccia{\vec{y}}{[\vec{s}/\vec{z}]}{\vec{z}}$ is given by the substitution
\[ \freccia{\vec{x}}{[s_1[\vec{t}/\vec{y}]/z_k,\dots, s_k[\vec{t}/\vec{y}]/z_k]}{\vec{z}}.\]
\end{itemize}
The functor $\doctrine{\mC_{\theory}}{LT}$ sends a list $(x_1,\dots,x_n)$ to the partial order  $LT(x_1,\dots,x_n)$  of equivalence classes $[\phi]$ of 
well formed formulas $\phi$  in the context $(x_1,\dots,x_n)$
where  $[\psi]\leq [\phi]$ for $\phi,\psi\in LT(x_1,\dots,x_n)$ if $\psi\vdash_{\theory}\phi$ and two formulas are equivalent if they are equiprovable in the theory.
Given a morphism of $\mC_{\theory}$ 
\[\freccia{(x_1,\dots,x_n)}{[t_1/y_1,\dots,t_m/y_m]}{(y_1,\dots,y_m)}\]
the functor $LT_{[\vec{t}/\vec{y}]}$ acts as the substitution
$LT_{[\vec{t}/\vec{y}]}(\psi(y_1,\dots,y_m))=\psi[\vec{t}/\vec{y}]$.

The doctrine $\doctrine{\mC_{\theory}}{LT}$ is elementary exactly when $\theory$ has an equality predicate and it is existential. For all the detail we refer to \cite{QCFF}, and for the case of a many sorted first order theory we refer to \cite{CLP}.
\item Let $\set_{\ast}$  be the category of non-empty sets and let $\xi$ be an ordinal with greatest element, and $\mH$ its frame. Then the doctrine
\[\doctrine{\set_{\ast}}{\mH^{(-)}}\]
is elementary and existential, in particular for every $\alpha\in \mH^{A\times B}$, the left adjoint $\Einv_{\pr_A}$ is defined as
\[\Einv_{\pr_A}(\alpha)(a)=\bigvee_{b\in B} \alpha (a,b)\]
and the equality predicate $\delta (i,j)\in \mH^{A\times A}$ is defined as the top element if $i=j$, and the bottom otherwise. See \cite{TECH} for all the details.
\end{enumerate}
\end{example}
The category of  primary doctrines $\PD$ is a 2-category, where:
\begin{itemize}
\item a \bemph{1-cell} is a pair $(F,b)$
\[\duemorfismo{\mC}{P}{F}{b}{\mD}{R}\]
such that $\freccia{\mC}{F}{\mD}$ is a functor, and $\freccia{P}{b}{R\circ F^{\op}}$ is a natural transformation.
\item a \bemph{2-cell} is a natural transformation $\freccia{F}{\theta}{G}$ from $(F,b)$ to $(G,c)$, namely from  $\freccia{P}{b}{R\circ F^{\op}}$ to $\freccia{P}{c}{R\circ G^{\op}}$,  such that for every $A$ in $\mC$ and every $\alpha$ in $P(A)$, we have 
\[b_A(\alpha)\leq R_{\theta_A}(c_A(\alpha)).\]
\end{itemize}

We denote by $\ED$ the 2-full subcategory of $\PD$ whose elements are existential doctrines, and whose 1-cells are those 1-cells of $\PD$ which preserve the existential structure.

We conclude this section by recalling some choice principles from \cite{TECH}.

\begin{definition}
Let $\doctrine{\mC}{P}$ be an elementary existential doctrine. We say that $P$ satisfies the \bemph{Rule of Unique Choice} (RUC) if for every entire functional relation $\phi$ in $P(A\times B)$ there exists an arrow $\freccia{A}{f}{B}$ such that 
\[\top_A\leq P_{\angbr{\id_A}{f}}{(\phi)}\]
\end{definition}
\begin{example}
The subobjects doctrine $\doctrine{\mC}{{\Sub_{\mC}}}$ presented in Example \ref{example doctrines} satisfies the (RUC) as observed in \cite{TECH}.
\end{example}
Now we recall the notion of  Extended Rule of Choice and its particular instance called Rule of Choice.
\begin{definition}\label{exrc}
An existential doctrine $\doctrine{\mC}{P}$  satisfies the \bemph{Extended Rule of Choice} (ERC) if for every $\phi \in P(B)$ and for every $\freccia{B}{g}{A}$ such that 
\[ \top_A \leq \Einv_{g}(\phi)\]
there exists an arrow $\freccia{A}{f}{B}$ in $\mC$ such that $gf=\id_A$ and
\[ \top_A \leq P_{f}(\phi).\]
\end{definition}
\begin{definition}
For an existential doctrine $\doctrine{\mC}{P}$, we say that $P$ satisfies the \bemph{Rule of Choice} (RC) if it satisfies the Extended Rule of Choice only
for projections, namely for every $\phi \in P(A\times B)$ such that 
\[ \top_A \leq \Einv_{\pr_1}(\phi)\]
there exists an arrow $\freccia{A}{f}{B}$ in $\mC$ such that
\[ \top_A \leq P_{\angbr{\id_A}{f}}(\phi).\]
\end{definition}

\begin{example}\label{example wek-sub doctrine}
Recall from \cite{TECH}  that the  doctrine of weak subobjects 
\[\doctrine{\mD}{{\Psi_{\mD}}}\]
presented in Example \ref{example doctrines} (2) satisfies the Extended Rule of Choice. 
\end{example}

Following \cite{TECH} we introduce:
\begin{definition}
Let $\doctrine{\mC}{P}$ be an existential doctrine. An object $B$ of $\mC$ is equipped with Hilbert's  $\epsilon$-\bemph{operators} if, for any object $A$ in $\mC$ and any $\alpha$ in $P(A\times B)$ there exists an arrow $\freccia{A}{\epsilon_{\alpha}}{B}$ such that
\[ \Einv_{\pr_1}(\alpha)=P_{\angbr{\id_A}{\epsilon_{\alpha}}}(\alpha)\]
holds in $P(A)$, where $\freccia{A\times B}{\pr_1}{A}$ is the first projection.
\end{definition}

\begin{definition}
We say that an elementary existential doctrine $\doctrine{\mC}{P}$ is \bemph{equipped with}  Hilbert's $\epsilon$-\bemph{operators} if every object in $\mC$ is equipped with  $\epsilon$-operators.
\end{definition}

\begin{example}\label{example localic doctrine ass. to ordinal}
We recall from \cite{TECH} that the doctrine
\[\doctrine{\set_{\ast}}{\mH^{(-)}}\]
presented in Example \ref{example doctrines} (4) is equipped with $\epsilon$-operators. In particular for every element $\alpha\in \mH^{A\times B}$, and for every $a\in A$ one can consider the (non empty) set $$I_{\alpha}(a)=\{b\in B \;|\; \alpha(a,b)=\bigvee_{c\in B}\alpha (a,c)\}.$$ Then, by the axiom of choice, there exists a function $\freccia{A}{\epsilon_{\alpha}}{B}$ such that $\epsilon_{\alpha}(a)\in I_{\alpha}(a)$. Therefore we have that
\[ \alpha (a,\varepsilon_{\alpha}(a))=\bigvee_{b\in B} \alpha (a,b)= \Einv_{\pr_A}(\alpha)(a)\]
and this prove that $\mH^{(-)}$ satisfies the epsilon rule.
\end{example}
We conclude this section recalling from \cite{AF,EQC,QCFF} the notion of doctrine with (strong) comprehensions. This notion connects an abstract elementary doctrine with that of the subobjects of the base when this has finite limits and provides an abstract algebraic counterpart of the set-theoretic ``comprehension axiom".
\begin{definition}
Let $\doctrine{\mC}{P}$ be a primary doctrine and  $\alpha$ be an object of $P(A)$.  A \bemph{comprehension} of $\alpha$ is an arrow $\freccia{X}{{\comp{\alpha}}}{A}$ such that $P_{\comp{\alpha}}(\alpha)=\top_X$ and, for every $\freccia{Z}{f}{A}$ such that $P_f(\alpha)=\top_Z$, there exists a unique map $\freccia{Z}{g}{X}$ such that $f=\comp{\alpha} \circ g$.
\end{definition}
Intuitively, the domain of the  comprehension morphism of the predicate $\alpha$  represents the set  $\{ x\in A \ \mid \alpha (x)\ \}$  containing  the elements of the object $A$ 
satisfying $\alpha$.
Then, one says that $P$ \bemph{has comprehensions} if every $\alpha$ has a comprehension, and that $P$ \bemph{has full comprehensions} if, moreover, $\alpha\leq \beta$ in $P(A)$ whenever $\comp{\alpha}$ factors through $\comp{\beta}$.

Note that each morphism can be the full comprehension of a unique object:
\begin{lemma}\label{uniqueness}
Let us consider a doctrine
\[\doctrine{\mC}{P}\]
such that $P$ has full comprehensions.
If $\comp{\alpha_1}=\comp{\alpha_2}$ then $\alpha_1=\alpha_2$.
\end{lemma}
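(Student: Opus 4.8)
The plan is to use nothing beyond the defining property of full comprehensions together with the fact that each fibre is a poset. First I would observe that the hypothesis $\comp{\alpha_1}=\comp{\alpha_2}$ already pins down a common fibre for the two predicates: a comprehension of $\alpha_i$ is by definition an arrow whose codomain is the object $A$ indexing the fibre $P(A)$ to which $\alpha_i$ belongs, so the literal equality of the two arrows forces them to share a codomain $A$, and hence both $\alpha_1$ and $\alpha_2$ are elements of the same poset $P(A)$, where the relation $\leq$ between them makes sense.

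The key observation is that the equality $\comp{\alpha_1}=\comp{\alpha_2}$ trivially exhibits a factorization of each comprehension arrow through the other: reading it as $\comp{\alpha_1}=\comp{\alpha_2}\circ \id$ says that $\comp{\alpha_1}$ factors through $\comp{\alpha_2}$, and symmetrically $\comp{\alpha_2}$ factors through $\comp{\alpha_1}$. Now I would invoke the fullness clause in the definition of comprehension, namely that $\alpha\leq\beta$ whenever $\comp{\alpha}$ factors through $\comp{\beta}$. Applying it to the first factorization yields $\alpha_1\leq\alpha_2$, and applying it to the second yields $\alpha_2\leq\alpha_1$.

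Finally, since $P(A)$ is an inf-semilattice and in particular a poset, antisymmetry of its order gives $\alpha_1=\alpha_2$, which concludes the argument.

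I do not expect any genuine obstacle here: the entire content is a symmetric, two-sided application of the defining property of full comprehensions, closed off by antisymmetry of the fibre order. The only point deserving a moment's care is the implicit assumption that $\alpha_1$ and $\alpha_2$ inhabit the same fibre, which, as noted above, is automatic once their comprehension morphisms are literally equal.
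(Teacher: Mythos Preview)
Your proposal is correct and is exactly the natural argument. The paper states this lemma without proof, so there is nothing to compare against; your two-sided application of fullness followed by antisymmetry is the intended justification.
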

\begin{remark}\label{remark compr has pullbacks}
For every $\freccia{A'}{f}{A}$ in $\mC$ the mediating arrow between the comprehensions $\freccia{X}{\comp{\alpha}}{A}$ and $\freccia{X'}{\comp{P_f(\alpha)}}{A'}$ produces a pullback
\[\pullback{X'}{A'}{X}{A.}{\comp{P_f(\alpha)}}{f'}{f}{\comp{\alpha}}\]
Thus comprehensions are stable under pullbacks.
Moreover it is straightforward to verify that if $\freccia{X}{\comp{\alpha}}{A}$ is a comprehension of $\alpha$, then $\comp{\alpha}$ is monic.
\end{remark}
\textbf{Notation:} given an $\alpha\in P(A)$, we will denote by $A_{\alpha}$ the domain of the comprehension $\comp{\alpha}$.
\begin{lemma}\label{remark tops cover the doctrine with full comp}
 Let $\doctrine{\mC}{P}$ be an doctrine with full comprehensions, and suppose that $P$ has left adjoints along comprehensions, satisfying (BCC), i.e. for every pullback  
 \[\pullback{A'_{P_f(\alpha)}}{A'}{A_{\alpha}}{A.}{\comp{P_f(\alpha)}}{f'}{f}{\comp{\alpha}}\]
we have $P_f\exists_{\comp{\alpha}}=\exists_{\comp{P_f(\alpha)}}P_{f'}$. Then we have 
\[
\alpha =\Einv_{\comp{\alpha}}(\top_{A_{\alpha}})
\]
for every element $\alpha$ in $P(A)$.
\end{lemma}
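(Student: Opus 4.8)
The plan is to prove the two inequalities $\alpha \leq \Einv_{\comp{\alpha}}(\top_{A_{\alpha}})$ and $\Einv_{\comp{\alpha}}(\top_{A_{\alpha}}) \leq \alpha$ separately, using the adjunction $\Einv_{\comp{\alpha}} \dashv P_{\comp{\alpha}}$ throughout. One direction is essentially formal, while the other is where the full comprehension hypothesis does the real work.

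First I would establish $\Einv_{\comp{\alpha}}(\top_{A_{\alpha}}) \leq \alpha$. By the defining property of a comprehension, $P_{\comp{\alpha}}(\alpha) = \top_{A_{\alpha}}$, so in particular $\top_{A_{\alpha}} \leq P_{\comp{\alpha}}(\alpha)$. Transposing this inequality across the adjunction $\Einv_{\comp{\alpha}} \dashv P_{\comp{\alpha}}$ yields immediately $\Einv_{\comp{\alpha}}(\top_{A_{\alpha}}) \leq \alpha$. This direction uses only that $\comp{\alpha}$ is a comprehension and the existence of the left adjoint; neither the Beck--Chevalley hypothesis nor fullness is needed here.

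The reverse inequality $\alpha \leq \Einv_{\comp{\alpha}}(\top_{A_{\alpha}})$ is the substantive step, and here I expect the main obstacle to lie. Set $\beta := \Einv_{\comp{\alpha}}(\top_{A_{\alpha}})$. The strategy is to exploit fullness: to conclude $\alpha \leq \beta$ it suffices, by the definition of full comprehension, to exhibit a factorization of $\comp{\alpha}$ through $\comp{\beta}$, or more directly to show $\comp{\alpha}$ and $\comp{\beta}$ have the same domain via the uniqueness in Lemma \ref{uniqueness}. Concretely, I would first check that $P_{\comp{\beta}}(\beta) = \top$ (automatic, as $\comp{\beta}$ is a comprehension of $\beta$) and that $P_{\comp{\alpha}}(\beta) = \top_{A_{\alpha}}$; the latter follows from the previous direction combined with the unit of the adjunction, since $\top_{A_{\alpha}} \leq P_{\comp{\alpha}}\Einv_{\comp{\alpha}}(\top_{A_{\alpha}}) = P_{\comp{\alpha}}(\beta)$ gives one inequality and the other is trivial. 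Because $P_{\comp{\alpha}}(\beta) = \top_{A_{\alpha}}$, the universal property of the comprehension $\comp{\beta}$ provides a map $A_{\alpha} \to A_{\beta}$ through which $\comp{\alpha}$ factors, whence by fullness $\alpha \leq \beta = \Einv_{\comp{\alpha}}(\top_{A_{\alpha}})$.

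The delicate point to verify carefully is whether fullness alone suffices or whether the Beck--Chevalley hypothesis on left adjoints along comprehensions is genuinely invoked; I suspect it enters in justifying that $\Einv_{\comp{\alpha}}(\top_{A_{\alpha}})$ again admits a comprehension with the expected behaviour, so that Lemma \ref{uniqueness} and the factorization argument apply cleanly. If the direct factorization route stalls, the fallback is to compute $P_{\comp{\alpha}}(\beta)$ using (BCC) applied to the pullback of $\comp{\alpha}$ against itself from Remark \ref{remark compr has pullbacks}, reducing the claim to a manipulation of $\top$ under substitution, and then transpose $\top_{A_{\alpha}} \leq P_{\comp{\alpha}}(\beta)$ back across the adjunction $\Einv_{\comp{\alpha}} \dashv P_{\comp{\alpha}}$ to obtain $\alpha \leq \beta$ directly without appealing to fullness at all.
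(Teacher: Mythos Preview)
Your proposal is correct and follows essentially the same two-step strategy as the paper: the inequality $\Einv_{\comp{\alpha}}(\top_{A_{\alpha}}) \leq \alpha$ by transposing $\top_{A_{\alpha}} \leq P_{\comp{\alpha}}(\alpha)$, and the reverse by first establishing $P_{\comp{\alpha}}(\Einv_{\comp{\alpha}}(\top_{A_{\alpha}})) = \top_{A_{\alpha}}$ and then invoking fullness. The one noteworthy difference is in how that last equality is obtained: the paper appeals to (BCC) together with the fact that $\comp{\alpha}$ is monic to deduce $P_{\comp{\alpha}}\Einv_{\comp{\alpha}} = \id$, whereas your main route simply uses the unit of the adjunction, $\top_{A_{\alpha}} \leq P_{\comp{\alpha}}\Einv_{\comp{\alpha}}(\top_{A_{\alpha}})$, which already forces equality since the left-hand side is the top element. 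Your argument is therefore slightly more economical and in fact shows that the (BCC) hypothesis is not needed for this particular lemma; the ``fallback'' you sketch at the end is precisely what the paper does, but your primary route already succeeds without it.
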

\begin{proof}
Let $\alpha$ be an element of $P(A)$, and let us consider the comprehension $\freccia{A_{\alpha}}{{\comp{\alpha}}}{A}$. First, it is direct to see that $\Einv_{\comp{\alpha}}(\top_{A_{\alpha}})\leq \alpha $ since $\Einv_{\comp{\alpha}}$ is left adjoint to $P_{\comp{\alpha}}$ and $P_{\comp{\alpha} }(\top_A) =\top_{A_\alpha}$.
 Notice that since comprehensions are monomorphisms, and pullbacks along comprehensions always exist by Remark \ref{remark compr has pullbacks}, we have that $P_{\comp{\alpha}}\Einv_{\comp{\alpha}}=\id$ by BCC. In particular we have $P_{\comp{\alpha}}(\Einv_{\comp{\alpha}}(\top_{A_{\alpha}}))=\top_{A_{\alpha}}$. So, by fullness, $\alpha\leq \Einv_{\comp{\alpha}}(\top_{A_{\alpha}})$, and then we can conclude that $\alpha=\Einv_{\comp{\alpha}}(\top_{A_{\alpha}})$. \end{proof}

We introduce a class of doctrines whose fibre equality turns out to be equivalent to the morphism equality of their base morphisms (see proposition 2.2 in \cite{TECH} and the original
notion called ``comprehensive equalizers" in \cite{QCFF}).
\begin{definition}
An elementary doctrine $\doctrine{\mC}{P}$ has \bemph{comprehensive diagonals} if the arrow $\Delta_A$ is the comprehension of the element $\delta_A\in P(A)$ for every objects $A$.
\end{definition}

Recall from \cite{TECH} that:

\begin{definition}
 An elementary doctrine is called \bemph{m-variational} if it has full comprehensions and comprehensive diagonals.
 \end{definition}

We summarize some useful properties and results about existential m-variational doctrines. 

\begin{lemma}\label{lemma rel funzionale iff pr rho mono}
Let $\doctrine{\mC}{P}$ be an existential m-variational doctrine. Then 
\begin{enumerate}
\item an arrow $\freccia{A}{f}{B}$ is monic if and only if $P_{f\times f}(\delta_B)=\delta_A$;
\item an element $\phi\in P(A\times B)$ is functional, i.e. it satisfies
 \[ P_{\angbr{\pr_1}{\pr_2}}(\phi)\wedge P_{\angbr{\pr_1}{\pr_3}}(\phi)\leq P_{\angbr{\pr_2}{\pr_3}}(\delta_B)\]
in $P(A\times B\times B)$ if and only if $\pr_A\comp{\phi}$ is monic, where $\freccia{A\times B}{\pr_A}{A}$ is the first projection.
\end{enumerate}
\end{lemma}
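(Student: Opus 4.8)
The plan is to reduce both statements to the behaviour of comprehensions, exploiting that an m-variational doctrine has comprehensive diagonals (so $\comp{\delta_A}=\Delta_A$) and full comprehensions, together with the stability of comprehensions under pullback (Remark \ref{remark compr has pullbacks}).

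For part (1), I would first record that $\comp{\delta_A}=\Delta_A$ and $\comp{\delta_B}=\Delta_B$ by comprehensive diagonals. By Remark \ref{remark compr has pullbacks}, the comprehension $\comp{P_{f\times f}(\delta_B)}$ is obtained as the pullback of $\comp{\delta_B}=\Delta_B$ along $f\times f$, and a direct inspection of this pullback identifies the subobject $\comp{P_{f\times f}(\delta_B)}\colon K\to A\times A$ with the kernel pair of $f$ sitting inside $A\times A$. Since $f$ is monic precisely when its kernel pair collapses onto the diagonal, I would conclude by fullness: the diagonal $\Delta_A$ always factors through $K$ (so $\delta_A\le P_{f\times f}(\delta_B)$ holds unconditionally), while the reverse factorisation of $K$ through $\Delta_A$ holds iff $f$ is monic; by fullness this reverse factorisation is equivalent to $P_{f\times f}(\delta_B)\le \delta_A$, whence $f$ monic $\iff P_{f\times f}(\delta_B)=\delta_A$. (Equivalently one may invoke Lemma \ref{uniqueness} on the two comprehensions.)

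For part (2), set $m:=\comp{\phi}$, which is monic by Remark \ref{remark compr has pullbacks}, and write $g:=\pr_A m$, $h:=\pr_B m$, so $m=\angbr{g}{h}$. Let $\psi:=P_{\angbr{\pr_1}{\pr_2}}(\phi)\wedge P_{\angbr{\pr_1}{\pr_3}}(\phi)$ be the left-hand side of the functionality inequality and let $n:=\comp{\psi}$. The heart of the argument is the claim that $n$, together with two maps $u_1,u_2$ induced by comprehension, is the kernel pair of $g$. Concretely, from $P_n(\psi)=\top$ one gets $P_{\angbr{\pr_1}{\pr_2}n}(\phi)=\top$ and $P_{\angbr{\pr_1}{\pr_3}n}(\phi)=\top$, so by the universal property of $m=\comp{\phi}$ there are unique $u_1,u_2$ with $m u_1=\angbr{\pr_1}{\pr_2}n$ and $m u_2=\angbr{\pr_1}{\pr_3}n$, and a short computation gives $g u_1=\pr_1 n=g u_2$. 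I would then verify the universal property of the kernel pair: given $z_1,z_2$ with $g z_1=g z_2$, the map $z:=\langle g z_1,\,h z_1,\,h z_2\rangle$ satisfies $P_z(\psi)=\top$, hence factors uniquely through $n$, and monicity of $m$ yields the unique mediating arrow. Once $(n;u_1,u_2)$ is known to be the kernel pair of $g$, the statement follows: $g$ is monic iff $u_1=u_2$, and since $m$ is monic this holds iff $\pr_2 n=\pr_3 n$, i.e. iff $n$ factors through $\iota:=\angbr{\id_{A\times B}}{\pr_B}$; but $\iota$ is exactly $\comp{P_{\angbr{\pr_2}{\pr_3}}(\delta_B)}$ (again the pullback of $\Delta_B$ along $\angbr{\pr_2}{\pr_3}$), so by fullness this factorisation is equivalent to $\psi\le P_{\angbr{\pr_2}{\pr_3}}(\delta_B)$, which is functionality.

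The main obstacle is precisely this kernel-pair identification in part (2): one cannot simply take the kernel pair of $g$ and match it with $\comp{\psi}$, because $g=\pr_A\comp{\phi}$ is not a comprehension and an m-variational doctrine only guarantees pullbacks along comprehensions, so the kernel pair of $g$ is not a priori available. The point of building the candidate kernel pair out of $n=\comp{\psi}$ is that both its existence and its universal property come for free from the universal property of comprehension and the monicity of $m$; this single construction simultaneously produces the kernel pair and supplies the bridge to functionality. The remaining work consists of the routine componentwise verifications of the stated factorisations, which I would not spell out in detail.
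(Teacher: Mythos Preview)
Your proposal is correct and, especially for part~(2), takes a genuinely different route from the paper's own proof.

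For part~(1) the paper simply cites external references (\cite[Cor.~4.8]{EQC} and \cite[Rem.~2.14]{TECH}), whereas you give a self-contained argument identifying $\comp{P_{f\times f}(\delta_B)}$ with the kernel pair of $f$ via Remark~\ref{remark compr has pullbacks} and then invoking fullness. This is essentially the content behind those references, so the two agree in spirit.

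For part~(2) the approaches diverge. The paper reduces the statement to part~(1) applied to $f_1:=\pr_A\comp{\phi}$: writing $\comp{\phi}=\angbr{f_1}{f_2}$, it works entirely in the fibres $P(E\times E)$ and $P(A\times B\times A\times B)$, using the identity $\delta_E=P_{f_1\times f_1}(\delta_A)\wedge P_{f_2\times f_2}(\delta_B)$ and, for the converse, the left adjoint $\exists_{f_2\times f_2}$ together with some reindexing gymnastics. Your argument instead stays on the base-category side: you show that $\comp{\psi}$ \emph{is} the kernel pair of $\pr_A\comp{\phi}$, built purely from the universal property of comprehension and the monicity of $\comp{\phi}$, and then read off functionality via fullness. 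What your approach buys is that it never touches the existential structure in part~(2) (no $\exists_f$ appears), so it would go through for any m-variational doctrine without assuming existentiality; it also makes transparent why the kernel pair exists despite $\pr_A\comp{\phi}$ not being a comprehension. What the paper's approach buys is a more uniform ``internal-logic'' flavour, manipulating predicates rather than limits, and a direct reuse of part~(1). Both are complete.
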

\begin{proof}
\textit{1.} If $f$ is monic then $P_{f\times f}(\delta_B)=\delta_A$ follows by \cite[Cor. 4.8]{EQC}, while the other direction follows from \cite[Rem. 2.14]{TECH}.\\

\noindent
\textit{2.} Suppose that $\phi\in P(A\times B)$ is functional, and let us consider its comprehension $\freccia{E}{\comp{\phi}}{A\times B}$, which is an arrow of the form $\comp{\phi}:=\angbr{f_1}{f_2}$. Then, we have that proving that $\pr_A\comp{\phi}$ is monic is equivalent to prove that $P_{f_1\times f_1}(\delta_A)=\delta_E$ by (1). Recall that every comprehension is monic, in particular we have that 
\begin{equation}\label{eq delta_E=P_fxf delta_AxB}
\delta_E=P_{\comp{\phi}\times \comp{\phi}}(\delta_{A\times B}).
\end{equation}
Hence, since by definition we have that $\delta_{A\times B}=P_{\angbr{\pr_1}{\pr_3}}(\delta_A)\wedge P_{\angbr{\pr_2}{\pr_4}}(\delta_B)$, we have that \eqref{eq delta_E=P_fxf delta_AxB} implies
\begin{equation}\label{eq delta_E=P_fxf delta_A e P_fxf delta_B}
\delta_E=P_{f_1\times f_1}(\delta_A)\wedge P_{f_2\times f_2}(\delta_B)
\end{equation}
Now, since $\phi$ is functional we have that
\[ P_{\angbr{\pr_1}{\pr_2}}(\phi)\wedge P_{\angbr{\pr_1}{\pr_3}}(\phi)\leq P_{\angbr{\pr_2}{\pr_3}}(\delta_B)\]
in $P(A\times B\times B)$, and also
\[ P_{\angbr{\pr_1}{\pr_2}}(\phi)\wedge P_{\angbr{\pr_3}{\pr_4}}(\phi)\wedge P_{\angbr{\pr_1}{\pr_3}}(\delta_A)\leq P_{\angbr{\pr_2}{\pr_4}}(\delta_B)\wedge P_{\angbr{\pr_1}{\pr_3}}(\delta_A)\]
in $P(A\times B\times A\times B)$. 
Finally
\[P_{\comp{\phi}\times \comp{\phi}}(P_{\angbr{\pr_1}{\pr_2}}(\phi)\wedge P_{\angbr{\pr_3}{\pr_4}}(\phi)\wedge P_{\angbr{\pr_1}{\pr_3}}(\delta_A))\leq P_{\comp{\phi}\times \comp{\phi}}(P_{\angbr{\pr_2}{\pr_4}}(\delta_B)\wedge P_{\angbr{\pr_1}{\pr_3}}(\delta_A))=\delta_E.\]
Notice that the left side of the inequality is exactly 
\begin{equation}\label{equazione tecnica P_f_1xf_1}
P_{\comp{\phi}\times \comp{\phi}}(P_{\angbr{\pr_1}{\pr_2}}(\phi)\wedge P_{\angbr{\pr_3}{\pr_4}}(\phi)\wedge P_{\angbr{\pr_1}{\pr_3}}(\delta_A))=P_{f_1\times f_1}(\delta_{A})
\end{equation} 
because $P_{\comp{\phi}\times \comp{\phi}}(P_{\angbr{\pr_1}{\pr_2}}(\phi))=P_{\pr_E}P_{\comp{\phi}}(\phi)=\top$ and similarly  $P_{\comp{\phi}\times \comp{\phi}}(P_{\angbr{\pr_3}{\pr_4}}(\phi))=\top$.
 Hence we have
\[P_{f_1\times f_1}(\delta_{A})\leq \delta_E\]
and then we obtain $P_{f_1\times f_1}(\delta_{A})= \delta_E$
and by (1) we conclude $\pr_A\comp{\phi}$ is monic.
\\

 One can prove the other direction using similar arguments. We just sketch the idea. Observe that if $\pr_1\comp{\phi}$ is monic, then we have that $$P_{f_1 \times f_1}(\delta_A)\leq P_{f_2\times f_2}(\delta_B).$$ This follows from the fact that comprehensions are monic, i.e. $\delta_E=P_{\comp{\phi}\times \comp{\phi}}(\delta_{A\times B})$, from (2)  and that $f_1$ is monic by our assumption, i.e  $\delta_E=P_{f_1\times f_1}(\delta_A)$.

Therefore we have that $\exists_{f_2\times f_2}P_{f_1 \times f_1}(\delta_A)\leq \delta_B$. Now we employ the same argument as before: notice that the equality \eqref{equazione tecnica P_f_1xf_1} still holds, so we have $P_{f_1 \times f_1}(\delta_A)=P_{\comp{\phi}\times \comp{\phi}}(P_{\angbr{\pr_1}{\pr_2}}(\phi)\wedge P_{\angbr{\pr_3}{\pr_4}}(\phi)\wedge P_{\angbr{\pr_1}{\pr_3}}(\delta_A))$. Hence we have that
$$\exists_{f_2\times f_2}P_{f_1 \times f_1}(\delta_A)=\exists_{\angbr{\pr_2}{\pr_4}}\exists_{\comp{\phi}\times \comp{\phi}}P_{\comp{\phi}\times \comp{\phi}}(P_{\angbr{\pr_1}{\pr_2}}(\phi)\wedge P_{\angbr{\pr_3}{\pr_4}}(\phi)\wedge P_{\angbr{\pr_1}{\pr_3}}(\delta_A))$$
and then we can conclude that 
$$\exists_{\angbr{\pr_2}{\pr_4}}(P_{\angbr{\pr_1}{\pr_2}}(\phi)\wedge P_{\angbr{\pr_3}{\pr_4}}(\phi)\wedge P_{\angbr{\pr_1}{\pr_3}}(\delta_A))\leq \delta_B$$
and hence
$$P_{\angbr{\pr_1}{\pr_2}}(\phi)\wedge P_{\angbr{\pr_3}{\pr_4}}(\phi)\wedge P_{\angbr{\pr_1}{\pr_3}}(\delta_A)\leq P_{\angbr{\pr_2}{\pr_4}}(\delta_B).$$
In particular, applying to both left and right side of the previous inequality the functor $P_{\angbr{\pr_1,\pr_2}{\pr_1,\pr_3}}$, where we consider the projections from $A\times B\times B$, we can can conclude that 
$$P_{\angbr{\pr_1}{\pr_2}}(\phi)\wedge P_{\angbr{\pr_1}{\pr_3}}(\phi)\leq P_{\angbr{\pr_2}{\pr_3}}(\delta_B)$$
since $P_{\angbr{\pr_1,\pr_2}{\pr_1,\pr_3}}(P_{\angbr{\pr_1}{\pr_3}}(\delta_A))=P_{\angbr{\pr_1}{\pr_1}}(\delta_A)=\top$.
\end{proof}

As pointed out in \cite{TECH,EQC,CLTT}, both full comprehensions and comprehensive diagonals can be freely added to a given doctrine.
In particular, recall that the Grothendieck category $\mG_P$ of points of the doctrine $\doctrine{\mC}{P}$ provides the free addition of comprehensions. This construction is called \emph{comprehension completion}. 
\begin{definition}\label{def comprehension comp}
Given a primary doctrine $\doctrine{\mC}{P}$ we can define the \bemph{comprehension completion} $\doctrine{\mG_P}{P_c}$ of $P$ as follows:
\begin{itemize}
\item an object of $\mG_P$ is a pair $(A,\alpha)$ where $A$ is a set and $\alpha\in P(A)$;
\item an arrow $\freccia{(A,\alpha)}{f}{(B,\beta)}$ if a arrow $\freccia{A}{f}{B}$ such that $\alpha\leq P_f(\beta)$.
\end{itemize}
The fibres $P_c(A,\alpha)$ are given by those elements $\gamma$ of $P(A)$ such that $\gamma\leq \alpha$. Moreover, the action of $P_c$ on morphism $f: (B,\beta)\ \rightarrow \ (A,\alpha)$ is defined as  $P_c(f)(\gamma)=P(f) (\gamma)\, \wedge\, \beta $
for $\gamma\in P(A)$  such that $\gamma\leq \alpha$.
\end{definition}

Similarly, the construction which freely adds comprehensive diagonal is provided by the \emph{extensional reflection}.

\begin{definition}\label{def extentiona reflection}
Given an elementary doctrine $\doctrine{\mC}{P}$ we can define \bemph{extensional reflection} $\doctrine{\mX_P}{P_x}$ of $P$ as follows: the base category $\mX_P$ is the quotient category of $\mC$ with respect to the equivalence relation where $f\sim g$ when $\top_A\leq P_{\angbr{f}{g}}(\delta_B)$. The equivalence class of a morphism $f$ of $\mC$, i.e. an arrow of $\mX_P$, is denoted by $[f]$.
\end{definition}
Finally, we denoted by $\Pred{P}$ the \bemph{category of predicates} of a doctrine $P$, i.e. the category defined as $\Pred{P}:=\mX_{P_c}$. 

Again we refer to \cite{TECH} for a complete description of these constructions.
\section{Generalized existential completion}\label{section existential completion}
We recall here from \cite{ECRT} how to complete a conjunctive doctrine to a generalized existential doctrine  with respect to a class $\Lambda$ of morphisms of $\mC$  closed under  composition, pullbacks and containing the identities.

\begin{definition}\label{lclass}
A class of morphisms $\Lambda$ of a category $\mC$ is called a \textbf{\lclass} if it satisfies the following conditions:
\begin{itemize}
\item given an arrow $fh$ of $\mC$, if $f \in \Lambda$ and $h\in \Lambda$, then we have $fh\in \Lambda$;
\item for every arrow $f\in \Lambda$ and $g$ of $\mC$, the pullback of $f$ and $g$ 
\[\xymatrix@+1pc{
D\ar[d]_{\pbmorph{f}{g}} \pullbackcorner \ar[r]^{\pbmorph{g}{f}} &A\ar[d]^g\\
C \ar[r]_f & B
}\]
exists and $\pbmorph{g}{f}\in \Lambda$;
\item $\id_A\in \Lambda$ for every object $A$ of $\mC$.
\end{itemize}
\end{definition}

\begin{example}
For any category with finite products the class of product projections is an example of a \lclass.
\end{example}

Actually,   the class $\Lambda$ represents {\it generalized projections} with respect to which we complete a conjunctive doctrine to a generalized existential ones.

\begin{definition} 
A \bemph{left class doctrine} is a pair $(P,\Lambda)$, where $\doctrine{\mC}{P}$ is a \conjdoctrine~and $\Lambda$ is a \lclass.
\end{definition}

\noindent
Now, we generalize the notion of  ``existential doctrine" to a doctrine closed under  left adjoint to  functors $P_f$ for any morphism $f$  in the left class $\Lambda$:
\begin{definition}\label{def lambda existential doctrine}
A \Lamdoctrine~$(P,\Lambda)$ is called a \bemph{generalized existential doctrine} with respect to a \lclass~$\Lambda$ if, for any arrow $\freccia{A}{f}{B}$ of $\Lambda$, the functor
\[ \freccia{P(B)}{{P_{f}}}{P(A)}\]
has a left adjoint $\Einv_{f}$, and these satisfy:
\begin{itemize}
\item[(BCC)] \bemph{Beck-Chevalley condition}: for any pullback
\[\xymatrix@+1pc{
X'\ar[d]_{f'} \pullbackcorner \ar[r]^{g'}& A'\ar[d]^{f}\\
X\ar[r]_{g} &A
}\]
with $g\in \Lambda$ (hence also $g'\in \Lambda$), for any $\beta\in P(X)$ the following equality holds
\[\Einv_{g'}P_{f'}(\beta)=P_f\Einv_{g}(\beta).\]
\item[(FR)] \bemph{Frobenius reciprocity}: for every morphism $\freccia{X}{f}{A}$ of $\Lambda$, for every element $\alpha\in P(A)$ and $\beta\in P(X)$, the following equality holds
\[\Einv_f(P_f(\alpha)\wedge \beta)=\alpha\wedge \Einv_f(\beta).\]
\end{itemize}
\end{definition}
In order to simplify the notation, sometimes we will simply say that $P$ is a \bemph{$\Lambda$-existential doctrine} in \cite{ECRT} to indicate a \Lamdoctrine~$(P,\Lambda)$ that is a generalized existential doctrine.

Moreover,  we adopt  the following specific names  for $\Lambda$-existential doctrines when  $\Lambda$ is the class of all the base morphisms
or the class of projections:
\begin{definition}
Let $\mC$ be a finite limit category. 
A \bemph{full existential doctrine} is a  generalized existential doctrine   $\doctrine{\mC}{P}$  with respect to the class of all  the base morphisms.
\end{definition}

\begin{definition}
Let $\mC$  be a finite product category. 
A  \bemph{pure existential doctrine} is a generalized existential doctrine   $\doctrine{\mC}{P}$  with respect to the class of all the projections of its base category.
\end{definition}
 
\noindent
Now, we define the 2-category $\LamPR$ as follows:
\begin{itemize}
\item \textbf{objects} are \Lamdoctrines~$(P,\Lambda)$;
\item \textbf{1-cells} are pairs $\freccia{(P,\Lambda)}{(F,b)}{(R,\Lambda')}$ where $\freccia{\mC}{F}{\mD}$ is a functor such that for every $f\in \Lambda$, we have $F(f)\in \Lambda'$, $F$ preserves pullbacks  along morphisms of $\Lambda$ and $\freccia{P}{b}{R\circ F^{\op}}$ is a natural transformation.
\item a \bemph{2-cell} is a natural transformation $\freccia{F}{\theta}{G}$  from $(F,b)$ to $(G,c)$ such that for every $A$ in $\mC$ and every $\alpha$ in $P(A)$, we have 
\[b_A(\alpha)\leq R_{\theta_A}(c_A(\alpha)).\]
 
\end{itemize}
Similarly, we denote by $\LamEX$ the 2-full subcategory of $\LamPR$ whose objects are objects $\Lambda$-existential doctrine $(P,\Lambda)$, and a 1-cell of $\LamEX$ is a 1-cell $(F,b)$ of $\LamPR$ such that the natural transformation $b$ preserves left adjoints of the opportune left classes, i.e. given $(P,\Lambda)$ and $(R,\Lambda')$, for every $\freccia{A}{f}{B}$ arrow of $\Lambda$, we have $b_B\exists_f=\exists_{F(f)}b_A$. As in the case of the ordinary existential doctrines, the 2-cell remains the same.

Given this setting, we present the  construction of the  \bemph{generalized existential completion} introduced in \cite{ECRT}  and  consisting of a $\Lambda$-existential doctrine $\doctrine{\mC}{\gencompex{P}{\Lambda}}$ freely generated  from a \Lamdoctrine~$(P,\Lambda)$.
Such a  free construction provides a left 2-adjoint to the forgetful functor $\freccia{\LamEX}{U}{\LamPR}$. 

\medskip

\begin{definition}[\bf Generalized existential completion]
For every object $A$ of $\mC$ consider the following preorder:
\begin{itemize}
\item the objects are pairs $(\frecciasopra{B}{g\in \Lambda}{A},\; \alpha\in PB)$;
\item $(\frecciasopra{B}{h\in \Lambda}{A},\; \alpha\in PB)\leq (\frecciasopra{D}{f\in \Lambda}{A},\; \gamma\in PD)$ if there exists $\freccia{B}{w}{D}$ such that
\[\xymatrix@+1pc{
 B \ar[r]^{w} \ar[dr]_h& D\ar[d]^f\\
&A
}\]

commutes and $\alpha\leq P_w(\gamma)$.
\end{itemize}
It is easy to see that the previous data give a preorder. We denote by
$\gencompex{P}{\Lambda}(A)$ the partial order obtained by identifying two
objects when
$$(\frecciasopra{B}{h\in \Lambda}{A},\; \alpha\in PB)\gtreqless (\frecciasopra{D}{f\in \Lambda}{A},\; \gamma\in PD)$$
in the usual way. With abuse of notation we denote the equivalence
class of an element in the same way.

Given a morphism $\freccia{A}{f}{B}$ in $\mC$, let $\gencompex{P}{\Lambda}_f(\frecciasopra{C}{g\in \Lambda}{B},\; \beta\in PC)$ be the object 
\[(\frecciasopra{D}{\pbmorph{f}{g}}{A},\; P_{\pbmorph{g}{f}}(\beta)\in PD)\]
where  
\[\xymatrix@+1pc{
D\ar[d]_{\pbmorph{f}{g}} \pullbackcorner \ar[r]^{\pbmorph{g}{f}} &C\ar[d]^g\\
A \ar[r]_f & B
}\]
is a pullback because $g\in \Lambda$. 

The assignment $\doctrine{\mC}{\gencompex{P}{\Lambda}}$ is called \bemph{the generalized existential completion} of $P$.
\end{definition}

\begin{theorem}\label{theorem generalized existential doctrine}
For every  \Lamdoctrine~$(P,\Lambda)$ the doctrine $(\gencompex{P}{\Lambda},\Lambda)$ is a generalized existential doctrine with respect to the left class $\Lambda$.
\end{theorem}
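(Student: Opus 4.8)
The plan is to verify, in turn, the four requirements of Definition~\ref{def lambda existential doctrine}: that $\gencompex{P}{\Lambda}$ is a well-defined conjunctive doctrine, that each reindexing $\gencompex{P}{\Lambda}_f$ along $f\in\Lambda$ has a left adjoint, and that these left adjoints satisfy (BCC) and (FR). I would first check that $\gencompex{P}{\Lambda}$ is a conjunctive doctrine. Each fibre is a poset by construction; its top element is $(\id_A,\top_A)$, which is legitimate because $\id_A\in\Lambda$, and the binary meet of $(g\colon B\to A,\alpha)$ and $(h\colon C\to A,\gamma)$ is obtained by forming the pullback of $g$ and $h$ — which exists and has legs in $\Lambda$ since $g,h\in\Lambda$ — composing one leg with $g$ to stay inside $\Lambda$, and conjoining the two reindexed predicates. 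The delicate point is the well-definedness of the reindexing $\gencompex{P}{\Lambda}_f$: I must check that it is independent of the chosen pullback (two pullbacks differ by a unique isomorphism under which one reindexed predicate is carried to the other, using functoriality of $P$), that it descends to the order quotient, and that it is functorial, i.e.\ that reindexing along a composite equals the composite of the reindexings. This last point is exactly the pullback pasting lemma applied to two stacked pullback squares, combined with functoriality of $P$. I expect this bookkeeping — stability under the equivalence relation and under the choice of pullbacks — to be the most laborious, if entirely routine, part of the argument.

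Next I would exhibit the left adjoint. For $f\colon A\to B$ in $\Lambda$, set $\Einv_f(g\colon C\to A,\alpha):=(fg\colon C\to B,\alpha)$; this lies in the fibre over $B$ because $fg\in\Lambda$ by closure under composition, and it is manifestly monotone, being essentially post-composition with $f$. To prove $\Einv_f\dashv\gencompex{P}{\Lambda}_f$ I would unwind both sides of the adjunction for $X=(g,\alpha)$ over $A$ and $Y=(k\colon E\to B,\gamma)$ over $B$, and produce the witnessing maps from the universal property of the pullback $D$ defining $\gencompex{P}{\Lambda}_f(Y)$. In one direction a witness $w\colon C\to E$ with $kw=fg$ factors uniquely through $D$; in the other a factorization $v\colon C\to D$ yields $w$ by composing $v$ with the projection $D\to E$. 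Under this correspondence the two predicate conditions $\alpha\leq P_w(\gamma)$ and $\alpha\leq P_v\gencompex{P}{\Lambda}_f(\gamma)$ coincide by functoriality of $P$, which gives the adjunction.

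Finally I would establish (BCC) and (FR), both of which reduce to pullback pasting once the left adjoints are identified as post-composition. For (BCC), given the pullback square with $g,g'\in\Lambda$ and a generator $\beta=(h\colon C\to X,a)$, the value $P_f\Einv_g(\beta)$ requires the pullback of $f$ against $gh$, whereas $\Einv_{g'}P_{f'}(\beta)$ requires first the pullback of $f'$ against $h$ and then post-composition with $g'$; the pasting lemma identifies these two pullbacks and transports one reindexed predicate onto the other, yielding the required equality on the nose. For (FR) I would similarly expand $\Einv_f(P_f(\alpha)\wedge\beta)$ and $\alpha\wedge\Einv_f(\beta)$ for $\alpha=(k\colon D\to A,a)$ and $\beta=(h\colon C\to X,b)$: both come down to the pullback $D\times_A C$ of $k$ against $fh$, which the left-hand side reaches through an iterated pullback over $X$. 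Pasting shows the iterated pullback agrees with $D\times_A C$ and matches the conjunctions term by term. The (FR) computation carries the heaviest pullback bookkeeping, but it presents no conceptual obstacle once the pasting identifications are in place.
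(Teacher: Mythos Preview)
Your proposal is correct and follows the standard direct verification: left adjoints along $\Lambda$ are given by post-composition, reindexing by pullback, and both (BCC) and (FR) reduce to the pullback pasting lemma together with functoriality of $P$. The paper itself does not spell out a proof at all but simply refers the reader to \cite[Thm~4.3]{ECRT}, where the argument is carried out; your outline is precisely the kind of computation one expects to find there, so there is nothing to compare beyond noting that you have reconstructed the omitted details rather than taken a different route.
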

\begin{proof} See \cite[Thm 4.3]{ECRT}.

\end{proof}

Again, we fix the notation for the  specific cases in which the \lclass~of the base category is the class of projections or of all the morphisms:
\begin{definition}
Let $\doctrine{\mC}{P}$ be a \conjdoctrine~on a finite limit (lex)  category $\mC$.
A  \bemph{full existential completion} of $P$, denoted with the symbol $\fullcompex{P}$,
is the generalized existential completion of $P$  with respect to 
 the class of all the base morphisms.
 \end{definition}
   
\begin{definition}
Let $\doctrine{\mC}{P}$ be a \conjdoctrine~on a category $\mC$ with finite products.
A \bemph{pure existential completion}, denoted with the symbol $\purecompex{P}$,
is the generalized existential completion  of $P$ with respect to 
 the class of all the projections in its base category.
 \end{definition}


Observe that we  can define a canonical injection of $\Lambda$-doctrines
 $$\freccia{(P,\Lambda)}{(\id_{\mC},\eta_P)}{(\gencompex{P}{\Lambda},\Lambda)}$$ 
where $\freccia{P(A)}{(\eta_P)_A}{\gencompex{P}{\Lambda}(A)}$ acts sending $$\alpha\mapsto(\frecciasopra{A}{\id_A}{A},\alpha\in P(A)).$$
Similarly, if $P$ is $\Lambda$-existential, we can define the 1-cell of $\Lambda$-existential doctrines $$\freccia{(\gencompex{P}{\Lambda},\Lambda)}{(\id_{\mC},\varepsilon_P)}{(P,\Lambda)}$$
which preserves the left-adjoints along morphisms of $\Lambda$,
where $\freccia{\compex{P}(A)}{(\varepsilon_P)_A}{P(A)}$ acts sending $$(\frecciasopra{B}{f\in \Lambda}{A},\beta\in P(B))\mapsto \Einv_{f}(\alpha).$$ 

It is direct to check that, if $(P,\Lambda)$ is $\Lambda$-existential, then $\varepsilon_P\eta_P=\id_P$ and that $\id_{\gencompex{P}{\Lambda}}\leq \eta_P \varepsilon_P$.

\begin{remark}\label{remark prenex normal form general}
Observe that every element $(\frecciasopra{B}{f}{A},\beta)$ of $\gencompex{P}{\Lambda}(A)$ is equal to $\Einv^{\Lambda}_f\eta_B(\beta)$ because $\Einv^{\Lambda}_f$ acts as the post-composition.
\end{remark} 
\begin{definition}[{\bf $\Lambda$-weak subobjects doctrine}]\label{definition lambda-weak sub}
Let $\mC$ be a category and let $\Lambda$ be a \lclass~of $\mC$. We define the  $\Lambda$-\bemph{weak subobjects} doctrine, or the doctrine of \bemph{$\Lambda$-weak subobjects}, as  the functor $$\doctrine{\mC}{\Psi_{\Lambda}}$$ assigning to every object $A$ of $\mC$ the poset $$\Psi_{\Lambda}(A):=\{\frecciasopra{B}{f}{A} \;|\; f\in \Lambda\}$$
with the usual order given by the factorization, i.e. the partial order induced by the usual preorder  on morphisms given by $f\leq g$ if there exists an arrow $h$ such that $f=gh$. The re-indexing functors $\Psi_{\Lambda}(f)$ act by pulling back the elements of the fibres. 
\end{definition}
One can directly check that $\Psi_{\Lambda}$ is a $\Lambda$-existential doctrine, since the left adjoints are given by the post-composition of arrows of $\Lambda$. 

\begin{definition}\label{definition trivial doc}
Given a category $\mC$, we call the \bemph{trivial doctrine} on $\mC$ the functor
\[\doctrine{\mC}{\trdc}\]
assigning  the poset with only an element to every object $A$ of $\mC$ denoted as $\trdc(A)=\{\top\}$.
\end{definition}
By definition of generalized existential completion, we immediately obtain the following theorem.
\begin{theorem}\label{example lambda-weak-sub}
The $\Lambda$-existential doctrine $\doctrine{\mC}{\Psi_{\Lambda}}$  is the generalized existential completion of the trivial doctrine $\doctrine{\mC}{\trdc}$.
\end{theorem}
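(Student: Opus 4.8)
The plan is to prove the statement by simply unfolding the definition of the generalized existential completion in the special case $P=\trdc$ and observing that it reproduces $\Psi_{\Lambda}$ on the nose. The conceptual point is that the trivial doctrine has a one-element fibre over each object, so the predicate component of a pair carries no information whatsoever, and the whole completion collapses to the bare data of the generalized projections ordered by factorization — which is exactly the content of Definition \ref{definition lambda-weak sub}. First I would treat the objects: an object of $\gencompex{\trdc}{\Lambda}(A)$ is a pair $(g\colon B\to A,\ \alpha\in\trdc(B))$ with $g\in\Lambda$, but since $\trdc(B)=\{\top\}$ the component $\alpha$ is forced to equal $\top$; hence these objects are in bijection with the arrows $g\colon B\to A$ in $\Lambda$, i.e.\ with the objects of $\Psi_{\Lambda}(A)$.

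Next I would match the order and the reindexing. The defining inequality $(h\colon B\to A,\top)\le(f\colon D\to A,\top)$ in $\gencompex{\trdc}{\Lambda}(A)$ requires an arrow $w\colon B\to D$ with $fw=h$ together with $\top\le\trdc_w(\top)$; the second condition is vacuous, so the relation is precisely the factorization preorder defining $\Psi_{\Lambda}(A)$, and the passage to the associated poset (identifying mutually-smaller objects) agrees on both sides. For the reindexing, given $f\colon A\to B$ the functor $\gencompex{\trdc}{\Lambda}_f$ sends $(g,\top)$ to $(\pbmorph{f}{g},\ \trdc_{\pbmorph{g}{f}}(\top))=(\pbmorph{f}{g},\top)$, where $\pbmorph{f}{g}$ is the pullback of $g$ along $f$; this is exactly the action of $\Psi_{\Lambda}(f)$, which pulls back the elements of the fibre. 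Consequently the fibrewise identifications assemble into a strict isomorphism of doctrines $\gencompex{\trdc}{\Lambda}\cong\Psi_{\Lambda}$, natural in $A$.

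Finally, by Theorem \ref{theorem generalized existential doctrine} the doctrine $\gencompex{\trdc}{\Lambda}$ is already $\Lambda$-existential, and by Remark \ref{remark prenex normal form general} its left adjoints $\Einv_f$ act by post-composition; this coincides with the existential structure of $\Psi_{\Lambda}$ recorded after Definition \ref{definition lambda-weak sub}, so the isomorphism is one of $\Lambda$-existential doctrines, which is what the statement asserts. There is essentially no genuine obstacle in this argument; the only point deserving a line of care is verifying that the trivial reindexing $\trdc_{\pbmorph{g}{f}}$ keeps the predicate component equal to $\top$, which is precisely what forces the morphism components to reduce to the pullback action of $\Psi_{\Lambda}$ and makes the two doctrines literally identical rather than merely equivalent.
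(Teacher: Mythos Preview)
Your proof is correct and takes essentially the same approach as the paper: both argue that the claim follows immediately by unfolding the definition of the generalized existential completion in the case $P=\trdc$. The paper's proof is extremely terse---it says only that the top elements \emph{cover} $\Psi_{\Lambda}$, i.e.\ every $f\colon B\to A$ in the fibre equals $\Einv_f(\top_B)$ because left adjoints are post-composition and $\top_B=\id_B$---whereas you spell out the identification of objects, order, reindexing, and existential structure in full; but the underlying idea is identical.
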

\begin{proof}
Observe that the top elements \emph{cover} the weak subobjects doctrine, i.e. for every object $A$ and for every element $\freccia{B}{f}{A}$ in the fibre $\Psi_{\mD}(A)$ we have $f=\Einv_f([\top_B])$, since the left adjoints $\Einv_f$ are given by the post-composition and the top element $\top_B$ is the identity morphism $\id_B$.
\end{proof}

\begin{example}\label{theorem weak sub is a existential comp}
Observe that the weak subobjects doctrine defined in example~\ref{example wek-sub doctrine}   is a  $\Lambda$-\bemph{weak subobjects} doctrine where $\Lambda$ is the class $\Lambda_{\mD}$ of all the morphisms of a finite limit base category $\mD$. Hence from Theorem \ref{example lambda-weak-sub} the weak subobjects doctrine  is a  \emph{full existential completion} of the
trivial doctrine $\doctrine{\mD}{\trdc}$.
\end{example}

Notice that the universal properties  of existential completion shown in \cite{ECRT}, can be generalized for the arbitrary case of the generalized existential completion.

%
 
In particular, the proof of \cite[Thm. 4.14]{ECRT} can be adapted to this more general setting, just observing that in the proof the fact the class $\Lambda$ is the class of projections, is not concretely used. The proof depends only on the properties of closure under pullbacks, composition and identities of the class of projections, and hence it can be directly generalized as follow.
\begin{theorem}
The forgetful 2-functor $\freccia{\LamEX}{U}{\LamPR}$ has a left 2-adjoint 2-functor $\freccia{\LamPR}{E}{\LamEX}$, acting on the objects as $(P,\Lambda)\mapsto (\gencompex{P}{\Lambda},\Lambda)$.
\end{theorem}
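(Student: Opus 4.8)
The plan is to exhibit, for each \Lamdoctrine~$(P,\Lambda)$, the canonical injection $\freccia{(P,\Lambda)}{(\id_{\mC},\eta_P)}{(\gencompex{P}{\Lambda},\Lambda)}$ as a $2$-universal arrow from $(P,\Lambda)$ to the forgetful $2$-functor $U$: by Theorem \ref{theorem generalized existential doctrine} the codomain lies in $\LamEX$, and the existence of a left $2$-adjoint $E$ with $E(P,\Lambda)=(\gencompex{P}{\Lambda},\Lambda)$ then follows from the standard correspondence between pointwise $2$-universal arrows and $2$-adjunctions. Concretely, I must show that precomposition with $\eta_P$ induces an isomorphism of hom-categories $\LamEX((\gencompex{P}{\Lambda},\Lambda),(R,\Lambda'))\to\LamPR((P,\Lambda),(R,\Lambda'))$, $2$-naturally in $(R,\Lambda')$. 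This is precisely the argument of \cite[Thm. 4.14]{ECRT}, carried out with ``projection'' replaced throughout by ``arrow of $\Lambda$'', the point being that the original proof only ever uses closure of the relevant class under composition, identities and pullbacks.

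First I define the candidate inverse on $1$-cells. Given $(F,b)$ in $\LamPR((P,\Lambda),(R,\Lambda'))$ with $(R,\Lambda')$ a $\Lambda'$-existential doctrine, since $F$ sends $\Lambda$ into $\Lambda'$ the left adjoint $\Einv_{F(f)}$ exists for every $f\in\Lambda$, so I set
\[ \tilde{b}_A(f\colon B\to A,\;\beta):=\Einv_{F(f)}(b_B(\beta)) \]
for every element $(f,\beta)$ of $\gencompex{P}{\Lambda}(A)$. This assignment is forced: by the prenex presentation of Remark \ref{remark prenex normal form general} every element equals $\Einv^{\Lambda}_f\eta_B(\beta)$, so any $\LamEX$-extension $(F,c)$ of $(F,b)$ must preserve left adjoints along $\Lambda$ and satisfy $c_A\circ(\eta_P)_A=b_A$, whence $c_A(f,\beta)=\Einv_{F(f)}(c_B(\eta_B(\beta)))=\Einv_{F(f)}(b_B(\beta))=\tilde{b}_A(f,\beta)$. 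This simultaneously gives uniqueness of the extension (injectivity on $1$-cells).

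The main work is to verify that $(F,\tilde{b})$ is an honest $1$-cell of $\LamEX$, and this is where the hypotheses are used. Monotonicity is checked by transposition: if $(h\colon B\to A,\alpha)\leq(f\colon D\to A,\gamma)$ via $w$ with $h=fw$ and $\alpha\leq P_w(\gamma)$, then using $R_{F(h)}=R_{F(w)}R_{F(f)}$, the unit $\id\leq R_{F(f)}\Einv_{F(f)}$, and naturality of $b$ one gets $b_B(\alpha)\leq R_{F(w)}(b_D(\gamma))\leq R_{F(h)}\Einv_{F(f)}(b_D(\gamma))$, which transposes to $\tilde{b}_A(h,\alpha)\leq\tilde{b}_A(f,\gamma)$; crucially this uses only $R_{F(w)}$ and never $\Einv_{F(w)}$, so it does not matter that $w$ need not lie in $\Lambda$. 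Naturality of $\tilde{b}$ is the genuinely delicate clause: for $\freccia{A'}{k}{A}$ one rewrites $\gencompex{P}{\Lambda}_k(f,\beta)$ through the pullback of $f$ along $k$, and since $(F,b)\in\LamPR$ sends this square (with $f\in\Lambda$) to a pullback in $\mD$, the Beck--Chevalley condition for $R$ together with naturality of $b$ yields $\tilde{b}_{A'}(\gencompex{P}{\Lambda}_k(f,\beta))=R_{F(k)}(\tilde{b}_A(f,\beta))$. I expect this to be the only step that genuinely needs both (BCC) and the preservation of $\Lambda$-pullbacks by $F$. Preservation of left adjoints, by contrast, is immediate from the prenex form: since $\Einv^{\Lambda}_g$ is post-composition by $g\in\Lambda$, functoriality of $F$ and composition of left adjoints give $\tilde{b}_A(\Einv^{\Lambda}_g(f,\beta))=\Einv_{F(gf)}(b_B(\beta))=\Einv_{F(g)}\tilde{b}_{A'}(f,\beta)$, while the triangle $\tilde{b}_A\circ(\eta_P)_A=b_A$ holds because $\Einv_{F(\id_A)}=\id$.

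Finally I treat $2$-cells and $2$-naturality. A $2$-cell $\theta\colon(F,b)\Rightarrow(G,c)$ in $\LamPR$ is a natural transformation $\theta\colon F\Rightarrow G$ with $b_A(\alpha)\leq R_{\theta_A}(c_A(\alpha))$, and I claim the same $\theta$ serves as a $2$-cell $(F,\tilde{b})\Rightarrow(G,\tilde{c})$ in $\LamEX$, i.e. $\tilde{b}_A(f,\beta)\leq R_{\theta_A}(\tilde{c}_A(f,\beta))$. Transposing along $\Einv_{F(f)}\dashv R_{F(f)}$ and using $\theta_A F(f)=G(f)\theta_B$ (so $R_{F(f)}R_{\theta_A}=R_{\theta_B}R_{G(f)}$) together with the unit $\id\leq R_{G(f)}\Einv_{G(f)}$ reduces the claim to the given inequality $b_B(\beta)\leq R_{\theta_B}(c_B(\beta))$; conversely precomposition with $\eta_P$ plainly returns $\theta$ unchanged, so the correspondence is bijective on $2$-cells as well. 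Since both the correspondence and its inverse are defined by (pre)composition with the fixed $1$-cell $\eta_P$, $2$-naturality in $(R,\Lambda')$ is automatic, the isomorphism of hom-categories is established, and the left $2$-adjoint $E$ with $E(P,\Lambda)=(\gencompex{P}{\Lambda},\Lambda)$ is obtained.
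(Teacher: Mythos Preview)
Your proposal is correct and follows precisely the approach the paper indicates: the paper's own proof consists entirely of the observation that \cite[Thm.~4.14]{ECRT} goes through verbatim once one replaces ``projection'' by ``arrow of $\Lambda$'', since that argument only ever uses closure of the class under identities, composition and pullbacks. You have spelled out that adaptation in detail, and the key steps (the forced definition of $\tilde b$ via the prenex form, the use of (BCC) together with $F$'s preservation of $\Lambda$-pullbacks for naturality, and the transposition argument for $2$-cells) are exactly the points where the left-class hypotheses enter. One minor omission: you do not explicitly verify that each $\tilde b_A$ preserves binary meets, but this follows routinely from (FR) and (BCC) in $R$ applied to the pullback presentation of meets in $\gencompex{P}{\Lambda}(A)$, and is part of the argument you are citing from \cite{ECRT}.
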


In the following we are going to define those elements of a $\Lambda$-existential doctrine $P$ which are  \emph{free from the left adjoints} $\Einv_f$ along $\Lambda$
as an algebraic counterpart of the logical notion of {\it existential-free formula}.

To this purpose we first define:
\begin{definition}\label{definition (P,lambda)-atomic0}
Let $\doctrine{\mC}{P}$ be an $\Lambda$-existential doctrine. An object $\alpha$ of the fibre $P(B)$ is said an  $\Lambda$-\bemph{existential splitting} if 
for every morphism 
\[\xymatrix{
C \ar[r]^{g\in \Lambda} &B 
}\]
and for every element $\beta$ of the fibre $P(C)$, whenever $ \alpha \leq \Einv_{g}(\beta)$ holds
then there exists an arrow $\freccia{B}{h}{C}$ such that
\[    \alpha \leq P_{h}(\beta) \]
and $gh=\id$.
\end{definition}

Now we are ready to introduce the notion of $\Lambda$-\bemph{existential-free} object of a doctrine:
\begin{definition}\label{definition (P,lambda)-atomic}
Let $\doctrine{\mC}{P}$ be an $\Lambda$-existential doctrine. An object $\alpha$ of the fibre $P(A)$ is said a  $\Lambda$-\bemph{existential-free} if 
for every morphism 
\[\xymatrix{
B \ar[r]^f &A
}\]
$P_f(\alpha)$ is  a $\Lambda$-existential splitting.
\end{definition}
\begin{remark}
The name $\Lambda$-\bemph{existential-free} object  represents an algebraic version of the syntactic notion of  {\em existential-free formula}
because if a  $\Lambda$-\bemph{existential-free} object $\alpha$ is equal to the existential quantification of an object $\beta$ along an arrow $f$ in $\Lambda$,  then $\alpha$ is equal to  a reindexing of $\beta$.
Indeed, if
$\alpha=\exists_f(\beta)$ with  $\beta\in P(B)$ and $\freccia{B}{f}{A}$, then  from $\alpha\leq \exists_f(\beta)$  it follows that there exists an arrow $\freccia{B}{h}{A}$ such that $\alpha\leq P_h(\beta)$.
Moreover,  since $\exists_f(\beta)\leq \alpha$, then $\beta\leq P_f(\alpha)$  and also $P_h(\beta)\leq P_h(P_f(\alpha))=\alpha$ because $fh=\id_A$. Hence we conclude that  $\alpha=P_h(\beta)$. 
\end{remark}

%
%
Now we are going to observe how the notion of  \qflamb~object is related to well known choice principles.
To this purpose we first introduce a  generalization of  the notion of Rule of Choice by relativizing the existence of a witness to the arrows of the class $\Lambda$.
\begin{definition}
For a $\Lambda$-existential doctrine $\doctrine{\mC}{P}$, we say that $P$ satisfies the $\bemph{Generalized Rule of Choice}$  with respect to the left class $\Lambda$ of morphisms, also
called $\Lambda$-(RC),  if whenever
\[\top_A\leq \Einv_g (\alpha) \]
where $\freccia{B}{g}{A}$ is an arrow of $\Lambda$, then there exists an arrow $\freccia{A}{f}{B}$ such that
\[\top_A\leq P_{f}(\alpha)\]
and $gf=\id$.
\end{definition}

\begin{remark}\label{RC}
A $\Lambda$-existential doctrine $\doctrine{\mC}{P}$ has $\Lambda$-(RC) if and only if for every object $A$ of $\mC$, the top element $\top_A\in P(A)$ is \qflamb~object.
\end{remark}

\begin{definition}
Given a $\Lambda$-existential doctrine $\doctrine{\mC}{P}$, we also say that an element $\alpha $ of the fibre $ P(A)$ is \bemph{$\Lambda$-covered} by an element $\beta\in P(B)$ if $\beta$ is a \qflamb~object and there exists an arrow $\freccia{B}{f}{A}$ of $\Lambda$ such that $\alpha=\Einv_f(\beta)$.
\end{definition}

\begin{definition}
We say that a $\Lambda$-existential doctrine $P$ has \bemph{enough}-$\Lambda$-\bemph{existential-free objects} if for every object $A$ of $\mC$ and for every element $\alpha\in P(A)$ there exists an object $B$, a \qflamb~element $\beta$ and an arrow $\freccia{B}{g}{A}$ in $\Lambda$ such that
\[\alpha=\Einv_{g}(\beta). \]

\end{definition}


Notice that similar notions were introduced independently in  \cite{Frey2020} (see also the talk of the second author \url{https://www.youtube.com/watch?v=sAMVU_5RQJQ&t=688s}).

Here we fix  some notation for the specific cases in which the \lclass~of the base category is the class of projections or of all the morphisms. In particular:
\begin{itemize}
\item when the case $\Lambda$ is the class of all the morphisms we will speak of \bemph{pure-existential splitting}, \bemph{pure-existential-free} and \bemph{enough-pure-existential-free objects};

\item when $\Lambda$ is the class of all the morphisms of the base category we will speak of  \bemph{full-existential splitting}, \bemph{full-existential-free} and \bemph{enough-full-existential-free objects}.

\end{itemize}

Before proving the main result of this section, we present some useful technical lemmas.
\begin{lemma}\label{lemma E_f (a)<E_g(b)}
Let $\doctrine{\mC}{P}$ be a $\Lambda$-existential doctrine, and let us consider a $\Lambda$-existential-free element $\alpha\in P(B)$. If $\Einv_f(\alpha)\leq\Einv_g(\beta)$ where $\freccia{B}{f}{A}$ and $\freccia{C}{g}{A}$ are arrows of the base category $\mC$ and $\beta\in P(C)$, then there exists an arrow $\freccia{B}{m}{D}$ where $D$ is the vertex of the pullback  of $f$ along $g$ such that

\begin{itemize}
\item $(\pbmorph{f}{g})m=\id$;
\item  $\alpha\leq P_{(\pbmorph{g}{f})m}(\beta)$.
\end{itemize}
And hence also $f=g(\pbmorph{g}{f})m$.
\end{lemma}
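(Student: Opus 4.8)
The plan is to reduce the statement to the defining property of a $\Lambda$-existential-free object, by first transporting the hypothesis across the pullback square via (BCC) and then invoking that $\alpha$ is an existential splitting. The hypothesis presupposes that the left adjoints $\Einv_f$ and $\Einv_g$ exist; I will use in particular that $\Einv_f$ is left adjoint to $P_f$, and that $g\in\Lambda$.

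First I would form the pullback of $f$ along $g$, written in the left-class notation as the square with top edge $\pbmorph{f}{g}\colon D\to B$, left edge $\pbmorph{g}{f}\colon D\to C$, right edge $f\colon B\to A$ and bottom edge $g\colon C\to A$, so that $f\,\pbmorph{f}{g}=g\,\pbmorph{g}{f}$. Since $g\in\Lambda$ is stable under pullback, the square exists and $\pbmorph{f}{g}\in\Lambda$. This is the key structural input: it guarantees simultaneously that (BCC) applies to this square and that $\pbmorph{f}{g}$ is an admissible index for the splitting definition applied at the object $B$.

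Next I would transpose the inequality. Using the adjunction $\Einv_f\dashv P_f$, the hypothesis $\Einv_f(\alpha)\leq\Einv_g(\beta)$ is equivalent to $\alpha\leq P_f\Einv_g(\beta)$, and (BCC) for the square rewrites the right-hand side as $P_f\Einv_g(\beta)=\Einv_{\pbmorph{f}{g}}P_{\pbmorph{g}{f}}(\beta)$. Hence $\alpha\leq\Einv_{\pbmorph{f}{g}}\big(P_{\pbmorph{g}{f}}(\beta)\big)$ in $P(B)$, with $\pbmorph{f}{g}\in\Lambda$ and $P_{\pbmorph{g}{f}}(\beta)\in P(D)$. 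Then I would extract $m$: since $\alpha\in P(B)$ is $\Lambda$-existential-free, taking the reindexing along $\id_B$ in Definition \ref{definition (P,lambda)-atomic} shows that $\alpha$ is itself a $\Lambda$-existential splitting. Applying the splitting property to the arrow $\pbmorph{f}{g}\in\Lambda$ and the element $P_{\pbmorph{g}{f}}(\beta)$ yields $m\colon B\to D$ with $(\pbmorph{f}{g})m=\id_B$ and $\alpha\leq P_m P_{\pbmorph{g}{f}}(\beta)=P_{(\pbmorph{g}{f})m}(\beta)$, which are the two asserted conclusions. The final identity $f=g\,(\pbmorph{g}{f})m$ follows by composing $(\pbmorph{f}{g})m=\id_B$ with $f$ and using commutativity of the pullback square: $f=f\,\pbmorph{f}{g}\,m=g\,\pbmorph{g}{f}\,m$.

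The only real obstacle is bookkeeping: matching the orientation of the pullback square to the shape required by (BCC) and tracking which leg is the $\Lambda$-morphism, so that $\pbmorph{f}{g}$ (the pullback of $g$, landing in $B$) rather than $\pbmorph{g}{f}$ is the index over which the splitting hypothesis for $\alpha$ is applied. Once the square and (BCC) are aligned correctly, existential-freeness of $\alpha$ does all the work and no further computation is needed.
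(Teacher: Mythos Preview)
Your proof is correct and follows the same route as the paper: transpose via $\Einv_f\dashv P_f$, apply (BCC) across the pullback of $g$ along $f$, and then invoke the $\Lambda$-existential splitting property of $\alpha$ (at $\id_B$) to produce the section $m$. Your version is simply more explicit about why $\pbmorph{f}{g}\in\Lambda$ and how the final identity $f=g(\pbmorph{g}{f})m$ follows from the square.
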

\begin{proof}
If $\Einv_f(\alpha)\leq\Einv_g(\beta)$, then $\alpha\leq P_f\Einv_g(\beta)$, and applying BCC, we have $\alpha\leq \Einv_{\pbmorph{f}{g}}P_{\pbmorph{g}{f}}(\beta)$, where
\[\pullback{D}{B}{C}{A}{\pbmorph{f}{g}}{\pbmorph{g}{f}}{f}{g}\]
is a pullback. Thus, since $\alpha$ is  $\Lambda_{\mC}$-existential-free, there exists a morphism $\freccia{B}{m}{D}$ such that $\alpha\leq P_{(\pbmorph{g}{f})m}(\beta)$ and $(\pbmorph{f}{g})m=\id$. In particular $f=g(\pbmorph{g}{f})m$. 
\end{proof}

\begin{lemma}\label{split-ex-free}
Let $\doctrine{\mC}{P}$ be a $\Lambda$-doctrine. 
If every element of the form $\eta_A(\alpha)$ for any object $\alpha$ of $P(A)$ with $A$ object of $\cal C$ is $\Lambda$-existential splitting then every element of the form $\eta_A(\alpha)$ is a \qflamb~object. 
\end{lemma}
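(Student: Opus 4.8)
The plan is to unwind the definition of \qflamb~object and reduce it directly to the existential splitting hypothesis, using the fact that the family of elements of the form $\eta_B(\gamma)$ is stable under reindexing in the completion. Recall that, by Definition \ref{definition (P,lambda)-atomic}, an object is \qflamb~exactly when \emph{all} of its reindexings are $\Lambda$-existential splitting; hence the only thing to establish is that every reindexing of $\eta_A(\alpha)$ is again of the form $\eta_B(\gamma)$, so that the hypothesis applies to it.

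The single computation I would carry out is the identity
\[ \gencompex{P}{\Lambda}_f(\eta_A(\alpha)) = \eta_B(P_f(\alpha)) \]
for every $\freccia{B}{f}{A}$ in $\mC$. This is just the naturality of the natural transformation $\eta_P$ of the canonical injection $\freccia{(P,\Lambda)}{(\id_{\mC},\eta_P)}{(\gencompex{P}{\Lambda},\Lambda)}$, but I would also record the direct verification: since $\eta_A(\alpha)=(\frecciasopra{A}{\id_A}{A},\alpha)$, computing its reindexing along $f$ requires the pullback of $\id_A$ along $f$, which is $B$ itself with legs $\id_B$ and $f$; feeding this pullback into the definition of the reindexing of the completion yields exactly $(\frecciasopra{B}{\id_B}{B},P_f(\alpha))=\eta_B(P_f(\alpha))$.

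With this identity available, the argument closes at once. Fixing $A$ and $\alpha\in P(A)$, to prove that $\eta_A(\alpha)$ is \qflamb~I must check that $\gencompex{P}{\Lambda}_f(\eta_A(\alpha))$ is a $\Lambda$-existential splitting for every $\freccia{B}{f}{A}$; by the displayed identity this element equals $\eta_B(P_f(\alpha))$, which is of the form $\eta_B(\gamma)$ with $\gamma:=P_f(\alpha)\in P(B)$, and the hypothesis asserts precisely that all such elements are $\Lambda$-existential splitting. I expect no genuine obstacle here: the whole proof collapses to the pullback-along-the-identity computation, and the substance of the statement is simply that the family $\{\eta_B(\gamma)\}$ is closed under reindexing, so that the splitting property, which holds pointwise by assumption, automatically upgrades to the existential-freeness property.
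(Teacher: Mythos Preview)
Your proof is correct and is exactly the paper's approach: the paper's own proof consists of the single sentence ``It follows by the naturality of $\eta_A$,'' which is precisely the identity $\gencompex{P}{\Lambda}_f(\eta_A(\alpha))=\eta_B(P_f(\alpha))$ you unfold explicitly.
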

\begin{proof}
It follows by the naturality of $\eta_A$.
\end{proof}

%

%
\begin{proposition}\label{proposition doctrine P' cover P}
Let $\doctrine{\mC}{P}$ be a $\Lambda$-existential doctrine  equipped with a fibred subdoctrine  $\doctrine{\mC}{P'}$  such that every element of $P'$ is a $\Lambda$-existential-free element in $P$. Then, if every element of $\alpha$ of $P(A)$ is  $\Lambda$-covered by an element of $P'$,  the elements of $P'$ are exactly those elements of $P$ which are $\Lambda$-existential-free.
\end{proposition}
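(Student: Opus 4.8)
The statement splits into two inclusions, one of which is immediate: every element of $P'$ is assumed to be a $\Lambda$-existential-free element of $P$. So the plan is to prove only the converse, namely that an arbitrary $\Lambda$-existential-free element $\alpha\in P(A)$ must already lie in the fibre $P'(A)$.

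The engine of the argument is the covering hypothesis combined with the remark following Definition \ref{definition (P,lambda)-atomic}: a $\Lambda$-existential-free object that happens to be an existential quantification along a morphism of $\Lambda$ is necessarily a reindexing of the quantified predicate. Concretely, I would first use that $\alpha$ is $\Lambda$-covered by an element of $P'$ to produce an object $B$, an element $\beta\in P'(B)$, and a morphism $f\colon B\to A$ in $\Lambda$ with $\alpha=\Einv_f(\beta)$. The goal then becomes showing that $\alpha$ is a reindexing of $\beta$, which will land $\alpha$ inside $P'$.

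The key steps are as follows. Since $\alpha$ is $\Lambda$-existential-free, in particular $\alpha=P_{\id_A}(\alpha)$ is a $\Lambda$-existential splitting; applying the splitting property to $f\in\Lambda$, to $\beta$, and to the (trivially valid) inequality $\alpha\leq\Einv_f(\beta)$ yields a section $h\colon A\to B$ with $fh=\id_A$ and $\alpha\leq P_h(\beta)$. For the reverse inequality, the equality $\alpha=\Einv_f(\beta)$ gives $\beta\leq P_f(\alpha)$ by the adjunction $\Einv_f\dashv P_f$, and applying the monotone functor $P_h$ yields $P_h(\beta)\leq P_h(P_f(\alpha))=P_{fh}(\alpha)=P_{\id_A}(\alpha)=\alpha$. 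Combining the two gives $\alpha=P_h(\beta)$. Finally, since $P'$ is a fibred subdoctrine of $P$, it is closed under the reindexing functors of $P$, so $P_h(\beta)$ computed in $P'$ agrees with the one in $P$ and lies in $P'(A)$; hence $\alpha\in P'(A)$, which completes the nontrivial inclusion.

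I do not anticipate a genuine obstacle: the proof is short and essentially repackages the remark after Definition \ref{definition (P,lambda)-atomic}. The only points needing care are bookkeeping of the direction of the section (it runs $h\colon A\to B$ and satisfies $fh=\id_A$, matching the splitting definition, rather than $hf=\id$), and invoking the fibred-subdoctrine hypothesis in exactly the form ``$P'$ is closed under the reindexings of $P$,'' which is what guarantees that the reindexed predicate $P_h(\beta)$ remains inside $P'$. Note that the $\Lambda$-existential-freeness of $\beta$ bundled into the definition of $\Lambda$-covered is not actually used; only $\beta\in P'$ is needed.
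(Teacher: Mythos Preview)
Your proof is correct and follows essentially the same route as the paper: write $\alpha=\Einv_f(\beta)$ via the covering hypothesis, use the $\Lambda$-existential-splitting property of $\alpha$ to obtain a section $h$ with $\alpha\leq P_h(\beta)$, derive the reverse inequality from the adjunction, and conclude $\alpha=P_h(\beta)\in P'(A)$ by closure of $P'$ under reindexing. Your observation that the $\Lambda$-existential-freeness of $\beta$ is not actually needed (only $\beta\in P'$) is also accurate.
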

\begin{proof}
Let $\alpha\in P(A)$ be a $\Lambda$-existential-free object. We have to prove that $\alpha\in P'(A)$. Now, since every element of $P$ is covered by an element of $P'$, we have that $\alpha=\exists_f(\beta)$ where $\beta$ is $\Lambda$-existential-free, $\beta\in P'(B)$ and $\freccia{B}{f}{A}$ is in $\Lambda$. Then,   since
$\alpha\in P(A)$ is a $\Lambda$-existential-free object,  from $\alpha\leq \exists_f(\beta)$  it follows that there exists an arrow $\freccia{B}{h}{A}$ such that $\alpha\leq P_h(\beta)$ with $fh=\id_A$. Moreover, since $\exists_f(\beta)\leq \alpha$, then $\beta\leq P_f(\alpha)$  and also $P_h(\beta)\leq P_h(P_f(\alpha))=\alpha$ because $fh=\id_A$. Hence we conclude that  $\alpha=P_h(\beta)$  which is then an object of  $ P'(A)$ because $\beta$ and
its reindexings are in  $P'$.
\end{proof}

\begin{proposition}\label{exist-free-objects}
Let $\doctrine{\mC}{P}$ be a $\Lambda$-doctrine.
Its generalized existential completion  $\gencompex{P}{\Lambda}$ satisfies the following conditions:
\begin{enumerate}
\item 
the elements  $\eta_A(\alpha)$  for any object $\alpha$ of $P(A)$ with $A$ object of $\cal C$ are \qflamb~objects of  $\gencompex{P}{\Lambda}$;
\item
 $\gencompex{P}{\Lambda}$ has \eqflamb\  of the form $\eta_A(\alpha)$ for any object $\alpha$ of $P(A)$ with $A$ object  in $\cal C$.
\item
the \qflamb~objects of  $\gencompex{P}{\Lambda}$ are exactly the elements  $\eta_A(\alpha)$  for any object $\alpha$ of $P(A)$ with $A$  object in $\cal C$.
 \end{enumerate}
\end{proposition}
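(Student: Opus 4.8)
The plan is to prove item $(1)$ in full, after which $(2)$ and $(3)$ follow almost formally. Two facts are used repeatedly. First, $\eta$ is natural, so reindexing carries $\eta$-elements to $\eta$-elements: $\gencompex{P}{\Lambda}_f(\eta_A(\alpha))=\eta_B(P_f(\alpha))$ for every $f\colon B\to A$. Second, by Remark \ref{remark prenex normal form general} the left adjoints $\Einv_g$ of $\gencompex{P}{\Lambda}$ act by post-composition, so that $\Einv_g(\frecciasopra{E}{k}{C},\beta')=(\frecciasopra{E}{gk}{A},\beta')$ whenever $g\colon C\to A$ and $k\in\Lambda$. Recall also that $\gencompex{P}{\Lambda}$ is $\Lambda$-existential by Theorem \ref{theorem generalized existential doctrine}.

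For $(1)$, by Lemma \ref{split-ex-free} it suffices to show that each $\eta_A(\alpha)$ is a $\Lambda$-existential \emph{splitting}; the passage from splitting to \qflamb\ is exactly that lemma, which only uses the naturality above (any reindexing of an $\eta$-element is again an $\eta$-element). So fix $g\colon C\to A$ in $\Lambda$ and $\beta\in\gencompex{P}{\Lambda}(C)$, represented by $(\frecciasopra{E}{k}{C},\beta')$ with $k\in\Lambda$ and $\beta'\in P(E)$, and assume $\eta_A(\alpha)\leq\Einv_g(\beta)=(\frecciasopra{E}{gk}{A},\beta')$. Unwinding the order of the completion yields $w\colon A\to E$ with $(gk)w=\id_A$ and $\alpha\leq P_w(\beta')$. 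Put $h:=kw\colon A\to C$, so that $gh=\id_A$, i.e. $h$ is a section of $g$. It remains to check $\eta_A(\alpha)\leq\gencompex{P}{\Lambda}_h(\beta)$. Computing $\gencompex{P}{\Lambda}_h(\beta)$ amounts to pulling $k$ back along $h$, giving the pullback vertex $D$ with legs $\pbmorph{h}{k}\colon D\to A$ and $\pbmorph{k}{h}\colon D\to E$ and value $P_{\pbmorph{k}{h}}(\beta')$. The cone $(\id_A,w)$ commutes, since $h\,\id_A=h=kw$, and hence induces $v\colon A\to D$ with $\pbmorph{h}{k}\,v=\id_A$ and $\pbmorph{k}{h}\,v=w$; then $\alpha\leq P_w(\beta')=P_v P_{\pbmorph{k}{h}}(\beta')$ gives exactly $\eta_A(\alpha)\leq\gencompex{P}{\Lambda}_h(\beta)$. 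Thus $\eta_A(\alpha)$ is a splitting and $(1)$ follows.

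Item $(2)$ is then immediate: by Remark \ref{remark prenex normal form general} every element of $\gencompex{P}{\Lambda}(A)$ has the shape $(\frecciasopra{B}{f}{A},\beta')=\Einv_f(\eta_B(\beta'))$ with $f\in\Lambda$, and $\eta_B(\beta')$ is \qflamb\ by $(1)$; this exhibits each element as $\Einv_f$ of a \qflamb\ object along a morphism of $\Lambda$, which is precisely \eqflamb\ of the required form. For item $(3)$ I would apply Proposition \ref{proposition doctrine P' cover P} to the $\Lambda$-existential doctrine $\gencompex{P}{\Lambda}$ equipped with the fibred subdoctrine $P'$ whose fibres are the images $\eta_A(P(A))$. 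This $P'$ is a genuine fibred subdoctrine: each $\eta_A$ preserves finite meets and the top element, so each fibre is a sub-inf-semilattice and a full subposet, and closure under reindexing is the naturality identity $\gencompex{P}{\Lambda}_f\eta_A(\alpha)=\eta_B(P_f(\alpha))$. Its elements are \qflamb\ by $(1)$, and every element of $\gencompex{P}{\Lambda}$ is $\Lambda$-covered by an element of $P'$ by $(2)$. Proposition \ref{proposition doctrine P' cover P} then concludes that the \qflamb\ objects of $\gencompex{P}{\Lambda}$ are exactly the elements of $P'$, that is exactly the $\eta_A(\alpha)$, which is $(3)$.

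The only genuinely technical point is the splitting computation in $(1)$: one must track the action of $\Einv_g$ as post-composition and the order of the completion to produce the section $h=kw$ of $g$, and then rebuild $\eta_A(\alpha)\leq\gencompex{P}{\Lambda}_h(\beta)$ from the universal property of the pullback defining $\gencompex{P}{\Lambda}_h$. Everything else reduces to the normal form of Remark \ref{remark prenex normal form general} and to quoting Lemma \ref{split-ex-free} and Proposition \ref{proposition doctrine P' cover P}.
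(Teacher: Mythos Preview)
Your proof is correct and follows essentially the same route as the paper's: reduce $(1)$ to the splitting property via Lemma~\ref{split-ex-free}, unfold the order in the completion to obtain a section of $g$, verify the reindexing inequality through the universal property of the pullback defining $\gencompex{P}{\Lambda}_h$, and then deduce $(2)$ and $(3)$ from Remark~\ref{remark prenex normal form general} and Proposition~\ref{proposition doctrine P' cover P} respectively. The only differences are notational.
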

\begin{proof}
(1) 
We first show that every element of the form $\eta_A(\alpha)$ is a $\Lambda$-existential splitting. Thus, let $\ovln{\beta}:=(\frecciasopra{C}{f}{B},\beta)$ be an object of the fibre $\gencompex{P}{\Lambda}(B)$, and suppose that 
\begin{equation}\label{eq gen top leq Einv_pr phi}
\eta_A(\alpha)\leq \Einv_g^{\Lambda}(\ovln{\beta})
\end{equation}
where $\freccia{B}{g}{A}$.
Recall that, by definition of the doctrine $\gencompex{P}{\Lambda}$, the inequality \eqref{eq gen top leq Einv_pr phi} means that there exists an arrow $\freccia{A}{h}{C}$ such that the following diagram commutes
\[\xymatrix@+2pc{
A\ar[r]^{h}\ar[dr]_{\id_A}& C\ar[d]^{\;\;\;gf} \\
 & A
}\]
and 
\begin{equation}\label{eq 1 alpha leq Pex(beta)}
\alpha\leq P_{h}(\beta)
\end{equation}
We claim that 
 \begin{equation}\label{claim} \eta_A(\alpha)\leq \gencompex{P}{\Lambda}_{fh}(\ovln{\beta}).
 \end{equation}
Thus, let us consider the pullback 
\[
\pullback{D}{C}{A}{B.}{\pbmorph{f}{(fh)}}{\pbmorph{(fh)}{f}}{f}{fh}
\]
Moreover,we have by definition 
\[\gencompex{P}{\Lambda}_{fh}(\ovln{\beta})=(\frecciasopra{D}{\pbmorph{(fh)}{f}}{A},P_{\pbmorph{f}{(fh)}}(\beta)).\] 
Thus, by the universal property of pullbacks, there exists an arrow $\freccia{A}{w}{D}$ such that the following diagram commutes 

\[\xymatrix@+2pc{ A \ar@/_1.5pc/[ddr]_{\id_A}^{(\bullet)} \ar@{-->}[dr]^w\ar@/^1.5pc/[drr]^{h}_{(\bullet\bullet)} \\
&D \pullbackcorner\ar[r]^{\pbmorph{f}{(fh)}} \ar[d]_{\pbmorph{(fh)}{f}} & C\ar[d]^{f}\\
&A \ar[r]_{fh} &B.
}\]
Hence, combining \eqref{eq 1 alpha leq Pex(beta)} with the triangle $(\bullet\bullet)$ we have that 
\[\alpha\leq P_{h}(\beta)=P_w(P_{\pbmorph{f}{(fh)}}(\beta)).\]
From this and the diagram $(\bullet)$ the claim  \eqref{claim} follows.
This ends the proof that an element of the form $\eta_A(\alpha)$  is $\Lambda$-existential splitting. By Lemma~\ref{split-ex-free} we conclude that  all such elements are \qflamb~objects.

\noindent
(2) It follows by Remark \ref{remark prenex normal form general}.

\noindent 
(3) It follows from (1) and (2)  and Proposition \ref{proposition doctrine P' cover P}.


\end{proof}

\begin{corollary}\label{corollary rule of choice}
Let $\doctrine{\mC}{P}$ be a $\Lambda$-doctrine. Then the generalized existential completion $\gencompex{P}{\Lambda}$ of the doctrine $P$ satisfies the Generalized Rule of Choice with respect to $\Lambda$.
\end{corollary}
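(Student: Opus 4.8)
The plan is to deduce the statement directly from the two facts already established in this section, namely Remark~\ref{RC} and part (1) of Proposition~\ref{exist-free-objects}. Since $\gencompex{P}{\Lambda}$ is a $\Lambda$-existential doctrine by Theorem~\ref{theorem generalized existential doctrine}, Remark~\ref{RC} applies to it: the doctrine $\gencompex{P}{\Lambda}$ satisfies the Generalized Rule of Choice with respect to $\Lambda$ if and only if, for every object $A$ of $\mC$, the top element of the fibre $\gencompex{P}{\Lambda}(A)$ is a \qflamb~object. Hence the whole corollary reduces to identifying this top element and recognising it as one of the elements already known to be \qflamb.

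First I would check that the top element of $\gencompex{P}{\Lambda}(A)$ is precisely $\eta_A(\top_A)$, where $\top_A$ now denotes the top element of the fibre $P(A)$. By definition $\eta_A(\top_A)$ is the pair consisting of the identity arrow $\id_A$ (which lies in $\Lambda$) together with $\top_A\in P(A)$. Given any element $(\frecciasopra{B}{f\in\Lambda}{A},\,\beta)$ of the fibre, the order of $\gencompex{P}{\Lambda}(A)$ requires a witnessing arrow $w\colon B\to A$ making the relevant triangle commute and satisfying $\beta\leq P_w(\top_A)$; taking $w:=f$ makes the triangle commute on the nose, since the codomain arrow of $\eta_A(\top_A)$ is $\id_A$, and the inequality holds automatically because $P_f$ preserves the top, so $P_f(\top_A)=\top_B$. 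Thus every element of the fibre lies below $\eta_A(\top_A)$, confirming that $\eta_A(\top_A)$ is the fibrewise top.

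Finally I would invoke Proposition~\ref{exist-free-objects}(1), which asserts that every element of the form $\eta_A(\alpha)$ is a \qflamb~object of $\gencompex{P}{\Lambda}$. Applying this with $\alpha:=\top_A$ shows that the top element of each fibre is \qflamb, and Remark~\ref{RC} then yields the Generalized Rule of Choice with respect to $\Lambda$. There is no genuinely hard step here: the argument is a short chain of citations once the bookkeeping is in place. The only point that deserves a moment of care---and the closest thing to an obstacle---is the verification that $\eta_A(\top_A)$ really is the top of the fibre rather than merely a large element, which rests solely on the definition of the order on $\gencompex{P}{\Lambda}(A)$ and on the preservation of top elements by the reindexing functors $P_f$.
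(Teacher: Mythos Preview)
Your proof is correct and follows essentially the same approach as the paper: both deduce the claim from Proposition~\ref{exist-free-objects} by observing that the top element of each fibre of $\gencompex{P}{\Lambda}$ is of the form $\eta_A(\top_A)$ and hence \qflamb. You are simply more explicit than the paper in invoking Remark~\ref{RC} and in verifying that $\eta_A(\top_A)$ really is the fibrewise top, but the underlying argument is identical.
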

\begin{proof}
It follows by Theorem \ref{exist-free-objects} since in each fibre of  $\doctrine{\mC}{P}$ the top element $\top_A$ is existential splitting for each object $A$ in $\mC$.
\end{proof}

\begin{example}
Recall  that if $\Lambda$ is the class of all the base morphisms of a \Lamdoctrine~$\doctrine{\mC}{P}$, then $P$ satisfies the Extended Rule of Choice of \ref{exrc}.
Hence, 
from the  above corollary we have a proof alternative to that in  \cite{TECH}  that 
the doctrine $\Psi_{\mD}$ of weak subobjects satisfies  the Extended Rule of Choice. 
\end{example}

\begin{remark}\label{RCfull}
Note that full and pure existential completions both satisfy the rule of choice (RC) as a consequence of remark~\ref{RC} since they are generalized existential completions
with respect to a  class $\Lambda$ containing all the projections of the base category.
\end{remark}

Now let us consider a  \Lamdoctrine~$(P, \Lambda)$ and  the forgetful functor $\freccia{\mG_P}{U}{\mC}$ from  the Grothendieck category $\mG_{P}$  of $P$.
We denote $U^{-1}(\Lambda)$  the class of morphisms in $\mG_{P}$ of the form $\freccia{(A,\alpha)}{f}{(B,\beta)}$ where $f\in \Lambda$.
\\
Notice that, since a \lclass~contains identities, we have that every arrow of the form $\freccia{(A,\alpha)}{\id_A}{(A,\top)}$ is contained in $\mG_P$.
\begin{lemma}
Let $(P,\Lambda)$ a \Lamdoctrine~with $\doctrine{\mC}{P}$.  Then $(P_c, U^{-1}(\Lambda))$ is a  $\Lambda$-doctrine, too.
\end{lemma}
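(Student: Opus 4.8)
The plan is to check the two ingredients of a \Lamdoctrine~separately: that $P_c$ is a \conjdoctrine, and that $U^{-1}(\Lambda)$ is a \lclass~of the Grothendieck category $\mG_P$. The first is essentially the content of the comprehension completion of Definition~\ref{def comprehension comp}: each fibre $P_c(A,\alpha)$ is the down-set $\{\gamma\in P(A)\mid \gamma\leq\alpha\}$, an inf-semilattice with top $\alpha$ and meets computed as in $P(A)$, and each reindexing $P_c(f)$ preserves finite meets (the morphism condition $\beta\leq P_f(\alpha)$ for $f\colon (B,\beta)\to(A,\alpha)$ is precisely what sends the top $\alpha$ to the top $\beta$). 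So the substance of the lemma lies in the \lclass~axioms for $U^{-1}(\Lambda)$.

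Two of the three axioms I would dispose of immediately. Since $U$ is a functor and $\id_{(A,\alpha)}$ lies over $\id_A\in\Lambda$, every identity of $\mG_P$ belongs to $U^{-1}(\Lambda)$; and since composition in $\mG_P$ lies over composition in $\mC$ and $\Lambda$ is closed under composition, the composite of two morphisms of $U^{-1}(\Lambda)$ again lies over a morphism of $\Lambda$, hence is in $U^{-1}(\Lambda)$.

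The main step is stability under pullback. Given $f\colon (C,\gamma)\to(B,\beta)$ in $U^{-1}(\Lambda)$ (so $f\in\Lambda$ and $\gamma\leq P_f(\beta)$) and an arbitrary $g\colon (A,\alpha)\to(B,\beta)$ of $\mG_P$ (so $\alpha\leq P_g(\beta)$), I would first form the pullback of $f$ along $g$ in $\mC$, which exists because $f\in\Lambda$:
\[ \pullback{D}{A}{C}{B}{\pbmorph{g}{f}}{\pbmorph{f}{g}}{g}{f} \]
with $\pbmorph{g}{f}\in\Lambda$. Then I would equip the vertex $D$ with the predicate $\delta:=P_{\pbmorph{f}{g}}(\gamma)\wedge P_{\pbmorph{g}{f}}(\alpha)\in P(D)$, so that by construction $\delta\leq P_{\pbmorph{f}{g}}(\gamma)$ and $\delta\leq P_{\pbmorph{g}{f}}(\alpha)$. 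This makes $\pbmorph{f}{g}\colon (D,\delta)\to(C,\gamma)$ and $\pbmorph{g}{f}\colon (D,\delta)\to(A,\alpha)$ into morphisms of $\mG_P$ whose underlying square commutes. To verify the universal property, I would take a competing cone $u\colon(E,\epsilon)\to(C,\gamma)$, $v\colon(E,\epsilon)\to(A,\alpha)$ with $fu=gv$, use the universal property in $\mC$ to get the unique $w\colon E\to D$ with $\pbmorph{f}{g}\,w=u$ and $\pbmorph{g}{f}\,w=v$, and then observe that since $P_w$ preserves meets, $P_w(\delta)=P_u(\gamma)\wedge P_v(\alpha)\geq\epsilon$, so that $w$ is in fact a morphism $(E,\epsilon)\to(D,\delta)$, necessarily unique because $U$ is faithful. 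Hence $(D,\delta)$ is a pullback in $\mG_P$, and its arrow $\pbmorph{g}{f}\colon(D,\delta)\to(A,\alpha)$ lies in $U^{-1}(\Lambda)$ since its underlying morphism $\pbmorph{g}{f}$ is in $\Lambda$.

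The only point requiring a genuine idea is the choice of $\delta$: the morphism conditions force any candidate to lie below both $P_{\pbmorph{f}{g}}(\gamma)$ and $P_{\pbmorph{g}{f}}(\alpha)$, and taking their meet is exactly what makes the universal property go through, through the meet-preservation of the reindexing functors. Everything else is a routine transport of the $\mC$-pullback along the faithful functor $U$.
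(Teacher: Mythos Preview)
Your proof is correct and takes essentially the same approach as the paper: construct the pullback in $\mG_P$ by lifting the pullback square from $\mC$ and equipping the vertex $D$ with the meet of the reindexed predicates from the two legs. The paper merely displays the resulting square and asserts it is a pullback (with an apparent typo, writing $P_{f^\ast h}(\beta)$ where $P_{f^\ast h}(\alpha)$ is meant), whereas you supply the verification of the universal property and of the other two \lclass~axioms explicitly.
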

\begin{proof}
Observe that for any morphism $\freccia{(A,\alpha)}{f}{(B,\beta)}$ with $f\in \Lambda$ and for any other morphism $\freccia{(C,\xi)}{h}{(B,\beta)}$ 
by using  the pullback projections $\freccia{D}{h^\ast f}{ C}$   and  $\freccia{D}{f^\ast h}{ A}$   of $f$ along $h$ claimed to exist in $\mC$
because $\Lambda$ is a \lclass, then 
the following  diagram is a pullback diagram in  $\mG_P$ 
\[\xymatrix@+2pc{
{(D, P_{h^\ast f} (\xi) \wedge  P_{f^\ast h} (\beta) )} \ar[d]_{\pbmorph{f}{h}} \pullbackcorner \ar[r]^{\;\;\;\;\;\;\;\pbmorph{h}{f}} &(C, \xi) \ar[d]^h\\
(A, \alpha)  \ar[r]_f & (B, \beta)
}\]
and $\freccia{(D, P_{h^\ast f} (\xi) \wedge  P_{f^\ast h} (\beta) ) }{h^\ast f}{(C,\xi)}$  is in $U^{-1}(\Lambda)$
since $h^\ast f$ is in $\Lambda$.
\end{proof}

Then, for every \Lamdoctrine~we can define a morphism of \Lamdoctrines~as follows:

\begin{definition}\label{arrowws}
 Let $\doctrine{\mC}{P}$ be  a \Lamdoctrine~and let us consider the $U^{-1}(\Lambda)$-weak subobjects on the Grothendieck category $\mG_{P}$, $\doctrine{\mG_P}{\Psi_{U^{-1}(\Lambda)}}$.
 
 We define the following \Lamdoctrine~morphism
\[\freccia{P}{(L,l)}{\Psi_{U^{-1}(\Lambda)} }\] 
   as the composition of the embedding of $P$ into its free comprehension completion $P_c$ followed by the comprehension doctrine morphism $ \comp{-}$ associating to an object its  comprehension arrow:
 
\[\xymatrix@+2pc{ P\ar@{^{(}->}[r] & P_c \ar[r]^{\comp{-}} &\Psi_{U^{-1}(\Lambda)}.} \]
namely
\begin{itemize}
\item $L(A)=(A,\top_A)$ for each object $A$ of  $\mC$
\item $L(f) =f$ for each morphism $f$ of $\mC$
\item $l(\alpha)=\comp{\alpha}$, i.e. $l(\alpha)$ is the \bemph{comprehension map} $l(\alpha):=\frecciasopra{(A,\alpha)}{\id_A}{(A,\top)}$ for each object $\alpha$ of $P(A)$.
\end{itemize}  
\end{definition}
Observe that:
\begin{lemma}
 For every object $A$ of the base category $\mC$,  and any \Lamdoctrine~$P$,  the functor $\freccia{P(A)}{l}{\Psi_{U^{-1}(\Lambda)}}(A,\top)$ is full and faithful, i.e. its preserves and reflects the order  and preserves finite conjunctions.
\end{lemma}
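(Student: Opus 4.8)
The plan is to verify the three asserted properties — preservation of order, reflection of order, and preservation of finite conjunctions — directly from the definitions of the Grothendieck category $\mG_P$, the comprehension completion $P_c$, and the weak subobjects doctrine $\Psi_{U^{-1}(\Lambda)}$. The observation that makes everything routine is that each element $l(\alpha)$ in the image is the comprehension map $\comp{\alpha}$, whose underlying morphism in $\mC$ is the identity $\id_A$. This rigidity means that factorisations in $\mG_P$ between such maps are forced to have underlying identity, so that the order and the meets in the image are controlled entirely by the fibre $P(A)$.

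First I would settle the order. Given $\alpha,\beta\in P(A)$, the relation $l(\alpha)\leq l(\beta)$ in $\Psi_{U^{-1}(\Lambda)}(A,\top)$ means, by definition of the order on $\Lambda$-weak subobjects, that there is a morphism $h\colon (A,\alpha)\to(A,\beta)$ of $\mG_P$ with $l(\alpha)=l(\beta)\circ h$. Since both $l(\alpha)$ and $l(\beta)$ have underlying morphism $\id_A$, passing to underlying morphisms in $\mC$ forces $h=\id_A$; and the requirement that $\id_A\colon(A,\alpha)\to(A,\beta)$ be a morphism of $\mG_P$ is precisely the condition $\alpha\leq P_{\id_A}(\beta)=\beta$. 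Hence $l(\alpha)\leq l(\beta)$ if and only if $\alpha\leq\beta$, which at once gives that $l$ preserves and reflects the order, and therefore that $l$ is full and faithful as a monotone map between posets.

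Next I would treat finite conjunctions. For the top element, $l(\top_A)$ is the identity map $\id_A\colon (A,\top_A)\to(A,\top_A)$, which is the top of $\Psi_{U^{-1}(\Lambda)}(A,\top)$. For binary meets, order-preservation already yields $l(\alpha\wedge\beta)\leq l(\alpha)$ and $l(\alpha\wedge\beta)\leq l(\beta)$, so it remains to verify the universal property. Suppose $g\colon(C,\gamma)\to(A,\top)$ in $U^{-1}(\Lambda)$ is a common lower bound of $l(\alpha)$ and $l(\beta)$. Unwinding $g\leq l(\alpha)$ exactly as above shows that the underlying morphism $g\colon C\to A$ factors through $(A,\alpha)$, i.e. $\gamma\leq P_g(\alpha)$, and likewise $\gamma\leq P_g(\beta)$; since $P_g$ is a morphism of inf-semilattices this yields $\gamma\leq P_g(\alpha)\wedge P_g(\beta)=P_g(\alpha\wedge\beta)$. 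Thus $g$ factors through $(A,\alpha\wedge\beta)$ and so $g\leq l(\alpha\wedge\beta)$, confirming $l(\alpha\wedge\beta)=l(\alpha)\wedge l(\beta)$.

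I do not expect any genuine obstacle here: the whole argument is an unwinding of definitions, and the only point requiring care is the bookkeeping that every factorisation between comprehension maps of underlying identity collapses to the fibrewise order of $P(A)$. The membership conditions in $U^{-1}(\Lambda)$ are automatic, since each $l(\alpha)$ has underlying identity $\id_A\in\Lambda$ and the comparison map $g$ is assumed to lie in $U^{-1}(\Lambda)$ from the outset.
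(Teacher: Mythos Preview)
Your argument is correct and entirely self-contained: the key observation that both $l(\alpha)$ and $l(\beta)$ have underlying morphism $\id_A$ forces any factoring map to be the identity on underlying morphisms, reducing the weak-subobject order to the fibre order of $P(A)$, and your treatment of the top and binary meets is a clean direct verification.

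The paper, by contrast, does not spell this out at all: its proof consists of a single citation to \cite[Thm.~2.15]{TECH}. So you have not taken a different route so much as supplied the elementary content that the paper delegates to an external reference. What you gain is a self-contained account that makes the lemma independent of \cite{TECH}; what the paper gains is brevity. There is no substantive mathematical difference between the two.
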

\begin{proof} It follows from \cite[Thm 2.15]{TECH}. \end{proof}
We now show a key lemma expressing that comprehensions in the image of $l$ are existential free objects:
\begin{lemma}\label{lemma comprehensions map are existential free}
Every element of the doctrine $\doctrine{\mG_P}{\Psi_{U^{-1}(\Lambda)}}$ of the form $\freccia{(A,\alpha)}{\id_A}{(A,\gamma)}$ is a $U^{-1}(\Lambda)$-existential-free element.  In particular, comprehension maps are $U^{-1}(\Lambda)$-existential-free elements.
\end{lemma}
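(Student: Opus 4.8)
The plan is to verify the defining condition of a $U^{-1}(\Lambda)$-existential-free element directly, after a preliminary reduction. By Definition~\ref{definition (P,lambda)-atomic} I must show that for every arrow $\freccia{(B,\beta)}{k}{(A,\gamma)}$ of $\mG_P$ the reindexing $(\Psi_{U^{-1}(\Lambda)})_k$ applied to the element carried by $\id_A$ is a $U^{-1}(\Lambda)$-existential splitting. The first step is to compute these reindexings: since the reindexing functors of a weak subobjects doctrine act by pullback and our element is carried by an identity, pulling $\id_A$ back along $k$ amounts to forming the pullback of $\id_A$ along $k$ in $\mG_P$, which by the explicit description of pullbacks in $\mG_P$ recalled just above is again carried by an identity arrow. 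Hence $(\Psi_{U^{-1}(\Lambda)})_k$ of our element is again of the form $\freccia{(B,\delta)}{\id_B}{(B,\beta)}$ for a suitable $\delta\leq\beta$ in $P(B)$, and it suffices to prove that every such identity-carried element is a $U^{-1}(\Lambda)$-existential splitting; the comprehension maps $\comp{\alpha}$ are then recovered as the instance $\gamma=\top$.

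For the splitting itself I would argue as follows. Fix an arrow $\freccia{(C,\tau)}{g}{(B,\beta)}$ of $U^{-1}(\Lambda)$ and an element $\rho$ of $\Psi_{U^{-1}(\Lambda)}(C,\tau)$, represented by some $\freccia{(E,\epsilon)}{\ell}{(C,\tau)}$, and assume that the identity-carried element lies below $\Einv_g(\rho)$. Because $\Einv_g$ is computed by post-composition, $\Einv_g(\rho)$ is represented by $g\ell$, and unwinding the order on the fibre of $\Psi_{U^{-1}(\Lambda)}$ over $(B,\beta)$ produces an arrow $\freccia{(B,\delta)}{w}{(E,\epsilon)}$ of $\mG_P$ with $g\ell w=\id_B$. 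The base arrow $\ell w$ is then a section of $g$, so the natural candidate for the required witness is $h:=\ell w$, which satisfies $gh=\id$ on the nose; the inequality $\id_B\leq (\Psi_{U^{-1}(\Lambda)})_h(\rho)$ would be obtained from the universal property of the pullback defining $(\Psi_{U^{-1}(\Lambda)})_h(\rho)$, the mediating arrow being assembled from $w$ and the identity.

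I expect the delicate heart of the argument to be not this construction but the verification that $h=\ell w$ is a genuine arrow of $\mG_P$ with source precisely $(B,\beta)$, i.e. the predicate inequality $\beta\leq P_{\ell w}(\tau)$, since the factorization data only yields $\delta\leq P_{\ell w}(\tau)$ and one must close the gap between $\delta$ and $\beta$. This is exactly where the special shape of identity-carried elements has to be exploited: I would feed in fullness of comprehensions (Lemma~\ref{uniqueness}) together with Lemma~\ref{remark tops cover the doctrine with full comp}, which presents such elements as $\Einv$ of a top along a comprehension arrow, in order to force the source predicate up to $\beta$. Carrying out this predicate-matching carefully is the main obstacle; once it is settled for all identity-carried elements, the statement for comprehension maps, and hence the ``in particular'' clause, follows by taking $\gamma=\top$.
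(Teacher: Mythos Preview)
Your two-step plan---reduce via reindexing to showing that identity-carried elements are $U^{-1}(\Lambda)$-existential splitting, then argue the splitting from post-composition and factorization---is exactly the structure of the paper's own proof. You go further than the paper by pinpointing the real obstruction: the candidate section $h=\ell w$ is only an arrow $(B,\delta)\to(C,\tau)$ in $\mG_P$, not $(B,\beta)\to(C,\tau)$, and nothing forces $\beta\leq P_{\ell w}(\tau)$. Your proposed remedy, however, does not apply: Lemma~\ref{uniqueness} and Lemma~\ref{remark tops cover the doctrine with full comp} concern doctrines with full comprehensions (and left adjoints along them satisfying BCC), whereas here $P$ is a bare \Lamdoctrine~with no comprehension hypothesis, and $\Psi_{U^{-1}(\Lambda)}$ itself does not have full strong comprehensions.

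In fact the gap you found is fatal to the statement as written. Take any $A$ with $\alpha<\top_A$ in $P(A)$ and view the comprehension map $\id_A\colon(A,\alpha)\to(A,\top)$ as an element of $\Psi_{U^{-1}(\Lambda)}(A,\top)$. Choose $g$ to be that very same arrow---it lies in $U^{-1}(\Lambda)$ since $\id_A\in\Lambda$---and take $\rho=\top_{(A,\alpha)}$. Then $\Einv_g(\rho)$ is again represented by this arrow, so the splitting hypothesis holds on the nose; yet any $h\colon(A,\top)\to(A,\alpha)$ in $\mG_P$ with $gh=\id_{(A,\top)}$ would force $\top_A\leq\alpha$. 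This agrees with Proposition~\ref{exist-free-objects}(3) applied to the trivial doctrine (via Theorem~\ref{example lambda-weak-sub}), which tells you that the only $U^{-1}(\Lambda)$-existential-free elements of $\Psi_{U^{-1}(\Lambda)}$ are the top elements. What the downstream application (Lemma~\ref{lemma full and faithful morphism} inside Theorem~\ref{theorem caract. gen. existential completion}) actually needs is the weaker claim that comprehension maps are existential-free relative to the image class $\ovln{L}(\Lambda)=\{\,f\colon(A,\top)\to(B,\top)\mid f\in\Lambda\,\}$; for $g$ in that class the target predicate is $\top$, your candidate $h=\ell w$ is automatically an arrow $(A,\top)\to(C,\top)$ of $\mG_P$, and the rest of your argument then goes through unchanged.
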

\begin{proof}
It is direct to check that every element of the form $\frecciasopra{(A,\alpha)}{\id_A}{(A,\gamma)}$ is a $U^{-1}(\Lambda)$-existential splitting because left adjoints $\Einv_g$ in
$\Psi_{U^{-1}(\Lambda)}$
with $g\in U^{-1}(\Lambda)$ are given by the post-composition, and the order in the fibres of $\Psi_{U^{-1}(\Lambda)}$ is given by the factorization. Moreover, it is also direct to check that $\frecciasopra{(A,\alpha)}{\id_A}{(A,\gamma)}$ is a $U^{-1}(\Lambda)$-existential free element because for every arrow $\freccia{(B,\beta)}{h}{(A,\gamma)}$ of $\mG_P$ we have that, by definition, $\Psi_{U^{-1}(\Lambda)}(h)(\frecciasopra{(A,\alpha)}{\id_A}{(A,\gamma)})=\frecciasopra{(B,\beta \wedge  P_h(\alpha))}{\;\;\;\;\;\;\id_B}{(B,\beta)}$.

\end{proof}
Notice also that one can check that every element of the fibre $\Psi_{U^{-1}(\Lambda)}(A,\top)$ can be presented just using left adjoints along the class $U^{-1}(\Lambda)$ and  comprehension maps as follows:

\begin{lemma}\label{lemma every element is an exists of a comprensive maps}
For every element $\frecciasopra{(B,\beta)}{f}{(A,\top)}$ of the fibre $ \Psi_{U^{-1}(\Lambda)}(A,\top)$ we have that 
\[\frecciasopra{(B,\beta)}{f}{(A,\top)}=\exists_f(\frecciasopra{(B,\beta)}{\id_B}{(B,\top)})\]
with $\freccia{(B,\top)}{f}{(A,\top)}$ arrow of $U^{-1}(\Lambda).$
\end{lemma}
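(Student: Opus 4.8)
The statement expresses every element of the fibre $\Psi_{U^{-1}(\Lambda)}(A,\top)$ as the existential quantification of a comprehension map along its own structure morphism, and my plan is to reduce it entirely to the post-composition description of existential quantifiers in a weak subobjects doctrine. Once that description is available the lemma amounts to nothing more than the identity $f\circ\id_B=f$; this is the same mechanism already used in Remark~\ref{remark prenex normal form general} for $\gencompex{P}{\Lambda}$, now transported to the doctrine $\Psi_{U^{-1}(\Lambda)}$ on $\mG_P$.

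First I would invoke the preceding lemma to guarantee that $U^{-1}(\Lambda)$ is a \lclass~on $\mG_P$, so that the weak subobjects doctrine $\Psi_{U^{-1}(\Lambda)}$ of Definition~\ref{definition lambda-weak sub} is well defined; as recorded just after that definition, the left adjoint $\exists_g$ along any $g\in U^{-1}(\Lambda)$ is computed by post-composition with $g$. The second step is purely a matter of typing. The given $f$, being an element $\frecciasopra{(B,\beta)}{f}{(A,\top)}$ of the fibre, is in particular a morphism of $\Lambda$ in $\mC$; I would reinterpret it as the arrow $\freccia{(B,\top)}{f}{(A,\top)}$ of $\mG_P$, which is a legitimate arrow since $P_f(\top_A)=\top_B$, and which lies in $U^{-1}(\Lambda)$ because $f\in\Lambda$. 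Dually, the comprehension map $\frecciasopra{(B,\beta)}{\id_B}{(B,\top)}$ is a genuine element of $\Psi_{U^{-1}(\Lambda)}(B,\top)$, its underlying map $\id_B$ lying in $\Lambda$ and $\beta\le\top_B$ holding trivially.

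With these identifications in hand the computation is immediate:
\[\exists_f(\frecciasopra{(B,\beta)}{\id_B}{(B,\top)})=\frecciasopra{(B,\beta)}{f\circ\id_B}{(A,\top)}=\frecciasopra{(B,\beta)}{f}{(A,\top)},\]
the first equality being the post-composition formula for $\exists_f$ and the second the cancellation $f\circ\id_B=f$, the source $(B,\beta)$ being unaffected by post-composition. The single point that needs care — bookkeeping rather than a genuine obstacle — is to keep the two occurrences of $f$ apart: as the index $f\colon(B,\top)\to(A,\top)$ of the quantifier on the one hand, and as the resulting fibre element $f\colon(B,\beta)\to(A,\top)$ on the other. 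Checking the two side conditions $P_f(\top_A)=\top_B$ and $\beta\le\top_B$ is exactly what makes both well-typed, after which there is nothing further to prove.
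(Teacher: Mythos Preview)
Your proposal is correct and follows exactly the same approach as the paper, which simply records that the result ``directly follows by the fact that left adjoints act as the post composition.'' Your version just unpacks the typing details (that $f$ viewed as $(B,\top)\to(A,\top)$ lies in $U^{-1}(\Lambda)$, and that the comprehension map is a legitimate fibre element) before applying the post-composition formula, but the core argument is identical.
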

\begin{proof}
It directly follows by the fact that left adjoints act as the post composition.
\end{proof}

Finally, observe that full and faithful doctrine morphisms preserve and reflects existential-free objects as follows:
\begin{lemma}\label{lemma full and faithful morphism}
Let us consider a $\Lambda$-existential doctrine $P$, a $\Lambda'$-existential doctrine $R$ and a morphism of generalized existential doctrines $\freccia{P}{(F,f)}{R}$ such that both  $F$  and $f$ are full and faithful. Then we have that an element $\alpha\in P(A)$ is $\Lambda$-existential-free if and only if $f(\alpha)\in R(FA)$ is $F(\Lambda)$-existential-free.

\end{lemma}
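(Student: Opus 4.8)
The plan is to prove the two implications separately, using throughout three structural facts about the $1$-cell $(F,f)$: naturality of $f$, giving $f_{B}(P_{u}(\xi))=R_{F(u)}(f_{B'}(\xi))$ for every $u\colon B\to B'$ and $\xi\in P(B')$; preservation of the left adjoints, giving $f_{B}(\Einv_{g}(\xi))=\Einv_{F(g)}(f_{C}(\xi))$ for every $g\colon C\to B$ in $\Lambda$; and the hypotheses that each component $f_{B}$ reflects the order (being full and faithful) while $F$ is full and faithful, so that every morphism $FB\to FC$ is of the form $F(h)$ and $F(g)F(h)=\id$ forces $gh=\id$.

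First I would prove the direction from $R$ to $P$, which is the clean one. Assume $f_{A}(\alpha)$ is $F(\Lambda)$-existential-free and fix $u\colon B\to A$ in $\mC$; to see that $P_{u}(\alpha)$ is a $\Lambda$-existential splitting, take $g\colon C\to B$ in $\Lambda$ and $\beta\in P(C)$ with $P_{u}(\alpha)\leq\Einv_{g}(\beta)$. Applying $f_{B}$ and using preservation of left adjoints together with naturality yields $R_{F(u)}(f_{A}(\alpha))\leq\Einv_{F(g)}(f_{C}(\beta))$. Since $f_{A}(\alpha)$ is $F(\Lambda)$-existential-free, $R_{F(u)}(f_{A}(\alpha))$ is an $F(\Lambda)$-existential splitting, so there is $k\colon FB\to FC$ with $F(g)\,k=\id$ and $R_{F(u)}(f_{A}(\alpha))\leq R_{k}(f_{C}(\beta))$. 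Fullness of $F$ gives $k=F(h)$, faithfulness gives $gh=\id_{B}$, and naturality rewrites $R_{k}(f_{C}(\beta))=f_{B}(P_{h}(\beta))$; finally, because $f_{B}$ reflects the order we obtain $P_{u}(\alpha)\leq P_{h}(\beta)$. As $u$ was arbitrary, $\alpha$ is $\Lambda$-existential-free.

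For the converse I would first reduce the test data. Since every morphism of $F(\Lambda)$ has codomain in the image of $F$, the $F(\Lambda)$-existential splitting condition on $R_{v}(f_{A}(\alpha))$ is vacuous unless the domain $Y$ of $v$ lies in the image of $F$; assuming $Y=FB$, fullness of $F$ writes $v=F(u)$, and naturality gives $R_{v}(f_{A}(\alpha))=f_{B}(P_{u}(\alpha))$ with $P_{u}(\alpha)$ again $\Lambda$-existential-free. Thus it suffices to show that $f_{B}(\gamma)$ is an $F(\Lambda)$-existential splitting whenever $\gamma\in P(B)$ is $\Lambda$-existential-free. Given $g\colon C\to B$ in $\Lambda$ and $\eta\in R(FC)$ with $f_{B}(\gamma)\leq\Einv_{F(g)}(\eta)$, Frobenius reciprocity in $R$ gives $f_{B}(\gamma)=\Einv_{F(g)}(R_{F(g)}(f_{B}(\gamma))\wedge\eta)=\Einv_{F(g)}(f_{C}(P_{g}(\gamma))\wedge\eta)$; bounding the argument above by $f_{C}(P_{g}(\gamma))$ and using preservation of adjoints together with reflection of order, this forces $\gamma=\Einv_{g}(P_{g}(\gamma))$ in $P$. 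Existential splitting of $\gamma$ applied to $P_{g}(\gamma)$ then produces a section $h$ of $g$ in $\mC$, and I would set $k=F(h)$.

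The main obstacle is the final inequality $f_{B}(\gamma)\leq R_{k}(\eta)$ for the originally given, arbitrary $\eta$: since $\eta$ need not lie in the image of $f_{C}$, the quantity $R_{F(h)}(\eta)$ is not directly controlled by data coming from $P$, and the section extracted from $P$ only sees $f_{C}(P_{g}(\gamma))$; indeed one checks that $f_{B}(\gamma)\leq R_{F(h)}(\eta)$ is equivalent to $\theta\leq R_{F(hg)}(\eta)$ for $\theta:=f_{C}(P_{g}(\gamma))\wedge\eta$, an identity that does not follow from monotonicity alone. My plan for this step is to work with $\theta\leq\eta$, for which $f_{B}(\gamma)=\Einv_{F(g)}(\theta)$ holds exactly, and to transfer the inequality along the split retraction $F(g)$, $F(h)$ by combining Beck--Chevalley on the pullback exhibiting the splitting with Frobenius reciprocity. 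This is the delicate point where the hypotheses that both $F$ and $f$ be full and faithful must be used to their fullest, and I expect it to be the hardest part of the argument.
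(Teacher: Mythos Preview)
Your argument for the implication ``$f_A(\alpha)$ is $F(\Lambda)$-existential-free $\Rightarrow$ $\alpha$ is $\Lambda$-existential-free'' is correct and complete, and it is the only direction actually used later in the paper (in the proof of Theorem~\ref{theorem caract. gen. existential completion}, $(2)\Rightarrow(3)$). The paper's own proof is a one-line remark (``recall that $F(\Lambda)\subset\Lambda'$'') that does not constitute an argument for either direction, so you have in fact supplied more than the paper does.

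The obstacle you isolate in the converse direction is genuine and your proposed rescue via Beck--Chevalley and Frobenius does not close it. The section $h$ you extract from $\gamma\leq\Einv_{g}(P_{g}\gamma)$ depends only on $P$-data and carries no information about the particular $\eta\in R(FC)$; different $\eta$'s may require different sections of $g$, and nothing in the hypotheses lets you align them. Concretely, your computation shows $R_{F(h)}(\theta)=f_B(\gamma)\wedge R_{F(h)}(\eta)$, so the desired inequality $f_B(\gamma)\leq R_{F(h)}(\theta)$ is equivalent to the one you are trying to prove, and the BCC square for the kernel pair of $F(g)$ only moves the unreachable $\eta$ to another fibre outside the image of $f$.

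The converse does follow under the stronger hypothesis that each $f_C$ is \emph{surjective} (hence an isomorphism, since it is already full and faithful): then $\eta=f_C(\beta)$ for some $\beta\in P(C)$, and reflecting the inequality $f_B(\gamma)\leq\Einv_{F(g)}(f_C(\beta))=f_B(\Einv_g(\beta))$ through $f_B$ gives $\gamma\leq\Einv_g(\beta)$, after which $\Lambda$-existential-freeness of $\gamma$ produces the required section directly. In the paper's single application the morphism is $(\ovln{L},\ovln{l})$ with each $\ovln{l}_A$ an isomorphism, so this stronger hypothesis holds there. As stated with only ``full and faithful'', the forward implication appears not to be provable; the lemma should either be weakened to the backward implication or its hypothesis strengthened to fibrewise isomorphism.
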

\begin{proof} Just recall that $F(\Lambda)\subset \Lambda'$ by definition of morphisms between generalized existential doctrines.
\end{proof}

Now we are ready to prove the main result of this section.
\begin{theorem}\label{theorem caract. gen. existential completion}
Let $\doctrine{\mC}{P}$ be a $\Lambda$-existential doctrine. Then the following are equivalent:  
\begin{enumerate}

\item $P$ is a  $\Lambda$-existential completion.

\item There exists a fibred conjunctive sub-doctrine $\doctrine{\mC}{P'}$ of $P$ 
and a morphism $(\ovln{L},\ovln{l})$ of $\Lambda$-existential doctrines such that the diagram
\[\xymatrix@+2pc{P' \ar[rd]_{(L,l)}\ar[r]^{(\id,\iota)}& P \ar@{-->}[d]^{(\ovln{L},\ovln{l})} \\ 
&\Psi_{U^{-1}(\Lambda)} 
}\]
commutes,  where  $\doctrine{\mG_{P'}}{\Psi_{U^{-1}(\Lambda)}}$ is the $U^{-1}(\Lambda)$-weak  subobjects doctrine on the Grothendieck category $\mG_{P'}$ of $P'$. Moreover for every object $A$ in $\mC$ the natural transformation
 $\freccia{PA}{\ovln{l}_A}\Psi_{U^{-1}(\Lambda)}(A,\top)$ is  an isomorphism.

\item $P$ satisfies the following points:
\begin{enumerate}
\item $P$ satisfies $\Lambda$-(RC), i.e. each top element $\top_A$ of the fibre $P(A)$ is  a \qflamb~object.
\item for every \qflamb~object $\alpha$ and $\beta$ of $P(A)$, then $\alpha\wedge \beta$ is a \qflamb~object.
\item $P$ has \eqflamb.
\end{enumerate}

\end{enumerate}
\end{theorem}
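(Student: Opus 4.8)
The plan is to establish the two equivalences $(1)\Leftrightarrow(3)$ and $(1)\Leftrightarrow(2)$, using as the hinge Proposition~\ref{exist-free-objects} (which pins down the $\Lambda$-existential-free objects of a completion), the universal property of the left $2$-adjoint $E$, and the lemmas describing $\Psi_{U^{-1}(\Lambda)}$. Throughout, the candidate subdoctrine is $P'$, the fibred sub-doctrine of $P$ whose fibre over $A$ consists of the $\Lambda$-existential-free objects of $P(A)$; note that it is closed under reindexing directly from Definition~\ref{definition (P,lambda)-atomic}, so it is a genuine fibred conjunctive sub-doctrine as soon as each fibre is closed under top and binary meets.

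For $(1)\Rightarrow(3)$ I would argue directly: if $P=\gencompex{Q}{\Lambda}$, then Proposition~\ref{exist-free-objects} says the existential-free objects are exactly the $\eta_A(\gamma)$ and that $P$ has enough of them, which is (c); Corollary~\ref{corollary rule of choice} gives (a); and (b) follows because $\eta_A$ preserves finite meets, so $\eta_A(\gamma)\wedge\eta_A(\gamma')=\eta_A(\gamma\wedge\gamma')$ is again existential-free. For the converse $(3)\Rightarrow(1)$, hypotheses (a) and (b) make $P'$ a fibred conjunctive sub-doctrine, so the inclusion $(\id,\iota)\colon P'\to P$ is a $\Lambda$-doctrine morphism into the $\Lambda$-existential doctrine $P$ and, by the universal property of $E$, extends to a morphism $c\colon\gencompex{P'}{\Lambda}\to P$ of $\Lambda$-existential doctrines over $\id_{\mC}$, acting as $c_A(\Einv_g\eta(\beta))=\Einv_g(\beta)$. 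I would then show $c$ is a fibrewise isomorphism: surjectivity is exactly hypothesis (c) (writing each $\alpha$ in prenex form via Remark~\ref{remark prenex normal form general}), while order-reflection is where Lemma~\ref{lemma E_f (a)<E_g(b)} does the work. Given $\Einv_{g_1}(\beta_1)\leq\Einv_{g_2}(\beta_2)$ in $P$ with $\beta_1$ existential-free, the lemma produces $m$ into the pullback vertex with $(\pbmorph{g_1}{g_2})m=\id$ and $\beta_1\leq P_{(\pbmorph{g_2}{g_1})m}(\beta_2)$; setting $w:=(\pbmorph{g_2}{g_1})m$ one checks $g_2 w=g_1$, so $w$ is precisely the datum witnessing $(g_1,\beta_1)\leq(g_2,\beta_2)$ in $\gencompex{P'}{\Lambda}$.

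For $(1)\Leftrightarrow(2)$ I would exploit the observation that pulling $\Psi_{U^{-1}(\Lambda)}$ back along $L\colon\mC\to\mG_{P'}$, $A\mapsto(A,\top_A)$, literally reproduces $\gencompex{P'}{\Lambda}$: an element of $\Psi_{U^{-1}(\Lambda)}(A,\top)$ is a pair $(f\colon B\to A\in\Lambda,\ \beta\in P'(B))$ ordered by factorization-plus-reindexing, which is exactly the description of the fibre $\gencompex{P'}{\Lambda}(A)$. Granting (1), the universal property extends $(L,l)\colon P'\to\Psi_{U^{-1}(\Lambda)}$ to $(\ovln{L},\ovln{l})\colon P\to\Psi_{U^{-1}(\Lambda)}$ with the triangle commuting; surjectivity of $\ovln{l}_A$ comes from Lemma~\ref{lemma every element is an exists of a comprensive maps} together with Lemma~\ref{lemma comprehensions map are existential free}, and injectivity from full-faithfulness of $l$ (Lemma~\ref{lemma full and faithful morphism}) combined again with Lemma~\ref{lemma E_f (a)<E_g(b)}. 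Conversely, given (2), the fibrewise isomorphisms $\ovln{l}_A$ assemble into an isomorphism over $\id_{\mC}$ between $P$ and $L^{\ast}\Psi_{U^{-1}(\Lambda)}=\gencompex{P'}{\Lambda}$, which is (1).

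The step I expect to be the real obstacle is the order-reflection/injectivity argument common to $(3)\Rightarrow(1)$ and to the isomorphism claim in (2): one must convert an inequality between existentially quantified predicates in $P$ into an honest factorization of generalized projections. This is exactly what Lemma~\ref{lemma E_f (a)<E_g(b)} supplies, namely a map $m$ into the pullback vertex splitting $\pbmorph{f}{g}$ and satisfying $\alpha\leq P_{(\pbmorph{g}{f})m}(\beta)$. Getting the bookkeeping of this pullback-splitting to match, on the nose, the order relation in $\gencompex{P'}{\Lambda}$ (respectively the factorization order in $\Psi_{U^{-1}(\Lambda)}$) is the crux; once the existential-free objects have been identified with the fibres of $P'$, everything else is formal.
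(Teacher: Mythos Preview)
Your proof is correct and covers the same essential ideas, but the logical organization differs from the paper's. The paper argues cyclically $(1)\Rightarrow(2)\Rightarrow(3)\Rightarrow(1)$, whereas you use $(1)$ as a hub and prove $(1)\Leftrightarrow(3)$ and $(1)\Leftrightarrow(2)$ separately. The implication $(3)\Rightarrow(1)$ is handled identically in both: define the sub-doctrine of existential-free objects, use the universal property of the completion to get a comparison map, and check order-reflection via the pullback/BCC argument (your appeal to Lemma~\ref{lemma E_f (a)<E_g(b)} is exactly what the paper unfolds by hand). Your direct $(1)\Rightarrow(3)$ via Proposition~\ref{exist-free-objects} and Corollary~\ref{corollary rule of choice} is cleaner than the paper's detour through $(2)$. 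The real divergence is $(2)\Rightarrow(1)$: you make the structural observation that $L^{\ast}\Psi_{U^{-1}(\Lambda)}$ is literally $\gencompex{P'}{\Lambda}$, so the fibrewise isomorphisms $\ovln{l}_A$ immediately give the desired equivalence; the paper instead passes through $(3)$ by invoking Lemmas~\ref{lemma comprehensions map are existential free}--\ref{lemma full and faithful morphism} to recognize the $\iota_A(\alpha)$ as existential-free. Your route is more economical here; the paper's route has the advantage of explicitly identifying the elements of $P'$ with the existential-free objects of $P$ along the way, which is used again later (e.g.\ Proposition~\ref{proposizione car. P_cI iso weak sub}). One small point: in your $(1)\Rightarrow(2)$, order-reflection of $\ovln{l}_A$ does not actually need Lemma~\ref{lemma E_f (a)<E_g(b)}; the paper just reads off the factorization $h$ from the order in $\Psi_{U^{-1}(\Lambda)}$ and uses $\Einv_f\leq\Einv_g\Einv_h P_h\leq\Einv_g$, which is slightly more direct.
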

\begin{proof}
$\mathrm{(1 \Rightarrow 2)}$ This follows from the universal property of the generalized existential completion. In particular, if $P$ is the $\Lambda$-existential completion of a doctrine $P'$, then the arrow $\freccia{P}{(\ovln{L},\ovln{l})}{\Psi_{U^{-1}(\Lambda)} }$ in definition~\ref{arrowws} exists by the universal properties of the generalized existential completion. Furthermore,  $\freccia{P(A)}{\ovln{l}}{\Psi_{U^{-1}(\Lambda)}(A,\top)}$ is surjective since as observed in Lemma~\ref{lemma every element is an exists of a comprensive maps}, every element $\freccia{(B,\beta)}{g}{(A,\top)}$ can be written as 
$$\exists_g(\comp{\beta})=\exists_g(l_B(\beta))=\exists_g(\ovln{l}_B(\iota_B(\beta)))=\ovln{l}_A\exists_g (\iota_B(\beta)).$$ 
It remains to  prove that $\ovln{l}_A$ reflects the order. Now suppose that $\ovln{l}_A(\Einv_f\iota_B(\beta ))\leq \ovln{l}_A(\Einv_g\iota_C(\xi)$. Then this means that $$\frecciasopra{(B,\iota_B(\beta))}{f}{(A,\top)}\leq \frecciasopra{(C,\iota_C(\xi))}{g}{(A,\top)}$$
in the fibre $\Psi_{\Lambda} (A,\top)$, i.e. that there exists an arrow $\freccia{(B,\iota_B(\beta))}{h}{(C,\iota_C(\xi))}$ such that $f=gh$. Moreover, we have that $\iota_B(\beta)\leq P_h(\iota_C(\xi))$ by definition of morphisms in  $\mG_{P'}$. Therefore we have that
\[\Einv_{f}\iota_B(\beta)\leq \Einv_f P_h(\iota_C(\gamma))=\Einv_g \Einv_hP_h(\iota_C(\gamma))\leq \Einv_g(\iota_C(\gamma)).\]
This ends the proof that for every object $A$, $\freccia{P(A)}{\ovln{l}_A}{\Psi_{U^{-1}(\Lambda)} (A,\top)}$ is an isomorphism of inf-semilattices.\\

\noindent
$\mathrm{(2\Rightarrow 3)}$ By Lemma \ref{lemma comprehensions map are existential free} and Lemma \ref{lemma full and faithful morphism} we have that every element of the form $\iota_A(\alpha)$ where $\alpha \in P'(A)$ is $\Lambda$-existential-free in $P(A)$, and by Lemma \ref{lemma every element is an exists of a comprensive maps} and the fact that $\ovln{l}$ is an isomorphism we  conclude that the doctrine $P$ has enough-$\Lambda$-existential-free elements. Finally, observe that the top element and the binary conjunction of $\Lambda$-existential free elements are $\Lambda$-existential-free elements because $P'$ is a full conjunctive sub-doctrine of $P$, i.e. $\iota$ preserves and reflects the order.
\\

\noindent
$\mathrm{(3\Rightarrow 1)}$ Let $(P,\Lambda)$ be a $\Lambda$-existential doctrine satisfying $ (a), (b)$ and $ (c). $ The we can define the \conjdoctrine
\[\doctrine{\mC}{\Pqf}\]
as the functor which sends an object $A$ to the poset $\Pqf (A)$ whose elements are the \qflamb~objects of $P(A)$ with the order induced from that of $P(A)$, and such that $\Pqf_f=P_f$ for every arrow $f$ of $\mC$. Notice that this functor is a \conjdoctrine~ because \qflamb~objects are stable under re-indexing by definition, and they are closed under the top element and binary conjunctions by the assumptions $(a)$ and $(c).$  Therefore, by the universal property of the generalized existential completion, there exists a 1-cell of $\LamEX$

\[\duemorfismo{\mC}{{\Pqf^{\Lambda}}}{id}{\unvpropmap}{\mC}{P}\]
where the map $\freccia{\gencompex{\Pqf}{\Lambda}(A)}{\unvpropmap_A}{P(A)}$  sends 
$$(\frecciasopra{B}{f}{A},\alpha)\mapsto \Einv_f (\alpha).$$
In particular, $\unvpropmap$ is a morphism of inf-semilattices and it is natural with respects to left adjoints along the class $\Lambda$. Moreover, notice that for every object $A$, we have that $\freccia{\gencompex{\Pqf}{\Lambda}(A)}{\unvpropmap_A}{P(A)}$ is surjective on the objects since $P$ has \eqflamb, i.e. every object $\alpha$ of $P(A)$ is of the form $\Einv_g(\beta)$ for some $\freccia{B}{g}{A}$ and $\beta\in P(B)$.
Now we want to show that $\unvpropmap$ reflects the order, and hence that it is an isomorphism. Suppose that 
\begin{equation}\label{eq Ea leq Eb}
\Einv_f (\alpha) \leq \Einv_g (\beta)
\end{equation}
where $\freccia{B}{f}{A}$ and $\freccia{C}{g}{A}$ are arrows of $\mC$, and $\alpha\in P(B)$ and $\beta\in P(C)$ are \qflamb~objets. Then we have to prove that 
\[(\frecciasopra{B}{f}{A},\alpha)\leq (\frecciasopra{C}{g}{A},\beta). \]
By \eqref{eq Ea leq Eb} we have that $\alpha\leq P_f \Einv_g (\beta)$. Now we can consider the pullback  in $\mC$
\[\pullback{D}{B}{C}{A}{\pbmorph{f}{g}}{\pbmorph{g}{f}}{f}{g}\]
and, after applying BCC,  we obtain
\[\alpha\leq  \Einv_{\pbmorph{f}{g}}P_{\pbmorph{g}{f}}(\beta).\]
Therefore, since $\alpha$ is a \qflamb~object, there exists an arrow $\freccia{B}{h}{D}$ such that
\[ \xymatrix@+1.5pc{
 B\ar[r]^{h}\ar[d]_{\id_B}^{\  }& D \ar[dl]^{\pbmorph{(f)}{g}} \\
B& 
}\]
and \[\alpha\leq P_h P_{\pbmorph{g}{f}}(\beta)\]
From this, it follows that  the diagram
\[\xymatrix@+1.5pc{   B\ar[dr]_f \ar[r]^{(\pbmorph{g}{f})h} &C \ar[d]^g\\
&A
}\]

commutes because $g(\pbmorph{g}{f})h=f(\pbmorph{f}{g})h=f$, and $\alpha\leq P_{(\pbmorph{g}{f})h}(\beta)$. Thus, we have proved that $(\frecciasopra{B}{f}{A},\alpha)\leq (\frecciasopra{C}{g}{A},\beta).$ 
Therefore, we can conclude that $(\id,\unvpropmap)$ is an invertible 1-cell of $\LamEX$, and then $\gencompex{\Pqf}{\Lambda}\cong P$.
\end{proof}
Observe that   when the existential completion   of a conjunctive doctrine $\doctrine{\mC}{P}$ is {\bf full}, i.e. 
 the  existential completion of $P$ is performed with respect to the whole class of morphisms of $\mC$, then such an existential completion comes equipped with comprehensive diagonals.
  From a logical perspective this does not surprise, since the full existential completion freely adds left adjoints, and in particular equality predicates as left adjoint along diagonals.
  \begin{corollary}\label{corollary full ex completion has comprehensive diagonals}
Let $\doctrine{\mC}{P}$ be the full existential completion of a conjunctive subdoctrine $P'$. Then $P$ has comprehensive diagonals. 
\end{corollary}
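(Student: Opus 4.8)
The plan is to exhibit, for every object $A$ of $\mC$, the diagonal $\Delta_A\colon A\to A\times A$ as the comprehension of the fibre equality $\delta_A\in P(A\times A)$, where $P=\fullcompex{P'}$. Since the completion is taken along the class $\Lambda$ of \emph{all} morphisms of $\mC$, the diagonal lies in $\Lambda$, so $\Einv_{\Delta_A}$ exists and the equality predicate provided by the (full, hence elementary) completion is $\delta_A=\Einv_{\Delta_A}(\top_A)$. By Remark~\ref{remark prenex normal form general} this is precisely the object $(\Delta_A\colon A\to A\times A,\ \top_A)$ of the fibre $\gencompex{P'}{\Lambda}(A\times A)$. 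Thus the whole argument reduces to checking the two clauses in the definition of comprehension for the pair $\delta_A$, $\Delta_A$.

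First I would verify that $P_{\Delta_A}(\delta_A)=\top_A$. Unwinding the definition of reindexing in the generalized existential completion, $P_{\Delta_A}(\delta_A)$ is computed by pulling back $\Delta_A$ along itself; since $\Delta_A$ is a monomorphism this pullback is trivial, with both legs the identity on $A$, so the result is $(\id_A,\ \top_A)$, which is the top element of the fibre over $A$. This is the straightforward half of the proof.

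The heart of the argument is the universal property. Given any $\freccia{Z}{f}{A\times A}$ with $P_f(\delta_A)=\top_Z$, I would again compute $P_f(\delta_A)$ as the pullback of $\Delta_A$ along $f$, obtaining an object $(p\colon D\to Z,\ \top_D)$, where $p$ is the leg of the pullback over $Z$ and the predicate component is $\top_D$ because the $P'$-component of $\delta_A$ is already $\top$. The identity $P_f(\delta_A)=\top_Z=(\id_Z,\top_Z)$ then unpacks, through the preorder defining the completion, into the existence of a morphism $\freccia{Z}{w}{D}$ with $pw=\id_Z$, that is, a section of $p$. Composing $w$ with the other pullback leg $\freccia{D}{q}{A}$ produces $g:=qw\colon Z\to A$, and the commutativity of the pullback square gives $\Delta_A\,g=f$. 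Uniqueness of this factorisation is immediate from the fact that $\Delta_A$ is monic. This establishes that $\Delta_A=\comp{\delta_A}$ for every $A$, hence that $P$ has comprehensive diagonals.

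I expect the main obstacle to be the careful bookkeeping in the last step: one must read the fibre-level identity $P_f(\delta_A)=\top_Z$ in the correct direction of the completion preorder (only the inequality $\top_Z\leq P_f(\delta_A)$ carries content, the reverse being automatic since $\top_Z$ is maximal) and thereby extract exactly a \emph{section} of the projection $p$, rather than an arbitrary filler. Once the section is in hand, the factorisation through $\Delta_A$ and its uniqueness are formal consequences of the universal property of the pullback and of $\Delta_A$ being monic.
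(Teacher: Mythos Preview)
Your proof is correct and takes a genuinely different route from the paper's. The paper does not verify the comprehension property of $\Delta_A$ directly; instead it invokes the alternative characterisation of comprehensive diagonals from \cite[Prop.~2.2]{TECH}, namely that $\top_A\leq P_{\angbr{f}{g}}(\delta_B)$ forces $f=g$. To obtain this it first appeals to Theorem~\ref{theorem caract. gen. existential completion} to know that each $\top_A$ is full-existential-free, rewrites the hypothesis as $\Einv_{\angbr{f}{g}}(\top_A)\leq\Einv_{\Delta_B}(\top_B)$, and then applies Lemma~\ref{lemma E_f (a)<E_g(b)} to extract a factorisation $\angbr{f}{g}=\Delta_B m$. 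Your argument bypasses all three of these ingredients and works directly with the explicit description of fibres and reindexing in the completion, reading off the required section of the pullback leg straight from the preorder relation. The paper's route has the advantage of illustrating how the existential-free machinery developed in Section~\ref{section existential completion} pays off, while your route is more self-contained and does not depend on the external reference \cite{TECH}. One small remark: your parenthetical ``(full, hence elementary)'' is not quite justified by anything proved up to this point---the paper does not establish that full existential completions are elementary in general---but this does not affect your argument, since all you actually use is the existence of $\Einv_{\Delta_A}$ and the identification $\delta_A=\Einv_{\Delta_A}(\top_A)$, which is exactly what the paper's own proof uses as well.
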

\begin{proof}
By Theorem \ref{theorem caract. gen. existential completion} we have that every top element is full-existential-free. Now, if we consider two arrows $\freccia{A}{f,g}{B}$, if $\top_A\leq P_{\angbr{f}{g}}(\delta_B)$ then we have that $\exists_{\angbr{f}{g}}(\top_A)\leq \delta_B=\exists_{\Delta_B}(\top_B)$. The by Lemma \ref{lemma E_f (a)<E_g(b)} we can conclude that there exists an arrow $m$ such that $\angbr{f}{g}=\Delta_B m$, and this means that $f=g$. So by proposition 2.2 in \cite{TECH}  we conclude that
 $P$ has comprehensive diagonals.
\end{proof}
Finally we conclude this section by showing another equivalent characterization of $\Lambda$-existential completions via  the comprehension completion construction.

\begin{proposition}\label{proposizione car. P_cI iso weak sub}\label{isopweak}
Let $\doctrine{\mC}{P}$ be a $\Lambda$-existential doctrine, and let us consider a fibred conjunctive subdoctrine $\doctrine{\mC}{P'}$  of $P$. Then the following conditions are equivalent:
\begin{enumerate}
\item  $P=\gencompex{P'}{\Lambda}$ is the $\Lambda$-existential completion of $P'$.
\item There exists an isomorphism $\freccia{P_cI^{\op}}{(\id_{\mG_{P'}},\hat{l})}{\Psi_{U^{-1}(\Lambda)}}$ of $\Lambda$-existential doctrines such that the diagram
\[\xymatrix@+2pc{P' \ar[rd]_{(L,l)}\ar[r]^{(\hat{I},\hat{\iota})}& P_cI^{\op} \ar@{-->}[d]^{(\id_{\mG_{P'}},\hat{l})} \\ 
&\Psi_{U^{-1}(\Lambda)} 
}\]
commutes.
\end{enumerate}
\end{proposition}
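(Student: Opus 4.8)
The plan is to read the statement as a fibrewise refinement of condition~(2) of Theorem~\ref{theorem caract. gen. existential completion}, the bridge being that the fibres of $P_cI^{\op}$ over the ``top'' objects are literally the fibres of $P$. Writing $\hat I\colon\mC\to\mG_{P'}$ for the embedding $A\mapsto(A,\top_A)$ and $I\colon\mG_{P'}\hookrightarrow\mG_P$ for the inclusion induced by $P'\hookrightarrow P$, Definition~\ref{def comprehension comp} gives $(P_cI^{\op})(A,\top_A)=P_c(A,\top_A)=P(A)$. Hence precomposing any morphism $(\id_{\mG_{P'}},\hat l)$ with $(\hat I,\hat\iota)$ restricts, on these fibres, to a map $\ovln l_A:=\hat l_{(A,\top_A)}\colon P(A)\to\Psi_{U^{-1}(\Lambda)}(A,\top)$ of exactly the shape occurring in Theorem~\ref{theorem caract. gen. existential completion}(2); the base functors agree and on predicates the composite sends $\alpha\in P'(A)$ to $\comp{\alpha}=l(\alpha)$, so the triangle displayed in the second condition restricts precisely to the triangle of Theorem~\ref{theorem caract. gen. existential completion}(2).

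This already settles $(2\Rightarrow 1)$. Given the isomorphism $\hat l$ and the commuting triangle, I first note that the restriction $(\hat I,\ovln l)$ of $(\id_{\mG_{P'}},\hat l)\circ(\hat I,\hat\iota)$ is a morphism of $\Lambda$-existential doctrines: a short check from Definition~\ref{def comprehension comp} shows that the reindexings and left adjoints of $P_cI^{\op}$ along $f\colon(A,\top_A)\to(B,\top_B)$ with $f\in\Lambda$ coincide with $P_f$ and $\Einv_f$ of $P$. Thus $\ovln l_A=\hat l_{(A,\top_A)}$ is an isomorphism $P(A)\cong\Psi_{U^{-1}(\Lambda)}(A,\top)$ fitting into the triangle of condition~(2) of Theorem~\ref{theorem caract. gen. existential completion}, and that theorem yields $P=\gencompex{P'}{\Lambda}$.

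For $(1\Rightarrow 2)$ I would construct $\hat l$ directly on every fibre. For $(A,\alpha)\in\mG_{P'}$ and $\gamma\in P_c(A,\alpha)$, that is $\gamma\in P(A)$ with $\gamma\le\alpha$, the hypothesis $P=\gencompex{P'}{\Lambda}$ together with Remark~\ref{remark prenex normal form general} and Proposition~\ref{exist-free-objects} presents $\gamma=\Einv_g\,\iota_B(\beta)$ with $g\in\Lambda$, $\beta\in P'(B)$ and $\iota_B(\beta)$ a $\Lambda$-existential-free object; the adjunction turns $\gamma\le\alpha$ into $\beta\le P_g(\alpha)$, so $g$ is a $U^{-1}(\Lambda)$-morphism $(B,\beta)\to(A,\alpha)$ of $\mG_{P'}$, and I set $\hat l_{(A,\alpha)}(\gamma):=[\,g\colon(B,\beta)\to(A,\alpha)\,]$. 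Surjectivity is immediate, since any such weak subobject is the image of $\Einv_g\iota_B(\beta)$ (cf. Lemma~\ref{lemma every element is an exists of a comprensive maps}); naturality in $(A,\alpha)$ follows from (BCC); preservation of the left adjoints along $U^{-1}(\Lambda)$ holds because there they are post-composition; and the triangle commutes by construction since $\hat l_{(A,\top_A)}\hat\iota_A(\alpha)=\comp{\alpha}=l(\alpha)$.

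The crux, and the only genuine computation, is to show that $\hat l_{(A,\alpha)}$ is well defined and monotone \emph{uniformly in $(A,\alpha)$}, not merely over the top objects treated in Theorem~\ref{theorem caract. gen. existential completion}. Here Lemma~\ref{lemma E_f (a)<E_g(b)} is the tool: from $\Einv_g\iota_B(\beta)=\gamma\le\gamma'=\Einv_{g'}\iota_{B'}(\beta')$ the $\Lambda$-existential-freeness of $\beta$ produces, through the pullback of $g$ along $g'$, a factorization $g=g'h$ with $\beta\le P_h(\beta')$, that is $[g]\le[g']$ in $\Psi_{U^{-1}(\Lambda)}(A,\alpha)$; applying this to the two presentations of a single $\gamma$ gives well-definedness, and in one direction it gives monotonicity. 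That $\hat l_{(A,\alpha)}$ conversely reflects the order is the easy estimate $\Einv_g\iota_B(\beta)=\Einv_{g'}\Einv_h\iota_B(\beta)\le\Einv_{g'}\iota_{B'}(\beta')$, coming from $g=g'h$, $\beta\le P_h(\beta')$, naturality of $\iota$ and the counit $\Einv_hP_h\le\id$, exactly as in the proof of Theorem~\ref{theorem caract. gen. existential completion}$(1\Rightarrow 2)$ for $\alpha=\top$. With $\hat l$ thus a natural, existential-structure-preserving fibrewise order-isomorphism, it assembles into the isomorphism $(\id_{\mG_{P'}},\hat l)$ of $U^{-1}(\Lambda)$-existential doctrines required in~(2).
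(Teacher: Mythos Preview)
Your proof is correct. For $(2\Rightarrow 1)$ you argue exactly as the paper does: restrict the isomorphism $\hat l$ to the fibres over the objects $(A,\top_A)$ and read off condition~(2) of Theorem~\ref{theorem caract. gen. existential completion}.

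For $(1\Rightarrow 2)$ your route differs from the paper's. You construct $\hat l_{(A,\alpha)}$ by hand on every fibre of $\mG_{P'}$: choose a presentation $\gamma=\Einv_g\iota_B(\beta)$, check that $g$ is a morphism $(B,\beta)\to(A,\alpha)$ of $\mG_{P'}$, and then verify well-definedness, monotonicity, order-reflection, surjectivity, naturality and preservation of left adjoints separately. This works, and your use of Lemma~\ref{lemma E_f (a)<E_g(b)} for well-definedness and monotonicity is the right engine. The paper's argument is shorter: it observes that $P_c(A,\alpha)$ is the down-set $\{\gamma\in P(A):\gamma\le\alpha\}$, and that the isomorphism $\ovln l_A\colon P(A)\cong\Psi_{U^{-1}(\Lambda)}(A,\top)$ already supplied by Theorem~\ref{theorem caract. gen. existential completion}(2) carries this down-set onto the down-set of $\comp{\alpha}$, which is canonically identified with $\Psi_{U^{-1}(\Lambda)}(A,\alpha)$ via post-composition with $\id_A\colon(A,\alpha)\to(A,\top)$. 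In other words, the paper \emph{restricts} a single known isomorphism, whereas you \emph{rebuild} the isomorphism fibre by fibre. Your approach has the advantage of being self-contained and making the naturality and existential structure visible at the level of $\mG_{P'}$; the paper's approach is more economical because all the hard work (well-definedness, order-reflection) was already packaged into Theorem~\ref{theorem caract. gen. existential completion}, and restriction to a down-set preserves those properties for free.
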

 \begin{proof}
 Notice that given a $\Lambda$-existential doctrine $\doctrine{\mC}{P}$ and a fibred subdoctrine $\doctrine{\mC}{P'}$ we always have a full and faithful injection of categories
$$\frecciainj{\mG_{P'}}{I}{\mG_P}$$
given by $I(A,\alpha)=(A,\alpha)$ and $I(f)=f$. Thus, we can consider the $\Lambda$-existential doctrine given by the functor $P_cI^{\op}$
\[\xymatrix{
\mG_{P'}^{\op}\ar@{^{(}->}[r]^{I^{\op}} & \mG_P^{\op} \ar[r]^{P_c} & \infsl
}\] 
which is essentially the restriction of the comprehension completion doctrine $P_c$  to $\mG_{P'}$.
Moreover, notice that we have an injection of doctrines $\frecciainj{P'}{(\hat{I},\hat{\iota})}{P_cI^{\op}}$, where $\hat{I}(A)=(A,\top)$ and $\hat{\iota}(\alpha)=\frecciasopra{(A,\alpha)}{\id_A}{(A,\top)}$.

Therefore, $\it (2)\Rightarrow (1)$ follows because the commutativity of the diagram implies the second point (2) of Theorem \ref{theorem caract. gen. existential completion} as a particular case.

Instead  $\it (1)\Rightarrow (2)$ follows since the fibres $P_c (A,\alpha)$ are contained in $P(A)$, which is isomorphic to $ \Psi_{U^{-1}(\Lambda)}(A,\top)$ by second point (2) of Theorem \ref{theorem caract. gen. existential completion}.
 \end{proof}

In the particular case of the full existential completion we obtain the following corollary relating full existential completions and weak subobjects doctrines.
\begin{corollary}\label{corollary P_cI iso weaksub}
Let $\doctrine{\mC}{P'}$ be a fibred conjunctive subdoctrine of a full existential doctrine $\doctrine{\mC}{P}$. Then the following conditions are equivalent:
\begin{enumerate}
\item  $P=\fullcompex{P'}$ is the full existential completion of $P'$.
\item There exists an isomorphism $\freccia{P_cI^{\op}}{(\id_{\mG_{P'}},\hat{l})}{\Psi_{\mG_{P'}}}$ commuting with the canonical injections of $P'$ in $P$ and of $P'$ in the weak subobjects doctrine  $\Psi_{\mG_{P'}}$.
\end{enumerate}
\end{corollary}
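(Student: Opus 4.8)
The plan is to read this off as the instance of Proposition~\ref{proposizione car. P_cI iso weak sub} obtained by taking for $\Lambda$ the class $\Lambda_{\mC}$ of \emph{all} morphisms of the finite limit base category $\mC$, this being exactly the class that defines a full existential completion. With this choice, condition~(1) of the Corollary, namely $P=\fullcompex{P'}$, is by definition the same as condition~(1) of the Proposition, namely $P=\gencompex{P'}{\Lambda_{\mC}}$, since $\fullcompex{P'}$ is defined to be $\gencompex{P'}{\Lambda_{\mC}}$.

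The one identification to carry out first is that the relative weak subobjects doctrine $\Psi_{U^{-1}(\Lambda_{\mC})}$ coincides with the ordinary weak subobjects doctrine $\Psi_{\mG_{P'}}$. By definition $U^{-1}(\Lambda_{\mC})$ consists of those arrows $(A,\alpha)\to(B,\beta)$ of $\mG_{P'}$ whose underlying morphism lies in $\Lambda_{\mC}$; since $\Lambda_{\mC}$ contains every morphism of $\mC$, this is the class of \emph{all} morphisms of $\mG_{P'}$. Hence, by Definition~\ref{definition lambda-weak sub} and the observation recorded in Example~\ref{theorem weak sub is a existential comp}, the doctrine $\Psi_{U^{-1}(\Lambda_{\mC})}$ is precisely the weak subobjects doctrine $\Psi_{\mG_{P'}}$ of Example~\ref{example doctrines}(2).

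For this identification to be meaningful I would first note that $\mG_{P'}$ is a finite limit category, so that $\Psi_{\mG_{P'}}$ is defined and is a full existential doctrine: the terminal object is $(1,\top_{1})$, binary products and the remaining finite limits are computed as in $\mC$ with the fibrewise meet, and in particular all pullbacks exist because the lemma preceding Definition~\ref{arrowws} (applied with $\Lambda=\Lambda_{\mC}$) already produces pullbacks in $\mG_{P'}$ along every morphism. This is the only point of the argument that requires an explicit verification, and I expect it to be routine.

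Once $\Psi_{U^{-1}(\Lambda_{\mC})}$ and $\Psi_{\mG_{P'}}$ are identified, the two conditions of the Corollary are word for word the two conditions of Proposition~\ref{proposizione car. P_cI iso weak sub}: the commuting triangle of the Proposition, assembled from the injection $(\hat{I},\hat{\iota})\colon P'\to P_cI^{\op}$ and the morphism $(L,l)\colon P'\to\Psi_{\mG_{P'}}$, is exactly the requirement that the isomorphism $(\id_{\mG_{P'}},\hat{l})$ commute with the canonical injections of $P'$ into $P_cI^{\op}$ and into the weak subobjects doctrine $\Psi_{\mG_{P'}}$. Thus the equivalence $(1)\Leftrightarrow(2)$ of the Corollary follows immediately from that of the Proposition, and no further obstacle arises.
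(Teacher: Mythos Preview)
Your proposal is correct and matches the paper's approach: the corollary is obtained exactly as you describe, by specializing Proposition~\ref{proposizione car. P_cI iso weak sub} to the case $\Lambda=\Lambda_{\mC}$, noting that then $U^{-1}(\Lambda_{\mC})$ is the class of all morphisms of $\mG_{P'}$ so that $\Psi_{U^{-1}(\Lambda_{\mC})}=\Psi_{\mG_{P'}}$. The paper in fact offers no separate proof beyond the sentence introducing the corollary as ``the particular case of the full existential completion,'' so your write-up is more explicit than the original while following the same route.
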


\subsection{A characterization of  pure existential completions}
Here we focus our attention on  \bemph{pure existential completions} $\purecompex{P}$ , i.e.  to existential completions with respect to  the class of product projections in the base category $\mC$ of a primary doctrine $\doctrine{\mC}{P}$ as originally introduced in \cite{ECRT}.
 Contrary to other existential completions, pure existential ones $\purecompex{P}$  inherit the elementary structure from any primary doctrine $P$ generating them and they also induce it on $P$
when they are elementary.

Our aim is to produce  a specific characterization for elementary pure existential completions $\purecompex{P'}$ of a primary doctrine $P'$
 analogous to Proposition \ref{proposizione car. P_cI iso weak sub}, but considering the weak subobjects of the category  of predicates $\Pred{P'}$ instead of the Grothendieck category $\mG_{P'}$.


In \cite{ECRT} it is shown that   the assignment $P\mapsto \purecompex{P}$ of a primary doctrine to its pure existential completion
 extends to a lax-idempotent 2-monads $$\freccia{\PD}{\compex{\mT}}{\PD}$$
on the 2-category of primary doctrines, and that the 2-category $\alg{\compex{\mT}}$ of algebras is isomorphic to the 2-category $\ED$ of existential doctrines. Moreover, the 2-monad $\compex{\mT}$ preserves the elementary structure, i.e. it can be restricted to the 2-category of elementary doctrines.

\begin{example}[Regular fragment of Intuitionistic Logic]\label{example regular fragment is existential comp}
Let $\lang_{=,\exists}$ be the Regular fragment of first order intuitionistic logic \cite{SAE}, i.e. the fragment with equality, conjunction and existential quantification of first order intuitionistic logic. Then the elementary existential doctrine 
$$\doctrine{\mC_{\lang_{=,\exists}}}{\syntdoc_{=,\exists}}$$
is the existential completion of the syntactic elementary doctrine
 \[\doctrine{\mC_{\lang_{=}}}{\syntdoc_{=}}\] 
associated to the Horn fragment $\lang_=$, i.e. the fragment of first order intuitionistic logic with conjunctions and equality. This result is a consequence of the fact that extending the language $\lang_=$ with  existential quantifications is a free operation, so by the known equivalence between doctrines and logic, the elementary existential doctrine $\purecompex{\syntdoc_{\lang_=}}$ must coincide with the syntactic doctrine $\syntdoc_{{=,\exists}}$, since both completions are free.
\end{example}

%

From   theorem~\ref{theorem caract. gen. existential completion} we immediately deduce:
\begin{corollary}\label{theorem caract. existential completion}\label{carpure}
Let $\doctrine{\mC}{P}$ be a pure existential doctrine. Then $P$ is an instance of the pure existential completion construction  
if and only if  the following conditions hold:
\begin{enumerate}
\item $P$ satisfies (RC);
\item $P$ has enough-pure-existential-free objects;
\item for every pure-existential-free object $\alpha$ and $\beta$ of $P(A)$, $\alpha\wedge \beta$ is pure-existential-free object.
\end{enumerate}
\end{corollary}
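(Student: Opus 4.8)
The plan is to obtain this statement as the direct specialization of Theorem~\ref{theorem caract. gen. existential completion} to the case in which the left class $\Lambda$ is the class of all product projections of the finite product base category $\mC$. First I would recall that this class is indeed a \lclass, so that a pure existential doctrine is precisely a $\Lambda$-existential doctrine for this $\Lambda$, and the pure existential completion $\purecompex{P'}$ is by definition the generalized existential completion $\gencompex{P'}{\Lambda}$. Under this identification, the clause ``$P$ is an instance of the pure existential completion construction'' is exactly condition (1) of Theorem~\ref{theorem caract. gen. existential completion}.

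Next I would translate the terminology through the notational conventions fixed above: the pure-existential-free objects are the $\Lambda$-existential-free objects, and having enough-pure-existential-free objects coincides with having enough-$\Lambda$-existential-free objects. Hence conditions (2) and (3) of the present statement are verbatim conditions (c) and (b) of Theorem~\ref{theorem caract. gen. existential completion}(3), respectively.

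The only point deserving a line of checking is that condition (1), namely (RC), coincides with condition (a) of Theorem~\ref{theorem caract. gen. existential completion}(3), that is $\Lambda$-(RC) for $\Lambda$ the class of projections. An arrow $g\in\Lambda$ is, up to isomorphism, a projection $\freccia{A\times B}{\pr_1}{A}$, and a section $\freccia{A}{f}{A\times B}$ of $\pr_1$ is exactly a morphism of the form $\angbr{\id_A}{f'}$ for a unique $\freccia{A}{f'}{B}$. Under this bijection the requirement ``$\top_A\leq P_f(\alpha)$ with $\pr_1 f=\id_A$'' becomes ``$\top_A\leq P_{\angbr{\id_A}{f'}}(\alpha)$'', which is precisely the defining clause of (RC); so, using Remark~\ref{RC}, $\Lambda$-(RC) and (RC) express the same property. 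Applying the equivalence $(1)\Leftrightarrow(3)$ of Theorem~\ref{theorem caract. gen. existential completion} then yields the corollary. There is essentially no obstacle here, the construction being an instance of the general one; the only mild subtlety is the bookkeeping identifying sections of projections with the tupling maps $\angbr{\id_A}{f'}$ appearing in the definition of (RC).
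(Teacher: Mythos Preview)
Your proposal is correct and follows exactly the paper's approach: the paper states the corollary as an immediate specialization of Theorem~\ref{theorem caract. gen. existential completion} to the class of product projections, without giving any further argument. You have simply spelled out the routine identifications (pure-existential-free $=$ $\Lambda$-existential-free, (RC) $=$ $\Lambda$-(RC) via sections of projections) that make this immediate.
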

From a algebraic point of view, since the existential doctrines are exactly the $\mathrm{T}^{\ex}$-algebras \cite{ECRT},  corollary~\ref{carpure} characterizes \emph{free algebras} of the monad $\compex{\mT}$. 

Moreover,  in    \cite{ECRT}  it was shown that the pure existential completion preserves the elementary structure. Here we also prove that an elementary pure existential completion must be the
pure existential completion of an elementary doctrine:
 \begin{theorem}\label{theorem elementary existential completion}
Let $\doctrine{\mC}{P'}$ a primary doctrine and $P=\purecompex{P'}$ its  pure existential completion.  
The following conditions are equivalent
\begin{enumerate}
\item $P'$ is elementary
\item $\purecompex{P'}$ is elementary.
\end{enumerate} 
\end{theorem}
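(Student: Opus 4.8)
The plan is to handle the two implications separately. The implication $(1)\Rightarrow(2)$ is exactly the statement, proved in \cite{ECRT}, that the pure existential completion preserves the elementary structure, so here I would simply invoke it. All the work is in $(2)\Rightarrow(1)$, and the whole point is to show that the equality predicate $\delta_A\in P(A\times A)$ of the elementary structure on $P=\purecompex{P'}$ is already a pure-existential-free object, hence by Proposition~\ref{exist-free-objects}(3) lies in the image of the embedding $\eta\colon P'\hookrightarrow P$. Once $\delta_A=\eta_{A\times A}(\delta'_A)$ for a (necessarily unique) $\delta'_A\in P'(A\times A)$, I would transport the elementary structure back along $\eta$: since $\eta$ is full, faithful, meet-preserving and order-reflecting, and since the two defining conditions of an elementary doctrine are expressed purely through finite meets and reindexings of $\delta_A$, both the left adjoint $\Einv_{\angbr{\id_A}{\id_A}}(\alpha)=P_{\pr_1}(\alpha)\wedge\delta'_A$ and the family of left adjoints along the maps $e$ land in $P'$ and the corresponding adjunctions restrict. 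This yields an elementary structure on $P'$.

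The core step is the construction of $\delta'_A$. Since $P$ has enough pure-existential-free objects by Proposition~\ref{exist-free-objects}(2), I would first write $\delta_A=\Einv_{\pr}(\gamma)$ where $\pr\colon (A\times A)\times Z\to A\times A$ is a product projection and $\gamma\in P((A\times A)\times Z)$ is pure-existential-free. Now I would exploit reflexivity of equality: from the elementary axiom $\Einv_{\angbr{\id_A}{\id_A}}(\alpha)=P_{\pr_1}(\alpha)\wedge\delta_A$ one gets $\delta_A=\Einv_{\Delta_A}(\top_A)$ and hence $P_{\Delta_A}(\delta_A)=\top_A$, where $\Delta_A=\angbr{\id_A}{\id_A}$. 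Applying (BCC) to the pullback of the projection $\pr$ along $\Delta_A$, whose vertex is $A\times Z$ with projection $\pr_0\colon A\times Z\to A$ and top map $\Delta_A\times\id_Z$, turns this into $\top_A=\Einv_{\pr_0}\bigl(P_{\Delta_A\times\id_Z}(\gamma)\bigr)$. Because $P$ is a pure existential completion it satisfies the Rule of Choice by Remark~\ref{RCfull}, i.e.\ each $\top_A$ is pure-existential-free; hence there is a section $\angbr{\id_A}{z_0}$ of $\pr_0$, for some $z_0\colon A\to Z$, with
\[
\top_A\le P_{\angbr{\id_A,\id_A}{z_0}}(\gamma).
\]

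I would then set $\delta'_A:=P_{s}(\gamma)$ with $s:=\angbr{\id_{A\times A}}{z_0\,\pr_1}\colon A\times A\to (A\times A)\times Z$, which is a section of $\pr$; being a reindexing of the pure-existential-free object $\gamma$, $\delta'_A$ is again pure-existential-free. Two inequalities then identify $\delta'_A$ with $\delta_A$. On one side, $\delta'_A=P_s(\gamma)\le\Einv_{\pr}(\gamma)=\delta_A$, using $\pr s=\id$ and the unit of $\Einv_{\pr}\dashv P_{\pr}$. On the other side, a direct computation gives $s\Delta_A=\angbr{\id_A,\id_A}{z_0}$, so $P_{\Delta_A}(\delta'_A)=P_{\angbr{\id_A,\id_A}{z_0}}(\gamma)=\top_A$; since $\delta_A=\Einv_{\Delta_A}(\top_A)$ is the least $\psi$ with $\top_A\le P_{\Delta_A}(\psi)$, by the adjunction $\Einv_{\Delta_A}\dashv P_{\Delta_A}$ of the elementary structure, this forces $\delta_A\le\delta'_A$. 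Hence $\delta_A=\delta'_A$ is pure-existential-free, completing the reduction.

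I expect the main obstacle to be precisely that the diagonal $\Delta_A$ is not a projection, so that $\delta_A$ is genuinely extra structure not produced by the free completion and (BCC) is unavailable along $\Delta_A$ itself. The device that circumvents this is to push the existential quantifier down to the witnessing object $Z$ via (BCC) along the \emph{projection} $\pr$, extract an honest witness $z_0$ on the diagonal using the Rule of Choice, and then reconstitute $\delta_A$ as the quantifier-free reindexing $P_s(\gamma)$, the least-element characterization of equality being what upgrades the resulting inequality to an equality. The remaining verification that $\delta'_A$ genuinely makes $P'$ elementary is then routine, once one notes that $P'$ is closed under finite meets and reindexing in $P$ and that $\eta$ reflects the order.
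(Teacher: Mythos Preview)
Your proof is correct and follows essentially the same approach as the paper: write $\delta_A=\Einv_{\pr}(\gamma)$ with $\gamma$ pure-existential-free, pull back along $\Delta_A$ and use (BCC) plus the Rule of Choice to extract a witness $z_0$, and then identify $\delta_A$ with a reindexing of $\gamma$. The only organizational differences are that the paper first isolates the intermediate fact that $\delta_A$ is pure-existential-\emph{splitting} (for arbitrary $\beta$) before specializing to the chosen $\gamma$, and that the paper obtains the inequality $P_s(\gamma)\le\delta_A$ by reindexing $\gamma\le P_{\pr}(\delta_A)$ along $s$ rather than via the unit, whereas you get $\delta_A\le P_s(\gamma)$ directly from the adjunction $\Einv_{\Delta_A}\dashv P_{\Delta_A}$; these are equivalent manoeuvres.
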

\begin{proof} For $\it (1)\Rightarrow (2)$ see  \cite[Prop. 6.1]{ECRT}.

For $\it (2)\Rightarrow (1)$ we proceed as follows.
First of all recall that left adjoints along the functors $P_{\Delta\times \id}$ are of the form
\[ \Einv_{\Delta\times id}(\alpha)= P_{\angbr{\pr_1}{\pr_2}}(\alpha)\wedge P_{\angbr{\pr_2}{\pr_2}}(\delta_A)\]
Since $P'$ is the subdoctrine of pure-existential-free objects of $P$,  in order to show that $P'$ is elementary it is enough to show that 
the equality predicates $\delta_A$ are pure-existential-free objects so that $ \Einv_{\Delta\times id}$  restricts to $P'$.

We start by showing that $\delta_A$  is a pure-existential-splitting object.
Suppose $\delta_A=\Einv_{\Delta_A}(\top_A) \leq\Einv_{ \pr_{A\times A}}(\beta)$.  Then by the equality adjunction
\begin{equation}\label{eq delta<P_f(a)}
\top_A \leq P_{\Delta_A} \Einv_{ \pr_{A\times A }}(\beta).
\end{equation}
Hence, by BCC, we have that
\begin{equation}\label{eq delta<P_f(a)}
\top_A \leq  \Einv_{ \pr_{A}}P_{\Delta_A\times \id_B}(\beta).
\end{equation}
Since $P$ satisfies the rule of choice by corollary~\ref{carpure}   then there exists an arrow $\freccia{A}{f}{B}$ such that $\top_A\leq P_{\angbr{\id_A}{f}}P_{\Delta_A\times \id_B}(\beta).$ 
 Now since $(\Delta_A\times \id_B)\angbr{\id_A}{f}=(\Delta_A\times f)\Delta_A$ we obtain $\top_A\leq P_{\Delta_A}P_{\Delta_A\times f}(\alpha)$ and by the equality adjunction we conclude
\begin{equation}\label{eq delta<P_f(a)}
\delta_A=\Einv_{\Delta_A}(\top)\leq P_{\Delta_A\times f}(\beta).
\end{equation}
This ends the proof  that $\delta_A$ is pure-existential-splitting object.

Now since pure-existential-free objects are enough for $P$ we have  that $\delta_A =\Einv_{\pr_{A\times A}}(\alpha)$ with $\alpha$ a pure-existential-free object but since $\delta_A$ is a pure existential splitting object  then there exists an arrow $\freccia{A\times A }{g}{B}$ such that $\delta_A\leq P_{\angbr{ id_{A\times A}}{ g}}(\alpha).$ 

Moreover from $\Einv_{\pr_{A\times A}}(\alpha) \leq \delta_A$ we obtain  $\alpha \leq P_{\pr_{A\times A} }(\delta_A)$ and hence we have $P_{ \angbr{id_{A\times A}}{g}}(\alpha) \leq P_{ \angbr{id_{A\times A}}{g}} (P_{\pr_{A\times A}} (\delta_A ))= \delta_A.$ We then conclude that $P_{ \angbr{id_{A\times A}}{g}}(\alpha)= \delta_A$, namely
that $\delta_A$ is a pure-existential-free object.
\end{proof}

\begin{remark}
Theorem~\ref{theorem elementary existential completion} does not hold for all generalized existential completions. In sections~\ref{section realizability hyperdoct} and \ref{supercom} we provide
examples of  full existential  completions which are elementary but their  generating primary doctrines are not.
 
Moreover, a $\Lambda$-weak subobjects doctrine $\doctrine{\mC}{\Psi_{\Lambda}}$ on a finite product category $\mC$  is not necessarily elementary if $\Lambda$ does not contain the diagonal arrow, whilst  its generating  trivial primary doctrine $\doctrine{\mC}{\trdc}$  is elementary, instead.
 \end{remark}
\begin{remark}
Recall from the Remark \ref{rem left adjoint elementary existential doc} that any  elementary existential doctrine $\doctrine{\mC}{P}$ has left adjoint of  every re-indexing functor $P_g$  given by the assignment $$\Einv_g(\alpha)=\Einv_{\pr_1}(P_{\pr_2}(\alpha)\wedge P_{\angbr{\pr_1}{g\pr_2}}(\delta_A)).$$
However, as observed in \cite{TECH},  these left adjoints do not necessarily satisfies BCC conditions unless $P$ has full comprehensions and   comprehensive diagonals.
In this case $P$  not only satisfies the {\em Rule of Choice} by \ref{carpure} but also the Extended Rule of Choice 
as stated in lemma 5.8 of \cite{TECH}.
\end{remark}

%
%
Generalized existential completions  with an elementary structure inherited by their generating subdoctrine are essentially pure existential completions:

\begin{proposition}\label{proposition P generalize ex.com of elementary implies pure existential}\label{genexpure}
Let $\doctrine{\mC}{P}$ be an elementary existential doctrine and  $\Lambda$ is a \lclass\  in $\mC$ containing the projections. Assume also that $P=\gencompex{P'}{\Lambda}$
 is the generalized existential completion of an elementary doctrine $P'$ and that the canonical injection of $P'$ in $P$ preserves the elementary structure. Then $P=\purecompex{P}$ is the pure existential completion of $P'$.
\end{proposition}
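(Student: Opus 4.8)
The plan is to apply the characterization of pure existential completions, Corollary~\ref{carpure}, to $P$ regarded with respect to the class $\Pi$ of product projections of $\mC$, which is contained in $\Lambda$ by hypothesis. Since $P$ is elementary existential it is in particular a pure existential doctrine, and because $\Pi\subseteq\Lambda$ every pullback along a projection is a pullback along a morphism of $\Lambda$, so the Beck--Chevalley and Frobenius conditions for $\Pi$ are inherited from those for $\Lambda$. By Proposition~\ref{exist-free-objects} applied to $P=\gencompex{P'}{\Lambda}$, the canonical injection $\eta_{P'}$ identifies $P'$ with the fibred conjunctive subdoctrine of $\Lambda$-existential-free objects of $P$. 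Finally, since $\Pi\subseteq\Lambda$, the pure-existential splitting condition (quantifying only over projections) is logically weaker than the $\Lambda$-existential splitting condition (quantifying over all of $\Lambda$); hence every $\Lambda$-existential-free object is a fortiori pure-existential-free, and in particular every element of $P'$ is pure-existential-free.

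The technical heart, and the step I expect to be the main obstacle, is to show that every object of $P$ is pure-existential-covered by an element of $P'$. By Remark~\ref{remark prenex normal form general} every $\gamma\in P(A)$ can be written as $\Einv_f(\beta)$ with $\freccia{B}{f}{A}$ in $\Lambda$ and $\beta$ in the image of $P'$. Since $f\in\Lambda$, this left adjoint coincides (by uniqueness of adjoints) with the elementary-existential one, so by the formula of Remark~\ref{rem left adjoint elementary existential doc} it rewrites as $\Einv_{\pr_2}\bigl(P_{f\times\id}(\delta_A)\wedge P_{\pr_1}(\beta)\bigr)$, where $\pr_1,\pr_2$ are the projections of $B\times A$ and $\pr_2$ is therefore a genuine projection onto $A$. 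Here is exactly where the hypothesis that $\eta_{P'}$ preserves the elementary structure is indispensable: it forces $\delta_A\in P'$, so both conjuncts lie in $P'$, and since $P'$ is a fibred conjunctive subdoctrine the predicate $\gamma':=P_{f\times\id}(\delta_A)\wedge P_{\pr_1}(\beta)$ belongs to $P'$ and is pure-existential-free. Thus every object of $P$ equals $\Einv_{\pr_2}(\gamma')$ for a projection $\pr_2$ and a pure-existential-free $\gamma'\in P'$; verifying that $\gamma'$ really stays inside $P'$ (which rests entirely on $\delta_A\in P'$) is the delicate point, and without the elementary-preservation hypothesis it can fail, as the remark following Theorem~\ref{theorem elementary existential completion} indicates.

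With these ingredients the conclusion follows quickly. Applying Proposition~\ref{proposition doctrine P' cover P} to the class $\Pi$, taking $P'$ as the covering subdoctrine (its elements are pure-existential-free and it covers every object of $P$ by the previous paragraph), yields that the pure-existential-free objects of $P$ are \emph{exactly} the elements of $P'$. The three conditions of Corollary~\ref{carpure} are then immediate: the rule of choice holds because each $\top_A$ lies in $P'$ and is hence pure-existential-free; $P$ has enough-pure-existential-free objects by the covering computation; and the binary meet of two pure-existential-free objects again lies in $P'$, hence is pure-existential-free, since $P'$ is closed under finite conjunctions. Therefore $P$ is the pure existential completion of its subdoctrine of pure-existential-free objects, which we have identified with $P'$, giving $P=\purecompex{P'}$ as claimed.
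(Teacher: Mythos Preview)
Your proposal is correct and follows essentially the same approach as the paper: both rewrite $\Einv_f(\beta)$, for $f\in\Lambda$ and $\beta\in P'$, as $\Einv_{\pr}$ of a conjunction involving a reindexing of $\delta_A$ (which lies in $P'$ by the elementary-preservation hypothesis), and then use that $\Lambda$-existential-free implies pure-existential-free since $\Pi\subseteq\Lambda$. The only stylistic difference is that you invoke Corollary~\ref{carpure} and Proposition~\ref{proposition doctrine P' cover P} explicitly to identify the pure-existential-free objects with $P'$, whereas the paper argues directly that every fibre element of $P$ already lies in the image of $\purecompex{P'}$; the underlying computation is identical.
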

\begin{proof}
Clearly the pure existential completion, which call $Q$,  of $P'$ is a subdoctrine of $P$ since  $\Lambda$  contains projections.
The statement of the proposition amounts to show that $Q$ is actually isomorphic to $P$.

To meet our purpose it is enough to show  that every element $\alpha\in P(A)$ is of the form $\alpha=\exists_{\pr_A}(\gamma)$ with $\gamma$ pure-existential-free element and hence it is a fibre object of $Q$. Now, since $P$ is the generalized existential completion of $P'$,  there exists an arrow $\freccia{B}{g}{A}$ and a $\Lambda$-existential-free element $\beta$ such that $\alpha=\exists_g(\beta)$. Moreover, since $P$ is elementary and existential we have that
$$\alpha=\exists_g(\beta)=\Einv_{\pr_A}(P_{\pr_B}(\beta)\wedge P_{\angbr{\pr_A}{g\pr_B}}(\delta_A))$$
which actually gives the claimed representation because we can  show  that 
 $P_{\pr_B}(\beta)\wedge P_{\angbr{\pr_}{g\pr_2}}(\delta_B))$ is a pure-existential-free element   of $P$.
Indeed,  $P_{\angbr{\pr_1}{g\pr_2}}(\delta_B)$ is $\Lambda$-existential-free because $P'$ is elementary and the canonical injection of $P'$ in $P$ is elementary,  i.e. $\delta_B$ is in $P'$ and hence $\Lambda$-existential-free.
Moreover since $\beta$ is also  $\Lambda$-existential-free, by closure of   $\Lambda$-existential-free objects under reindexing and conjunctions
we conclude that $P_{\pr_B}(\beta)\wedge P_{\angbr{\pr_}{g\pr_2}}(\delta_B))$
is also $\Lambda$-existential-free  and finally also  pure-existential-free
 since $\Lambda$ contains the projections.
  This concludes the proof  that $P$  is isomorphic to $Q$, i.e. $P$ is the pure existential completion of $P'$.
\end{proof}
\begin{remark}
Observe that by  Theorem \ref{theorem elementary existential completion} and  (2) of   Theorem \ref{theorem caract. gen. existential completion} under the hypothesis of
Theorem \ref{theorem elementary existential completion} 
we obtain an arrow $\freccia{P}{(\ovln{L},\ovln{l})}{\Psi_{U^{-1}(\Lambda)}}$ which is both a morphism of elementary and existential doctrines. This is because the generalized weak subobjects relative to the class of projections is elementary and existential, since $P'$ is elementary. 
\end{remark}
We can prove a stronger result, which will be fundamental in the sequel of our work and shows  the analogous of Proposition \ref{proposizione car. P_cI iso weak sub}, but considering the weak subobjects of the category  of predicates $\Pred{P'}$ of $P'$ instead of the Grothendieck category $\mG_{P'}$.

%
%
To this purpose we first need to show a lemma.  From here on  we denote the injection of $\Pred{P'}$ into $\Pred{P}$ by $\freccia{\Pred{P'}}{\hat{I}}{\Pred{P}}$.

\begin{lemma}\label{lemma P_cxI= weak su pred}\label{cheldo}
Let $P=\purecompex{P'}$ be the pure existential completion of an elementary doctrine $P'$.
Then, the doctrine $\doctrine{\Pred{P'}}{P_{cx}\hat{I}^{\op}}$ is elementary and existential with comprehensive diagonals and full weak comprehensions. Furthermore, it satisfies the rule of choice (RC). 
\end{lemma}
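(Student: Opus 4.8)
The plan is to exploit three ingredients: that $P=\purecompex{P'}$ is already elementary and existential and satisfies the rule of choice, that the comprehension completion followed by the extensional reflection builds an m-variational doctrine on top of it, and that the restriction along $\hat I$ to the smaller base $\Pred{P'}$ keeps everything well defined because the fibres are controlled by pure-existential-free objects. First I would recall that, since $P'$ is elementary, Theorem \ref{theorem elementary existential completion} makes $P=\purecompex{P'}$ elementary and existential, and by Remark \ref{RCfull} it satisfies (RC). By the standard behaviour of the two reflections (see \cite{TECH,EQC}), the comprehension completion $P_c$ adjoins full comprehensions while preserving the elementary and existential structure, and the extensional reflection then adjoins comprehensive diagonals, so that $P_{cx}=(P_c)_x$ on $\Pred{P}=\mX_{P_c}$ is an elementary existential doctrine with full comprehensions and comprehensive diagonals. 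The content of the lemma is that these properties survive the passage to $\Pred{P'}$, with the caveat that strong comprehensions degenerate to \emph{weak} ones, since the comprehension object of an arbitrary $\gamma$ in a fibre need not lie in $\Pred{P'}$.

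Next I would describe the restricted fibres and check the elementary and existential structure. Objects of $\Pred{P'}$ are (classes of) pairs $(A,\alpha)$ with $\alpha\in P'(A)$, which by Proposition \ref{exist-free-objects} are precisely the pure-existential-free elements of $P$, and the fibre $P_{cx}\hat I^{\op}(A,\alpha)$ is $\{\gamma\in P(A):\gamma\leq\alpha\}$. The product $(A,\alpha)\times(B,\beta)$ is carried by $P_{\pr_1}(\alpha)\wedge P_{\pr_2}(\beta)$, which is again pure-existential-free because such objects are closed under reindexing and finite meets; the same closure places the equality predicate (obtained by meeting $\delta_A$ into the appropriate fibre) in $\Pred{P'}$, so that comprehensive diagonals are inherited directly from the extensional reflection. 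For the existential structure along a projection, if $\gamma\leq P_{\pr_1}(\alpha)\wedge P_{\pr_2}(\beta)$ then Frobenius reciprocity gives $\Einv_{\pr_1}(\gamma)\leq \Einv_{\pr_1}(P_{\pr_1}(\alpha)\wedge P_{\pr_2}(\beta))=\alpha\wedge\Einv_{\pr_1}P_{\pr_2}(\beta)\leq\alpha$, so the $P$-quantification lands back in the fibre over $(A,\alpha)$; Beck--Chevalley and Frobenius for $P_{cx}\hat I^{\op}$ are then inherited verbatim from $P$.

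The main step, and the real obstacle, is full weak comprehensions. Given $\gamma$ in the fibre over $(A,\alpha)$, I would use that $P$ has enough pure-existential-free objects (Proposition \ref{exist-free-objects}) to write $\gamma=\Einv_{\pr_1}(\gamma')$ with $\gamma'\in P'(A\times B)$, and claim that the projection $\freccia{(A\times B,\gamma')}{\pr_1}{(A,\alpha)}$ is the weak comprehension of $\gamma$ (this morphism exists in $\Pred{P'}$ because $\gamma'\leq P_{\pr_1}\Einv_{\pr_1}(\gamma')=P_{\pr_1}(\gamma)\leq P_{\pr_1}(\alpha)$, which is also the unit giving its defining equation). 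For the universal property, take $\freccia{(Z,\zeta)}{f}{(A,\alpha)}$ with $\zeta\leq P_f(\gamma)$; by Beck--Chevalley $P_f(\gamma)=\Einv_{\pr_Z}P_{f\times\id_B}(\gamma')$, and since $\zeta$ is pure-existential-free its splitting property produces $\freccia{Z}{k}{B}$ with $\zeta\leq P_{\angbr{f}{k}}(\gamma')$, i.e.\ a morphism $\angbr{f}{k}$ of $\Pred{P'}$ factoring $f$ through the comprehension — non-unique, exactly as weak comprehensions demand. Fullness then follows from the order-reflection properties of the embedding of $P'$ into the weak subobjects. Finally, the very same splitting argument applied to the top element in place of $\gamma$ is precisely (RC) for $P_{cx}\hat I^{\op}$, so the last clause is immediate. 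The delicate point throughout is that every factorization one needs is manufactured from the pure-existential-free (splitting) hypothesis rather than from strict limits; once the Frobenius identity above and the Beck--Chevalley rewriting of $P_f(\gamma)$ are in place, the remaining verifications are routine bookkeeping over the extensional quotient.
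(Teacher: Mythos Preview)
The proposal is correct and follows essentially the same route as the paper: both arguments construct the weak comprehension of $\gamma\leq\alpha$ by writing $\gamma=\Einv_{\pr}(\gamma')$ with $\gamma'\in P'$ (using enough-pure-existential-free objects), take the projection from $(A\times B,\gamma')$ as the comprehension map, and verify the weak universal property via Beck--Chevalley together with the splitting property of the pure-existential-free element sitting in the domain; the rule of choice is obtained by the same splitting argument applied to top elements. Your treatment is slightly more detailed on why the elementary and existential structure restricts along $\hat I$ (the paper simply invokes preservation under the two completions and that $\hat I$ preserves projections), but the substantive content is identical.
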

\begin{proof}
First, $P_{cx}\hat{I}^{\op}$ is elementary and pure existential because both the comprehension completion and the diagonal comprehensive completion preserve the elementary and existential structures, and $\hat{I}^{\op}$ preserves projections.
Now we show that $P_{cx}\hat{I}^{\op}$ has weak comprehensions. Let us consider an element $\beta\in P_{cx}(A,\alpha)$, where $\alpha\in P'(A)$. Now, since $P$ has enough-pure-existential-free elements, we have that $\beta=\exists_{\pr_A}(\gamma)$ where $\gamma\in P'(B\times A)$ and $\freccia{B\times A}{\pr_A}{A}$. Then we define $\comp{\beta}:=\frecciasopra{(B\times A,\gamma)}{\pr_A}{(A,\alpha)}$, that is a morphism of $\Pred{P'}$. Notice that
$\top_{(B,\gamma)} \leq  (P_{cx})_{\comp{\beta}}(\beta)$ since
 $\top_{(B,\gamma)}=\gamma\leq (P_{cx})_{\pr_A}(\exists_{\pr_A}(\gamma))=(P_{cx})_{\pr_A}(\beta)$. Moreover, if we consider an arrow $\freccia{(C,\sigma)}{g}{(A,\alpha)}$ of $\Pred{P'}$ such that $\top_{(C,\sigma)}=\sigma \leq(P_{cx})_g(\beta)$, i.e. $\sigma\leq P_g(\beta)$, in particular we have that $\sigma\leq P_g\exists_{\pr_A}(\gamma)$ then, by BCC we have $\sigma\leq \exists_{\pr_C}P_{\id_B\times g}(\gamma)$ where $\freccia{B\times C}{\pr_C}{C}$. Now, since $\sigma$ is an element of $P'(C)$, it is a pure-existential-free element of $P$, hence there exists an arrow $\freccia{C}{f}{B}$ such that $\sigma\leq P_{\angbr{f}{g}}(\gamma)$ and
 $\freccia{(C,\sigma)}{\angbr{f}{g}}{(B\times A,  \gamma)}$ is such that $g= \comp{\beta} \angbr{f}{g}$.

Thus, we have proved that $\comp{\beta}:=\frecciasopra{(B\times A,\gamma)}{\pr_A}{(A,\alpha)}$ is a weak comprehension of $\beta$. It is direct to check weak comprehensions are full and that $P_{cx}\hat{I}^{\op}$ has comprehensive diagonals. Finally, $P_{cx}\hat{I}^{\op}$ satisfies (RC) because the base category of this doctrine is $\Pred{P'}$, and hence every object of this category is of the form $(A,\alpha)$ with $\alpha$  an element of $P'(A)$  which is in particular, a pure-existential-free elements of $P$. Hence, if $\top_{(A,\alpha)}\leq \exists_{\pr_A}(\beta)$ then, since $\top_{(A,\alpha)}=\alpha$, we can use the universal property of pure-existential-free elements to construct the witness, and concluding that $P_{cx}\hat{I}^{\op}$ satisfies the rule of choice.
\end{proof}

\begin{remark}
Observe that proposition~\ref{cheldo} holds also for elementary generalized existential completions $P$ of  a primary elementary subdoctrine $P'$ with respect to a class $\Lambda$ of morphisms
containing the projections and 
whose  embedding of  $P'$ in $P$  preserves the elementary structure. However, by proposition~\ref{genexpure} such existential completions are  actually pure existential completions.
So the above statement is the most general one.
\end{remark}

Therefore,  we conclude:

\begin{theorem}\label{theorem char pure existential compl of elementary doct}
Let $\doctrine{\mC}{P'}$ be an elementary fibred subdoctrine of an existential elementary doctrine $\doctrine{\mC}{P}$. The following conditions are equivalent:
\begin{enumerate}
\item
$P=\purecompex{P'}$ is the pure existential completion of $P'$.

\item There exists an isomorphism $\freccia{P_{cx}\hat{I}^{\op}}{(\id_{\mG_{P'}},\hat{l})}{\Psi_{\Pred{P'}}}$ of  existential elementary doctrines such that the diagram
\[\xymatrix@+2pc{P' \ar[rd]_{(L,l)}\ar[r]^{(\hat{I},\hat{\iota})}& P_{cx}\hat{I}^{\op} \ar@{-->}[d]^{(\id_{\mG_{P'}},\hat{l})} \\ 
&\Psi_{\Pred{P'}} 
}\]
commutes.
\end{enumerate}
\end{theorem}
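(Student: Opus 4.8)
The plan is to transport the strategy of Proposition~\ref{proposizione car. P_cI iso weak sub}, and of its Corollary~\ref{corollary P_cI iso weaksub}, from the Grothendieck category $\mG_{P'}$ to the category of predicates $\Pred{P'}$, now in the elementary setting, using Lemma~\ref{cheldo} as the structural engine and Corollary~\ref{carpure} as the characterization of pure existential completions. Throughout I abbreviate $R:=P_{cx}\hat{I}^{\op}$, the doctrine on $\Pred{P'}$.

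For $(1)\Rightarrow(2)$ I assume $P=\purecompex{P'}$. By Lemma~\ref{cheldo}, $R$ is elementary and existential, has comprehensive diagonals and full weak comprehensions, and satisfies $(RC)$. The fact I would exploit, already produced in the proof of that lemma, is that each $\beta\in R(A,\alpha)$ is recovered from its weak comprehension as $\beta=\Einv_{\comp{\beta}}(\top)$, where $\comp{\beta}$ is a projection of $\Pred{P'}$. I would then set $\hat{l}_{(A,\alpha)}(\beta):=[\comp{\beta}]$: fullness of the weak comprehensions makes this assignment an order embedding of $R(A,\alpha)$ into $\Psi_{\Pred{P'}}(A,\alpha)$, and the displayed covering makes it surjective once every weak subobject is rewritten, by means of the graph construction available since $R$ is elementary, as a comprehension along a projection. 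Hence each $\hat{l}_{(A,\alpha)}$ is an isomorphism of inf-semilattices; naturality and preservation of the elementary and existential structure follow because weak comprehensions are stable under reindexing and $\Einv$ in $\Psi_{\Pred{P'}}$ is post-composition, so $(\id_{\Pred{P'}},\hat{l})$ is an isomorphism of existential elementary doctrines. Commutativity of the triangle with $(\hat{I},\hat{\iota})$ and $(L,l)$ is immediate, as both injections send $\alpha\in P'(A)$ to the comprehension map of $\alpha$.

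For $(2)\Rightarrow(1)$ I would mirror the implication $(2)\Rightarrow(3)$ of Theorem~\ref{theorem caract. gen. existential completion}. Composing the inclusion $(\id,\iota)\colon P'\hookrightarrow P$ with the comprehension morphism into $\Psi_{\Pred{P'}}$ supplied by~(2), the image of each $\alpha\in P'(A)$ is a comprehension map, hence existential-free by Lemma~\ref{lemma comprehensions map are existential free}; as that morphism is full and faithful on the fibres, Lemma~\ref{lemma full and faithful morphism} reflects this back, so the elements of $P'$ are pure-existential-free in $P$. The surjectivity contained in~(2), read through the presentation of Lemma~\ref{lemma every element is an exists of a comprensive maps}, shows that $P$ has enough pure-existential-free objects; closure of these under binary meets is immediate from $P'$ being a fibred conjunctive subdoctrine, and $(RC)$ for $P$ is inherited from $(RC)$ for $R$. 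The three hypotheses of Corollary~\ref{carpure} then yield $P=\purecompex{P'}$.

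I expect the real difficulty to lie in the bookkeeping of the two completions interposed between $P$ and $R$: one must check that evaluating $R=P_{cx}\hat{I}^{\op}$ at the ``top'' objects $(A,\top_A)$ returns the fibres $P(A)$ together with their pure existential structure, and that projections of $\Pred{P'}$ match projections of $\mC$. The decisive point --- and the reason $\Pred{P'}$, rather than $\mG_{P'}$, is the correct base --- is that $R$ carries comprehensive diagonals even when $P$ does not, so that over the lex category $\Pred{P'}$ the distinction between the pure (projection) and the full (all-morphism) existential structure collapses through the graph construction; this is precisely what lets the weak subobjects doctrine $\Psi_{\Pred{P'}}$ be presented, via $\hat{l}$, as the elementary pure existential completion encoded by $R$.
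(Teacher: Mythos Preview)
Your overall strategy mirrors the paper's: for $(1)\Rightarrow(2)$ you unpack Lemma~\ref{cheldo} and produce the isomorphism explicitly (the paper simply cites \cite[Thm.~5.9]{TECH}, which is the statement that an existential doctrine with full weak comprehensions, comprehensive diagonals and (RC) is isomorphic to the weak subobjects doctrine on its base); for $(2)\Rightarrow(1)$ you run the $(2)\Rightarrow(3)$ step of Theorem~\ref{theorem caract. gen. existential completion} through Corollary~\ref{carpure}. This is exactly the paper's plan.

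There is, however, a genuine gap in your $(2)\Rightarrow(1)$. You invoke Lemma~\ref{lemma full and faithful morphism} to reflect existential-freeness from $\Psi_{\Pred{P'}}$ back to $P$, but that lemma requires the base functor $F$ to be full and faithful, not just the fibre maps. Here $F\colon \mC\to\Pred{P'}$, $A\mapsto(A,\top_A)$, $f\mapsto[f]$, is the quotient by the $P'$-equality relation, and there is no reason for it to be faithful unless $P'$ already has comprehensive diagonals. The same problem affects your claim that ``(RC) for $P$ is inherited from (RC) for $R$'': a section $[\pr][g]=[\id]$ in $\Pred{P'}$ does not literally give $\pr g=\id$ in $\mC$.

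The paper closes this gap with the following observation, which is special to projections (and fails for general morphisms, so the argument genuinely differs from the $\mG_{P'}$ case): if $[\pr_A][g]=[\id_A]$ in $\Pred{P'}$ with $g=\langle g_1,g_2\rangle\colon A\to A\times B$, then $g_1\sim\id_A$; setting $g':=\langle\id_A,g_2\rangle$ one has $\pr_A g'=\id_A$ strictly in $\mC$ and $[g']=[g]$. This is what allows one to lift splittings of projections from $\Pred{P'}$ to $\mC$ and hence to conclude that elements of $P'$ are pure-existential-splitting in $P$. Your last paragraph gestures at ``projections of $\Pred{P'}$ matching projections of $\mC$'', but the actual obstruction is about their \emph{sections}, and it is this point---together with the formula $\exists_g(\alpha)=\exists_{\pr_1}(P_{\pr_2}(\alpha)\wedge P_{\langle\pr_1,g\pr_2\rangle}(\delta))$, which lets every left adjoint be rewritten along a projection using only data from the elementary subdoctrine $P'$---that makes the reflection step go through.
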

\begin{proof}
$\it (1)\Rightarrow (2)$ It follows from the fact that, if $P=\purecompex{P'}$ 
by Lemma \ref{lemma P_cxI= weak su pred} we have that the doctrine $\doctrine{\Pred{P'}}{P_{cx}\hat{I}^{\op}}$ is an existential variational doctrine satisfying the rule of choice, and then, by \cite[Thm. 5.9]{TECH}, $\doctrine{\Pred{P'}}{P_{cx}\hat{I}^{\op}}$ has to be  isomorphic to the weak subobjects doctrine on $\Pred{P'}$. \\

\noindent
$\it (2)\Rightarrow (1)$
It follows by employing the same arguments  in  Theorem \ref{theorem caract. gen. existential completion} and Theorem \ref{theorem char pure existential compl of elementary doct}.
Observe that while in general $[f][g]=[\id]$ in $\Pred{P'}$ does not imply that $fg=\id$ in $\mC$ (recall the notation $[f]$ is introduced in Definition \ref{def extentiona reflection}),  for the specific case of projections if $[\pr][ g]=\id $ in $\Pred{P'}$ then we have that there exists an arrow $g'$ in $\mC$ such that $\pr g'=\id $. Using this, the hypothesis that $P'$ is an elementary subdoctrine of $P$ and the fact that  left adjoint can be written as $\Einv_g(\alpha)=\Einv_{\pr_1}(P_{\pr_2}(\alpha)\wedge P_{\angbr{\pr_1}{g\pr_2}}(\delta_A))$, we can employ the same arguments used for the general case and conclude.
\end{proof}
Combining the previous result with Proposition \ref{proposition P generalize ex.com of elementary implies pure existential} we conclude:
\begin{corollary}
Let $\doctrine{\mC}{P}$ be an elementary $\Lambda$-existential doctrine, where $\Lambda$ is \lclass~containing the projections. Assume also that  $P=\gencompex{P'}{\Lambda}$ is the generalized existential completion of an elementary doctrine $P'$ with respect to $\Lambda$ and that  the canonical injection of $P'$ in $P$  preserves the elementary structure.
Then,  there exists an isomorphism $\freccia{P_{cx}\hat{I}^{\op}}{(\id_{\mG_{P'}},\hat{l})}{\Psi_{\Pred{P'}}}$ of pure existential, elementary doctrines such that the diagram
\[\xymatrix@+2pc{P' \ar[rd]_{(L,l)}\ar[r]^{(\hat{I},\hat{\iota})}& P_{cx}\hat{I}^{\op} \ar@{-->}[d]^{(\id_{\mG_{P'}},\hat{l})} \\ 
&\Psi_{\Pred{P'}} 
}\]
commutes.
\end{corollary}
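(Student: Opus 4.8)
The plan is to deduce the statement by chaining the two immediately preceding results, as the sentence introducing the corollary already indicates. First I would observe that the hypotheses are exactly those of Proposition~\ref{genexpure}: an elementary existential doctrine $\doctrine{\mC}{P}$, a left class $\Lambda$ containing the projections, a presentation $P=\gencompex{P'}{\Lambda}$ as the generalized existential completion of an elementary doctrine $P'$, and the assumption that the canonical injection of $P'$ into $P$ preserves the elementary structure. Invoking Proposition~\ref{genexpure} directly then yields that $P$ is in fact the \emph{pure} existential completion of $P'$, i.e.\ $P=\purecompex{P'}$. This is the step in which the hypothesis that $\Lambda$ contains the projections, together with elementarity of the embedding, is genuinely used: it forces the completion along all of $\Lambda$ to collapse onto the completion along the projections alone.

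With the reduction $P=\purecompex{P'}$ secured, the remaining task is simply to apply the implication $(1)\Rightarrow(2)$ of Theorem~\ref{theorem char pure existential compl of elementary doct}. For this I must check that $P'$ is an elementary fibred subdoctrine of the existential elementary doctrine $P$; this holds because the canonical injection $P'\hookrightarrow\gencompex{P'}{\Lambda}=P$ identifies $P'$ with a fibred subdoctrine and, by hypothesis, preserves the elementary structure. Both hypotheses of the theorem are thus met, and its conclusion transfers verbatim: one obtains the isomorphism $\freccia{P_{cx}\hat{I}^{\op}}{(\id_{\mG_{P'}},\hat{l})}{\Psi_{\Pred{P'}}}$ together with the commuting triangle relating $(L,l)$, $(\hat{I},\hat{\iota})$ and the comparison morphism.

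The only point deserving a word of care is the label on the isomorphism. Theorem~\ref{theorem char pure existential compl of elementary doct} phrases it as an isomorphism of existential elementary doctrines, whereas the corollary asserts it is one of \emph{pure} existential elementary doctrines; this is a relabelling rather than a fresh claim, since once $P=\purecompex{P'}$ is known the relevant left class on both sides is the class of projections, and the comparison map preserves left adjoints precisely along projections, as recorded in Lemma~\ref{cheldo}. I do not anticipate any substantive obstacle: the corollary is a bookkeeping combination, with all of the mathematical content already contained in Proposition~\ref{genexpure} and Theorem~\ref{theorem char pure existential compl of elementary doct}.
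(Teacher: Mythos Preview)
Your proposal is correct and follows exactly the approach the paper takes: the corollary is introduced as the combination of Proposition~\ref{proposition P generalize ex.com of elementary implies pure existential} (your \ref{genexpure}) with Theorem~\ref{theorem char pure existential compl of elementary doct}, and you have correctly identified and verified the hypotheses needed for each step.
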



%


\begin{example}[Regular fragment of Intuitionistic Logic]\label{example regular fragment is existential comp 2}
Observe that the pure-existential-free objects of the existential doctrine
\[\doctrine{\mC_{\lang_{=,\exists}}}{\syntdoc_{=,\exists}}\]
are exactly the formulae which are free from the existential quantifier, and this doctrine has enough-pure-existential-free objects since every formula in the Regular fragment of intuitionistic logic is provably equivalent to a formula in prenex normal form.

Then, we have that the Regular  fragment of Intuitionistic logic $\lang_{=,\exists}$ satisfies the Rule of Choice by Theorem \ref{theorem caract. existential completion}, and more generally, for every formula $[a:A]\; |\; \alpha (a)$ which is quantifier-free, we have that
\[ [a:A]\; |\; \alpha (a)\vdash \exists b:B \; \beta (a,b)\]
then there exists a term $[a:A]\; |\; t(a):B$ such that
\[ [a:A]\; |\; \alpha (a)\vdash \beta (a,t(a)).\]
This property is called \emph{Existence Property} in \cite{vandalen_logic}.
\end{example}

\subsection{A characterization of idempotent $\compex{\mT}$-algebras }

Now we turn our attention to the  \bemph{idempotent} $\compex{\mT}$-\bemph{algebras}, and we show that they coincide with those doctrines which are equipped  with Hilbert's epsilon operator.


The logical relevance of elementary existential doctrines with Hilbert's epsilon operator is provided in \cite{TECH}, where the authors show that these doctrines are exactly those
elementary existential doctrines $P$  such that  their exact completions $ \tripostotopos{P}$ is equivalent to the exact completion $(\Pred{P})_{\mathrm{ex/lex}}$ of the category of predicates
$\Pred{P}$  via an equivalence induced by the canonical embedding
of  $\Pred{P}$  in   $ \tripostotopos{P}$.

Actually  we will derive such a result as a consequence of a more general characterization.
\begin{theorem}\label{theorem P iso P^ex if and only if}
Let $\doctrine{\mC}{P}$ be an existential doctrine. Then $P$ is isomorphic to $\purecompex{P}$ if and only if $P$ is equipped with an $\epsilon$-operators. 
\end{theorem}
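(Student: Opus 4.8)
The plan is to read the $\epsilon$-operator condition directly off the canonical comparison $1$-cells between $P$ and $\purecompex{P}$. Since $P$ is existential, it is $\Lambda$-existential for $\Lambda$ the class of projections, so we have the unit $\eta_P$ with $(\eta_P)_A(\alpha)=(\id_A,\alpha)$ and the retraction $\varepsilon_P$ with $(\varepsilon_P)_A(\pr\colon B\to A,\ \beta)=\Einv_{\pr}(\beta)$, satisfying $\varepsilon_P\eta_P=\id_P$ and $\id_{\purecompex{P}}\le\eta_P\varepsilon_P$. First I would note that $\eta_P$ is full and faithful on every fibre: if $(\id_A,\alpha)\le(\id_A,\alpha')$ then the mediating arrow must be $\id_A$, so $\alpha\le\alpha'$. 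Consequently $\eta_P$ is an isomorphism precisely when $\eta_P\varepsilon_P=\id$, i.e. when each $(\eta_P)_A(\varepsilon_P)_A$ is the identity; this is the reformulation of ``$P\cong\purecompex{P}$'' I will work with.

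Thus everything reduces to analysing the equation $(\id_A,\Einv_{\pr}(\beta))=(\pr\colon B\to A,\ \beta)$ in $\purecompex{P}(A)$. The inequality $(\pr,\beta)\le(\id_A,\Einv_{\pr}\beta)$ always holds (it is exactly $\id\le\eta_P\varepsilon_P$, coming from the unit $\beta\le P_{\pr}\Einv_{\pr}\beta$). Unwinding the order in the completion, the remaining inequality $(\id_A,\Einv_{\pr}\beta)\le(\pr,\beta)$ asserts the existence of a section $w$ of $\pr$ with $\Einv_{\pr}(\beta)\le P_{w}(\beta)$. Grouping the non-$A$ factors of the projection into a single object, I may take $\pr=\pr_1\colon A\times B\to A$ with $\beta\in P(A\times B)$, and then a section of $\pr_1$ is precisely $\angbr{\id_A}{s}$ for some $s\colon A\to B$. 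So the inequality says: there is $s\colon A\to B$ with $\Einv_{\pr_1}(\beta)\le P_{\angbr{\id_A}{s}}(\beta)$.

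The converse inequality $P_{\angbr{\id_A}{s}}(\beta)\le\Einv_{\pr_1}(\beta)$ is automatic: applying $P_{\angbr{\id_A}{s}}$ to $\beta\le P_{\pr_1}\Einv_{\pr_1}(\beta)$ and using $\pr_1\angbr{\id_A}{s}=\id_A$ gives it. Hence the order-equation is equivalent to the genuine equality $\Einv_{\pr_1}(\beta)=P_{\angbr{\id_A}{s}}(\beta)$, which is exactly the defining equation of an $\epsilon$-operator $\epsilon_\beta:=s$ for the object $B$. Running this equivalence over all $A$, $B$ and $\beta$ shows that $\eta_P\varepsilon_P=\id$ holds if and only if every object of $\mC$ is equipped with $\epsilon$-operators, i.e. $P$ is equipped with $\epsilon$-operators; here the passage to binary projections is harmless because a product of objects is again an object of $\mC$, which carries $\epsilon$-operators by hypothesis in the $(\Leftarrow)$ direction. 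This proves both implications simultaneously.

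The only point requiring care is the reading of ``isomorphic to $\purecompex{P}$'' as invertibility of the canonical unit $\eta_P$; this is legitimate because $\compex{\mT}$ is lax-idempotent and $\eta_P$ is always full and faithful, so a fixed point of the monad is exactly an object with invertible unit. Equivalently, one may phrase $\eta_P$'s essential surjectivity through Proposition~\ref{exist-free-objects}, identifying the image of $\eta_P$ with the pure-existential-free objects; the remaining work — the bookkeeping of product factors and the direction of the section — is routine once the order on the completion is made explicit.
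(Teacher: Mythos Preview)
Your proof is correct and follows essentially the same route as the paper: both reduce the isomorphism $P\cong\purecompex{P}$ to the inequality $\eta_P\varepsilon_P\le\id_{\purecompex{P}}$ (using $\varepsilon_P\eta_P=\id_P$ and $\id\le\eta_P\varepsilon_P$), then unwind the order in the completion to obtain, for each element $(\pr_1\colon A\times B\to A,\ \alpha)$, the existence of a section $\angbr{\id_A}{f}$ with $\Einv_{\pr_1}(\alpha)\le P_{\angbr{\id_A}{f}}(\alpha)$. Your observation that the reverse inequality is automatic, and your final paragraph justifying why ``isomorphic'' may be read as invertibility of the unit via lax-idempotence, are details the paper leaves implicit but which you make explicit; otherwise the arguments coincide.
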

\begin{proof}
We need to show that $P$ is isomorphic to $\purecompex{P}$ if and only if for every $A$ and $B$ of $\mC$ and every $\alpha\in P(A\times B)$ there exists arrow $\freccia{A}{f}{B}$ such that
\begin{equation}\label{eq E is less or equal to P_<id,f>}
\Einv_{\pr_1}(\alpha)\leq P_{\angbr{\id_A}{f}}(\alpha)
\end{equation}
where $\freccia{A\times B}{\pr_1}{A}$ is the first projection.
Recall that by \cite{ECRT} we have the following adjunction 
\[\xymatrix@+1pc{
\mC^{\op}\ar[dd]^{\id} \ar[rrd]^{P}_{}="a"  &\\
& \;\dashv& \infsl\\
\mC^{\op} \ar[urr]_{\compex{P}}^{}="b"\\
\ar@/^1pc/^{\eta_P}"a";"b"
\ar@/^1pc/^{\epsilon_P} "b";"a"
}\]
with  
\[
\id_{P}=\epsilon_P\eta_P \;\mbox{  and  } \; \id_{\compex{P}}\leq \eta_P\epsilon_P.
\]
In particular the doctrine $P$ is isomorphic to $\compex{P}$ if an only if $\eta_P\epsilon_P\leq \id_{\compex{P}}$. By definition of the pre-order in the doctrine $\compex{P}$, this inequality holds if and only if for every object $A$ of $\mC$ and every $(\frecciasopra{A\times B}{\pr_1}{A},\alpha\in P(A\times B))\in \compex{P}(A)$, there exists an arrow $\freccia{A}{f}{B}$ such that $\Einv_{\pr_1}(\alpha)\leq P_{\angbr{\id_A}{f}}(\alpha)$.
\end{proof}
\begin{example}
 A notable example of doctrine equipped with Hilbert's $\epsilon$-operator, and hence of $\compex{\mT}$-idempotent algebra thanks to Theorem \ref{theorem P iso P^ex if and only if}, is provided by the well-known hyperdoctrine of subsets $\doctrine{\set}{\mathrm{P}}$.
 
  In particular, thanks to the Axiom of Choice, one can easily check the doctrine $\doctrine{\set}{\mathrm{P}}$ is equipped with $\doctrine{\set}{\mathrm{P}}$.

A second example of $\compex{\mT}$-idempotent algebra is provided by the localic hyperdoctrine presented in Example \ref{example localic doctrine ass. to ordinal}.
\end{example}
\subsection{Examples of generalized existential completions with full comprehensions}\label{section doctrine with full comprehension}
In this section we show that relevant examples of generalized existential doctrines are given by doctrines with full comprehensions closed under compositions.

\begin{definition}
Given  a conjunctive doctrine \[\doctrine{\mC}{P}\]
with full comprehensions, we say that $P$ has \bemph{composable comprehensions}  if its comprehensions are closed under compositions, namely  if  for every  objects $\alpha$ in $P(A)$ and $\beta$ in $P(B)$ with $\comp{\alpha}: B \rightarrow A$
then we have $\comp{\alpha}\comp{\beta}=\comp{\gamma}: C\rightarrow A$  for $\gamma$ in $P(C)$.
\end{definition}

\noindent
Note that $\gamma$ is unique by Lemma~\ref{uniqueness}.

Our aim now is to show  that a doctrince $\doctrine{\mC}{P}$ with full composable comprehensions  is an instance of the generalized existential completion construction with respect to the class $\Lambda_{\cm}$ of comprehensions. 

To this purpose, 
we first show that such a $P$ is closed under  left adjoints along comprehensions:

\begin{proposition}\label{proposition compre compose iff left adjoint}
Let $\doctrine{\mC}{P}$ be a conjunctive doctrine with full comprehensions. Then the following conditions are equivalent:
\begin{enumerate}
\item  $P$ has composable comprehensions.
\item   $P$ has left adjoints along the comprehensions satisfying BCC and FR.
\end{enumerate}
\end{proposition}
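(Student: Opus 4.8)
The plan is to exploit the dictionary, valid in any doctrine with full comprehensions, between the fibrewise order and factorisations of comprehension maps: combining the definition of full comprehensions with the universal property of $\comp{\beta}$, one has $\alpha\leq\beta$ in $P(A)$ \emph{if and only if} $\comp{\alpha}$ factors through $\comp{\beta}$. I would record this equivalence first, as both implications are needed: the forward one uses that $\alpha\leq\beta$ forces $P_{\comp{\alpha}}(\beta)=\top_{A_\alpha}$, so $\comp{\alpha}$ factors through $\comp{\beta}$ by the universal property of $\comp{\beta}$, while the backward one is exactly fullness. I would also use the trivial identity $\comp{\top_A}=\id_A$ and recall from Remark~\ref{remark compr has pullbacks} that comprehensions are monic and that $\comp{P_f(\alpha)}$ is the pullback of $\comp{\alpha}$ along $f$.

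For $(1)\Rightarrow(2)$, given a comprehension $\comp{\alpha}\colon A_\alpha\to A$ and $\gamma\in P(A_\alpha)$ with comprehension $\comp{\gamma}\colon (A_\alpha)_\gamma\to A_\alpha$, composability lets me define $\exists_{\comp{\alpha}}(\gamma)$ to be the unique $\delta\in P(A)$ with $\comp{\delta}=\comp{\alpha}\comp{\gamma}$ (uniqueness by Lemma~\ref{uniqueness}). The adjunction $\exists_{\comp{\alpha}}(\gamma)\leq\beta\iff\gamma\leq P_{\comp{\alpha}}(\beta)$ then becomes purely diagrammatic: by the dictionary the left side says $\comp{\alpha}\comp{\gamma}$ factors through $\comp{\beta}$, the right side (read in the fibre over $A_\alpha$) says $\comp{\gamma}$ factors through $\comp{P_{\comp{\alpha}}(\beta)}$, and these are equivalent by the universal property of the pullback defining $\comp{P_{\comp{\alpha}}(\beta)}$ together with the monicity of $\comp{\alpha}$. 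For BCC I would paste pullbacks: stacking the pullback of $\comp{\gamma}$ along $f'$ beneath the pullback of $\comp{\alpha}$ along $f$ exhibits $\comp{P_f(\alpha)}\comp{P_{f'}(\gamma)}$ as a pullback of $\comp{\alpha}\comp{\gamma}=\comp{\delta}$ along $f$, hence as $\comp{P_f(\delta)}$; comparing with the definition of $\exists_{\comp{P_f(\alpha)}}(P_{f'}(\gamma))$ and invoking Lemma~\ref{uniqueness} yields $P_f\exists_{\comp{\alpha}}=\exists_{\comp{P_f(\alpha)}}P_{f'}$.

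The main obstacle is Frobenius reciprocity, for which I would first prove an auxiliary \emph{meet lemma}: in a conjunctive doctrine with full comprehensions the comprehension of $\beta\wedge\delta$ is the intersection of $\comp{\beta}$ and $\comp{\delta}$, i.e.\ their pullback over $A$ (this pullback exists by Remark~\ref{remark compr has pullbacks}, one leg being a comprehension, and one checks that $\comp{\beta\wedge\delta}$ and the comparison map $e$ out of the pullback vertex factor through each other, using $P_e(\beta)=P_e(\delta)=\top$). Granting this, FR reduces to the projection formula for intersections along the monomorphism $\comp{\alpha}$: since $\comp{\alpha}\comp{\gamma}$ is contained in $\comp{\alpha}$, pulling $\comp{\beta}$ back along $\comp{\alpha}$, intersecting with $\comp{\gamma}$, and post-composing with $\comp{\alpha}$ reproduces the intersection of $\comp{\beta}$ with $\comp{\alpha}\comp{\gamma}$; unwinding the dictionary, this is precisely $\exists_{\comp{\alpha}}(P_{\comp{\alpha}}(\beta)\wedge\gamma)=\beta\wedge\exists_{\comp{\alpha}}(\gamma)$, the identification being once more by pullback pasting and Lemma~\ref{uniqueness}.

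For $(2)\Rightarrow(1)$ I would show directly that $\comp{\alpha}\comp{\gamma}$ is the comprehension of $\delta:=\exists_{\comp{\alpha}}(\gamma)$. First, BCC applied to the pullback of $\comp{\alpha}$ along itself (which by monicity has identity legs) gives $P_{\comp{\alpha}}\exists_{\comp{\alpha}}=\id$, exactly as in Lemma~\ref{remark tops cover the doctrine with full comp}; hence $P_{\comp{\alpha}\comp{\gamma}}(\delta)=P_{\comp{\gamma}}P_{\comp{\alpha}}\exists_{\comp{\alpha}}(\gamma)=P_{\comp{\gamma}}(\gamma)=\top$. For the universal property, given $f\colon Z\to A$ with $P_f(\delta)=\top$, note $\delta\leq\alpha$ (since $\gamma\leq P_{\comp{\alpha}}(\alpha)=\top$), so $P_f(\alpha)=\top$ and $f$ factors uniquely as $f=\comp{\alpha}f'$; then $P_{f'}(\gamma)=P_{f'}P_{\comp{\alpha}}\exists_{\comp{\alpha}}(\gamma)=P_f(\delta)=\top$, so $f'$ factors uniquely as $f'=\comp{\gamma}g$, whence $f=\comp{\alpha}\comp{\gamma}g$, with $g$ unique because $\comp{\alpha}\comp{\gamma}$ is monic. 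Thus comprehensions compose; notably FR is not needed for this implication.
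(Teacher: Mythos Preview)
Your proof is correct and follows essentially the same route as the paper's: both directions hinge on the dictionary between the fibrewise order and factorisations of comprehension maps, defining $\exists_{\comp{\alpha}}$ via composability in $(1)\Rightarrow(2)$ and identifying $\comp{\alpha}\comp{\gamma}$ with the comprehension of $\exists_{\comp{\alpha}}(\gamma)$ in $(2)\Rightarrow(1)$. Your $(2)\Rightarrow(1)$ is marginally more direct---you verify the universal property of $\comp{\alpha}\comp{\gamma}$ outright, whereas the paper constructs an explicit isomorphism with the domain of $\comp{\exists_{\comp{\alpha}}(\beta)}$---and you spell out BCC and FR (via the meet lemma), which the paper leaves as ``direct to verify''.
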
 
\begin{proof}
We start by showing $\it (2)\Rightarrow (1)$.

Suppose that the doctrine $P$ has left adjoints along comprehensions, and consider two comprehensions $\freccia{C}{\comp{\beta}}{B}$ and $\freccia{B}{\comp{\alpha}}{A}$. We claim that the comprehension 
\[\freccia{D}{\comp{\Einv_{\comp{\alpha}}(\beta)}}{A}\]
is the composition $\comp{\alpha}\comp{\beta}$.
First of all from the unit of the adjunction $\beta\leq P_{\comp{\alpha}}\Einv_{\comp{\alpha}}(\beta)$  it is direct to verify that  
\[\top_C\leq P_{\comp{\beta}}P_{\comp{\alpha}}\Einv_{\comp{\alpha}}(\beta)= P_{\comp{\alpha}\comp{\beta}}\Einv_{\comp{\alpha}}(\beta) \]
Therefore, by comprehension, there exists a unique $\freccia{C}{g}{D}$ such that the following diagram commutes 
\begin{equation}\label{diagramma Eg=comp (a) comp (b) }
\begin{aligned}
\xymatrix{
C\ar[d]_{\comp{\beta}}\ar[rr]^g &&D\ar[ddll]^{\comp{\Einv_{\comp{\alpha}}(\beta)}}\\
B\ar[d]_{\comp{\alpha}}\\
A. }
\end{aligned}
\end{equation}
Now we are going to prove that $g$ is invertible.
Observe that by Lemma \ref{remark tops cover the doctrine with full comp} (or see \cite{TECH}), we have
$\alpha= \Einv_{\comp{\alpha}} (\top_B)$ from which 
\[\Einv_{\comp{\alpha}}(\beta)\leq \Einv_{\comp{\alpha}} (\top_B)\leq \alpha.\]
%
Then 
\[ \top_D\leq     P_{\comp{\Einv_{\comp{\alpha}}(\beta)}} (\,  \Einv_{\comp{\alpha}}(\beta)\, ) \leq    P_{\comp{\Einv_{\comp{\alpha}}(\beta)}}(\alpha).\]
Hence by comprehension there exists a unique $\freccia{D}{h}{B}$ such that the following diagram commutes
\[\xymatrix@+1pc{
D \ar[d]_{\comp{\Einv_{\comp{\alpha}}(\beta)}} \ar[r]^h & B\ar[dl]^{\comp{\alpha}} \\
A.
}\]
Hence
\[ \top_D\leq   P_{\comp{\Einv_{\comp{\alpha}}(\beta)}} (\,  \Einv_{\comp{\alpha}}(\beta)\, ) = P_h(P_{\comp{\alpha}}(\Einv_{\comp{\alpha}}(\beta)) \leq P_h  \beta\]
since $P_{\comp{\alpha}}\Einv_{\comp{\alpha}}(\beta)=\beta$  by BCC and the fact that $\comp{\alpha}$ is monic.

Therefore by comprehension from 
 $\top_D\leq P_h(\beta)$  there is a unique $\freccia{D}{l}{C}$ such that the diagram 
\begin{equation}\label{diagramma comp Exist= comp a comp b l}
\begin{aligned}
\xymatrix@+1pc{
C \ar[r]^{\comp{\beta}}& B \ar[r]^{\comp{\alpha}} & A\\
& D \ar[u]^h \ar[ul]^l \ar[ur]_{\comp{\Einv_{\comp{\alpha}}(\beta)}}
}
\end{aligned}
\end{equation}
commutes. Then, since $ \comp{\Einv_{\comp{\alpha}}(\beta)}g=\comp{\alpha}\comp{\beta}$ by \eqref{diagramma Eg=comp (a) comp (b) }, putting together the known diagrams
 \[\xymatrix@+1.5pc{
D\ar[r]^l \ar[rd]_{\comp{\Einv_{\comp{\alpha}}}}& C\ar[d]^{\comp{\alpha}\comp{\beta}} \ar[r]^g & D\ar[dl]^{\comp{\Einv_{\comp{\alpha}}}}\\
& A.
}\]
we get $\comp{\Einv_{\comp{\alpha}}(\beta)}gl=\comp{\Einv_{\comp{\alpha}}(\beta)}$ and since $\comp{\Einv_{\comp{\alpha}}(\beta)}$ is monic  we conclude $gl=\id_D$.
Analogously, we get
$$\comp{\alpha}\comp{\beta} lg = \comp{\alpha}\comp{\beta}$$ and   hence $ lg=\id_C$ since $\comp{\alpha}\comp{\beta}$ is monic.  Therefore $\comp{\Einv_{\comp{\alpha}}(\beta)}=\comp{\alpha}\comp{\beta}$, namely we have shown  that comprehensions of $P$ compose.

Now we show that $\it (1)\Rightarrow (2)$,  namely that if comprehensions compose then there exists the left adjoint along $\comp{\alpha}$ defined as
\[\Einv_{\comp{\alpha}}(\beta):=\gamma\]
where $\gamma$ is the {\it unique} element of $P(A)$ such that $\comp{\alpha}\comp{\beta}=\comp{\gamma}$.

First, observe that,  since comprehensions are full, we have that $\gamma\leq \sigma$, where $\sigma\in P(A)$, if and only if 
\[\top\leq P_{\comp{\gamma}}(\sigma)\]
and then, by definition of $\gamma$, this holds if and only if 
\[\top\leq P_{\comp{\alpha}\comp{\beta}}(\sigma).\] 
Then, again by fullness, the previous inequality holds if and only if 
\[\beta\leq P_{\comp{\alpha}}(\sigma).\]
Therefore, we have that $\Einv_{\comp{\alpha}}$ is left adjoint to $P_{\comp{\alpha}}$, and it is a direct to verify that these left adjoints satisfy (BCC) and (FR).


\end{proof}


\begin{example}\label{example m-cat}
Given an $\mM$-category $(\mC,\mM)$ one can define the doctrine of $\mM$-\emph{subobjects}
\[	\subobjdoctrine{\mC}{\mM} \]
where $\Sub_{\mM} (X)$ if the inf-semilattice of $\mM$-subobjects of $\mM$, and the action of $\Sub_{\mM}$ on a morphism $\freccia{X}{f}{Y}$ of $\mC$ is given by pulling back the $\mM$-subobjects of $Y$ along $f$.
Recall that a $\mM$-\emph{category} is a pair $(\mC,\mM)$ where $\mC$ is a category and $\mM$ is a stable system of monics, i.e. $\mM$ is a collection of monics which includes all isomorphisms and is closed under composition and pullbacks. 
Observe that a stable system of monics is essentially what was called a \emph{dominion} in \cite{CET}, an \emph{admissible system of subobjects} in \cite{CPM},
 a \emph{notion of partial maps} in \cite{DDC} and a \emph{domain structure} in \cite{PMCPCCC}

It is direct to prove that given an $\mM$-category $(\mC,\mM)$, the doctrine of $\mM$-subobjects $\subobjdoctrine{\mC}{\mM}$ has full composable comprehensions.

%
%
\end{example}

\begin{proposition}\label{lemma lamdba_com RC}
Let $\doctrine{\mC}{P}$ be a conjunctive doctrine with full composable comprehensions, and let $\Lambda_{\cm}$ be the class of the comprehensions. Then:
\begin{enumerate}
\item $\Lambda_{\cm}$ is a \lclass~of $\mC$;
\item for every $\alpha\in P(A)$, $\alpha=\exists_{\comp{\alpha}}(\top_{A_{\alpha}})$;
\item an element $\alpha\in P(A)$ is a \quantfree{\Lambda_{\cm}}~object if and only if is the top element. In particular the doctrine $P$ satisfies the $\Lambda_{\cm}$-(RC).
\end{enumerate}
\end{proposition}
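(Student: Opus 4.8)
The plan is to establish the three claims in sequence, leaning throughout on the structure already available for doctrines with full composable comprehensions.

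For (1) I would verify the three conditions of Definition~\ref{lclass} one at a time. Closure under composition is exactly the hypothesis that comprehensions compose. Stability under pullback, together with the fact that the pulled-back arrow is again a comprehension, is precisely the content of Remark~\ref{remark compr has pullbacks}, which exhibits the mediating arrow between $\comp{\alpha}$ and $\comp{P_f(\alpha)}$ as a pullback of $\comp{\alpha}$ along $f$. Finally $\id_A\in\Lambda_{\cm}$ because $\id_A=\comp{\top_A}$: one has $P_{\id_A}(\top_A)=\top_A$, and the universal property of a comprehension is trivially witnessed by $\id_A$ itself. For (2) I would first invoke Proposition~\ref{proposition compre compose iff left adjoint}, which guarantees that a doctrine with composable comprehensions admits left adjoints along all comprehensions satisfying (BCC) and (FR). These are exactly the hypotheses of Lemma~\ref{remark tops cover the doctrine with full comp}, whose conclusion is the required identity $\alpha=\Einv_{\comp{\alpha}}(\top_{A_\alpha})$.

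The substantive part is (3), which I would prove by establishing both implications of the equivalence ``$\Lambda_{\cm}$-existential-free $\Leftrightarrow$ top''. For the direction showing that each $\top_A$ is $\Lambda_{\cm}$-existential-free, I would fix an arbitrary $f\colon B\to A$, note $P_f(\top_A)=\top_B$, and check that $\top_B$ is a $\Lambda_{\cm}$-existential splitting in the sense of Definition~\ref{definition (P,lambda)-atomic0}. So let $\comp{\mu}\colon C\to B$ be a comprehension and $\beta\in P(C)$ with $\top_B\leq\Einv_{\comp{\mu}}(\beta)$. The key observation is that, by monotonicity and part (2), $\Einv_{\comp{\mu}}(\beta)\leq\Einv_{\comp{\mu}}(\top_C)=\mu$, whence $\mu=\top_B$; thus $\comp{\mu}$ is a comprehension of $\top_B$, and applying its universal property to $\id_B$ produces a section, so $\comp{\mu}$ is simultaneously monic (Remark~\ref{remark compr has pullbacks}) and a split epimorphism, i.e. an isomorphism. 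Taking $h=\comp{\mu}^{-1}$ gives $\comp{\mu}\,h=\id_B$ and $\top_B\leq\Einv_{\comp{\mu}}(\beta)=P_h(\beta)$, as required. For the converse, if $\alpha\in P(A)$ is $\Lambda_{\cm}$-existential-free (Definition~\ref{definition (P,lambda)-atomic}), I would apply the splitting property of $\alpha=P_{\id_A}(\alpha)$ to the instance $\alpha\leq\Einv_{\comp{\alpha}}(\top_{A_\alpha})$ supplied by part (2), obtaining $h\colon A\to A_\alpha$ with $\comp{\alpha}\,h=\id_A$; then $\alpha=P_{\comp{\alpha} h}(\alpha)=P_h P_{\comp{\alpha}}(\alpha)=P_h(\top_{A_\alpha})=\top_A$. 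The final sentence of the statement then follows immediately from Remark~\ref{RC}, since the top elements have just been shown to be $\Lambda_{\cm}$-existential-free.

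I expect the main obstacle to lie in the two isomorphism arguments inside (3): recognizing that a comprehension which is a split epimorphism must be an isomorphism (using that comprehensions are monic), and correctly threading the identity from part (2) through the definitions of existential splitting and of $\Lambda_{\cm}$-existential-free object so that the witness $h$ can be produced and then exploited via the contravariance $P_{\comp{\alpha} h}=P_h P_{\comp{\alpha}}$. The remaining computations are routine once these points are in place.
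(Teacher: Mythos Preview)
Your proof is correct and follows the same architecture as the paper's: items (1) and (2) are handled identically, and item (3) is proven via the same two implications. The only difference is in the mechanics of the forward direction of (3). The paper uses the explicit identity $\comp{\Einv_{\comp{\alpha}}(\beta)}=\comp{\alpha}\comp{\beta}$ (extracted from the proof of Proposition~\ref{proposition compre compose iff left adjoint}) together with fullness to factor $\id_A$ through $\comp{\alpha}\comp{\beta}$, and then sets $f:=\comp{\beta}h$ as the required section. You instead observe that $\top_B\leq\Einv_{\comp{\mu}}(\beta)\leq\Einv_{\comp{\mu}}(\top_C)=\mu$ forces $\mu=\top_B$, so that $\comp{\mu}$ is simultaneously monic and split epi, hence an isomorphism, and the section is its inverse. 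For the converse, the paper appeals to fullness (from $\comp{\alpha}f=\id_A=\comp{\top_A}$ deduce $\top_A\leq\alpha$), whereas your computation $\alpha=P_{\comp{\alpha}h}(\alpha)=P_h P_{\comp{\alpha}}(\alpha)=P_h(\top)=\top_A$ uses only the defining property of comprehension and bypasses fullness. Both variants are valid; yours is slightly more self-contained in that it does not rely on the internal structure of the composability proof, while the paper's makes the factorization through $\comp{\alpha}\comp{\beta}$ explicit.
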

\begin{proof}
$1.$ The class of comprehensions contains identities, and comprehensions are stable under pullbacks by Remark \ref{remark compr has pullbacks} while they compose by Proposition \ref{proposition compre compose iff left adjoint}. \\

\noindent
$2.$ The first point follows from Lemma \ref{remark tops cover the doctrine with full comp}.\\

\noindent
$3.$ First we show that every top element is \quantfree{\Lambda_{\cm}}. If $\top_A\leq \Einv_{\comp{\alpha}}(\beta)$, then we have that $\comp{\top_A}=\id_A$ factors on $\comp{\Einv_{\comp{\alpha}}(\beta)}$, which is equal to the arrow $\comp{\alpha}\comp{\beta}$ by Proposition \ref{proposition compre compose iff left adjoint}. Then there exists an arrow $h$ such that the following diagram commutes
\[\xymatrix{
A\ar[dd]_{\id_A} \ar[rr]^h&& C\ar[dl]^{\comp{\beta}}\\
& A_{\alpha}\ar[dl]^{\comp{\alpha}}\\
A.
}\]
Now we define $f:=\comp{\beta}h$. Thus, we have that $\comp{\alpha}f=\id_A$ and 
\[P_{f}(\beta)=P_{h}P_{\comp{\beta}}(\beta)=\top_A.\]

Now we prove the converse. By point $2.$ we have that $\alpha=\Einv_{\comp{\alpha}}(\top_{A_{\alpha}})$, and then if $\alpha$ is a \quantfree{\Lambda_{\cm}}~object, $\alpha\leq \Einv_{\comp{\alpha}}(\top_{A_{\alpha}})$ implies that there exists an arrow $\freccia{A}{f}{A_{\comp{\alpha}}}$ such that $\alpha\leq P_{f}(\top_{A_{\alpha}})=\top_A$ and $\comp{\alpha}f=\id_A$. Since $\id_A=\comp{\top_A}$ by fullness of comprehensions we conclude
$\top_A\leq \alpha$ and hence $\top_A=\alpha$.
\end{proof}

From this theorem  we obtain the following one:
\begin{theorem}\label{theorem P full comp if if ex comp}
Every \conjdoctrine~$\doctrine{\mC}{P}$ with full and composable comprehensions is an instance of the generalized existential completion construction with respect to the class $\Lambda_{\cm}$ of comprehensions. In particular $P$ is the generalized existential completion of the \conjdoctrine
\[\doctrine{\mC}{\trdc}\]
where for every object $A$ the poset $\trdc(A)$ contains only the top element.
\end{theorem}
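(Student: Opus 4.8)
The plan is to reduce everything to the characterization of generalized existential completions in Theorem \ref{theorem caract. gen. existential completion}, since all of the technical work has already been assembled in Proposition \ref{proposition compre compose iff left adjoint} and Proposition \ref{lemma lamdba_com RC}. First I would record that $(P,\Lambda_{\cm})$ is genuinely a $\Lambda_{\cm}$-existential doctrine: by Proposition \ref{lemma lamdba_com RC}(1) the class $\Lambda_{\cm}$ of comprehensions is a \lclass, and by the implication $(1)\Rightarrow(2)$ of Proposition \ref{proposition compre compose iff left adjoint} the hypothesis that comprehensions compose provides left adjoints $\Einv_{\comp{\alpha}}$ along every comprehension, satisfying (BCC) and (FR).

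Next I would verify the three conditions in point (3) of Theorem \ref{theorem caract. gen. existential completion} for $(P,\Lambda_{\cm})$. Condition (a), namely $\Lambda_{\cm}$-(RC), is exactly the final assertion of Proposition \ref{lemma lamdba_com RC}(3). For condition (b), Proposition \ref{lemma lamdba_com RC}(3) identifies the $\Lambda_{\cm}$-existential-free objects of each fibre with precisely the top elements, so closure under binary conjunction is immediate since $\top_A\wedge\top_A=\top_A$. Condition (c), that $P$ has enough $\Lambda_{\cm}$-existential-free objects, is supplied by Proposition \ref{lemma lamdba_com RC}(2): every $\alpha\in P(A)$ equals $\Einv_{\comp{\alpha}}(\top_{A_{\alpha}})$, where $\comp{\alpha}\in\Lambda_{\cm}$ and $\top_{A_{\alpha}}$ is $\Lambda_{\cm}$-existential-free. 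Invoking the implication $(3)\Rightarrow(1)$ of Theorem \ref{theorem caract. gen. existential completion} then yields that $P$ is a $\Lambda_{\cm}$-existential completion.

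It remains to identify the conjunctive doctrine that is being completed. Inspecting the proof of $(3)\Rightarrow(1)$ in Theorem \ref{theorem caract. gen. existential completion}, the base doctrine $\Pqf$ is the fibred subdoctrine whose fibre over $A$ consists of the $\Lambda_{\cm}$-existential-free objects of $P(A)$. By Proposition \ref{lemma lamdba_com RC}(3) this fibre is the singleton $\{\top_A\}$ for every $A$, so $\Pqf$ coincides with the trivial doctrine $\doctrine{\mC}{\trdc}$. Hence $P\cong\gencompex{\trdc}{\Lambda_{\cm}}$, as claimed. The only point that requires a little care is this last identification of $\Pqf$ with $\trdc$; the genuinely substantial step, the construction of the left adjoints along comprehensions, was already carried out in Proposition \ref{proposition compre compose iff left adjoint}, so no further obstacle arises here.
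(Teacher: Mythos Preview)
Your proof is correct and follows the same approach as the paper: the paper's own proof is the single line ``By point (3) of Theorem~\ref{theorem caract. gen. existential completion} and Proposition~\ref{lemma lamdba_com RC},'' and you have faithfully unpacked exactly how those results combine to verify conditions (a)--(c) and to identify the base doctrine as $\trdc$. The additional invocation of Proposition~\ref{proposition compre compose iff left adjoint} to confirm that $(P,\Lambda_{\cm})$ is genuinely $\Lambda_{\cm}$-existential is a reasonable explicit step that the paper leaves implicit.
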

\begin{proof}
By point $(3)$ of Theorem~\ref{theorem caract. gen. existential completion} and Proposition~\ref{lemma lamdba_com RC}.
\end{proof}
\begin{remark}
When $\doctrine{\mC}{P}$ is a doctrine with full composable comprehensions, then the generalized existential completion of the trivial doctrine $\doctrine{\mC}{\trdc}$ is exactly the doctrine $\subobjdoctrine{\mC}{\Lambda_{\cm}}$ where $\Sub_{\Lambda_{\cm}}(A)$ is the class of $\Lambda_{\cm}$-subobjects of $A$.

In particular, by Theorem \ref{theorem P full comp if if ex comp} we have an isomorphism given by the 1-cell
\[\duemorfismo{\mC}{P}{\id_{\mC}}{\comp{-}}{\mC}{{\Sub_{\Lambda_{\cm}}}}\]
where $\freccia{P(A)}{\comp{-}_A}{\Msubobj{\Lambda_{\cm}}{A}}$ sends  $\alpha$ to $\comp{\alpha}$. 
\end{remark}
In particular, one can directly check that the previous result, together with Example \ref{example m-cat}, extends to an isomorphism of 2-categories. 
\begin{theorem}\label{theorem equiv m-cat comprensioni}
We have an isomorphism of 2-categories
$$\Mcat\cong\CompAdj$$
where $\Mcat$ is the 2-category of $\mM$-categories and the 2-category $\CompAdj$ of doctrines with full composable comprehensions.
\end{theorem}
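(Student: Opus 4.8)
The plan is to exhibit two mutually inverse 2-functors $\Phi\colon \Mcat \to \CompAdj$ and $\Psi\colon \CompAdj \to \Mcat$ and to check that their composites are the identities (up to the canonical identification discussed below). On objects, $\Phi$ sends an $\mM$-category $(\mC,\mM)$ to the doctrine $\Sub_{\mM}$, which has full composable comprehensions by Example~\ref{example m-cat}; conversely $\Psi$ sends a doctrine $\doctrine{\mC}{P}$ with full composable comprehensions to the pair $(\mC,\Lambda_{\cm})$, which is an $\mM$-category since $\Lambda_{\cm}$ is a stable system of monics by Proposition~\ref{lemma lamdba_com RC}(1) (it is a \lclass~consisting of monics, hence contains the isomorphisms and is closed under composition and pullback).

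First I would set up the action on 1-cells. A morphism of $\mM$-categories $F\colon(\mC,\mM)\to(\mD,\mM')$ is a functor preserving $\mM$ and pullbacks along $\mM$, and I send it to the doctrine 1-cell $(F,b)$ whose natural transformation $b$ takes an $\mM$-subobject to its $F$-image, which lands in $\Sub_{\mM'}$ precisely because $F$ preserves $\mM$ and the relevant pullbacks. In the other direction, a 1-cell $(F,b)$ of $\CompAdj$ preserves comprehensions, so $F$ carries comprehension maps to comprehension maps, i.e.\ $F(\Lambda_{\cm})\subseteq \Lambda'_{\cm}$, and preserves pullbacks along them; thus $F$ is an $\mM$-functor, and I send $(F,b)$ to $F$. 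The key point, which collapses the doctrinal data onto the $\mM$-categorical data, is that $b$ is entirely determined by $F$: since the isomorphism $P\cong\Sub_{\Lambda_{\cm}}$ identifies each $\alpha$ with its comprehension $\comp{\alpha}$, and a morphism is the full comprehension of at most one predicate by Lemma~\ref{uniqueness}, the component $b_A(\alpha)$ is forced to be the predicate whose comprehension is $F(\comp{\alpha})$. On 2-cells both 2-functors act as the identity, sending a natural transformation $\theta$ to itself, the defining inequality transporting across the order-isomorphisms $\comp{-}$.

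It then remains to verify the composites. The composite $\Psi\Phi$ is the identity of $\Mcat$ on the nose: for an $\mM$-category $(\mC,\mM)$ the comprehension of an $\mM$-subobject $[m]$ is $m$ itself, so the class $\Lambda_{\cm}$ of comprehensions of $\Sub_{\mM}$ is exactly $\mM$, and on $1$- and $2$-cells the round trip is visibly the identity. For the composite $\Phi\Psi$, which sends $P$ to $\Sub_{\Lambda_{\cm}}$, I invoke the preceding remark: the $1$-cell $\comp{-}\colon P \to \Sub_{\Lambda_{\cm}}$ is an isomorphism of doctrines. I would check that these component isomorphisms are $2$-natural in $P$, i.e.\ compatible with composition of $1$-cells and with $2$-cells, which yields that $\Phi\Psi$ is isomorphic to $\id_{\CompAdj}$; together with $\Psi\Phi=\id_{\Mcat}$ this establishes the asserted isomorphism, strictly once objects of $\CompAdj$ are taken up to the canonical identification $P=\Sub_{\Lambda_{\cm}}$ furnished by $\comp{-}$.

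The main obstacle is bookkeeping rather than conceptual: one must confirm that the correspondence is a bijection on $1$-cells and $2$-cells, respecting both horizontal and vertical composition, and in particular that every comprehension-preserving $1$-cell arises from a unique $\mM$-functor. This uniqueness rests squarely on Lemma~\ref{uniqueness} together with Proposition~\ref{proposition compre compose iff left adjoint}, which guarantees that the left adjoints $\Einv_{\comp{\alpha}}$ through which $b$ must be expressed are themselves determined by the comprehension structure. Some care is also needed about the distinction between a strict isomorphism and a $2$-equivalence, since the $\Psi\Phi$ leg is strictly the identity whereas the $\Phi\Psi$ leg is only an isomorphism via $\comp{-}$.
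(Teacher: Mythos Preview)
Your proposal is correct and follows exactly the natural approach; the paper itself does not supply a detailed proof, stating only that ``one can directly check that the previous result, together with Example~\ref{example m-cat}, extends to an isomorphism of 2-categories,'' so your argument is precisely the elaboration of that direct check. Your closing caveat about the distinction between a strict isomorphism and a 2-equivalence (since $\Phi\Psi$ yields $\Sub_{\Lambda_{\cm}}$ rather than $P$ on the nose) is well taken and more careful than the paper's own formulation.
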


\begin{corollary}\label{cor m-var doct are lambda existential comp}
Every m-variational existential doctrine is an instance of generalized existential completion construction with respect to the class of comprehensions.
\end{corollary}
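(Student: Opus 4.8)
The plan is to reduce the statement to Theorem~\ref{theorem P full comp if if ex comp}: since that result already shows that every conjunctive doctrine with full and composable comprehensions arises as the generalized existential completion of the trivial doctrine $\trdc$ along the class $\Lambda_{\cm}$ of comprehensions, it suffices to check that an m-variational existential doctrine $\doctrine{\mC}{P}$ has full and composable comprehensions.

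Fullness is immediate: by definition an m-variational doctrine has full comprehensions (and comprehensive diagonals). For composability, I would appeal to Proposition~\ref{proposition compre compose iff left adjoint}, which for a conjunctive doctrine with full comprehensions reduces composability of comprehensions to the existence of left adjoints along comprehensions satisfying (BCC) and (FR). So the task becomes producing such left adjoints.

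Here the elementary and existential structure enters. By Remark~\ref{rem left adjoint elementary existential doc}, every elementary existential doctrine already carries a left adjoint $\Einv_g$ to each reindexing $P_g$ --- in particular along each comprehension $\comp{\alpha}$ --- given by $\Einv_g(\alpha)=\Einv_{\pr_1}(P_{\pr_2}(\alpha)\wedge P_{\angbr{\pr_1}{g\pr_2}}(\delta_A))$; this formula yields (FR) for free. The delicate point is (BCC), which in a general elementary existential doctrine may fail for these induced adjoints. However, as recalled in the remark following Theorem~\ref{theorem elementary existential completion}, the simultaneous presence of full comprehensions and comprehensive diagonals --- exactly the m-variational hypothesis --- guarantees that these left adjoints do satisfy (BCC); moreover pullbacks along comprehensions exist and are again comprehensions by Remark~\ref{remark compr has pullbacks}, so (BCC) is meaningful along $\Lambda_{\cm}$.

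Therefore an m-variational existential doctrine has left adjoints along its comprehensions satisfying (BCC) and (FR); by the direction $(2)\Rightarrow(1)$ of Proposition~\ref{proposition compre compose iff left adjoint} its comprehensions compose, and Theorem~\ref{theorem P full comp if if ex comp} then identifies $P$ with $\gencompex{\trdc}{\Lambda_{\cm}}$, as required. The only genuine obstacle is the (BCC) verification, and this is precisely where comprehensive diagonals are indispensable: without them the canonical elementary-existential adjoints need not be Beck--Chevalley, and the reduction to Theorem~\ref{theorem P full comp if if ex comp} would collapse.
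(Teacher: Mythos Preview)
Your argument is correct and is precisely the route the paper intends: the corollary is stated without proof immediately after Theorem~\ref{theorem P full comp if if ex comp}, so the only content is checking that an m-variational existential doctrine has full \emph{composable} comprehensions, which you obtain via Proposition~\ref{proposition compre compose iff left adjoint} using the (BCC) guaranteed by the m-variational hypothesis (as recorded in the remark you cite, drawing on \cite{TECH}). There is nothing to add.
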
\label{subgen}
\begin{remark}\label{rem m-var equivalent to fact system}
Recall that in the case of existential m-variational doctrines, the equivalence of Theorem \ref{theorem equiv m-cat comprensioni} restricts to an equivalence between the 2-category of existential m-variational doctrines and the 2-category of proper stable factorization systems \cite{NRRFS}. We refer to \cite{UEC} and \cite{FSF} the proof of this equivalence.
\end{remark}
In the case of the subobjects doctrine, we have that every that the class of comprehensions is exactly those of all the  monomorphims.
\begin{corollary}
The subobjects doctrine $\doctrine{\mC}{\Sub_{\mC}}$ of a category $\mC$ with finite limits is the generalized existential completion of the trivial doctrine $\doctrine{\mC}{\trdc}$, with respect to the class of all the monomorphisms $\Lambda_{\mono}$.
\end{corollary}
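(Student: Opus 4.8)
The plan is to realize the subobjects doctrine $\Sub_{\mC}$ as a \conjdoctrine~with full composable comprehensions and then invoke Theorem~\ref{theorem P full comp if if ex comp}, observing that in this case the class $\Lambda_{\cm}$ of comprehensions coincides with the class $\Lambda_{\mono}$ of all monomorphisms of $\mC$. Since $\mC$ has finite limits, each fibre $\Sub_{\mC}(A)$ is an inf-semilattice (meets given by intersection of subobjects, top given by $\id_A$) and reindexing is pullback, so $\Sub_{\mC}$ is indeed conjunctive; the whole content of the corollary lies in analysing its comprehensions.

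First I would identify the comprehension structure. Given a subobject $\alpha\in\Sub_{\mC}(A)$ represented by a mono $m\colon S\rightarrowtail A$, I claim $\comp{\alpha}=m$. Indeed, pulling $\alpha$ back along $m$ yields the maximal subobject of $S$, so $P_m(\alpha)=\top_S$; and for any $f\colon Z\to A$ with $P_f(\alpha)=\top_Z$, the equality $P_f(\alpha)=\top_Z$ says precisely that $f$ factors through $m$, this factorization being unique because $m$ is monic. Hence $m$ satisfies the universal property of the comprehension of $\alpha$, and $\Sub_{\mC}$ has comprehensions. Fullness is then immediate from the definition of the subobject order: $\alpha\leq\beta$ in $\Sub_{\mC}(A)$ holds exactly when the representing mono of $\alpha$ factors through that of $\beta$, that is, when $\comp{\alpha}$ factors through $\comp{\beta}$.

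Next I would check composability and pin down the class $\Lambda_{\cm}$. Composability follows from the fact that the composite of two monomorphisms is again a monomorphism: if $\comp{\alpha}\colon B\to A$ and $\comp{\beta}\colon C\to B$ are comprehensions, then $\comp{\alpha}\comp{\beta}\colon C\to A$ is monic, hence the comprehension of the subobject of $A$ it represents. Moreover, by Remark~\ref{remark compr has pullbacks} every comprehension is monic, while the previous paragraph shows conversely that every monomorphism arises as the comprehension of the subobject it represents; therefore $\Lambda_{\cm}=\Lambda_{\mono}$.

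Finally I would apply Theorem~\ref{theorem P full comp if if ex comp} to $\Sub_{\mC}$, now known to be a \conjdoctrine~with full composable comprehensions, concluding that $\Sub_{\mC}$ is the generalized existential completion of the trivial doctrine $\doctrine{\mC}{\trdc}$ along $\Lambda_{\cm}=\Lambda_{\mono}$. The only point demanding care is the identification of the comprehension of a subobject with its representing monomorphism, together with the verification that this identification exhausts all monomorphisms; everything else is routine bookkeeping, so I expect no genuine obstacle.
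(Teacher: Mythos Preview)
Your proof is correct and follows exactly the approach intended by the paper: the corollary is stated right after Theorem~\ref{theorem P full comp if if ex comp} together with the sentence ``In the case of the subobjects doctrine, we have that the class of comprehensions is exactly those of all the monomorphisms'', and your argument simply spells out this identification and invokes the theorem. The paper gives no further proof, so you have supplied precisely the missing details.
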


The previous characterizations, in particular of Lemma \ref{lemma lamdba_com RC}, provide also another presentation of existential m-variational doctrines satisfying the Rule of Unique Choice.
\begin{proposition}\label{proposition caratterizzazione RUC}
\label{mainsub}
Let $\doctrine{\mC}{P}$ be a primary doctrine. Then the following are equivalent:
\begin{enumerate}
\item $P$ is existential, m-variational, and it satisfies the Rule of Unique Choice;
\item $\mC$ is regular and $P=\Sub_{\mC}$;
\item $P$ is the $\mM$-existential completion of the trivial doctrine $\doctrine{\mC}{\trdc}$, where $\mM$ is a class of morphisms of $\mC$ such that 
\begin{itemize}
\item there exists a class $\mathcal{E}$ of morphisms of $\mC$ such that $(\mathcal{E}, \mM)$ is a proper, stable, factorization system on $\mC$;
\item for every arrow $\freccia{A\times B}{\pr_A}{A}$ of $\mC$, if $\pr_Af$ is a monomorphism and $f\in \mM$ then $\pr_Af\in \mM$.
\end{itemize}

\end{enumerate}
\end{proposition}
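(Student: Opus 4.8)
The plan is to prove the cycle $(1)\Rightarrow(2)\Rightarrow(3)\Rightarrow(1)$, using throughout the presentation of an m-variational existential doctrine as the $\mM$-subobjects doctrine of a proper stable factorization system, and translating the Rule of Unique Choice into the projection-closure condition on $\mM$. For $(1)\Rightarrow(2)$ I would first apply Corollary~\ref{cor m-var doct are lambda existential comp} together with Remark~\ref{rem m-var equivalent to fact system} to obtain a proper stable factorization system $(\mE,\mM)$ on $\mC$ whose right class $\mM$ is the class of comprehensions, with $P\cong\Sub_{\mM}$. The key use of (RUC) is the observation that the \emph{reversed} graph $\angbr{g}{\id}$ of any morphism $g$ is itself an $\mM$-subobject, being a pullback of a diagonal $\delta$ (which lies in $\mM$ by comprehensive diagonals). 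Applying (RUC) to the reversed graph of a monic map $e\in\mE$, which is functional by Lemma~\ref{lemma rel funzionale iff pr rho mono}(2) and entire because $e\in\mE$, yields a section of $e$, so that $e$, being a monic split epimorphism, is invertible. Factoring an arbitrary monomorphism as $m=\mu e$ with $e\in\mE$ and $\mu\in\mM$ then forces $e$ to be invertible and hence $m\cong\mu\in\mM$; thus $\mM$ is the class of all monomorphisms and $P\cong\Sub_{\mC}$. Finally, a finitely complete category carrying a proper stable factorization system whose right class is all monomorphisms is regular, so $\mC$ is regular.

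For $(2)\Rightarrow(3)$ I would take $\mM$ to be the class of all monomorphisms and $\mE$ the class of covers; in a regular category this is a proper stable factorization system, and the corollary identifying $\Sub_{\mC}$ with the generalized existential completion of $\trdc$ along $\Lambda_{\mono}$ gives that $P$ is the $\mM$-existential completion of the trivial doctrine. The projection-closure condition is then vacuous, since $\pr_A f$ monic already means $\pr_A f\in\mM$.

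For $(3)\Rightarrow(1)$ I would first record that $P=\Sub_{\mM}$ by Theorem~\ref{example lambda-weak-sub}. Properness forces every diagonal into $\mM$: factoring $\Delta_A=\mu e$ with $e\in\mE$, the identities $\pr_i\mu e=\id$ make $e$ a split monomorphism, and a split monomorphism in $\mE\subseteq\mathrm{Epi}$ is invertible, so $\Delta_A\cong\mu\in\mM$. Hence $\Sub_{\mM}$ has comprehensive diagonals, and together with its full composable comprehensions (Example~\ref{example m-cat}) and the left adjoints along projections given by image factorization — with (BCC) guaranteed by stability of $(\mE,\mM)$ — it is m-variational and existential. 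The projection condition then yields (RUC): an entire functional $\phi$ has $\pr_A\comp{\phi}$ monic by functionality (Lemma~\ref{lemma rel funzionale iff pr rho mono}(2)) and in $\mE$ by entireness (since $\Einv_{\pr_A}\phi=\top_A$), so the projection condition places $\pr_A\comp{\phi}$ in $\mM$; as $\mE\cap\mM$ consists of isomorphisms, $\pr_A\comp{\phi}$ is invertible and supplies the required witness. This establishes $(1)$ and closes the cycle.

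The step I expect to be the main obstacle is the two-way translation between (RUC) and the projection-closure condition, and in particular the consequence in $(1)\Rightarrow(2)$ that (RUC) forces $\mM$ to exhaust all monomorphisms. This rests on the two structural facts that the reversed graph of a morphism is always an $\mM$-subobject, so that (RUC) can be applied to it, and that $\mE\cap\mM$ reduces to isomorphisms in any factorization system; once these are in place, the remaining verifications are routine manipulations with the factorization system already attached to an m-variational existential doctrine.
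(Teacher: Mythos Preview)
Your proof is correct and follows the same overall strategy as the paper, namely exploiting the correspondence between existential m-variational doctrines and proper stable factorization systems (Remark~\ref{rem m-var equivalent to fact system}) and then reading (RUC) as a closure condition on $\mM$. The two arguments differ in emphasis rather than substance.

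For $(1)\Rightarrow(2)$ the paper simply invokes \cite[Prop.~5.3]{TECH} and \cite[Thm.~4.4.4, Thm.~4.9.4]{CLTT}, whereas you give a self-contained argument: (RUC) applied to the reversed graph of a monic $e\in\mE$ produces a section of $e$, forcing $e$ to be invertible, hence $\mM=\mathrm{Mono}$ and $\mC$ is regular. This is a genuine gain in self-containment, and your observation that $\angbr{g}{\id}$ is always an $\mM$-subobject (as a pullback of a diagonal) is exactly the hook that makes (RUC) bite.

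For $(3)\Rightarrow(1)$ the paper, after noting that $\pr_A\comp{\rho}$ is monic and hence in $\mM$ by the projection condition, applies the $\Lambda_{\cm}$-(RC) of Proposition~\ref{lemma lamdba_com RC} to extract a section. You instead observe additionally that entireness forces $\pr_A\comp{\phi}\in\mE$, so $\pr_A\comp{\phi}\in\mE\cap\mM$ is already invertible. Both routes are valid; yours bypasses Proposition~\ref{lemma lamdba_com RC} at the cost of one extra (easy) verification about images in $\Sub_{\mM}$.
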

\begin{proof}
$\mathrm{(1 \Rightarrow 2)}$ It follows from \cite[Prop. 5.3]{TECH} and  \cite[Thm. 4.4.4 and Thm. 4.9.4]{CLTT}.\\

\noindent
$\mathrm{(2 \Rightarrow 3)}$ It follows from Corollary~\ref{subgen}.\\

\noindent
$\mathrm{(3 \Rightarrow 1)}$ 
Under these assumptions, we have that $P$ is an existential m-variational doctrine by Remark \ref{rem m-var equivalent to fact system} and Corollary \ref{cor m-var doct are lambda existential comp}, and the arrows of $\mM$ are exactly the comprehensions of $P$.
Now, let us consider a functional, entire relation $\rho\in P(A\times B)$. First, notice that $\rho$ functional implies $\pr_A\comp{\rho}$ monic by Lemma \ref{lemma rel funzionale iff pr rho mono}. Hence, by our assumption, $\pr_A\comp{\rho}$  is a comprehension. Moreover, we have that
\begin{equation}\label{eq rho is entire}
\top_A\leq \Einv_{\pr_A}(\rho)=\Einv_{\pr_A\comp{\rho}}(\top)
\end{equation}
because $\rho$ is entire and $\rho=\Einv_{\comp{\rho}}(\top)$ by Lemma \ref{remark tops cover the doctrine with full comp}. Therefore we can apply Lemma \ref{lemma lamdba_com RC}, because $\pr_A\comp{\rho}$ is a comprehension. Hence,  there exists an arrow $\freccia{A}{f}{(A\times B)_{\rho}}$ such that 
\[\top_A\leq P_f(\top)\]
and $\pr_A\comp{\rho}f=\id_A$, namely $\comp{\rho} f= \angbr{\id_A}{\pr_B\comp{\rho} f}$.
In particular, we have that 
\[\top_A\leq P_f(\top)=P_{\comp{\rho}f}(\rho)=P_{\angbr{\id}{\pr_B\comp{\rho} f}}(\rho).\]
Therefore, $P$ satisfies the (RUC).
\end{proof}


%

\subsection{The realizability hyperdoctrine}\label{section realizability hyperdoct}
In this section we are going to prove that  all the realizability triposes  \cite{TT,TTT,van_Oosten_realizability} are full generalized existential completions as shown independently in \cite{Frey2020} and presented in the talk by the second author \url{https://www.youtube.com/watch?v=sAMVU_5RQJQ&t=688s} in 2020. A different presentation of realizability triposes via free constructions (essentially acting on the representing object of a given representable indexed preorder) can be found in \cite{Hofstra06}. 
 
We start by recalling some related fundamental notions.
 
A \bemph{partial combinatory algebra} (pca) is specified by a set $\pca{A}$ together with a partial binary operation $(-)\cdot (-): \pca{A}\times \pca{A} \rightharpoonup \pca{A}$ for which there exist elements $k,s\in \pca{A}$ satisfying for all $a,a',a''\in \pca{A}$ that
\[k\cdot a \downarrow \mbox{ and } (k\cdot a)\cdot a'\equiv a\] 
and
\[s\cdot a \downarrow \mbox{, } (s\cdot a)\cdot a'\downarrow \mbox{, and }((s\cdot a)\cdot a')\cdot a''\equiv (a \cdot a'')\cdot (a'\cdot a'')\] 
where $e\downarrow$ means "$e$ is defined" and $e\equiv e'$  means "$e$ is defined if and only $e'$ is, and in that case they are equal". For more details we refer to \cite{van_Oosten_realizability}.

Given a pca $\pca{A}$, we can consider the \bemph{realizability hyperdoctrine} $$\doctrine{\set}{\mP}$$ 
over $\set$ introduced in \cite{TT,TTT}. For each set $X$, the partial ordered set $(\mP(X),\leq)$ is defined as follow:
\begin{itemize}
\item $\mP(X)$ is the set of functions $\pwset{\pca{A}}^X$ from $X$ to the powerset $\pwset{\pca{A}}$ of $\pca{A}$;
\item given two elements $\alpha$ and $\beta$ of $\mP(X)$, we say that $\alpha\leq \beta$ if there exists an element $\ovln{a}\in \pca{A}$ such that for all $x\in X$ and all $a\in \alpha (x)$, $\ovln{a}\cdot a$ is defined and it is an element of $\beta (x)$.  By standard properties of pca this relation is reflexive and transitive, i.e. it is a preorder.
Then $\mP (X)$ is defined as the quotient of $\pwset{\pca{A}}^X$ by the equivalence relation generated by $\leq$.  The partial order on the equivalence classes $\eqc{\alpha}$ is that induced by $\leq$.
\end{itemize}
Given a function $\freccia{X}{f}{Y}$ of $\set$, the functor $\freccia{\mP (Y)}{\mP_f}{\mP (X)}$ sends an element $\eqc{\alpha}\in \mP (Y)$ to the element $\eqc{\alpha\circ f }\in \mP (X)$ given by the composition of the two functions. With the these assignments $\doctrine{\set}{\mP}$ is an hyperdoctrine, see \cite[Example 2.3]{TTT} or \cite{van_Oosten_realizability}. 

In particular we recall from some Heyting operations which give the structure of Heyting algebra to the  fibres $\mP(X)$. First, we recall from \cite{van_Oosten_realizability} that from the elements $k$ and $s$ one can construct elements $\bf{p}$, $\bf{p_1}$ and $\bf{p_2}$ which are called \emph{pairing} and \emph{projections} operators. Hence, we can consider the following assignments:
\begin{itemize}
\item $\top_X:=\eqc{\lambda x\in X.\pca{A}}$;
\item $\eqc{\alpha}\wedge \eqc{\beta}:=\eqc{\lambda x\in X.\{({\bf p}\cdot a)\cdot b \;|\; a\in \alpha (x) \mbox{\;and \;} b\in \beta (x)\} }$
\item $\delta_X:=\eqc{\lambda (x_1,x_2)\in X\times X.\pca{A}\mbox{ if } x_1=x_2, \emptyset\mbox{ otherwise}}$;
\item for every projection $\freccia{X \times Y}{\pr_X}{X}$, the functor $\Einv_{\pr_X}$ sends an element $\eqc{\gamma}\in \mP (X\times Y)$ to the following:
$$\Einv_{\pr_X}(\eqc{\gamma})=\eqc{\lambda x\in X. \bigcup_{y\in Y} \gamma (x,y)}. $$
\end{itemize} 
In particular we have that $\mP$ is elementary and existential and then, in particular, it has left adjoints along arbitrary functions, i.e. it is a full existential doctrine.
 
Notice also these left adjoints along any function satisfy BCC and FR (see \cite[Lem. 5.2]{realizability} or \cite{van_Oosten_realizability}).

\noindent
Now we show that the realizability hyperdoctrine is an instance of generalized existential completion. Thus, we fix the class $\Lambda_{\set}$ to be the class of all functions of $\set$. The realizability hyperdoctrine is, in particular, existential and elementary, and then it has left adjoints along all the morphisms of $\set$, so it is a full existential doctrine. The rest of this section is devoted to prove that $\mP$ is a full existential completion.

Hence, we need to understand what are the full-existential-free  objects of the realizability hyperdoctrine. 
\begin{definition}
Let $\pca{A}$ be a pca, and let $\doctrine{\Set}{\mP}$ be the realizability tripos associated to the pca $\pca{A}$. We call an element $\freccia{X}{\eqc{\gamma}}{\pwset{\pca{A}}}$ of the fibre $\mP(X)$ a \bemph{singleton predicate} if there exists a singleton function $\freccia{X}{\alpha}{\pwset{\pca{A}}}$, i.e. $\alpha(x)=\{a\}$ for some $a$ in $\pca{A}$, such that  $\gamma\sim \alpha$.
\end{definition}
We claim that the singleton predicates are exactly the full-existential-free objects provided that we assume {\it the axiom of choice} in our meta-theory as we do.

\begin{lemma}\label{lemma singleton a free-quantifier}
Every singleton predicate is a full-existential-free object.
\end{lemma}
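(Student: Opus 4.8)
The plan is to reduce the statement to showing that a singleton predicate is a full-existential \emph{splitting} object, and then to build the required section by hand from the explicit realizability description of the left adjoints, using the axiom of choice in the metatheory.

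First I would observe that the class of all functions of $\set$ is closed under the reindexings that occur in the definition of full-existential-free object, so that the claim reduces to the splitting condition. Indeed, if $f\colon Y\to X$ is any function and $\eqc{\gamma}\in\mP(X)$ is a singleton predicate, say $\gamma\sim\alpha$ with $\alpha(x)=\{\bar\alpha(x)\}$ for a function $\bar\alpha\colon X\to\pca{A}$, then $\mP_f(\eqc{\gamma})=\eqc{\gamma\circ f}$ and $\gamma\circ f\sim\alpha\circ f$ with $(\alpha\circ f)(y)=\{\bar\alpha(f(y))\}$ again a singleton. Hence every reindexing of a singleton predicate is again a singleton predicate, and so by Definition~\ref{definition (P,lambda)-atomic} it suffices to prove that every singleton predicate is a full-existential splitting in the sense of Definition~\ref{definition (P,lambda)-atomic0}.

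Next I would make explicit the behaviour of the left adjoints along an \emph{arbitrary} function. Using the formula of Remark~\ref{rem left adjoint elementary existential doc} together with the fact that $\delta_X$ takes the value $\pca{A}$ on the diagonal and $\emptyset$ off it, one checks that for any $g\colon Y\to X$ and any $\eqc{\beta}\in\mP(Y)$ one has, up to realizability equivalence,
\[\Einv_g(\eqc{\beta})(x)\ \sim\ \bigcup_{y\,:\,g(y)=x}\beta(y),\]
the passage between the two sides being witnessed by a \emph{uniform} realizer assembled from the pairing and projection combinators. Now let $g\colon Y\to X$ be arbitrary, let $\eqc{\beta}\in\mP(Y)$, and suppose $\eqc{\alpha}\le\Einv_g(\eqc{\beta})$ for the singleton $\alpha(x)=\{\bar\alpha(x)\}$. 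Unwinding the preorder of $\mP$ produces a single element $\bar r\in\pca{A}$ such that for every $x\in X$ the application $\bar r\cdot\bar\alpha(x)$ is defined and lies in $\bigcup_{g(y)=x}\beta(y)$; in particular, for each $x$ there is at least one $y\in Y$ with $g(y)=x$ and $\bar r\cdot\bar\alpha(x)\in\beta(y)$, which incidentally forces $g$ to be onto.

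Finally I would assemble the section. By the axiom of choice in the metatheory I select, for each $x\in X$, some $y_x\in Y$ with $g(y_x)=x$ and $\bar r\cdot\bar\alpha(x)\in\beta(y_x)$, and set $h\colon X\to Y$, $h(x):=y_x$. Then $gh=\id_X$ by construction, and since $(\beta\circ h)(x)=\beta(y_x)$ the \emph{same} realizer $\bar r$ witnesses $\eqc{\alpha}\le\mP_h(\eqc{\beta})$: one has $\bar r\cdot\bar\alpha(x)\in\beta(h(x))$, and $\bar\alpha(x)$ is the \emph{only} element of $\alpha(x)$. This is exactly the splitting condition, so the singleton predicate is full-existential splitting, and by the reduction above it is full-existential-free. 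The main obstacle I anticipate is the bookkeeping in the third step: one must verify that the equivalence $\Einv_g(\eqc{\beta})(x)\sim\bigcup_{g(y)=x}\beta(y)$ holds for a general $g$ (not merely a projection) and is realized \emph{uniformly} in $x$, so that a single $\bar r$ can be extracted; and one must use precisely the singleton hypothesis to collapse the universally quantified ``for all $a\in\alpha(x)$'' to the sole witness $\bar\alpha(x)$, which is what lets the chosen $y_x$ depend only on $x$ while $\bar r$ survives the reindexing along $h$.
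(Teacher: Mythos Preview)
Your proposal is correct and follows essentially the same approach as the paper: reduce to the splitting condition by observing that reindexings of singleton predicates are again singleton predicates, unwind $\Einv_g$ via Remark~\ref{rem left adjoint elementary existential doc} to the fibrewise union $\bigcup_{g(y)=x}\beta(y)$, and then use the axiom of choice to pick a section $h$ so that a single realizer witnesses $\eqc{\alpha}\le\mP_h(\eqc{\beta})$. The only cosmetic difference is that the paper keeps the pairing combinators explicit and extracts the final realizer as $\lambda y.\,\mathbf{p_1}(\bar b\,y)$, whereas you absorb that passage into the uniform equivalence $\Einv_g(\eqc{\beta})(x)\sim\bigcup_{g(y)=x}\beta(y)$ and call the composite realizer $\bar r$ from the start.
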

\begin{proof}
Assume that $\freccia{X}{\eqc{\gamma}}{\pwset{\pca{A}}}$ is a singleton predicate, i.e. $\gamma$ assigns to every element $x$ of $X$ a singleton $\gamma (x)=\{ a\}$ for some $a$ in $\pca{A}$.
In order to show that this is a full-existential-free object is enough to show that it is a full-existential splitting since the action of $\mP_f$ preserves singletons, i.e.  for every function $\freccia{Z}{m}{X}$ of $\set$, we have that $\mP_m(\eqc{\gamma})=\eqc{\gamma\circ m}$ is again a singleton predicates.
To this purpose, 
observe that if  $\eqc{\gamma}\leq \Einv_{g}(\eqc{\beta})$ for some $\eqc{\beta} \in \mP (Y)$, then there exists an element $\ovln{b}\in \pca{A}$ such that for every $x\in X$ and every $a\in \gamma (x)$, $\ovln{b}\cdot a\in \Einv_g \beta (x)$. By Remark \ref{rem left adjoint elementary existential doc} we have 
$$\Einv_g(\beta)= \Einv_{\pr_X}(\mP_{\pr_Y}(\beta)\wedge \mP_{g\times \id_X}(\delta_X))$$
 and since $\gamma (x)$ is a singleton $\{a\}$ for every $x\in X$, we have that $\ovln{b}\cdot a\in \bigcup_{y\in Y}(\mP_{\pr_Y}(\beta)\wedge \mP_{g\times \id}(\delta_Y))(x,y)$. Hence we have that $\ovln{b}\cdot a= ({\bf p}\cdot c_1)\cdot c_2$ for some $c_1\in \beta (x,y_a)$, $c_2\in \pca{A}$, and for some $y_a\in Y$ such that $g(y_a)=x$ . By the Axiom of Choice, we can define a function $\freccia{X}{f}{Y}$ such that $f(x)=y_a$. In particular, we have that $\lambda y. {\bf p_1}(\ovln
{b}y)$ realizes $\gamma \leq \mP_{f}(\beta)$, and we have that $gf=\id$. This concludes the proof that  singletons predicates are full-existential splitting.
\end{proof}

\begin{corollary}\label{lemma realizability hyperdoc top is a free quant}
For every set $X$, $\eqc{\top_X}\in \mP(X)$ is a full-existential-free object.
\end{corollary}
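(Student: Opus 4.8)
The plan is to recognize that $\eqc{\top_X}$, although it is not literally presented by a singleton-valued function, coincides in the fibre $\mP(X)$ with a singleton predicate, so that Lemma~\ref{lemma singleton a free-quantifier} applies directly. Concretely, I would fix an arbitrary element $a_0\in\pca{A}$ (the pca is nonempty, since it contains $k$ and $s$) and consider the constant singleton function $\freccia{X}{\gamma}{\pwset{\pca{A}}}$ given by $\gamma(x)=\{a_0\}$ for every $x\in X$. The claim to establish is that $\eqc{\gamma}=\eqc{\top_X}$, so that $\eqc{\top_X}$ is a singleton predicate in the sense of the relevant definition.

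To verify this I would exhibit realizers for the two comparisons defining the order on the fibre $\mP(X)$. For $\eqc{\gamma}\leq\eqc{\top_X}$ the combinator $k$ works: for every $x\in X$ and every $a\in\gamma(x)=\{a_0\}$ we have $k\cdot a_0\downarrow$ and $k\cdot a_0\in\pca{A}=\top_X(x)$. For the reverse inequality $\eqc{\top_X}\leq\eqc{\gamma}$ the element $k\cdot a_0$ works: by the defining axiom of $k$, for every $a\in\pca{A}=\top_X(x)$ we have $(k\cdot a_0)\cdot a\equiv a_0\in\gamma(x)$. Hence $\top_X\sim\gamma$, and $\eqc{\top_X}$ is a singleton predicate.

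Having identified $\eqc{\top_X}$ with a singleton predicate, I would conclude immediately by Lemma~\ref{lemma singleton a free-quantifier}, which asserts that every singleton predicate is a full-existential-free object. There is essentially no genuine obstacle here: the only point requiring care is the choice of the two realizers above, which is routine once one unwinds the combinator axioms for $k$, and the nonemptiness of $\pca{A}$ guarantees that a witness $a_0$ exists in the first place.
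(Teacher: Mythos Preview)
Your proof is correct and follows exactly the same approach as the paper: both pick an element of $\pca{A}$, show that $\top_X$ is equivalent to the corresponding constant singleton function, and then invoke Lemma~\ref{lemma singleton a free-quantifier}. You have simply made explicit the realizers that the paper leaves as ``straightforward to check''.
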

\begin{proof}
Let us consider an element $c\in \pca{A}$ and the singleton function $\freccia{X}{\iota_c}{\pwset{\pca{A}}}$ given by the constant assignment $x\mapsto \{ c\}$. It is straightforward to check that $\top_X\sim \iota_c$, i.e. that $\eqc{\top_X}$ is a singleton predicate, and then a full-existential-free object by Lemma \ref{lemma singleton a free-quantifier}. 
\end{proof}

\begin{remark}
Notice that from Lemma \ref{lemma realizability hyperdoc top is a free quant}  it follows that the realizability hyperdoctrine $\doctrine{\set}{\mP}$ satisfies the Extended Rule of Choice.
\end{remark}

Now we ready to show the main result of this section.
\begin{theorem}\label{theorem realizability hyper. is gen ex comp}
The realizability hyperdoctrine $\doctrine{\set}{\mP}$ is the full existential completion of the primary doctrine $\doctrine{\set}{\mP^{\singleton}}$ whose elements of the fibre $\mP^{\singleton}(X)$ are only singleton predicates of $\mP(X)$.
\end{theorem}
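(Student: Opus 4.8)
The plan is to apply the third characterization in Theorem~\ref{theorem caract. gen. existential completion} for the class $\Lambda_{\set}$ of all functions of $\set$, together with Proposition~\ref{proposition doctrine P' cover P}, which identifies the subdoctrine of $\Lambda$-existential-free objects. Two ingredients are already in place: by Lemma~\ref{lemma singleton a free-quantifier} every singleton predicate is full-existential-free, and by Corollary~\ref{lemma realizability hyperdoc top is a free quant} each $\eqc{\top_X}$ is a singleton, hence full-existential-free. It therefore remains to show that $\mP^{\singleton}$ is a fibred conjunctive subdoctrine of $\mP$ and that $\mP$ has enough full-existential-free objects of singleton form.

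First I would verify that $\mP^{\singleton}$ is closed under the conjunctive structure and under reindexing. Closure under reindexing is the observation already made in the proof of Lemma~\ref{lemma singleton a free-quantifier}, namely $\mP_m(\eqc{\gamma}) = \eqc{\gamma \circ m}$ is again a singleton; closure under conjunction follows from the conjunction formula, since if $\gamma_i(x) = \{a_i\}$ are singletons then $(\gamma_1 \wedge \gamma_2)(x) = \{({\bf p}\cdot a_1)\cdot a_2\}$ is a singleton, and $\eqc{\top_X}$ is a singleton as well, so each fibre $\mP^{\singleton}(X)$ is a sub-inf-semilattice. The heart of the argument is producing enough full-existential-free objects: given $\eqc{\alpha} \in \mP(X)$, I would form the total space $Y = \{(x,a) : x \in X,\ a \in \alpha(x)\}$ with projection $g \colon Y \to X$, $(x,a)\mapsto x$, and the singleton predicate $\beta(x,a) := \{a\}$. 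Using the formula for left adjoints along arbitrary maps in Remark~\ref{rem left adjoint elementary existential doc}, one computes that, up to realizability equivalence, $\Einv_g(\eqc{\beta})(x) \sim \bigcup_{g(y)=x}\beta(y) = \bigcup_{a\in\alpha(x)}\{a\} = \alpha(x)$, so that $\eqc{\alpha} = \Einv_g(\eqc{\beta})$ with $\beta$ a singleton (the fibres over points $x$ with $\alpha(x)=\emptyset$ being empty on both sides). I expect the main obstacle to be exactly this computation: one must check that the pairing operators introduced by the conjunction with $\mP_{g\times\id_X}(\delta_X)$ in the adjoint formula are undone by the projections and so leave the realizability class unchanged.

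With these facts, Proposition~\ref{proposition doctrine P' cover P} applies: $\mP^{\singleton}$ is a fibred conjunctive subdoctrine all of whose elements are full-existential-free, and every element of $\mP$ is full-covered by a singleton, whence the full-existential-free objects of $\mP$ are exactly the singleton predicates. Consequently all three conditions of Theorem~\ref{theorem caract. gen. existential completion}(3) hold for $\Lambda_{\set}$ — the top elements are full-existential-free, binary conjunctions of full-existential-free (hence singleton) objects are again singleton and so full-existential-free, and $\mP$ has enough of them — and the subdoctrine of full-existential-free objects it singles out is precisely $\mP^{\singleton}$. Therefore $\mP = \fullcompex{\mP^{\singleton}}$, as claimed.
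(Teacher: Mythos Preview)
Your proposal is correct and follows essentially the same route as the paper: invoke Lemma~\ref{lemma singleton a free-quantifier} and Corollary~\ref{lemma realizability hyperdoc top is a free quant}, check closure of singletons under reindexing and binary meets, exhibit the covering $\alpha = \Einv_g(\beta)$ via the total-space construction, and then conclude by Proposition~\ref{proposition doctrine P' cover P} and Theorem~\ref{theorem caract. gen. existential completion}. The only superfluous worry is the detour through Remark~\ref{rem left adjoint elementary existential doc}: since the realizability tripos has left adjoints along \emph{all} maps satisfying BCC and FR, the adjoint $\Einv_g$ is computed directly as $\Einv_g(\eqc{\beta})(x)=\bigcup_{g(y)=x}\beta(y)$, and no pairing bookkeeping is needed.
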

\begin{proof}
By Lemma \ref{lemma singleton a free-quantifier} we have that every singleton predicate is a full-existential-free element, and in particular,  by corollary \ref{lemma realizability hyperdoc top is a free quant} we have that also every top element is a full-existential-free element. Notice that singleton predicates are closed under binary meet since if $\eqc{\alpha}$ and $\eqc{\beta}$ are singletons of $\mP (X)$, by definition, we have that
$$(\eqc{\alpha}\wedge \eqc{\beta})=\eqc{\lambda x \in X.\{(p\cdot a)\cdot b \;|\; a\in \alpha (x) \mbox{\;and \;} b\in \beta (x)\}}.$$
is again a singleton function. Moreover, one can directly check that the singleton predicates \emph{cover} the realizability hyperdoctrine, i.e. for every $\eqc{\beta}\in \mP(X)$ there exists a singleton predicate $\eqc{\alpha}\in \mP (Y)$ and a function $\freccia{Y}{g}{X}$ such that $\eqc{\beta}=\Einv_{g}(\eqc{\alpha})$. Then, by Proposition \ref{proposition doctrine P' cover P} we have that singleton predicates are exactly the full-existential-free elements of $\mP$. Therefore the realizability hyperdoctrine satisfies all the conditions of Theorem \ref{theorem caract. gen. existential completion} and then we can conclude that  it is the full existential completion of the primary doctrine $\mP^{\singleton}$ of singletons.
\end{proof}


\begin{remark}
Observe that the doctrine $\doctrine{\set}{\mP^{\singleton}}$ of singletons is closed under universal quantifiers. Then, by combining Theorem \ref{theorem realizability hyper. is gen ex comp} with the results presented in \cite{TSPGF}, we can conclude that the realizability doctrine satisfies the so called \bemph{principle of Skolemization}:
\[\forall u \exists x \;\alpha(u,x,i)=\exists f \forall u\; \alpha(u,fu,i)\]
being the existential completion of a universal doctrine whose base is cartesian closed. This form of choice principle  
plays a central role in the dialectica interpretation \cite{goedel1986} and its categorical presentation, see \cite{hofstra2011,TSPGF} for more details.
\end{remark}

\subsection{Localic doctrines}\label{supercom}

Let us consider a locale $\loc$, i.e. $\loc$ is a poset with finite meets and arbitrary joins, satisfying the \emph{infinite distributive law} $x \wedge (\bigvee_i y_i)=\bigvee_i(x\wedge y_i)$. Recall from \cite{TT,TTT} that given a locale $\loc$ we can define the \emph{canonical localic doctrine} of $\loc$:
\[\doctrine{\set}{\loc^{(-)}}\]
by assign $I\mapsto \loc^I$, and the partial order is provided by the pointwise partial order on functions $\freccia{I}{f}{\loc}$. Propositional connectives are defined pointwise. The existential quantifier along a given function $\freccia{I}{f}{J}$ maps a function $\phi\in \loc^I$ to $\Einv_f(\phi)$ given by $j\mapsto \bigvee_{\{ i\in I|f(i)=j\}}\phi (i)$ and these are called existential since they satisfy
the FR and BCC conditions.

Now we are going to consider localic triposes \cite{TT,TTT} whose locale is  \bemph{supercoherent} as defined in \cite{BANASCHEWSKI199145}.
For the reader convenience we just recall few related basic notions from \cite{BANASCHEWSKI199145}.

\begin{definition}
An element $c$ of a locale $\loc$ is said \bemph{supercompact} if whenever $c\leq \bigvee_{k\in K} b_k$, there exists $\ovln{k}\in K$ such that $c\leq b_{\ovln{k}}$.
\end{definition}

\begin{remark}
Note that a supercompact element of a non trivial locale  must be different from the bottom of the locale.
\end{remark}

\begin{definition}
A local $\loc$ is called \bemph{supercoherent} if:
\begin{itemize}
\item each element $d\in \loc$ is a join $d=\bigvee_I c_i$ of supercompact elements $c_i$;
\item supercompact elements are closed under finite meets.
\end{itemize}
\end{definition}

\begin{remark}
The category $\supercohframe$ of supercoherent frames is a coreflexive subcategory of the category of frames. This result together with the general notion of supercoherent frame was introduced in \cite{BANASCHEWSKI199145} where the authors show that every regular projective complete lattice is exactly a retract of a supercoherent frame \cite[Cor. 6]{BANASCHEWSKI199145}.
\end{remark}
Let $\meetcat$ be the category of meet-semilattices, and let $\framecat$ be the category of frames. Recall from \cite{johnstone1982stonespaces} that we can define a functor 
$$\freccia{\meetcat}{D}{\framecat}$$ 
sending a meet-semilattice $\mM$ to the \emph{down-set lattice} $D(\mM)$, i.e. the lattice of all the $X\subseteq \mM$ such that $a\in X$ implies  that  for all $b\leq a$ we have $b\in X$ and the order is provided by the set-theoretical inclusion. This functor is left adjoint to the inclusion functor, see \cite[Lem. 1]{BANASCHEWSKI199145} or \cite[Thm. 1.2]{johnstone1982stonespaces}. In particular, we have a natural injection $\freccia{\mM}{\eta}{D(\mM)}$ sending $a\mapsto \downarrow (a)$. It is direct to see that sets of the form $\downarrow (a)$ are supercompact elements in $D(\mM)$.
\begin{remark}\label{remark equivalecen supercompframe }
The functor $\freccia{\meetcat}{D}{\framecat}$ induces an equivalence $\meetcat\equiv\supercohframe.$ Essentially this means that supercoherent frames are the frame completion of inf-semilattices.
\end{remark}

\begin{definition}
Let $\loc$ be an arbitrary locale, and let $\doctrine{\Set}{\loc^{(-)}}$ be the localic doctrine. We call \bemph{supercompact predicate} an element $\phi\in \loc^J$ such that $\phi(j)$ is a supercompact element for every $j\in J$ .
\end{definition}
\begin{lemma}\label{lemma supercompact function}
Every supercompact predicate of the localic doctrine $\doctrine{\set}{\loc^{(-)}}$ is a full-existential-free element.
\end{lemma}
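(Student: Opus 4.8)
The plan is to follow the pattern established in the proof of Lemma~\ref{lemma singleton a free-quantifier} for singleton predicates, with the defining property of supercompact elements playing the role of the pca pairing operators.

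First I would record a stability observation: reindexing preserves supercompact predicates. If $\phi \in \loc^J$ is supercompact and $\freccia{I}{m}{J}$ is any function of $\set$, then the reindexing of $\phi$ along $m$ is the composite $\phi \circ m$, whose value at $i \in I$ is $\phi(m(i))$, again supercompact by hypothesis on $\phi$. Since, by Definition~\ref{definition (P,lambda)-atomic}, $\phi$ is full-existential-free exactly when every reindexing $P_f(\phi)$ is a full-existential splitting, and every such reindexing is supercompact, it suffices to prove the single statement that an arbitrary supercompact predicate is a full-existential splitting.

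So I would fix a supercompact $\phi \in \loc^J$, an arbitrary function $\freccia{K}{g}{J}$ of $\set$ and $\beta \in \loc^K$ with $\phi \leq \Einv_g(\beta)$. Unfolding the pointwise order together with the explicit formula for $\Einv_g$ gives, for each $j \in J$,
\[
\phi(j) \leq \Einv_g(\beta)(j) = \bigvee_{\{k \in K \,\mid\, g(k)=j\}} \beta(k).
\]
This is precisely the place where supercompactness is used: since $\phi(j)$ is supercompact and lies below this join, there is an index $k_j$ with $g(k_j)=j$ and $\phi(j) \leq \beta(k_j)$. Invoking the Axiom of Choice in the ambient ZFC, I would pick one such $k_j$ for each $j$ and set $\freccia{J}{h}{K}$ by $h(j):=k_j$. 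Then $g(h(j))=g(k_j)=j$, so $gh=\id_J$, while $\phi(j) \leq \beta(k_j)=(\beta\circ h)(j)$ for all $j$ shows $\phi \leq P_h(\beta)$, since $P_h(\beta)=\beta\circ h$ for this doctrine. This is exactly the witness demanded by Definition~\ref{definition (P,lambda)-atomic0}, establishing the splitting property.

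The argument is routine once the reduction to the splitting property is isolated; its only genuine content is the pointwise use of supercompactness, which is what lets a single fibre value $\beta(k_j)$ dominate $\phi(j)$ rather than merely their join. The one step requiring care is the appeal to the Axiom of Choice to glue the pointwise witnesses $k_j$ into an honest function $h$ satisfying $gh=\id_J$; this is legitimate in the classical metatheory fixed at the outset, exactly as in the singleton-predicate case.
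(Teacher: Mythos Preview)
Your proof is correct and follows essentially the same approach as the paper: reduce to the splitting property via stability of supercompact predicates under reindexing, then use supercompactness pointwise on the join defining $\Einv_g(\beta)(j)$ to extract a witness $k_j$ and assemble these into a section $h$ of $g$. The only difference is cosmetic: you make the appeal to the Axiom of Choice explicit, whereas the paper leaves it implicit in the construction of the section.
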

\begin{proof}
Let  $\phi\in \loc^J$ be a supercompact predicate. If we have
\[\phi \leq \Einv_f (\psi)\]
for some $\freccia{I}{f}{J}$ and $\psi\in \loc^I$, then in particular
\[\phi(j)\leq \bigvee_{\{i\in I| f(i)=j\}}\psi (i)\]
and, since $\phi(j)$ is supercompact, there exists $\ovln{i}^j\in \{i\in I| f(i)=j\}$ such that $\phi(j)\leq\psi(\ovln{i}^j)$. Hence, we can define a function $\freccia{J}{g}{I}$ sending $j\mapsto \ovln{i}^j$. Hence, by definition, we have that $fg=\id$ and $\phi\leq \loc^{(-)}_g(\psi)$, i.e. $\phi$ is full-existential-splitting. Moreover it is direct to see that supercompact predicate are stable under re-indexing, and hence supercompact predicates are full-existential-free.

\end{proof}

\begin{theorem}\label{theorem caract localic doct}
Let $\loc$ be a locale. 
\begin{enumerate}
\item If $ \loc$ is a supercoherent locale then the localic doctrine $\doctrine{\set}{\loc^{(-)}}$ is a full existential completion  of the primary doctrine $\doctrine{\set}{\supercomp^{(-)}}$ of supercompact predicates of $\loc$.
\item
If the localic doctrine $\doctrine{\set}{\loc^{(-)}}$ is a full existential completion then  its locale $\loc$ is supercoherent.

\end{enumerate}

\end{theorem}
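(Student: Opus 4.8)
The plan is to prove (1) in close analogy with the realizability case of Theorem \ref{theorem realizability hyper. is gen ex comp}, invoking the characterization of full existential completions in Theorem \ref{theorem caract. gen. existential completion} together with Proposition \ref{proposition doctrine P' cover P}. First I would check that the supercompact predicates form a fibred conjunctive subdoctrine $\supercomp^{(-)}$ of $\loc^{(-)}$: they are stable under reindexing (as already noted in Lemma \ref{lemma supercompact function}), they contain each top element since $\top_\loc$ is the empty meet of supercompact elements and so is itself supercompact, and they are closed under binary meet because meets in the fibres are computed pointwise and supercompact elements are closed under finite meets. By Lemma \ref{lemma supercompact function}, every supercompact predicate is a full-existential-free object of $\loc^{(-)}$.

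The one place where supercoherence really enters is in showing that supercompact predicates \emph{cover} $\loc^{(-)}$. Given $\phi \in \loc^J$, I would write each value as a join of supercompact elements, $\phi(j) = \bigvee_{i \in I_j} c^j_i$, set $I := \coprod_{j \in J} I_j$, take $g \colon I \to J$ to be the projection $(j,i) \mapsto j$ and define the supercompact predicate $\psi \in \loc^I$ by $\psi(j,i) := c^j_i$; then $\Einv_g(\psi)(j) = \bigvee_{i \in I_j} c^j_i = \phi(j)$, so $\phi = \Einv_g(\psi)$. Once covering is available, Proposition \ref{proposition doctrine P' cover P} identifies the full-existential-free objects of $\loc^{(-)}$ with exactly the supercompact predicates, so conditions $(a)$, $(b)$, $(c)$ of Theorem \ref{theorem caract. gen. existential completion}$(3)$ all hold and that theorem gives that $\loc^{(-)}$ is the full existential completion of $\supercomp^{(-)}$.

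For (2) the plan is to feed the three conditions provided by Theorem \ref{theorem caract. gen. existential completion} back through the single-element base, using $\loc^{1} \cong \loc$. The core step is to prove that a predicate $\phi \in \loc^{1}$, corresponding to $d := \phi(\ast)$, is full-existential-free if and only if $d$ is supercompact: the implication from supercompactness is Lemma \ref{lemma supercompact function}, while for the converse I would assume $\phi$ full-existential-free and $d \leq \bigvee_{k \in K} b_k$, form the predicate $\psi(k) := b_k$ in $\loc^{K}$ and the unique map $g \colon K \to 1$ so that $\phi \leq \Einv_g(\psi)$, and then apply the existential splitting property to obtain $h \colon 1 \to K$ with $gh = \id$ and $\phi \leq P_h(\psi)$, that is $d \leq b_{h(\ast)}$. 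Granting this correspondence, the two clauses of supercoherence drop out: closure of supercompact elements under finite meets comes from condition $(a)$ (the top predicate is full-existential-free, so $\top_\loc$ is supercompact) and condition $(b)$ (binary meets of full-existential-free predicates over $1$ stay full-existential-free), while the fact that every $d$ is a join of supercompact elements comes from condition $(c)$: writing $\phi = \Einv_g(\psi)$ with $g \colon I \to 1$ and $\psi \in \loc^{I}$ full-existential-free gives $d = \bigvee_{i \in I} \psi(i)$, and since reindexing preserves full-existential-freeness, each $\psi(i) = P_{\iota_i}(\psi)$ is full-existential-free over $1$, hence supercompact.

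I expect the main obstacle to be the converse half of the correspondence in (2): extracting the order-theoretic statement of supercompactness of $d$ purely from the fibrational splitting property. Reducing everything to the one-point base, and using that reindexing preserves full-existential-freeness, is exactly what lets the abstract hypothesis ``$\loc^{(-)}$ is a full existential completion'' be translated into the combinatorial definition of a supercoherent locale.
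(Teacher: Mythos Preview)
Your proposal is correct and follows essentially the same approach as the paper. For part (1) your argument is identical to the paper's; for part (2) the paper shows directly that every value $\phi(j)$ of a full-existential-free predicate $\phi\in\loc^J$ is supercompact (by padding the family $\{b_i\}$ with $\top$'s over the other indices of $J$), whereas you reduce to the one-point base via reindexing along $\iota_i\colon 1\to I$, which is a minor reorganization of the same idea rather than a different strategy.
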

\begin{proof}

(1)
Let $\loc$ be a supercoherent locale. By Lemma \ref{lemma supercompact function} supercompact predicates are full-existential-free. Moreover since $\loc$ is supercoherent, the supercompact predicates are closed under finite meets, and since every element of $a\in\loc$ is a join of supercompact elements, then every element of every fibre of $\loc^{(-)}$ is covered by a supercompact predicates. In particular, if we consider an element $\freccia{I}{\phi}{\loc}$, we have that for every $i\in I$, $\phi(i)=\bigvee_{j\in J_i}c_j^i$ with $c^i_j$ supercompact elements. So, we can define the disjoint sum $J=\oplus_{i\in I}J_i$ and a supercompact predicate $\freccia{J}{\psi}{\loc}$ given by $\psi(i,j)=c^i_j$. Then it is direct to see that $\phi=\exists_f(\psi)$ where $\freccia{J}{f}{I}$ is the function mapping $f(i,j)=i$.

Hence,  by Proposition \ref{proposition doctrine P' cover P} we have that supercompact elements are exactly the full-existential-free elements of $\loc^{(-)}$  and by Theorem \ref{theorem caract. gen. existential completion} we can conclude that $\loc^{(-)}$ is the full existential completion of the primary doctrine $\doctrine{\set}{\supercomp^{(-)}}$ such that $\supercomp (I)$ is the inf-semilattice whose objects are the supercompact predicates of $\loc^I$. 

(2)
Suppose that the localic doctrine $\doctrine{\set}{\loc^{(-)}}$ is a full existential completion. By Lemma \ref{lemma supercompact function} we know that supercompact elements of $\loc$ are values  of full-existential-free elements, hence supercompact elements are closed under finite meets because  full-existential-free elements are closed under them. Now suppose that $\phi$ is a full-existential-free object. Let  $\tilde{j}$ any index in $J$ and suppose that $\phi(\ltil{j})\leq \bigvee_{i\in I} b_i$. Then we can define an element $\freccia{J\times I}{\psi}{\loc}$ such that $\psi(\ltil{j},i)=b_i$ and $\psi(j,i)=\top$ for every $j\neq \ltil{j}$. Hence  it follows  that $\phi(j)\leq \bigvee_{i\in I}\psi (j,i)$ for each $j$ in $J$ which means $\phi \leq \Einv_{\pr_I}(\psi)$.  Then, since $\phi$ is a full-existential-free objec,  there exists a function $\freccia{J}{g}{I}$ such $\phi\leq \loc^{(-)}_{\angbr{\id_J}{g}}(\psi)$,  and then, in particular $\phi(\ltil{j})\leq \psi(\ltil{j},g(\ltil{j}))=b_{g(\ltil{j})}$. Hence every element $\phi(j)$ is supercompact. This concludes the proof that the full-existential-free elements of $\loc^{(-)}$ are exactly the supercompact predicates.

Finally, every element $a\in \loc$ is the join of supercompact elements because for such an element we can define the function $\freccia{\{\ast\}}{\alpha}{\loc}$ as $\ast \mapsto a$, and  since the doctrine has enough-full-existential-free elements, we have $\alpha= \Einv_f \phi$ for a full-existential-free object $\phi$, i.e. $a$ is the join of supercompact elements.
\end{proof}


%

\section{Regular completion via generalized existential completion}
The notion of elementary existential doctrine contains the logical data which allow to describe relational composition as well as functionality and entirety. Given an elementary and existential doctrine $\doctrine{\mC}{P}$, an element $\alpha\in P(A\times B)$ is said \bemph{entire} from $A$ to $B$ if 
$$\top_A\leq \Einv_{\pr_A}(\alpha).$$ 
Moreover it is said \bemph{functional} if 
\[ P_{\angbr{\pr_1}{\pr_2}}(\alpha)\wedge P_{\angbr{\pr_1}{\pr_3}}(\alpha)\leq P_{\angbr{\pr_2}{\pr_3}}(\delta_B)\]
in $P(A\times B\times B)$. Notice that for every relation $\alpha\in P(A\times B)$ and $\beta\in P(B\times C)$, the \bemph{relational composition of} $\alpha$ and $\beta$ is given by the relation 
\[ \Einv_{\angbr{\pr_1}{\pr_3}}(P_{\angbr{\pr_1}{\pr_2}}(\alpha)\wedge  P_{\angbr{\pr_2}{\pr_3}}(\beta)\]
in $P(A\times B)$, where $\pr_i$ are the projections from $A\times B\times C$.

Moreover, as pointed out in \cite{NRRFS,TECH},  for elementary existential m-variational doctrines, i.e. with full comprehensions and comprehensive diagonals, we can define their {\em regular completion} as follows:
\begin{definition}
Let $P$ be an elementary and m-variational doctrine. Then
we define {\it its  m-variational regular completion} as 
 the category $\Ef{P}$,
 whose objects are those of $\mC$, and whose morphisms are entire functional relations.
 \end{definition}

Recall from Section  \ref{section notions of doctrines} that the doctrine $P_{c}$ denotes the free-completion with full comprehensions of the doctrine $P$. 
\begin{definition}
Let $P$ be a an elementary existential doctrine. 
We call the category $\regularcompdoctrine{P}:=\Ef{P_{c}}$  the  {\it the  regular completion}  of $P$.
\end{definition}
 Then, we recall the following result from \cite[Thm. 3.3]{TECH}:
\begin{theorem}\label{theorem maietti rosolini pasquali regular comp}\label{mainreg}
The assignment $P\mapsto \regularcompdoctrine{P}$ extends to a 2-functor
\[\freccia{\EED}{\regularcompdoctrine{-}}{\Reg}\]
which is left biadjoint to the inclusion of the 2-category $\Reg$ of regular categories in the 2-category $\EED$ of elementary and existential doctrines.
\end{theorem}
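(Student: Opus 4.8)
The statement is recalled from \cite[Thm. 3.3]{TECH}, so strictly its proof is a citation; nonetheless I outline the bireflection I would establish. The plan is to exhibit the inclusion 2-functor $\iota\colon \Reg \to \EED$ sending a regular category $\mR$ to its subobjects doctrine $\Sub_{\mR}$ --- which is elementary and existential precisely because $\mR$ is regular, as recorded in Example \ref{example doctrines}(1) --- and then to produce, for every elementary existential doctrine $P$, a unit 1-cell $P \to \Sub_{\regularcompdoctrine{P}}$ in $\EED$ enjoying the universal property of a left biadjoint.

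First I would verify that $\regularcompdoctrine{P}=\Ef{P_c}$ is genuinely a regular category and that the assignment is 2-functorial. Recall that $P_c$ freely adjoins full comprehensions and that $\Ef{-}$ forms the category whose objects are the objects of the base $\mG_P$ and whose morphisms are entire functional relations, composed by relational composition. Using Frobenius reciprocity, Beck--Chevalley, and the correspondence between functional relations and monomorphisms established in Lemma \ref{lemma rel funzionale iff pr rho mono}, one checks that this category has finite limits and carries a stable factorization of each morphism into a regular epimorphism followed by a monomorphism, hence is regular. A 1-cell $(F,b)\colon P \to R$ of $\EED$ preserves entirety and functionality of relations --- since it preserves $\top$, $\wedge$, the equality predicates $\delta$, and the relevant left adjoints --- so it induces a regular functor $\regularcompdoctrine{P}\to\regularcompdoctrine{R}$, and 2-cells transport likewise, giving 2-functoriality.

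Next I would construct the unit. Each $\alpha\in P(A)$ determines, via its comprehension in $P_c$ together with the identity $\alpha=\Einv_{\comp{\alpha}}(\top)$ of Lemma \ref{remark tops cover the doctrine with full comp}, a subobject of $A$ inside $\regularcompdoctrine{P}$, and this assignment assembles into a 1-cell $\eta_P\colon P \to \Sub_{\regularcompdoctrine{P}}$ of elementary existential doctrines. The heart of the argument is the universal property: given a regular category $\mR$ and a 1-cell $(G,c)\colon P \to \Sub_{\mR}$, I would extend it to a regular functor $\widehat{G}\colon \regularcompdoctrine{P}\to \mR$. This is exactly where regularity of $\mR$ is used --- in a regular category the entire functional relations from $X$ to $Y$ are precisely the graphs of morphisms $X\to Y$, so the functional entire relations constituting the morphisms of $\regularcompdoctrine{P}$ are carried by $(G,c)$ to genuine morphisms of $\mR$ in an essentially forced way, yielding $\widehat{G}$ uniquely up to a canonical isomorphism and compatibly with $\eta_P$. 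Showing that $\widehat{G}$ preserves finite limits and regular epimorphisms, and that the induced comparison $\Reg(\regularcompdoctrine{P},\mR)\simeq \EED(P,\Sub_{\mR})$ is an equivalence pseudo-natural in both variables, then completes the biadjunction.

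I expect the main obstacle to be this last point: establishing the equivalence of hom-categories coherently, i.e. verifying pseudo-naturality and the triangle identities only up to invertible 2-cells, since we are proving a biadjunction rather than a strict 2-adjunction. A secondary technical nuisance is that $\regularcompdoctrine{P}$ is built by first passing through the comprehension completion $P_c$, so one must confirm that adjoining comprehensions does not disturb the universal property --- that is, that $\EED(P,\Sub_{\mR})$ and $\EED(P_c,\Sub_{\mR})$ agree for regular $\mR$ --- which follows from the freeness of the comprehension completion together with the fact that $\Sub_{\mR}$ already has full comprehensions.
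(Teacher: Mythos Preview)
Your proposal is appropriate: the paper does not prove this theorem at all but simply recalls it from \cite[Thm.~3.3]{TECH}, and you correctly note this at the outset. The outline you then supply --- inclusion via $\Sub_{-}$, regularity of $\Ef{P_c}$ through the functional-relation/monomorphism correspondence, unit via comprehensions, and universality by the graph-of-a-morphism characterization of entire functional relations in a regular target --- is the standard route and matches the argument in the cited source.

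One small comment on your sketch: when you invoke Lemma~\ref{remark tops cover the doctrine with full comp} to write $\alpha=\Einv_{\comp{\alpha}}(\top)$ for the unit, note that this lemma requires left adjoints along comprehensions satisfying BCC, which in $P_c$ is guaranteed because comprehensions there are the evident maps $(A,\alpha)\to(A,\top)$ and the ambient doctrine is existential; you may want to make that explicit rather than appeal to the lemma as a black box. Your remark about reconciling $\EED(P,\Sub_{\mR})$ with $\EED(P_c,\Sub_{\mR})$ via the freeness of the comprehension completion is exactly the right observation and is how the cited proof handles the two-step construction $\regularcompdoctrine{P}=\Ef{P_c}$.
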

Notice that the regular completion of an elementary existential doctrine depends only on the objects of the base category and on the fibres. In particular we have the following lemma.
\begin{lemma}\label{lemma P ha fibre isomorfe a R}
Let $P$ and $R$ be two elementary existential doctrines. If $\freccia{P}{(F,f)}{R}$ is a morphism of elementary existential doctrine such that $\freccia{P(A)}{f_A}{R(FA)}$ is an isomorphism for every object $A$ of the base, then $\freccia{\regularcompdoctrine{P}}{\regularcompdoctrine{F,f}}{\regularcompdoctrine{R}}$ is  full and faithful functor. 

\end{lemma}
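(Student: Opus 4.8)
The plan is to reduce the statement to two facts: that the fibre components of $(F,f)$ are isomorphisms, and that a morphism of elementary existential doctrines preserves, and — being a fibrewise isomorphism — also reflects, all the structure used to express entirety and functionality of a relation. By the $2$-functoriality established in Theorem \ref{mainreg}, the functor $\regularcompdoctrine{F,f}\colon \Ef{P_c}\to \Ef{R_c}$ is the one induced on regular completions by $(F,f)$; concretely it acts on objects by $(A,\alpha)\mapsto (FA,f_A(\alpha))$. A morphism $\freccia{(A,\alpha)}{\rho}{(B,\beta)}$ of $\regularcompdoctrine{P}$ is an entire functional relation, i.e. an element $\rho$ of the fibre $P_c\big((A,\alpha)\times (B,\beta)\big)$, which is the down-set $\{\gamma\in P(A\times B)\mid \gamma\leq P_{\pr_1}(\alpha)\wedge P_{\pr_2}(\beta)\}$. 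Since as a $1$-cell of $\EED$ the functor $F$ preserves finite products and each $f_A$ preserves $\wedge$ and the reindexings, the functor sends $\rho$ to $f_{A\times B}(\rho)$, which lies in $R_c\big((FA,f_A\alpha)\times (FB,f_B\beta)\big)$ because $f_{A\times B}(\rho)\leq R_{\pr_1}(f_A\alpha)\wedge R_{\pr_2}(f_B\beta)$.

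The key observation I would establish first is that each fibre component of the induced comprehension-completion morphism $\freccia{P_c}{(F_c,f_c)}{R_c}$ is again an isomorphism. Indeed $P_c(A,\alpha)=\{\gamma\leq\alpha\}$ is a principal down-set of $P(A)$, and since $\freccia{P(A)}{f_A}{R(FA)}$ is an order isomorphism it restricts to an isomorphism onto $\{\zeta\leq f_A(\alpha)\}=R_c(FA,f_A\alpha)$. In particular the map $f_{A\times B}$ above is an isomorphism between the relevant hom-posets of relations. Faithfulness is then immediate: the action of $\regularcompdoctrine{F,f}$ on the hom-set from $(A,\alpha)$ to $(B,\beta)$ is a restriction of the injective map $f_{A\times B}$, so two entire functional relations with equal images must coincide.

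For fullness I would take an arbitrary morphism $\sigma\colon (FA,f_A\alpha)\to (FB,f_B\beta)$ of $\regularcompdoctrine{R}$, that is, an entire functional relation in $R_c$, and use surjectivity of $f_{A\times B}$ to obtain the unique $\rho$ with $f_{A\times B}(\rho)=\sigma$. It then remains to verify that $\rho$ is entire and functional. Entirety is the inequality $\top\leq \Einv_{\pr}(\rho)$ and functionality is the inequality $P_{\angbr{\pr_1}{\pr_2}}(\rho)\wedge P_{\angbr{\pr_1}{\pr_3}}(\rho)\leq P_{\angbr{\pr_2}{\pr_3}}(\delta_B)$. Because $(F,f)$, being a morphism of elementary existential doctrines, preserves $\top$, $\wedge$, the reindexings, the equality predicates $\delta$ and the left adjoints $\Einv$, and because each $f$-component is an order isomorphism and hence reflects the order, $\rho$ satisfies these inequalities if and only if $\sigma$ does. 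Thus $\rho$ is a morphism of $\regularcompdoctrine{P}$ mapping onto $\sigma$, which proves fullness.

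The only genuinely delicate point, which I would treat with care, is precisely this transfer of entirety and functionality. Preservation of the defining inequalities is guaranteed by $(F,f)$ being a morphism of elementary and existential doctrines, so that $\top$, $\wedge$, reindexing, $\delta$ and $\Einv$ are all carried across; the extra hypothesis that every $f_A$ is an isomorphism is exactly what upgrades preservation to reflection, allowing the two inequalities to be pulled back from $\sigma$ to $\rho$. Everything else — the description of objects and the concrete action on relations — is a routine unwinding of the comprehension completion and of the $\Ef{-}$ construction.
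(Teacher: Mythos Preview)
Your proposal is correct and follows essentially the same route as the paper's own proof: first observe that the fibrewise isomorphisms $f_A$ restrict to isomorphisms on the fibres of the comprehension completion $P_c(A,\alpha)\cong R_c(FA,f_A\alpha)$, and then use that a morphism of elementary existential doctrines with isomorphic fibre components both preserves and reflects the inequalities defining entirety and functionality, so that $\rho$ is entire functional in $P_c$ iff $f_c(\rho)$ is in $R_c$. The paper compresses all of this into two sentences, whereas you spell out faithfulness and fullness separately and make the reflection step explicit; the only minor slip is that in $P_c$ the top over $(A,\alpha)$ is $\alpha$ rather than the ambient $\top$, so entirety should read $\alpha\leq\Einv_{\pr}(\rho)$, but this does not affect the argument.
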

\begin{proof}
By hypothesis we have that $\freccia{P(A)}{f_A}{R(FA)}$ is an isomorphism, and then when we consider the comprehension completion we have again that each component $\freccia{P_c(A,\alpha)}{(f_c)_{(A,\alpha)}}{R_c(FA,f_A(\alpha))}$ an isomorphism. Since $(F,f)$ is a morphism of elementary and existential doctrines and $f$ is a natural transformation whose components are isomorphisms we have that the $\phi$ is an entire functional relation of $P_c(A,\alpha)$ if and only if $f_c(\phi)$ is an entire functional relation of $R_c(FA,f_A(\alpha))$. Therefore, by definition of regular completion, we have that the functor  $\freccia{\regularcompdoctrine{P}}{\regularcompdoctrine{F,f}}{\regularcompdoctrine{R}}$ is a full and faithful.

\end{proof}

The regular completion construction  $\regularcompdoctrine{P}$  of an elementary existential doctrine $P$ generalizes in the setting of doctrines  the regular completion  construction
$\regularcomp{\mD}$ 
 of a category $\mD$  with finite limits introduced in \cite{REC,SFEC}. In particular, 
such a regular completion  is an instance of  the $\regularcomp{-}$ completion. 

We briefly recall from \cite{REC} the construction of the category $\regularcomp{\mD}$ associate to a category $\mD$ with finite limits.

\begin{definition}
Let $\mD$ be a category with finite limits. The \bemph{regular completion} of $\mD$ is the category $\regularcomp{\mD}$ defined as follows:
\begin{itemize}
\item its objects are arrows $\frecciasopra{X}{g}{Y}$ of $\mD$;
\item an arrow \[\xymatrix{
X \ar[rr]^g &\ar[d]^{[m]}& Y\\
U \ar[rr]_f&& V
}\]
in $\regularcomp{\mD}$ is an equivalence class of arrows $\freccia{X}{m}{U}$ such that $fmg_1=fmg_2$ where $g_1$ and $g_2$ is a kernel pair of $g$. Two such arrows $m$ and $u$ are considered equivalent if $fn=fu$.
\end{itemize} 
\end{definition}


\begin{corollary}\label{corollario regular come  ex lex}
The regular  completion  $\regularcomp{\mD}$ of category with finite limits $\mD$ is equivalent to the regular completion $\regularcompdoctrine{\Psi_{\mD}}$ of the doctrine $\doctrine{\mD}{\Psi_{\mD}}$ of weak subobjects of $\mD$.
\end{corollary}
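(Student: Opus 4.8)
The plan is to identify both categories with the regular completion of the subobjects doctrine of the regular category $\regularcomp{\mD}$, using that $\regularcompdoctrine{-}$ is a reflection by Theorem~\ref{mainreg}. By definition one has $\regularcompdoctrine{\Psi_{\mD}}=\Ef{(\Psi_{\mD})_c}$, so the point is to compare $\Ef{(\Psi_{\mD})_c}$ with $\regularcomp{\mD}$. First I would recall from \cite{REC} the structural features of the canonical embedding $\freccia{\mD}{\gamma}{\regularcomp{\mD}}$: it is full, faithful and preserves finite limits; the objects in its image are regular projective; and every object of $\regularcomp{\mD}$ is covered by a regular epimorphism out of some $\gamma Z$. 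Moreover, since $\regularcomp{\mD}$ is a \emph{regular} (not exact) completion, each of its objects is the image of an arrow $\gamma g$ with $\freccia{X}{g}{Y}$ in $\mD$, hence is a subobject of a projective $\gamma Y$.

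Next I would exhibit a $1$-cell of $\EED$, namely $\freccia{\Psi_{\mD}}{(\gamma,b)}{\Sub_{\regularcomp{\mD}}}$, whose functor part is $\gamma$ and whose natural transformation $b_A$ sends a weak subobject $[h]$ of $A$ to the subobject $\operatorname{im}(\gamma h)$ of $\gamma A$. Preservation of the elementary structure is immediate because $\gamma$ is lex, and preservation of the existential structure follows since on both sides $\exists$ is computed by direct image and $\operatorname{im}(\gamma(fh))=\Einv_{\gamma f}(\operatorname{im}(\gamma h))$. The crucial verification is that each $\freccia{\Psi_{\mD}(A)}{b_A}{\Sub_{\regularcomp{\mD}}(\gamma A)}$ is an isomorphism of inf-semilattices: surjectivity follows because any subobject of $\gamma A$ is covered by a projective $\gamma Z$, so by fullness of $\gamma$ it equals $\operatorname{im}(\gamma h)$ for some $\freccia{Z}{h}{A}$; and $b_A$ reflects the order because the projectivity of the $\gamma Z$ lets one lift a factorization between images to a factorization $h=h'k$ in $\mD$, i.e. to $[h]\le[h']$ in $\Psi_{\mD}(A)$.

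With $(\gamma,b)$ a fibrewise isomorphism, Lemma~\ref{lemma P ha fibre isomorfe a R} yields a full and faithful functor $\regularcompdoctrine{\Psi_{\mD}}\to\regularcompdoctrine{\Sub_{\regularcomp{\mD}}}$. Since the embedding $\Reg\hookrightarrow\EED$, $\mathcal R\mapsto\Sub_{\mathcal R}$, is $2$-fully faithful (regular functors are exactly the morphisms of subobjects doctrines), the biadjunction of Theorem~\ref{mainreg} is a reflection and its counit is an equivalence $\regularcompdoctrine{\Sub_{\regularcomp{\mD}}}\simeq\regularcomp{\mD}$. Composing, I obtain a full and faithful functor $\regularcompdoctrine{\Psi_{\mD}}\to\regularcomp{\mD}$ carrying an object $(A,[h])$ of $\Ef{(\Psi_{\mD})_c}$ to the subobject $\operatorname{im}(\gamma h)\hookrightarrow\gamma A$.

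Finally I would check essential surjectivity: every object $\freccia{X}{g}{Y}$ of $\regularcomp{\mD}$ is isomorphic to the image $\operatorname{im}(\gamma g)\hookrightarrow\gamma Y$ (by the last observation of the first paragraph), which is the value at $(Y,[g])$ of the functor just constructed; hence the functor is essentially surjective and therefore an equivalence. I expect the main obstacle to be the fibrewise isomorphism $b_A$, that is, the identification of subobjects of a $\mD$-object inside $\regularcomp{\mD}$ with weak subobjects of that object in $\mD$, which rests squarely on the projectivity and projective-cover properties of $\gamma(\mD)$ from \cite{REC}. As an alternative I would mention the direct comparison matching a Carboni--Vitale arrow $\freccia{g}{}{f}$ — a map coequalizing the kernel pair of $g$ and factoring through $f$ — with an entire functional relation in $(\Psi_{\mD})_c$ between $(Y,[g])$ and $(V,[f])$; this route is more computational, and the reflection argument above is the shorter path.
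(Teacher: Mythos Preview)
Your argument is correct, but it proceeds along a genuinely different route from the paper's. The paper's proof is a short universal-property comparison: it observes that the embedding $\mD\hookrightarrow\regularcompdoctrine{\Psi_{\mD}}$ is lex, and that any lex functor $L\colon\mD\to\mR$ into a regular category extends \emph{uniquely} to an elementary existential morphism $(L,l)\colon\Psi_{\mD}\to\Sub_{\mR}$ via $l([f]):=\Einv_{L(f)}(\top)$; hence by Theorem~\ref{mainreg} $\regularcompdoctrine{\Psi_{\mD}}$ enjoys the same universal property as $\regularcomp{\mD}$.

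Your approach instead builds the equivalence concretely: you construct a fibrewise isomorphism $(\gamma,b)\colon\Psi_{\mD}\to\Sub_{\regularcomp{\mD}}$ using the projective-cover description of $\regularcomp{\mD}$ from \cite{REC}, invoke Lemma~\ref{lemma P ha fibre isomorfe a R} to obtain full faithfulness of the induced functor, pass through the reflection $\regularcompdoctrine{\Sub_{\regularcomp{\mD}}}\simeq\regularcomp{\mD}$, and then verify essential surjectivity by hand. This is more work but yields as a byproduct the explicit identification $\Psi_{\mD}(A)\cong\Sub_{\regularcomp{\mD}}(\gamma A)$, which is in fact the content of Carboni's characterization of $\regularcomp{\mD}$ via projectives that the paper later cites in Theorem~\ref{theorema unicita weak subobject doctrine}. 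The paper's route is shorter and avoids importing structural facts about $\regularcomp{\mD}$ from \cite{REC}; yours makes the comparison object-by-object explicit and shows precisely how the alternative direct matching you mention at the end would unfold.
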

\begin{proof}
This follows from Theorem~\ref{mainreg} by assigning to a lex category $\mD$  the regular completion  $\regularcompdoctrine{\Psi_{\mD}}$ of its weak subobjects doctrine after observing that the embedding of $\mD$ into $\regularcompdoctrine{\Psi_{\mD}}$ is a finite limit functor.  Moreover, any  finite limit functor $L$ from $\mD$ into a regular category $\mR$ can be completed {\it uniquely} to an elementary existential doctrine
morphism $(L,l)$ from $\Psi_{\mD}$ to the subobjects doctrine of $\mR$ by defining $l(f)\equiv\ \exists_{L(f)}(\top)$, namely the image of $L(f)$ for any $[f]$ in
  the fibre of  $\Psi_D$, since
$f=\exists_f(\top)$ and $l$ must preserve existential quantifiers. \end{proof}

\begin{lemma}\label{lemma tutti gli oggeti della Reg(Gr) sono nella forma (A,a)---->(B,T)}
Let $\doctrine{\mD}{P}$ be a conjunctive doctrine such that the base category has finite limits. The category $\mG_P$ has finite limits, and every object of $\regularcomp{\mG_P}$ is isomorphic to one of the form $\frecciasopra{(A,\alpha)}{f}{(B,\top_B)}$.
\end{lemma}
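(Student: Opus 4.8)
The plan is to treat the two assertions in turn.

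For the finite limits of $\mG_P$, the idea is that the forgetful functor $U\colon \mG_P\to\mD$ creates them, using that each fibre $P(A)$ is an inf-semilattice and each reindexing $P_f$ preserves finite meets. Concretely I would exhibit the terminal object as $(\terminalobject,\top_{\terminalobject})$, the binary product of $(A,\alpha)$ and $(B,\beta)$ as $(A\times B,\ P_{\pr_A}(\alpha)\wedge P_{\pr_B}(\beta))$, and, for arrows $f\colon (A,\alpha)\to(C,\gamma)$ and $h\colon (B,\beta)\to(C,\gamma)$, their pullback as $(A\times_C B,\ P_p(\alpha)\wedge P_q(\beta))$ where $p,q$ are the projections of the pullback of the underlying arrows $f,h$ in $\mD$. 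In every case the universal property in $\mG_P$ reduces to the corresponding one in $\mD$ together with the defining inequality $\gamma\leq P_m(\delta)$ of $\mG_P$-arrows, which holds because the displayed predicate is a greatest lower bound in the fibre; terminal object, binary products and pullbacks then give all finite limits.

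For the second assertion, take an arbitrary object of $\regularcomp{\mG_P}$, namely an arrow $g\colon (X,\xi)\to(Y,\eta)$ of $\mG_P$ (so $\xi\leq P_g(\eta)$), and I would show it is isomorphic to $g\colon (X,\xi)\to(Y,\top_Y)$. The latter is a legitimate arrow of $\mG_P$ since $\xi\leq \top_X=P_g(\top_Y)$, and it is patently of the required form $(A,\alpha)\to(B,\top_B)$ with $A=X$, $\alpha=\xi$, $B=Y$. The mutually inverse isomorphisms are both represented by the class $[\id_X]$: since $U$ creates the kernel pair (first part), the kernel pair of either structure map $g$ has the same underlying projections $g_1,g_2$ as the kernel pair of $g$ in $\mD$, so in each direction the morphism condition reads $g\,\id_X\, g_1=g\,\id_X\, g_2$, i.e. $g g_1=g g_2$, which holds; and $\id_X$ is a genuine $\mG_P$-arrow $(X,\xi)\to(X,\xi)$. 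Because composition in the regular completion sends $[n],[m]$ to $[nm]$ and the identity is the class of the identity arrow, the two composites are $[\id_X\id_X]=[\id_X]$, the respective identities, whence the two objects are isomorphic.

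The conceptual content is that morphisms of $\regularcomp{-}$, and their defining equivalence, depend only on the underlying $\mD$-arrows and on the domain objects, never on the predicate decorating the codomain; hence that predicate may be freely inflated to $\top$. I do not anticipate a real obstacle: the only points needing care are that the kernel pairs used to test the two morphism conditions have the same underlying $\mD$-arrows, which is precisely what the creation of finite limits from the first part supplies, and that $[\id_X]$ is indeed the identity of the regular completion so that the composites collapse to identities. Both are routine once the finite-limit structure of $\mG_P$ is available.
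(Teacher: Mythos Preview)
Your proposal is correct and follows essentially the same approach as the paper: the paper, too, obtains finite limits in $\mG_P$ from those of $\mD$ together with the inf-semilattice structure of the fibres, and exhibits the desired isomorphism via $[\id_A]$. You simply spell out more of the routine verifications (the explicit limit formulas and the kernel-pair check) than the paper, which leaves these as ``direct to check''.
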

\begin{proof}
The fact that $\mG_P$ has finite limits follows because the base category $\mD$ has finite limits and every fibre of $P$ is an inf-semilattice.
Moreover, if we consider an object $\frecciasopra{(A,\alpha)}{f}{(B,\beta)}$ of the category $\regularcomp{\mG_P}$, then it is direct to check that 
\[\xymatrix{
(A,\alpha) \ar[rr]^f &\ar[d]^{[\id_A]}& (B,\beta)\\
(A,\alpha) \ar[rr]_f&& (B,\top_B)
}\]
is an arrow of $\regularcomp{\mG_P}$ and that is an isomorphism.
\end{proof}
\begin{remark}\label{remark tutti gli oggetti della reg sono ((A,T),E(T)}
Let us consider the weak subobjects doctrine $\doctrine{\mG_P}{\Psi_{\mG_P}}$, where $P$ is a conjunctive doctrine. Then every object of $\regularcompdoctrine{\Psi_{\mG_P}}$ is of the form $((B,\beta),\Einv_f(\top))$, with $\freccia{(A,\alpha)}{f}{(B,\beta)}$, since $\Psi_{\mG_P}$ is the full existential completion of the trivial doctrine $\trdc_{\mG_P}$. Notice that it is direct to check that $((B,\beta),\Einv_f(\top))\cong ((B,\top),\Einv_f(\top))$ via the isomorphism $\Einv_{\angbr{f}{f}}(\top)$. This is the correspondent result of Lemma \ref{lemma tutti gli oggeti della Reg(Gr) sono nella forma (A,a)---->(B,T)}, since $\regularcomp{\mG_P}\equiv \regularcompdoctrine{\Psi_{\mG_P}}$.
\end{remark} 
From  what said in example \ref{theorem weak sub is a existential comp}, we conclude:
\begin{corollary}\label{corollary regular comp of weak sub 2}
The regular  completion  of  $\doctrine{\mD}{\Psi_{\mD}}$ of a category with finite limits is the regular completion of a full generalized existential completion.
\end{corollary}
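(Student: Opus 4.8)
The plan is to observe that the weak subobjects doctrine $\Psi_{\mD}$ is \emph{itself} an instance of the full generalized existential completion, so that the assertion becomes immediate once the regular completion functor is applied.

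First I would recall from Example \ref{theorem weak sub is a existential comp} that, since $\mD$ has finite limits, the class $\Lambda_{\mD}$ of \emph{all} morphisms of $\mD$ is a \lclass: it contains the identities, is closed under composition, and all its pullbacks exist and lie again in $\Lambda_{\mD}$ because $\mD$ is finitely complete. With respect to this class, the doctrine of weak subobjects $\Psi_{\mD}$ coincides with the $\Lambda_{\mD}$-weak subobjects doctrine $\Psi_{\Lambda_{\mD}}$ of Definition \ref{definition lambda-weak sub}, whose left adjoints are computed by post-composition. By Theorem \ref{example lambda-weak-sub} the latter is the generalized existential completion of the trivial doctrine $\doctrine{\mD}{\trdc}$; and since $\Lambda_{\mD}$ consists of all base morphisms, this generalized completion is precisely the \emph{full} existential completion, that is $\Psi_{\mD} \cong \fullcompex{\trdc}$.

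Next I would check that $\Psi_{\mD}$ is an elementary existential doctrine, so that the regular completion functor $\regularcompdoctrine{-}$ of Theorem \ref{mainreg} applies: it is existential by Example \ref{example doctrines}(2), and elementary because the full class $\Lambda_{\mD}$ contains the diagonals, which supply the required equality predicates (compare the Remark following Theorem \ref{theorem elementary existential completion}). Applying $\regularcompdoctrine{-}$ to the identification $\Psi_{\mD} \cong \fullcompex{\trdc}$ then yields $\regularcompdoctrine{\Psi_{\mD}} = \regularcompdoctrine{\fullcompex{\trdc}}$, exhibiting it as the regular completion of a full generalized existential completion. There is essentially no obstacle here: the entire content is the identification $\Psi_{\mD} \cong \fullcompex{\trdc}$ already established in Example \ref{theorem weak sub is a existential comp}, and the rest is just unwinding the definition of $\regularcompdoctrine{-}$.
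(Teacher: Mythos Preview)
Your proposal is correct and takes essentially the same approach as the paper: both arguments simply invoke Example~\ref{theorem weak sub is a existential comp} to identify $\Psi_{\mD}$ with the full existential completion $\fullcompex{\trdc}$ of the trivial doctrine, from which the statement follows immediately. Your additional verification that $\Psi_{\mD}$ is elementary existential (so that $\regularcompdoctrine{-}$ applies) is a helpful sanity check, but the paper leaves this implicit.
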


Notice that for any regular theory $\theory$ according to Johnstone's definition \cite{SAE}~\footnote{Such a notion of regular logic is indeed the internal language
of existential elementary doctrines in the sense of \cite{MCBDTTCIPT}. In   \cite{MCBDTTCIPT} the internal language of regular categories is presented as a dependent type theory.}
%
 the construction of its syntactic category $\mathfrak{C}_{\theory}$  in the sense of \cite{SAE} is equivalent to the regular completion $\regularcompdoctrine{LT_{\theory}}$ of the syntactic elementary existential doctrine $\doctrine{\mC_{\theory}}{LT}$  associated to the theory $\theory$, defined in Example \ref{example doctrines}. Therefore it is natural to present and consider the notion of \emph{regular Morita-equivalent} for elementary existential doctrines defined as follows:
  
\begin{definition}\label{def reg. morita equivalent}
Two elementary existential doctrines $P$ and $P'$ are said \bemph{regular Morita-equivalent} if their regular completions are equivalent, i.e. when $\regularcompdoctrine{P}\equiv \regularcompdoctrine{P'}$.
\end{definition}
 
Hence, by using  Definition \ref{def reg. morita equivalent}  we can state:
\begin{theorem}\label{theorem nec. regular morita equiv}
Let $\doctrine{\mC}{P}$ be a elementary and existential doctrine. If $\regularcompdoctrine{P}\equiv\regularcomp{\mD} $, then $P$ is regular-Morita equivalent to a full existential completion whose base is $\mD$.
\end{theorem}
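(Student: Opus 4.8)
The plan is to exhibit the weak subobjects doctrine $\Psi_{\mD}$ of $\mD$ as the full existential completion witnessing the claimed regular Morita-equivalence, and then simply to chain together the identifications already assembled in this section. The key point is that the conclusion only asks us to produce \emph{some} full existential completion with base $\mD$ whose regular completion agrees with $\regularcompdoctrine{P}$, and $\Psi_{\mD}$ is the obvious candidate.

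First I would recall from Example~\ref{theorem weak sub is a existential comp} that the weak subobjects doctrine $\doctrine{\mD}{\Psi_{\mD}}$ of the finite limit category $\mD$ is precisely the full existential completion of the trivial doctrine $\doctrine{\mD}{\trdc}$; in particular $\Psi_{\mD}$ is a full existential completion whose base is exactly $\mD$. Next I would invoke Corollary~\ref{corollario regular come ex lex}, which states that the category-theoretic regular completion $\regularcomp{\mD}$ of the lex category $\mD$ is equivalent to the doctrinal regular completion $\regularcompdoctrine{\Psi_{\mD}}$ of its weak subobjects doctrine.

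Combining this with the hypothesis $\regularcompdoctrine{P}\equiv\regularcomp{\mD}$ then yields the chain
\[
\regularcompdoctrine{P}\ \equiv\ \regularcomp{\mD}\ \equiv\ \regularcompdoctrine{\Psi_{\mD}}.
\]
By Definition~\ref{def reg. morita equivalent} of regular Morita-equivalence, the equivalence $\regularcompdoctrine{P}\equiv\regularcompdoctrine{\Psi_{\mD}}$ is exactly the assertion that $P$ and $\Psi_{\mD}$ are regular Morita-equivalent, and since $\Psi_{\mD}$ is a full existential completion with base $\mD$ this is precisely the statement to be proved.

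I do not anticipate a genuine obstacle here: this theorem is the ``necessity'' half of the characterization announced in the introduction, and all of its content is already packaged into Corollary~\ref{corollario regular come ex lex} together with Example~\ref{theorem weak sub is a existential comp}. The only subtlety to verify is that the equivalences in play are of the correct type, namely equivalences of the completion \emph{categories} as required by Definition~\ref{def reg. morita equivalent}; since both $\regularcompdoctrine{P}\equiv\regularcomp{\mD}$ and Corollary~\ref{corollario regular come ex lex} furnish such equivalences of categories, the argument is a direct assembly rather than a construction.
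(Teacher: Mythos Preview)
Your proposal is correct and follows essentially the same approach as the paper: the paper's proof simply cites Corollary~\ref{corollario regular come  ex lex} together with Corollary~\ref{corollary regular comp of weak sub 2}, the latter being nothing more than the observation (which you draw directly from Example~\ref{theorem weak sub is a existential comp}) that $\Psi_{\mD}$ is a full existential completion. Your argument is slightly more explicit in spelling out the chain of equivalences and invoking Definition~\ref{def reg. morita equivalent}, but the content is identical.
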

\begin{proof} It follows from  Corollaries \ref{corollary regular comp of weak sub 2} and \ref{corollario regular come  ex lex}. \end{proof}
Moreover, the notion of regular Morita-equivalence among weak subobjects doctrines coincides with their equivalence as doctrines:
\begin{theorem}\label{theorema unicita weak subobject doctrine}
Let $\doctrine{\mC}{\Psi_{\mC}}$ and $\doctrine{\mD}{\Psi_{\mD}}$ be two weak subobjects doctrines whose base categories $\mC$ and $\mD$ have finite limits. Then $\Psi_{\mC}$ and $\Psi_{\mD}$ are regular Morita-equivalent if and only if $\Psi_{\mC}\cong \Psi_{\mD}$.
\end{theorem}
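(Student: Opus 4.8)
The forward implication is immediate from functoriality: applying the $2$-functor $\regularcompdoctrine{-}$ of Theorem~\ref{mainreg} to an isomorphism $\Psi_{\mC}\cong\Psi_{\mD}$ produces an equivalence $\regularcompdoctrine{\Psi_{\mC}}\equiv\regularcompdoctrine{\Psi_{\mD}}$, so $\Psi_{\mC}$ and $\Psi_{\mD}$ are regular Morita-equivalent. The whole content of the statement lies in the converse, and the plan is as follows. Assume $\regularcompdoctrine{\Psi_{\mC}}\equiv\regularcompdoctrine{\Psi_{\mD}}$. By Corollary~\ref{corollario regular come  ex lex} we have $\regularcompdoctrine{\Psi_{\mC}}\equiv\regularcomp{\mC}$ and $\regularcompdoctrine{\Psi_{\mD}}\equiv\regularcomp{\mD}$, so the hypothesis becomes an equivalence $\regularcomp{\mC}\equiv\regularcomp{\mD}$ between the Carboni--Vitale regular completions of the lex categories $\mC$ and $\mD$.

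The first step is to recover the generating lex categories from their regular completions. For this I would invoke the reconstruction theorem of \cite{REC}: the canonical embedding $\mC\hookrightarrow\regularcomp{\mC}$ identifies $\mC$, up to equivalence, with the full subcategory of regular projective objects of $\regularcomp{\mC}$, that is, those $P$ for which $\mathrm{Hom}(P,-)$ carries regular epimorphisms to surjections, and $\regularcomp{\mC}$ has enough such objects. The second step is the observation that a bare equivalence of the underlying categories of two regular categories automatically preserves and reflects regular projectivity: regular epimorphisms are exactly the coequalizers of their own kernel pairs, a condition phrased purely through limits and colimits and hence stable under any equivalence, and projectivity is the associated lifting property, which transports along the equivalence together with its quasi-inverse. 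Therefore the equivalence $\regularcomp{\mC}\equiv\regularcomp{\mD}$ restricts to an equivalence between the full subcategories of regular projectives, yielding an equivalence $F\colon\mC\equiv\mD$ of the base categories.

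The final step upgrades this to an isomorphism of weak subobjects doctrines. Being an equivalence, $F$ preserves finite limits, and for each object $A$ of $\mC$ it induces an equivalence of slice categories $\mC/A\simeq\mD/FA$; passing to poset reflections gives isomorphisms $\Psi_{\mC}(A)\cong\Psi_{\mD}(FA)$. These are natural in $A$ because reindexing in either doctrine is computed by pullback and $F$ preserves pullbacks, so they assemble into an isomorphism of doctrines $\Psi_{\mC}\cong\Psi_{\mD}$, which is exactly what is claimed.

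I expect the genuine obstacle to be concentrated in the first two steps. Regular Morita-equivalence only supplies a plain equivalence of the regular completions, with no a priori control over the embedded base categories; the argument works only because the generating lex category is not extra structure but is canonically pinned down inside $\regularcomp{\mC}$ as its regular projectives, and because projectivity is an intrinsically categorical notion invariant under equivalence. Once this is secured, the passage from the equivalence $\mC\equiv\mD$ back to the doctrinal isomorphism $\Psi_{\mC}\cong\Psi_{\mD}$ is routine, since $\Psi_{(-)}$ is a purely functorial construction on lex categories.
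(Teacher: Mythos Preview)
Your proposal is correct and follows essentially the same route as the paper's own proof: both reduce to $\regularcomp{\mC}\equiv\regularcomp{\mD}$ via Corollary~\ref{corollario regular come  ex lex} and then invoke Carboni's characterization of the generating lex category inside its reg/lex completion as the full subcategory of regular projectives. The paper's argument is a one-line citation of \cite[Lem.~5.1]{SFEC} for this last step, whereas you spell out explicitly why regular projectivity is equivalence-invariant and how the resulting equivalence $\mC\equiv\mD$ induces the doctrine isomorphism; but the underlying strategy is identical.
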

\begin{proof}
It follows by the notion of regular Morita-equivalence,   Corollary \ref{corollario regular come  ex lex} and the characterization of regular completions in terms of regular projectives due to Carboni \cite[Lem. 5.1]{SFEC}, 
\end{proof}
The main purpose of this section it to show that, for a given elementary existential doctrine $P$, the condition of being regular-Morita equivalent to a full existential completion is not
only necessary but also sufficient for obtaining  $\regularcompdoctrine{P}$ equivalent to the regular completion of a finite limit category.

In particular, we are going to show that, if we consider a conjunctive doctrine $\doctrine{\mC}{P}$ where $\mC$ is a category with finite limits and we consider the \lclass\ of all the morphisms of $\mC$,
the regular completion of  the full existential completion $\fullcompex{P}$ of $P$ is equivalent to the  regular category associated to the Grothendieck category $\mG_{P}$ of to the doctrine $P$
$$\regularcompdoctrine{\fullcompex{P}}\equiv \regularcomp{\mG_{P}}.$$

\noindent
To this purpose we define the following canonical morphism:
\begin{definition}
Given a full existential doctrine $\doctrine{\mC}{P}$ on a category $\mC$ with finite limits and a fibred subdoctrine $P'$ of $P$, we define a morphism of full existential doctrines
$$\freccia{(\Psi_{\mG_{P'}})}{(N,n)}{P}$$ 
given by 
\begin{itemize}
\item $N(A,\alpha)=A$ for every object $(A,\alpha)$ of $\mG_{P'}$;
\item $N(h)=h$ for every arrow $\freccia{(B,\beta)}{h}{(A,\alpha)}$;
\item $n(g)=\exists_g(\gamma)$ for every element $\freccia{(C,\gamma)}{g}{(A,\alpha)}$ of  $(\Psi_{\mG_{P'}})((A,\alpha))$.
\end{itemize}
This induces a morphism 
$$\freccia{(\Psi_{\mG_{P'}})_c}{(N,n)_c}{P_c}$$ 
defined  as follows:

\begin{itemize}
\item $N_c((A,\alpha),f)=(A,\Einv_f(\beta))$, where $\freccia{(B,\beta)}{f}{(A,\alpha)}$;
\item $\freccia{(A,\exists_f(\beta))}{N_c(h)=h}{(C,\exists_l(\sigma))}$ for any  arrow $\freccia{((A,\alpha),f)}{h}{((C,\gamma),l)}$;
\item $n_c(g)=\Einv_g(\gamma)$ for any given  element $\freccia{(C,\gamma)}{g}{(A,\alpha)}$ of $(\Psi_{\mG_{P'}})_c((A,\alpha),f)$.
\end{itemize}
\end{definition}
We also recall that the action of the functor $\freccia{\regularcompdoctrine{\Psi_{\mG_{P}} }}{\regularcompdoctrine{N,n}}{\regularcompdoctrine{\fullcompex{P}}}$ on the objects of $\regularcompdoctrine{\Psi_{\mG_{P}} }$ is given essentially by the action of the functor $N_c$, i.e. we have that $\regularcompdoctrine{N,n}((A,\alpha),f)=N_c((A,\alpha),f)$, while the action of $\regularcompdoctrine{N,n}$ on morphisms is given by the action of $n_c$, i.e. $\regularcompdoctrine{N,n}(g)=n_c(g)$.

Combining the previous results, we obtain the following theorem.
\begin{theorem}\label{theorem car. regular comp via ex. comp.}
Let $\doctrine{\mC}{P}$ be a conjunctive doctrine, whose base category $\mC$ has finite limits. Then the regular functor $\freccia{\regularcompdoctrine{\Psi_{\mG_{P}} }}{\regularcompdoctrine{N,n}}{\regularcompdoctrine{\fullcompex{P}}}$ is an equivalence, so we have
\[\regularcompdoctrine{\fullcompex{P}} \equiv \regularcompdoctrine{\Psi_{\mG_P}}\equiv\regularcomp{\mG_{P}}.\]
\end{theorem}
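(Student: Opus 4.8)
The plan is to split the statement into two independent equivalences and to treat the nontrivial one, $\regularcompdoctrine{\fullcompex{P}} \equiv \regularcompdoctrine{\Psi_{\mG_P}}$, by exhibiting an explicit quasi-inverse to $\regularcompdoctrine{N,n}$. First I would record the easy half: by Lemma~\ref{lemma tutti gli oggeti della Reg(Gr) sono nella forma (A,a)---->(B,T)} the Grothendieck category $\mG_P$ has finite limits, so $\Psi_{\mG_P}$ is the full existential completion of the trivial doctrine on $\mG_P$ (Theorem~\ref{example lambda-weak-sub}), it is elementary and existential, and Corollary~\ref{corollario regular come  ex lex} gives $\regularcompdoctrine{\Psi_{\mG_P}} \equiv \regularcomp{\mG_P}$. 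This disposes of the right-hand equivalence, leaving only the claim that $\regularcompdoctrine{N,n}$ is an equivalence.

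The key move is to use the canonical comparison in the opposite direction. Since $\fullcompex{P}$ is the full existential completion of $P$, point (2) of Theorem~\ref{theorem caract. gen. existential completion} supplies a morphism $(\ovln{L},\ovln{l})\colon \fullcompex{P}\to \Psi_{\mG_P}$ whose components $\ovln{l}_A\colon \fullcompex{P}(A)\to \Psi_{\mG_P}(A,\top)$ are all isomorphisms. Using that $\ovln{l}$ preserves existential structure and that equality predicates are existential quantifications along diagonals (so that $\ovln{l}(\delta_A)=\delta_{(A,\top)}$), I would check that $(\ovln{L},\ovln{l})$ is a morphism of elementary existential doctrines. Then Lemma~\ref{lemma P ha fibre isomorfe a R} immediately yields that $\regularcompdoctrine{\ovln{L},\ovln{l}}$ is full and faithful. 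Essential surjectivity follows from the normal form of objects: by Remark~\ref{remark tutti gli oggetti della reg sono ((A,T),E(T)} every object of $\regularcompdoctrine{\Psi_{\mG_P}}$ is of the form $((B,\top),\exists_f(\top))$, and since $\ovln{l}_B$ is surjective this is $\regularcompdoctrine{\ovln{L},\ovln{l}}(B,\psi)$ for a suitable $\psi\in \fullcompex{P}(B)$. Hence $\regularcompdoctrine{\ovln{L},\ovln{l}}$ is an equivalence.

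It then remains to identify $\regularcompdoctrine{N,n}$ with its quasi-inverse. I would verify that the composite $(N,n)\circ(\ovln{L},\ovln{l})$ is the \emph{identity} $1$-cell on $\fullcompex{P}$: on bases $N\ovln{L}=\id_{\mC}$, and on fibres, writing a generic element as $\exists_g\eta_B(\beta)$ by Remark~\ref{remark prenex normal form general}, one has $\ovln{l}_A(\exists_g\eta_B(\beta))=\exists_g(\comp{\beta})$ (the element $g\colon(B,\beta)\to(A,\top)$ of $\Psi_{\mG_P}(A,\top)$), which $n$ sends back to $\exists_g(\beta)=\exists_g\eta_B(\beta)$. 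Since $\regularcompdoctrine{-}$ is a $2$-functor (Theorem~\ref{mainreg}), applying it gives $\regularcompdoctrine{N,n}\circ\regularcompdoctrine{\ovln{L},\ovln{l}}=\id$. As $\regularcompdoctrine{\ovln{L},\ovln{l}}$ is an equivalence, this forces $\regularcompdoctrine{N,n}$ to be a quasi-inverse of it, and therefore an equivalence; composing with the right-hand equivalence from the first paragraph produces the displayed chain.

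The main obstacle I expect is the bookkeeping in the second and third paragraphs rather than any conceptual difficulty: one must be careful that $(\ovln{L},\ovln{l})$ (and $(N,n)$) genuinely live in the $2$-category $\EED$ on which $\regularcompdoctrine{-}$ is a $2$-functor, i.e. that they preserve the elementary structure, which rests on $\fullcompex{P}$ having comprehensive diagonals (Corollary~\ref{corollary full ex completion has comprehensive diagonals}) and on $\ovln{l}$ being an isomorphism intertwining the equality predicates, and that the composite $(N,n)\circ(\ovln{L},\ovln{l})$ is the identity on the nose (or at least $2$-isomorphic to it), so that the $2$-functoriality argument applies verbatim.
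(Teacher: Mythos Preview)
Your proposal is correct and follows essentially the same approach as the paper: construct $(\ovln{L},\ovln{l})$ from Theorem~\ref{theorem caract. gen. existential completion}, apply Lemma~\ref{lemma P ha fibre isomorfe a R} for full faithfulness, use Remark~\ref{remark tutti gli oggetti della reg sono ((A,T),E(T)} for essential surjectivity, and then verify that $(N,n)\circ(\ovln{L},\ovln{l})=\id$ (the paper notes this follows from fullness of comprehensions) to conclude that $\regularcompdoctrine{N,n}$ is a quasi-inverse. Your explicit discussion of why $(\ovln{L},\ovln{l})$ and $(N,n)$ preserve the elementary structure is a useful point that the paper leaves implicit.
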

\begin{proof}
By Theorem \ref{theorem caract. gen. existential completion} there exists a morphism $\freccia{\fullcompex{P}}{(\ovln{L},\ovln{l})}{\Psi_{\mG_P}}$ of full existential doctrines where $\freccia{\mC}{\ovln{L}}{\mG_P}$ is full and faithful and for every object $A$ of $\mC$, $\ovln{l}_A$ is an isomorphism between the fibres $\fullcompex{P}(A)\cong \Psi_{\mG_{P}}(A,\top)$. Therefore, by Lemma \ref{lemma P ha fibre isomorfe a R}, we have that $\freccia{\regularcompdoctrine{\fullcompex{P}} }{\regularcompdoctrine{\ovln{L},\ovln{l}}}{\regularcompdoctrine{\Psi_{\mG_P}}}$ is an inclusion of categories, i.e. the functor $\regularcompdoctrine{\ovln{L},\ovln{l}}$ is full and faithful. Now, since by Remark \ref{remark tutti gli oggetti della reg sono ((A,T),E(T)} every object of $\regularcompdoctrine{\Psi_{\mG_P}}$ is isomorphic to one of the form $((B,\top),\Einv_f(\top))=\regularcompdoctrine{\ovln{L},\ovln{l}}(B,\exists_f(\alpha))$ we can conclude that $$\regularcompdoctrine{\fullcompex{P}} \equiv \regularcompdoctrine{\Psi_{\mG_P}}\equiv\regularcomp{\mG_{P}}.$$
Finally, since comprehensions are full, we have that $\regularcompdoctrine{N,n}\regularcompdoctrine{\ovln{L},\ovln{l}}= \id$  and also $\regularcompdoctrine{\ovln{L},\ovln{l}}\regularcompdoctrine{N,n}\cong \id$.

\end{proof}
\begin{remark}
Observe that for any conjunctive doctrine $\doctrine{\mC}{P}$, whose base category $\mC$ has finite limits,
we have that $(N,n) (\ovln{L},\ovln{l})=\id$ by fullness of comprehensions but not in general that  $(\ovln{L},\ovln{l})(N,n) =\id$. The composition $(\ovln{L},\ovln{l})(N,n)$
 becomes an identity after applying
the regular completion construction for what observed in Remark~\ref{remark tutti gli oggetti della reg sono ((A,T),E(T)}.
\end{remark}


%

\begin{theorem}\label{tchar1}
Let $\doctrine{\mC}{P}$ be a full existential doctrine, whose base category $\mC$ has finite limits. 
Let $ \doctrine{\mC}{P'}$ be a conjunctive, fibred subdoctrine of $P$.
If the canonical arrow $\freccia{\regularcompdoctrine{\Psi_{\mG_{P'}} }}{\regularcompdoctrine{N,n}}{\regularcompdoctrine{P}}$ is an equivalence preserving the canonical embeddings of $P'$ into $P$ and $\Psi_{\mG_{P'}}$, namely  it forms an equivalence with an arrow
 $\freccia{\regularcompdoctrine{P}}{(H,h)} {\regularcompdoctrine{\Psi_{\mG_{P'}} }}$ making the diagram


\[\xymatrix@+2pc{
P'\ar[r] \ar[rd]& \regularcompdoctrine{P}\ar[d]^{(H,h)}\\
&\regularcompdoctrine{\Psi_{\mG_{P'}} }
}\]
commute, then 
$P$ is the full existential completion of $P'$, i.e. $P=\fullcompex{P'}$.
\end{theorem}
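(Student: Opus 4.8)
The plan is to reduce the statement to condition (2) of Corollary~\ref{corollary P_cI iso weaksub}: it suffices to produce an isomorphism $P_{c}I^{\op}\cong \Psi_{\mG_{P'}}$ of full existential doctrines over $\mG_{P'}$ commuting with the canonical injections of $P'$ into $P$ and into $\Psi_{\mG_{P'}}$, where $\freccia{\mG_{P'}}{I}{\mG_P}$ is the usual inclusion. First observe that, being full existential, $P$ has left adjoints along all diagonals and is therefore elementary, with equality predicates $\delta_A=\Einv_{\Delta_A}(\top_A)$; hence $\regularcompdoctrine{P}=\Ef{P_c}$ is defined and the hypothesis is meaningful.

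The key tool I would use is the reconstruction of the fibres of an elementary existential doctrine from the subobjects of its regular completion. For any elementary existential doctrine $R$ with canonical full and faithful embedding $\freccia{\mG_R}{J_R}{\regularcompdoctrine{R}}$, sending a predicate $\gamma\leq\alpha$ to the (monic) comprehension $\comp{\gamma}$ yields a natural isomorphism of inf-semilattices
\[
\Sub_{\regularcompdoctrine{R}}\big(J_R(A,\alpha)\big)\;\cong\; R_c(A,\alpha),
\]
compatible with reindexing (pullback of subobjects corresponds to substitution) and, because images in a regular category compute existential quantification, with the left adjoints $\Einv$; this is part of the standard analysis of the $\Ef{-}$ construction in \cite{TECH,UEC,SFEC}. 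Applying this to $R=P$ and restricting along $I$ gives an isomorphism $\Sub_{\regularcompdoctrine{P}}\circ J_{P}\circ I^{\op}\cong P_cI^{\op}$ of full existential doctrines over $\mG_{P'}$. Applying it to $R=\Psi_{\mG_{P'}}$, and using $\regularcompdoctrine{\Psi_{\mG_{P'}}}\equiv\regularcomp{\mG_{P'}}$ from Theorem~\ref{theorem car. regular comp via ex. comp.}, the subobjects of the embedded object $X\in\mG_{P'}$ recover the weak subobjects $\Psi_{\mG_{P'}}(X)$, so $\Sub_{\regularcompdoctrine{\Psi_{\mG_{P'}}}}\circ J\cong \Psi_{\mG_{P'}}$, where $\freccia{\mG_{P'}}{J}{\regularcompdoctrine{\Psi_{\mG_{P'}}}}$ is the canonical lex embedding.

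I would then transport one doctrine onto the other through the assumed equivalence. The hypothesis that $\regularcompdoctrine{N,n}$ preserves the canonical embeddings means precisely that $\regularcompdoctrine{N,n}\circ J\cong J_{P}\circ I$ (equivalently, that the pseudo-inverse $(H,h)$ satisfies $(H,h)\circ J_P\circ I\cong J$). Since $\regularcompdoctrine{N,n}$ is an equivalence of regular categories, it induces, for each $(A,\alpha)$ in $\mG_{P'}$, an isomorphism of subobject posets $\Sub_{\regularcompdoctrine{\Psi_{\mG_{P'}}}}(J(A,\alpha))\cong \Sub_{\regularcompdoctrine{P}}(J_P I(A,\alpha))$, natural in $(A,\alpha)$ and preserving finite meets, inverse images, and images---hence reindexing and the left adjoints. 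Composing the three displayed isomorphisms yields an isomorphism
\[
\Psi_{\mG_{P'}}\;\cong\;P_cI^{\op}
\]
of full existential doctrines over $\mG_{P'}$, and the compatibility of the equivalence with the embeddings translates, through the fibre reconstruction, exactly into commutativity with the canonical injections of $P'$. By Corollary~\ref{corollary P_cI iso weaksub} this gives $P=\fullcompex{P'}$.

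The main obstacle I expect lies in the second paragraph: pinning down the fibre-reconstruction isomorphism $\Sub_{\regularcompdoctrine{R}}\circ J_R\cong R_c$ and checking that it is genuinely natural and that it intertwines existential quantification with images, so that the transported isomorphism lives in $\LamEX$ and not merely in $\PD$. Once this is secured, the compatibility with the $P'$-injections is a routine diagram chase from the embedding-preservation hypothesis, and the conclusion is immediate from Corollary~\ref{corollary P_cI iso weaksub}.
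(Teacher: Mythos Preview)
Your proposal is correct and follows essentially the same route as the paper: both arguments reconstruct the fibres of the doctrines from the subobject posets of their regular completions and then transport one onto the other through the assumed equivalence $\regularcompdoctrine{N,n}$. The only minor difference is the target characterization: the paper carries out the fibre reconstruction only at objects of the form $(A,\top)$ to obtain $P(A)\cong\Psi_{\mG_{P'}}(A,\top)$ and then invokes point~(2) of Theorem~\ref{theorem caract. gen. existential completion} directly, whereas you work at every $(A,\alpha)\in\mG_{P'}$ and appeal to Corollary~\ref{corollary P_cI iso weaksub}; since these two characterizations are equivalent, this is a cosmetic rather than a substantive divergence.
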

\begin{proof}
To prove the result we employ the second point of Theorem \ref{theorem caract. gen. existential completion}. Hence, we first show that $P(A)\cong \Psi_{\mG_{P'}}(A,\top)$ for every object $A$ of $\mC$. Therefore, we have that for every elementary and existential doctrine the following isomorphisms hold

\begin{equation}\label{eq PA= Sub Reg(A,T)}
P(A)\cong P_c(A,\top)\cong \Sub_{\regularcompdoctrine{P}}(A,\top)\end{equation}
and, in particular we have that 
\begin{equation}\label{eq Psi(A,T)=Sub Reg ((A,T),id)}
\Psi_{\mG_{P'}}(A,\top)\cong (\Psi_{\mG_{P'}})_c((A,\top),\id_{(A,\top)})\cong \Sub_{\regularcompdoctrine{\Psi_{\mG_{P'}} }}((A,\top),\id_{(A.\alpha)}).
\end{equation}
Now, since $\freccia{\regularcompdoctrine{\Psi_{\mG_{P'}} }}{\regularcompdoctrine{N,n}}{\regularcompdoctrine{P}}$ is an equivalence, and by definition of $N$, we have that 

\begin{equation}\label{eq sub reg P =sub reg psi}
\Sub_{\regularcompdoctrine{\Psi_{\mG_{P'}} }}((A,\top),\id_{(A.\top)})\cong \Sub_{\regularcompdoctrine{P}}(A,\top)
\end{equation}
we  conclude that 
\begin{equation}\label{equation P(A) cong Psi(A,top)}
P(A)\cong \Psi_{\mG_{P'}}(A,\top).
\end{equation}

Observe that all the previous mentioned isomorphisms preserve elements of $P'$ because they are components of canonical morphisms and  $\regularcompdoctrine{N,n}$ preserves the elements of $P'$ by hypothesis. Hence we conclude that the isomorphism \eqref{equation P(A) cong Psi(A,top)} sends every element $\alpha \in P'(A)$ to the comprehension map $\frecciasopra{(A,\alpha)}{\id_A}{(A,\top)}$.

The naturality of this family of isomorphisms follows because the  canonical injections of a weak subobjects doctrine into the completions involved, is full and faithful on the base category. In particular the crucial point is  that the functor between the base categories of the canonical injection
$$\frecciasopra{\Psi_{\mG_{P'}}}{}{\Sub_{\regularcompdoctrine{\Psi_{\mG_{P'}}}}}$$ 
is full and faithful. This holds because the weak subobjects doctrines has comprehensive diagonals.  
Therefore, we can apply the second point of Theorem \ref{theorem caract. gen. existential completion} and conclude that $P$ is the full existential completion of $P'$.

\end{proof}
\begin{remark}\label{example regula comp category with finite limits}
A first obvious example of application of theorem \ref{theorem car. regular comp  via ex. comp.}  is  the construction of the reg/lex completion 
of a finite limit category $\mD$ itself. Indeed, after recalling that $\regularcomp{\mD}\equiv\regularcompdoctrine{\Psi_{\mD}}$   from corollary~\ref{corollario regular come  ex lex}
and that  the weak subobjects doctrine is a full existential completions of the  trivial doctrine $\doctrine{\mD}{\trdc}$  from \ref{theorem weak sub is a existential comp},
we can obviously apply Theorem \ref{theorem car. regular comp via ex. comp.} to  $\Psi_{\mD}$ 
by getting
$\regularcomp{\mD}\equiv \regularcompdoctrine{\Psi_{\mD}}\equiv \regularcompdoctrine{\fullcompex{\trdc}}\equiv \regularcomp{\mG_{\trdc}}$.
This chain of equivalences does not yield more information since by Theorem \ref{theorema unicita weak subobject doctrine}  we conclude that $\mD\equiv \mG_{\trdc}$ as one can immediately check independently.
\end{remark}

Finally, we can apply Theorem \ref{theorem car. regular comp via ex. comp.}, together with Theorem \ref{theorem nec. regular morita equiv} obtaining our main result: 

\begin{theorem}\label{theorema caratterizzazione generale regual comp}
Let $\doctrine{\mC}{P}$ be an full existential doctrine, whose base category has finite limits. Then the following are equivalent:

\begin{enumerate}
\item the regular completion $\regularcompdoctrine{P}$ is equivalent to regular completion $\regularcomp{\mD}$ of a lex category $\mD$;
\item $P$ is regular Morita-equivalent to the weak-subojects doctrine $\doctrine{\mD}{\Psi_{\mD}}$ on  a lex category $\mD$;
\item $P$ is regular Morita-equivalent to a full existential completion $P'$.
\end{enumerate}
\end{theorem}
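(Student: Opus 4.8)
The plan is to establish the cycle $(1)\Rightarrow(2)\Rightarrow(3)\Rightarrow(1)$, assembling the completion results proved above; no genuinely new computation is required, only chains of equivalences of categories together with Definition~\ref{def reg. morita equivalent} of regular Morita-equivalence and transitivity of equivalence.

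First I would prove $(1)\Rightarrow(2)$. Starting from a given equivalence $\regularcompdoctrine{P}\equiv\regularcomp{\mD}$, I would apply Corollary~\ref{corollario regular come ex lex}, which identifies $\regularcomp{\mD}$ with $\regularcompdoctrine{\Psi_{\mD}}$; composing the two equivalences gives $\regularcompdoctrine{P}\equiv\regularcompdoctrine{\Psi_{\mD}}$, which is by definition the statement that $P$ is regular Morita-equivalent to the weak subobjects doctrine $\Psi_{\mD}$ on the same lex base $\mD$. Next, for $(2)\Rightarrow(3)$, I would recall from Example~\ref{theorem weak sub is a existential comp} that the weak subobjects doctrine of a lex category is itself a full existential completion, namely of the trivial doctrine $\trdc_{\mD}$; hence being regular Morita-equivalent to $\Psi_{\mD}$ is a special case of being regular Morita-equivalent to a full existential completion, yielding $(3)$ with $P':=\Psi_{\mD}$.

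Finally, for $(3)\Rightarrow(1)$, I would write the full existential completion appearing in $(3)$ as $P'=\fullcompex{Q}$ for a conjunctive doctrine $Q$ on a finite limit base, and invoke Theorem~\ref{theorem car. regular comp via ex. comp.}, which gives $\regularcompdoctrine{\fullcompex{Q}}\equiv\regularcomp{\mG_Q}$. Since $\mG_Q$ has finite limits by Lemma~\ref{lemma tutti gli oggeti della Reg(Gr) sono nella forma (A,a)---->(B,T)}, composing with the Morita-equivalence $\regularcompdoctrine{P}\equiv\regularcompdoctrine{P'}$ produces $\regularcompdoctrine{P}\equiv\regularcomp{\mG_Q}$, which is exactly $(1)$ with the lex category $\mD:=\mG_Q$.

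Since the argument is essentially transitivity of equivalence, the only delicate point—and thus the main thing to watch—is the bookkeeping of the finite-limit hypotheses: one must confirm that the base of the full existential completion $P'$ in $(3)$ is genuinely a lex category, so that Theorem~\ref{theorem car. regular comp via ex. comp.} applies, and that the $\mG_Q$ produced in $(3)\Rightarrow(1)$ qualifies as the lex category witnessing $(1)$. Both are guaranteed by the standing definition of full existential completion (which requires a lex base) and by the finite-limit completeness of the Grothendieck construction. I would also note that the implication $(1)\Rightarrow(3)$ follows directly and independently from the necessity Theorem~\ref{theorem nec. regular morita equiv}, which serves as a cross-check on the cycle.
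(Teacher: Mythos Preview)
Your proof is correct and follows essentially the same cycle $(1)\Rightarrow(2)\Rightarrow(3)\Rightarrow(1)$ as the paper, invoking the same results (Corollary~\ref{corollario regular come  ex lex}, Example~\ref{theorem weak sub is a existential comp}/Theorem~\ref{example lambda-weak-sub}, and Theorem~\ref{theorem car. regular comp via ex. comp.}) at the corresponding steps. Your explicit remark that $\mG_Q$ is lex and the cross-check via Theorem~\ref{theorem nec. regular morita equiv} are helpful elaborations but do not change the underlying argument.
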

\begin{proof}
$(1\Rightarrow 2)$ It follows by Corollary \ref{corollario regular come  ex lex} and by definition of regular Morita-equivalence, Definition \ref{def reg. morita equivalent}.

$(2\Rightarrow 3)$ It follows again by definition of regular Morita-equivalence and by the fact that every weak subobjects doctrine is a full existential completion, see Theorem \ref{example lambda-weak-sub} and Example \ref{theorem weak sub is a existential comp}.

$(3\Rightarrow 1)$ It follows by Theorem \ref{theorem car. regular comp via ex. comp.}.
\end{proof}
Employing the same arguments used in the previous proofs, together with Theorem \ref{theorem char pure existential compl of elementary doct}, we can prove the analogous results for the case of the pure existential completion of elementary doctrines. 
\begin{theorem}\label{theorem car. regular comp via pure ex. comp.}
Let $\doctrine{\mC}{P}$ be a pure existential, elementary doctrine.
 Then we have the equivalence
\[\regularcompdoctrine{\purecompex{P}} \equiv \regularcompdoctrine{\Psi_{\Pred{P}}}\equiv\regularcomp{\Pred{P}}.\]
\end{theorem}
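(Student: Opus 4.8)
The plan is to run the proof of Theorem~\ref{theorem car. regular comp via ex. comp.} in the pure, elementary setting, replacing the Grothendieck category $\mG_P$ by the category of predicates $\Pred{P}$ and the characterization of Theorem~\ref{theorem caract. gen. existential completion} by its pure, elementary counterpart Theorem~\ref{theorem char pure existential compl of elementary doct}. Two facts are needed at the outset: since $P$ is elementary, its pure existential completion $\purecompex{P}$ is again elementary and existential by Theorem~\ref{theorem elementary existential completion}; and the category of predicates $\Pred{P}$ of an elementary doctrine has finite limits (finite products from the base, equalizers from the comprehensive diagonals), so that both $\Psi_{\Pred{P}}$ and $\regularcomp{\Pred{P}}$ are defined. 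Regarding $P$ as an elementary fibred subdoctrine of $\purecompex{P}$ through the unit $\eta_P$, Theorem~\ref{theorem char pure existential compl of elementary doct} applies, and, combined with the universal property of the pure existential completion, it produces a canonical morphism of elementary existential doctrines
\[\freccia{\purecompex{P}}{(\ovln{L},\ovln{l})}{\Psi_{\Pred{P}}}\]
over the base functor $A\mapsto(A,\top)$---the pure, elementary analogue of the morphism of Definition~\ref{arrowws}---whose fibre components $\freccia{\purecompex{P}(A)}{\ovln{l}_A}{\Psi_{\Pred{P}}(A,\top)}$ are isomorphisms of inf-semilattices.

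With this fibrewise isomorphism in hand, Lemma~\ref{lemma P ha fibre isomorfe a R} gives at once that the induced regular functor $\freccia{\regularcompdoctrine{\purecompex{P}}}{\regularcompdoctrine{\ovln{L},\ovln{l}}}{\regularcompdoctrine{\Psi_{\Pred{P}}}}$ is full and faithful. For essential surjectivity I would argue as in Remark~\ref{remark tutti gli oggetti della reg sono ((A,T),E(T)}: since $\Psi_{\Pred{P}}$ is the full existential completion of the trivial doctrine on $\Pred{P}$ by Theorem~\ref{example lambda-weak-sub}, every object of $\regularcompdoctrine{\Psi_{\Pred{P}}}$ is isomorphic to one of the form $((B,\top),\Einv_f(\top))$ with $B$ in $\mC$, and any such object lies in the image of $\regularcompdoctrine{\ovln{L},\ovln{l}}$ because $\ovln{L}(B)=(B,\top)$ and $\ovln{l}_B$ is surjective. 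Hence $\regularcompdoctrine{\ovln{L},\ovln{l}}$ is an equivalence and $\regularcompdoctrine{\purecompex{P}}\equiv\regularcompdoctrine{\Psi_{\Pred{P}}}$. The remaining equivalence $\regularcompdoctrine{\Psi_{\Pred{P}}}\equiv\regularcomp{\Pred{P}}$ is then Corollary~\ref{corollario regular come  ex lex} applied to the lex category $\Pred{P}$.

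The delicate step---and the one where the pure case genuinely diverges from the full case of Theorem~\ref{theorem car. regular comp via ex. comp.}---is the construction and well-definedness of $(\ovln{L},\ovln{l})$, that is, the fibrewise-isomorphism hypothesis feeding Lemma~\ref{lemma P ha fibre isomorfe a R}. The obstruction is that $\Pred{P}=\mX_{P_c}$ arises by first freely adding comprehensions and then passing to the extensional reflection, so an identity $[f][g]=[\id]$ in $\Pred{P}$ need not be witnessed by an equation $fg=\id$ in $\mC$. As in the proof of Theorem~\ref{theorem char pure existential compl of elementary doct}, this is repaired precisely for projections---if $[\pr][g]=\id$ in $\Pred{P}$ then some $g'$ of $\mC$ satisfies $\pr g'=\id$---and it is this liftability of sections of projections, together with the elementary expression $\Einv_g(\alpha)=\Einv_{\pr_1}(P_{\pr_2}(\alpha)\wedge P_{\angbr{\pr_1}{g\pr_2}}(\delta_A))$ of the left adjoints, that makes the arguments showing $\ovln{l}_A$ both surjective and order-reflecting carry over from the general characterization.
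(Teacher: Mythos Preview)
Your proposal is correct and follows essentially the same approach as the paper: invoke Theorem~\ref{theorem char pure existential compl of elementary doct} to obtain the fibrewise isomorphism $\purecompex{P}(A)\cong\Psi_{\Pred{P}}(A,\top)$, then run the argument of Theorem~\ref{theorem car. regular comp via ex. comp.} (Lemma~\ref{lemma P ha fibre isomorfe a R} for full faithfulness, the analogue of Remark~\ref{remark tutti gli oggetti della reg sono ((A,T),E(T)} for essential surjectivity, and Corollary~\ref{corollario regular come  ex lex} for the second equivalence). Your final paragraph on the liftability of sections of projections in $\Pred{P}$ is really part of the proof of Theorem~\ref{theorem char pure existential compl of elementary doct} rather than an extra step needed here, but it does no harm to record why that theorem goes through.
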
 
\begin{proof}
Notice that in this case we have to employ Theorem \ref{theorem char pure existential compl of elementary doct} to obtain the equivalence between the fibres $\purecompex{P}(A)\cong \Psi_{\Pred{P}}(A,\top)$. After this observation, we can use exactly the same argument used in the proof of Theorem \ref{theorem car. regular comp via ex. comp.} to conclude.
\end{proof}
Similarly we have the following theorem.
\begin{theorem}\label{purextchar1}
Let $\doctrine{\mC}{P}$ be an pure existential, elementary  doctrine, and 
let $ \doctrine{\mC}{P'}$ be an elementary fibred subdoctrine of $P$.
If the canonical morphism given by the pair $\freccia{ \regularcompdoctrine{\Psi_{\Pred{P'}} }}{\regularcompdoctrine{N,n}}{\regularcompdoctrine{P}}$ is an equivalence then 
$P$ is a pure  existential completion of $P'$.
\end{theorem}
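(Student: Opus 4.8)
The plan is to follow the proof of Theorem~\ref{tchar1} step by step, replacing throughout the Grothendieck category $\mG_{P'}$ by the category of predicates $\Pred{P'}$, the comprehension completion $P_c$ by the completion $P_{cx}$, and the second point of Theorem~\ref{theorem caract. gen. existential completion} by the second point of Theorem~\ref{theorem char pure existential compl of elementary doct}. Concretely, in order to conclude $P=\purecompex{P'}$ it is enough to produce an isomorphism of existential elementary doctrines $P_{cx}\hat{I}^{\op}\cong\Psi_{\Pred{P'}}$ commuting with the canonical embeddings of $P'$, and the core of the argument is the construction of the underlying fibrewise isomorphism $P(A)\cong\Psi_{\Pred{P'}}(A,\top)$ for every object $A$ of $\mC$.

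First I would assemble this fibrewise isomorphism from the same chain used in Theorem~\ref{tchar1}. Since $P$ is elementary and existential, subobjects of $(A,\top)$ in $\regularcompdoctrine{P}$ recover the fibre of $P$, namely
\[
P(A)\cong P_{cx}(A,\top)\cong\Sub_{\regularcompdoctrine{P}}(A,\top),
\]
and the same identification applied to $\Psi_{\Pred{P'}}$ gives
\[
\Psi_{\Pred{P'}}(A,\top)\cong\Sub_{\regularcompdoctrine{\Psi_{\Pred{P'}}}}((A,\top),\id_{(A,\top)}).
\]
The hypothesis that $\freccia{\regularcompdoctrine{\Psi_{\Pred{P'}}}}{\regularcompdoctrine{N,n}}{\regularcompdoctrine{P}}$ is an equivalence, together with the fact that $N$ sends $((A,\top),\id_{(A,\top)})$ to $(A,\top)$, yields the isomorphism $\Sub_{\regularcompdoctrine{\Psi_{\Pred{P'}}}}((A,\top),\id_{(A,\top)})\cong\Sub_{\regularcompdoctrine{P}}(A,\top)$; composing the three displayed isomorphisms produces the desired $P(A)\cong\Psi_{\Pred{P'}}(A,\top)$.

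Next I would check that this family of isomorphisms is natural and respects the images of $P'$, so that it assembles into the doctrine isomorphism required by the second point of Theorem~\ref{theorem char pure existential compl of elementary doct}. Naturality follows exactly as in Theorem~\ref{tchar1}, from the fact that the base functor of the canonical injection of $\Psi_{\Pred{P'}}$ into its regular completion is full and faithful --- which holds because weak subobjects doctrines have comprehensive diagonals. Preservation of the image of $P'$ is built into the canonical morphism $(N,n)$, which by construction sends an element $\alpha\in P'(A)$ to the comprehension map $\frecciasopra{(A,\alpha)}{\id_A}{(A,\top)}$. Invoking Theorem~\ref{theorem char pure existential compl of elementary doct} then gives $P=\purecompex{P'}$.

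The main obstacle I anticipate is the passage through $\Pred{P'}$, which, unlike the Grothendieck category used in the full case, is a quotient category: an equation $[f][g]=\id$ in $\Pred{P'}$ does not in general reflect to $fg=\id$ in $\mC$, and such a reflection is precisely what the splitting arguments for existential-free objects require. As in the proof of Theorem~\ref{theorem char pure existential compl of elementary doct}, this is resolved by restricting to the class of projections, for which $[\pr][g]=\id$ does yield an arrow $g'$ of $\mC$ with $\pr g'=\id$, and by using that $P'$ is an elementary subdoctrine of $P$, so that every left adjoint may be written as $\Einv_g(\alpha)=\Einv_{\pr_1}(P_{\pr_2}(\alpha)\wedge P_{\angbr{\pr_1}{g\pr_2}}(\delta_A))$. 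With these two facts the witness-construction arguments of the full existential case transfer to the pure existential setting, completing the proof.
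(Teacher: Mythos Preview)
Your proposal is correct and follows essentially the same approach as the paper: the paper's proof simply states that one uses the same argument as in Theorem~\ref{tchar1}, replacing the characterization of full existential completions by that of pure existential completions in Theorem~\ref{theorem char pure existential compl of elementary doct}. You have faithfully unpacked this sketch, including the one delicate point about splittings in the quotient category $\Pred{P'}$, which the paper handles in the same way you indicate.
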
 
\begin{proof}
Notice that by Theorem \ref{theorem char pure existential compl of elementary doct} we obtain the equivalence between the fibres $\purecompex{P}(A)\cong \Psi_{\Pred{P}}(A,\top)$.
Then, this result can be proved exactly using the same argument of Theorem  \ref{tchar1}. 
\end{proof}
\begin{example}
As application of Theorem \ref{theorem car. regular comp via pure ex. comp.} together with Theorem \ref{theorem P iso P^ex if and only if} we obtain that if an elementary existential doctrine $\doctrine{\mC}{P}$ is equipped with Hilbert's $\epsilon$-operators, then we have the equivalence $\regularcompdoctrine{P}\equiv \regularcomp{\Pred{P}}$. Therefore, as a particular case of our result, we obtain the result proved in \cite[Thm. 6.2(ii)]{TECH}.
\end{example}

\subsection{The example of the regular category of assemblies of a partial combinatory algebra}
From \cite{RosoliniRobinson90,HYLAND1982165,SFEC}  we know that the category of assemblies is the reg/lex completion of  the category of partitioned assemblies.
We show here that  such a result is a consequence of Theorem \ref{theorem car. regular comp via ex. comp.}  and the fact that  the realizability hyperdoctrines on a pca  \cite{TT,TTT,van_Oosten_realizability} are full existential completions as proved in theorem~\ref{theorem realizability hyper. is gen ex comp}.

Recall that an \bemph{assembly} on a pca $\pca{A}$ is a pair $(X,E)$ where $X$ is a set and $E$ is a function $\freccia{X}{E}{\pwsetnonempty{\pca{A}}}$ assigning to every element $x\in X$ a non-empty subset $E(x)\subseteq \pca{A}$.

A \bemph{morphism of assemblies}  $\freccia{(X,E)}{f}{(Y,F)}$ is a function $\freccia{X}{F}{Y}$ with the property that there exists an element $a\in \pca{A}$ such that for every $x\in X$, for every $b\in E(x)$, $a\cdot b\downarrow$ and $a\cdot b\in F(f(x))$. This element $a$ is said to \bemph{track} the function $f$. Assemblies and morphisms of assemblies form a category denoted by $\assembly{\pca{A}}$.

An assembly $(X,E)$ is said \bemph{partitioned} if $E$ is single-valued, i.e. $E$ is a function from $X$ to $\pca{A}$. The full subcategory of $\assembly{\pca{A}}$ on partitioned assemblies is written $\parassembly{\pca{A}}$. 

We refer to \cite{van_Oosten_realizability,RosoliniRobinson90} for an extensive analysis of these categories.

First of all observe that:
\begin{theorem}\label{theorem assemnlies and regularization realiz. tripos}
The category of assemblies $\assembly{\pca{A}}$  on a pca $\pca{A}$ is equivalent to the regular completion $\regularcompdoctrine{\mP}$ of the realizability doctrine $\doctrine{\set}{\mP}$ associated to $\pca{A}$.
\end{theorem}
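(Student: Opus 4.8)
The plan is to realize $\regularcompdoctrine{\mP}$ as the regular completion of the subobjects doctrine of $\assembly{\pca{A}}$, and then to combine Lemma~\ref{lemma P ha fibre isomorfe a R} with the fact that the regular completion of the subobjects doctrine of a regular category recovers the category. First I would recall that $\assembly{\pca{A}}$ is a regular category and introduce the codiscrete functor $\freccia{\set}{\nabla}{\assembly{\pca{A}}}$ sending a set $X$ to the assembly $(X,\lambda x.\pca{A})$, in which every element of $\pca{A}$ realizes every point; being right adjoint to the underlying-set functor $\freccia{\assembly{\pca{A}}}{\Gamma}{\set}$, the functor $\nabla$ preserves finite limits.

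The heart of the argument is the standard realizability correspondence between predicates and subobjects of codiscrete assemblies. For each set $X$ I would define an order isomorphism $\freccia{\mP(X)}{f_X}{\Sub_{\assembly{\pca{A}}}(\nabla X)}$ sending a class $[\phi]$ to the subobject whose underlying set is $\{x\in X\mid \phi(x)\neq\emptyset\}$ endowed with the assembly structure $x\mapsto \phi(x)$, and conversely reading off from a subassembly of $\nabla X$ the predicate recording its realizers. Then I would verify that $(\nabla,f)$ is a $1$-cell of elementary and existential doctrines from $\mP$ to $\Sub_{\assembly{\pca{A}}}$: naturality reduces to the fact that reindexing $\mP_g(\psi)=\psi\circ g$ corresponds to pulling the associated subobject back along $\nabla g$; preservation of the equality predicate holds because $f_{X\times X}(\delta_X)$ is exactly the diagonal subobject of $\nabla X$ inside $\nabla X\times\nabla X\cong \nabla(X\times X)$; and preservation of the existential left adjoints holds because the regular image in $\assembly{\pca{A}}$ of $f_X(\phi)$ along $\nabla g$ has underlying set $\{y\mid \Einv_g(\phi)(y)\neq\emptyset\}$ and realizers $\bigcup_{g(x)=y}\phi(x)$, i.e. it is $f_Y(\Einv_g(\phi))$.

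Since each $f_X$ is a fibrewise isomorphism, Lemma~\ref{lemma P ha fibre isomorfe a R} then gives that the induced functor $\freccia{\regularcompdoctrine{\mP}}{\regularcompdoctrine{\nabla,f}}{\regularcompdoctrine{\Sub_{\assembly{\pca{A}}}}}$ is full and faithful. By Theorem~\ref{mainreg} the counit of the regular-completion biadjunction is an equivalence on regular categories, so $\regularcompdoctrine{\Sub_{\assembly{\pca{A}}}}\equiv\assembly{\pca{A}}$, and under this equivalence the above functor sends an object $(X,[\phi])$ of $\regularcompdoctrine{\mP}$ to the domain of $f_X(\phi)$, namely the assembly $(\{x\mid \phi(x)\neq\emptyset\},\,\phi)$. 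To conclude it remains only to check essential surjectivity, which is immediate: an arbitrary assembly $(Y,E)$, where $\freccia{Y}{E}{\pwsetnonempty{\pca{A}}}$ has nonempty values, is hit (up to isomorphism) by the object $(Y,[E])$, since in that case $\{y\mid E(y)\neq\emptyset\}=Y$. Being full, faithful and essentially surjective, the functor is an equivalence $\regularcompdoctrine{\mP}\equiv\assembly{\pca{A}}$.

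I expect the main obstacle to be the verification that $(\nabla,f)$ genuinely preserves the existential structure, since this relies on the concrete description of regular images in $\assembly{\pca{A}}$ (set-theoretic image of underlying sets, union of realizers) matching the definition of $\Einv_g$ in $\mP$ recalled above; the naturality and the equality-predicate clauses are routine, and the remaining steps are direct invocations of Lemma~\ref{lemma P ha fibre isomorfe a R} and Theorem~\ref{mainreg} together with the classical fact that $\assembly{\pca{A}}$ is regular. Note that this argument is deliberately independent of the classical identification of $\assembly{\pca{A}}$ with $\regularcomp{\parassembly{\pca{A}}}$, so that the latter may be recovered afterwards by feeding $\regularcompdoctrine{\mP}\equiv\regularcompdoctrine{\fullcompex{\mP^{\singleton}}}$ into Theorem~\ref{theorem car. regular comp via ex. comp.}.
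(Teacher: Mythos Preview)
Your argument is correct and takes a genuinely different route from the paper's own proof. The paper proceeds by a direct, hands-on matching: it first shows that every object $(A,\alpha)$ of $\regularcompdoctrine{\mP}$ is isomorphic to one whose predicate has nonempty values everywhere (or to the empty assembly), and then that each entire functional relation between two such normalised objects is the ``graph'' of a unique tracked function. In other words, the paper unfolds $\regularcompdoctrine{\mP}=\Ef{\mP_c}$ explicitly and reads off the category of assemblies from the description of objects and morphisms.

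Your approach instead factors the equivalence through the subobjects doctrine of $\assembly{\pca{A}}$: you exhibit a fibrewise isomorphism of elementary existential doctrines $(\nabla,f)\colon \mP\to \Sub_{\assembly{\pca{A}}}$, invoke Lemma~\ref{lemma P ha fibre isomorfe a R} to get fullness and faithfulness on regular completions, and then use the idempotence of $\regularcompdoctrine{-}$ on regular categories (Theorem~\ref{mainreg}) to identify $\regularcompdoctrine{\Sub_{\assembly{\pca{A}}}}$ with $\assembly{\pca{A}}$, checking essential surjectivity at the end. This buys you a clean reuse of the general machinery already in the paper and makes the role of the codiscrete embedding transparent; the paper's direct argument, by contrast, is more elementary in that it does not presuppose that $\assembly{\pca{A}}$ is regular, nor that the counit of the biadjunction is an equivalence on regular categories. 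One small caution: since your proof invokes regularity of $\assembly{\pca{A}}$ as an input, be sure to cite a source that establishes this without passing through $\assembly{\pca{A}}\equiv\regularcomp{\parassembly{\pca{A}}}$, lest the independence you claim in the final paragraph be undermined; direct verifications are available in \cite{van_Oosten_realizability}.
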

\begin{proof}
The equivalence   $\assembly{\pca{A}}\equiv \regularcompdoctrine{\mP}$ can easily established after observing the  following two facts whose proofs are straightforward.
First, given an object $(A,\alpha)$ of $\regularcompdoctrine{\mP}$, then  if $A$ is not empty, then $(A,\alpha)$ is isomorphic to an object $(A',\alpha')$ such that $\alpha'(a)\neq \emptyset$ for every $a\in A'$, otherwise it is isomorphic to $(\emptyset,!_{\pwset{\pca{A}}})$, where $\freccia{0}{!_{\pwset{\pca{A}}}}{\pwset{\pca{A}}}$ is the empty function.

Moreover, if $\freccia{(A,\alpha)}{R}{(B,\beta)}$ is an arrow of $\regularcompdoctrine{\mP}$ where $\alpha$ and $\beta$ are always non-empty, then there exists a unique function $\freccia{A}{f}{B}$ such that 
\[R(a,b)= \begin{cases}
\alpha (a) & \text{if } b=f(a)\\
\emptyset & \text{otherwise.}
\end{cases}\]
Second, notice   that $f$ has the following property: there exists an element $n\in \pca{A}$ such that for every $a\in A$, for every $x\in \alpha(a)$, $n\cdot x\downarrow$ and $n\cdot x\in  \beta (f(a))$.
\end{proof}

It is immediate to observe that:
\begin{lemma}\label{par}
The Grothendieck category of $\mP^{\singleton}$ defined in section~\ref{section realizability hyperdoct} is  equivalent to that of partitioned assemblies 
$$\mG_{\mP^{\singleton}}\equiv \parassembly{\pca{A}}.$$
\end{lemma}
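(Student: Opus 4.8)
The plan is to exhibit a fully faithful and essentially surjective functor $\freccia{\mG_{\mP^{\singleton}}}{F}{\parassembly{\pca{A}}}$. First I would unfold the two descriptions of objects. By Definition~\ref{def comprehension comp}, an object of $\mG_{\mP^{\singleton}}$ is a pair $(X,\eqc{\gamma})$ with $X$ a set and $\eqc{\gamma}\in\mP^{\singleton}(X)$, that is, a singleton predicate; by the very definition of singleton predicate, the class $\eqc{\gamma}$ contains a singleton representative, so there is a function $\freccia{X}{e}{\pca{A}}$ with $\gamma\sim(x\mapsto\{e(x)\})$. Choosing such a representative in the metatheory, I set $F(X,\eqc{\gamma}):=(X,e)$, the partitioned assembly on $X$ with realizer function $e$. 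On arrows, an arrow $\freccia{(X,\eqc{\gamma})}{f}{(Y,\eqc{\delta})}$ of $\mG_{\mP^{\singleton}}$ is by definition a function $\freccia{X}{f}{Y}$ such that $\eqc{\gamma}\leq\mP_f(\eqc{\delta})=\eqc{\delta\circ f}$, and I would simply put $F(f):=f$; functoriality is immediate since composition and identities in both categories are those of the underlying functions.

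The key step is the computation establishing full faithfulness. Writing $\{e(x)\}$ for a singleton representative of $\gamma$ and $\{d(y)\}$ for one of $\delta$, the inequality $\eqc{\gamma}\leq\eqc{\delta\circ f}$ in $\mP(X)$ unfolds, by definition of the realizability preorder, to the existence of $\ovln{a}\in\pca{A}$ with $\ovln{a}\cdot e(x)\downarrow$ and $\ovln{a}\cdot e(x)\in\delta(f(x))=\{d(f(x))\}$ for every $x\in X$, i.e. $\ovln{a}\cdot e(x)=d(f(x))$. This is precisely the condition that $\ovln{a}$ tracks $f$ as a morphism of partitioned assemblies $\freccia{(X,e)}{f}{(Y,d)}$. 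Hence a function $f$ is an arrow of $\mG_{\mP^{\singleton}}$ if and only if it is a morphism of $\parassembly{\pca{A}}$; since in both cases the datum is merely the underlying function (trackers not being part of the structure), $F$ is full and faithful.

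Essential surjectivity is then straightforward, and in fact $F$ is surjective on objects: given any partitioned assembly $(X,E)$ with $\freccia{X}{E}{\pca{A}}$, the singleton function $\gamma(x):=\{E(x)\}$ determines a singleton predicate $\eqc{\gamma}\in\mP^{\singleton}(X)$ with $F(X,\eqc{\gamma})=(X,E)$. The only genuine subtlety, and the point I would treat as the main obstacle, is the dependence of $F$ on the choice of singleton representative: if $e$ and $e'$ both represent $\eqc{\gamma}$, then $\gamma\sim\gamma'$ supplies realizers tracking $\id_X$ in both directions, so $(X,e)\cong(X,e')$ in $\parassembly{\pca{A}}$. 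Thus $F$ is well defined up to natural isomorphism, which suffices for an equivalence of categories, and I would invoke the axiom of choice in the metatheory (as is done throughout the paper) to pin down the representatives. Combining fullness, faithfulness and essential surjectivity yields $\mG_{\mP^{\singleton}}\equiv\parassembly{\pca{A}}$.
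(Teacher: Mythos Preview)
Your argument is correct and is exactly the natural way to unpack the equivalence; the paper itself gives no proof, simply stating the lemma as ``immediate to observe''. Your explicit functor, the verification that the Grothendieck-category inequality $\eqc{\gamma}\leq\eqc{\delta\circ f}$ unfolds to the tracking condition for partitioned assemblies, and your treatment of the choice-of-representative issue are all sound and constitute a fully detailed proof of what the paper leaves to the reader.
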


Hence, from these facts and our previous results we conclude in an alternative way Robinson and Rosolini's result:
\begin{corollary}
The category of assemblies
 is equivalent
to reg/lex completion of the category of partitioned assemblies.
\end{corollary}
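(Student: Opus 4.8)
The plan is to read the statement as a direct assembly of the three preparatory results proved in this section. First I would recall from Theorem~\ref{theorem assemnlies and regularization realiz. tripos} that the category of assemblies is the regular completion of the realizability doctrine, $\assembly{\pca{A}}\equiv\regularcompdoctrine{\mP}$, and from Theorem~\ref{theorem realizability hyper. is gen ex comp} that $\mP$ is the full existential completion of the singleton doctrine, $\mP=\fullcompex{\mP^{\singleton}}$. Substituting the latter into the former reduces the problem to computing $\regularcompdoctrine{\fullcompex{\mP^{\singleton}}}$.

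At this point I would invoke Theorem~\ref{theorem car. regular comp via ex. comp.} for the conjunctive doctrine $\doctrine{\set}{\mP^{\singleton}}$. Its hypothesis is satisfied because the base category $\set$ has finite limits, and $\mP^{\singleton}$ is a conjunctive doctrine since singleton predicates are stable under reindexing and closed under finite meets, as already verified inside the proof of Theorem~\ref{theorem realizability hyper. is gen ex comp}. The theorem then yields $\regularcompdoctrine{\fullcompex{\mP^{\singleton}}}\equiv\regularcomp{\mG_{\mP^{\singleton}}}$, the reg/lex completion of the Grothendieck category of $\mP^{\singleton}$, which has finite limits by the same argument used in Lemma~\ref{lemma tutti gli oggeti della Reg(Gr) sono nella forma (A,a)---->(B,T)}.

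Finally I would apply Lemma~\ref{par}, which identifies $\mG_{\mP^{\singleton}}$ with the category of partitioned assemblies, $\mG_{\mP^{\singleton}}\equiv\parassembly{\pca{A}}$, so that $\regularcomp{\mG_{\mP^{\singleton}}}\equiv\regularcomp{\parassembly{\pca{A}}}$. Chaining the equivalences produces
\[
\assembly{\pca{A}}\equiv\regularcompdoctrine{\mP}=\regularcompdoctrine{\fullcompex{\mP^{\singleton}}}\equiv\regularcomp{\mG_{\mP^{\singleton}}}\equiv\regularcomp{\parassembly{\pca{A}}},
\]
which is exactly the claim. I expect no genuine obstacle here: all the substantive work lives in the preparatory Theorems~\ref{theorem realizability hyper. is gen ex comp} and~\ref{theorem car. regular comp via ex. comp.} together with Lemma~\ref{par}, so the only thing to check is that the hypotheses of Theorem~\ref{theorem car. regular comp via ex. comp.} apply to $\mP^{\singleton}$, and this is immediate.
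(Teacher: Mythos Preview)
Your proposal is correct and follows essentially the same approach as the paper: both proofs chain together Theorem~\ref{theorem assemnlies and regularization realiz. tripos}, Theorem~\ref{theorem realizability hyper. is gen ex comp}, Theorem~\ref{theorem car. regular comp via ex. comp.}, and Lemma~\ref{par} in the obvious way. Your version is slightly more explicit about verifying the hypotheses of Theorem~\ref{theorem car. regular comp via ex. comp.}, but the argument is otherwise identical.
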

\begin{proof}
From Theorem \ref{theorem realizability hyper. is gen ex comp} we know that the realizability hyperdoctrine $\doctrine{\set}{\mP}$ associated to a  partial combinatory algebra $\pca{A}$ is the full existential completion of the primary doctrine $\doctrine{\set}{\mP^{\singleton}}$ whose elements of the fibre $\mP^{\singleton}(X)$ are only singleton predicates of $\mP(X)$.  Therefore, by Theorem \ref{theorem car. regular comp via ex. comp.}, we have that 
\[\regularcompdoctrine{\mP}\equiv \regularcomp{\mG_{\mP^{\singleton}}}.\] 
and by theorem~\ref{theorem assemnlies and regularization realiz. tripos} and by lemma~\ref{par}  we conclude
$$\assembly{\pca{A}}\  \equiv\ \regularcomp{\parassembly{\pca{A}}}$$
\end{proof}

%

\begin{remark}
Observe also that in \cite{EQCCP}  the category of assemblies can be obtained also as a different kind of completion which instead is  not an instance of a full existential completion because the generating doctrine does not satisfy the choice principles (RC) in Remark \ref{RC}.
\end{remark}

\section{Exact completions via generalized existential completion}

The tripos-to-topos construction orginally  introduced in \cite{TTT,TT} is a fundamental construction of categorical logic that relates the notion of tripos with that of topos. 
This construction is an instance of the exact completion $\tripostotopos{P}$ of an elementary existential doctrine $\doctrine{\mC}{P}$  studied  in \cite{UEC}.

Given an elementary existential doctrine $\doctrine{\mC}{P}$, the category $\tripostotopos{P}$ consists of:
 
\medskip
\noindent 
\textbf{objects:} pairs $(A,\rho)$ such that $\rho \in P(A\times A)$  satisfies

\begin{itemize}

\item \emph{symmetry:} $\rho\leq P_{\angbr{\pr_2}{\pr_1}}(\rho)$;
\item \emph{transitivity:} $P_{\angbr{\pr_1}{\pr_2}}(\rho)\wedge P_{\angbr{\pr_2}{\pr_3}}(\rho)\leq P_{\angbr{\pr_1}{\pr_3}}(\rho)$, where $\pr_i$ are the projections from $A\times A\times A$;
\end{itemize} 
\textbf{arrows:} $\freccia{(A,\rho)}{\phi}{(B,\sigma)}$ are objects $\phi\in P(A\times B)$ such that
\begin{enumerate}[label=(\roman*)]
\item $\phi\leq P_{\angbr{\pr_1}{\pr_1}}(\rho)\wedge P_{\angbr{\pr_2}{\pr_2}}(\sigma)$;
\item $P_{\angbr{\pr_1}{\pr_2}}(\rho)\wedge P_{\angbr{\pr_1}{\pr_3}}(\phi)\leq P_{\angbr{\pr_2}{\pr_3}}(\phi)$ where $\pr_i$ are projections from $A\times A\times B$; 
\item $P_{\angbr{\pr_2}{\pr_3}}(\sigma)\wedge P_{\angbr{\pr_1}{\pr_2}}(\phi)\leq P_{\angbr{\pr_1}{\pr_3}}(\phi)$ where $\pr_i$ are projections from $A\times B\times B$; 
\item $P_{\angbr{\pr_1}{\pr_2}}(\phi)\wedge P_{\angbr{\pr_1}{\pr_3}}(\phi)\leq P_{\angbr{\pr_2}{\pr_3}}(\sigma)$ where $\pr_i$ are projections from $A\times B\times B$; 
\item $P_{\angbr{\pr_1}{\pr_1}}(\rho)\leq \Einv_{\pr_1}(\phi)$ where $\pr_i$ are projections from $A\times B$.
\end{enumerate}
\medskip
\noindent

The construction $\tripostotopos{P}$ is called the \bemph{exact completion} of the elementary existential doctrine $P$. In particular we recall the following result from \cite[Cor. 3.4]{UEC}:
\begin{theorem}\label{theorem maietti rosolini pasquali exact comp}
The assignment $P\mapsto \tripostotopos{P}$ extends to a 2-functor 
\[\frecciasopra{\EED}{}{\excat}\]
from the 2-category $\EED$ of elementary, existential doctrines to the 2-category $\excat$ of exact categories, and it is left adjoint to the functor sending an exact category $\mC$ to doctrine $\Sub_{\mC}$ of its subobjects.
\end{theorem}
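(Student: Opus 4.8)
The plan is to follow the strategy of \cite{UEC}, establishing the statement in three stages: first that $\tripostotopos{P}$ is genuinely an exact category, then that the assignment $P\mapsto\tripostotopos{P}$ is 2-functorial, and finally that it enjoys the universal property witnessing the claimed 2-adjunction against the functor $\mE\mapsto\Sub_{\mE}$.

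First I would verify that $\tripostotopos{P}$ is exact. The terminal object is $(\terminalobject,\top)$, binary products are computed on the base as $(A,\rho)\times(B,\sigma)=(A\times B,\ P_{\angbr{\pr_1}{\pr_3}}(\rho)\wedge P_{\angbr{\pr_2}{\pr_4}}(\sigma))$, and equalizers are obtained by refining the partial equivalence relation with the equality predicate supplied by the elementary structure. The delicate part is the \emph{regular} and \emph{exact} structure: every functional relation $\phi$ must factor as a regular epimorphism followed by a mono, and every internal equivalence relation must be effective. Here the key observation is that an equivalence relation on an object $(A,\rho)$ of $\tripostotopos{P}$ is itself presented by a symmetric and transitive predicate $\theta\in P(A\times A)$ with $\rho\leq\theta$, so that its quotient is simply $(A,\theta)$, the quotient map being the functional relation $\theta$ itself. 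Stability of regular epimorphisms under pullback, and the verification that conditions (i)--(v) are preserved by the induced comparison maps, rely repeatedly on Frobenius reciprocity (FR) and the Beck--Chevalley condition (BCC) of Definition \ref{def existential doctrine}, together with the equality adjunction of the elementary structure.

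Next I would define the action on 1-cells and 2-cells. A 1-cell $(F,b)\colon P\to R$ of $\EED$ sends $(A,\rho)$ to $(FA,\ b_{A\times A}(\rho))$ and a functional relation $\phi\in P(A\times B)$ to $b_{A\times B}(\phi)\in R(FA\times FB)$; since $b$ preserves finite meets, the elementary structure, and the existential quantifiers, conditions (i)--(v) as well as symmetry and transitivity are transported verbatim, so this yields a functor $\tripostotopos{P}\to\tripostotopos{R}$, and one checks directly that it preserves finite limits, regular epimorphisms, and quotients, hence is exact. A 2-cell $\theta\colon(F,b)\Rightarrow(G,c)$ then induces a natural transformation between the associated exact functors, and functoriality in both dimensions is routine.

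Finally, for the 2-adjunction I would exhibit the unit as a morphism $P\to\Sub_{\tripostotopos{P}}$ of $\EED$ sending $\alpha\in P(A)$ to the subobject of $(A,\delta_A)$ cut out by $\alpha$, and then establish the universal property: given an exact category $\mE$ and a morphism $(G,g)\colon P\to\Sub_{\mE}$ of elementary existential doctrines, the predicate $g(\rho)$ is a symmetric and transitive relation on $GA$, hence, after restricting to its domain of reflexivity and using that $\mE$ is exact, an effective equivalence relation; sending $(A,\rho)$ to its quotient in $\mE$ and each functional relation to the unique morphism it induces between quotients produces an exact functor $\widehat{G}\colon\tripostotopos{P}\to\mE$. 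I expect the main obstacle to lie precisely in this last step: proving that $\widehat{G}$ is well defined independently of the chosen representatives, that it is genuinely exact, and above all that it is the \emph{unique} exact extension up to canonical isomorphism. This uniqueness rests on showing that every object of $\tripostotopos{P}$ is a quotient of a ``discrete'' object $(A,\delta_A)$ coming from the image of $\mC$, and that every arrow is determined by its behaviour on such generators, so that an exact functor is forced on both objects and morphisms once it is fixed on the image of $P$; the verification is then a diagram chase using effectiveness of equivalence relations and the covering of arbitrary objects by discrete ones.
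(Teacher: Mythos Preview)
The paper does not prove this theorem at all: it is stated as a recall from the literature, with the single sentence ``In particular we recall the following result from \cite[Cor.~3.4]{UEC}'' preceding it, and no proof is given. Your proposal, which explicitly sets out to follow the strategy of \cite{UEC}, is therefore not competing with a proof in the paper but rather sketching the content of the cited reference. The three-stage outline you give---exactness of $\tripostotopos{P}$, 2-functoriality, and the universal property via extension along quotients of discrete objects $(A,\delta_A)$---is the standard argument and is essentially correct; in particular your description of quotients of equivalence relations in $\tripostotopos{P}$ and of the induced exact functor $\widehat{G}$ matches how the construction is carried out in \cite{UEC} (and, ultimately, in the original tripos-to-topos papers \cite{TT,TTT}).

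One small point to watch: when you write $b_{A\times A}(\rho)\in R(FA\times FB)$ you are silently using that $F$ preserves finite products, so that $F(A\times A)\cong FA\times FA$; this is indeed part of the data of a 1-cell in $\EED$ but deserves to be made explicit, since the general definition of 1-cells in $\PD$ recalled in the paper does not impose it.
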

As done for regular completions,  we introduce the following definition of \bemph{exact Morita-equivalence} between elementary existential doctrines:
\begin{definition}\label{def exact. morita equivalent}
Two elementary existential doctrines $P$ and $P'$ are said \bemph{exact Morita-equivalent} if their exact completions are equivalent, i.e. when $\tripostotopos{P}\equiv \tripostotopos{P'}$. 
\end{definition}

\begin{remark}\label{exreg}
Recall from \cite[Ex. 3.2]{UEC} and \cite[Ex. 4.8]{TECH} that if we consider a regular category $\mA$, the exact completion $\tripostotopos{\Sub_{\mA}}$ of the subobjects doctrine $\doctrine{\mA}{\Sub_{\mA}}$  coincides with the exact completion $\exactcomp{\mA}$, i.e. with the exact completion $\exactcomp{\mA}$ of the regular category $\mA$ introduced by Freyd in \cite{CA}.  Therefore, combining Proposition \ref{proposition caratterizzazione RUC} with \cite[Ex. 4.8]{TECH} we can provide a description of the exact completion of a regular category in terms of existential completions.

In particular, if a doctrine $\doctrine{\mC}{P}$  is the $\mM$-existential completion of the trivial doctrine $\doctrine{\mC}{\trdc}$, where $\mM$ is a class of morphisms of $\mC$ such that 
\begin{itemize}
\item there exists a class $\mathcal{E}$ of morphisms of $\mC$ such that $(\mathcal{E}, \mM)$ is a proper, stable, factorization system on $\mC$;
\item for every arrow $\freccia{A\times B}{\pr_A}{A}$ of $\mC$, if $\pr_Af$ is a monomorphism and $f\in \mM$ then $\pr_Af\in \mM$;
\end{itemize}
then $\tripostotopos{P}\equiv\exactcomp{\mC}$.
\end{remark}

Then, from the results shown in the previous sections we also obtain:
\begin{corollary}
 The exact completion  ${\cal C}_{\mathrm{ex/reg}}$ of a regular category $\cal C$ is the 
 exact completion of a full existential completion.
 \end{corollary}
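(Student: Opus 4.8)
The plan is to prove the statement by reducing $\exactcomp{\mC}$ to a tripos-to-topos construction and then recognising the doctrine that feeds it as an instance of the generalized existential completion construction. The starting point is Remark~\ref{exreg}: for a regular category $\mC$ the exact completion $\exactcomp{\mC}$ is equivalent to the exact completion $\tripostotopos{\Sub_{\mC}}$ of its subobjects doctrine. This rewrites the purely categorical object $\exactcomp{\mC}$ as $\tripostotopos{P}$ for the concrete elementary existential doctrine $P=\Sub_{\mC}$, which is exactly the shape in which ``exact completion of an existential completion'' can be read off. The underlying reason, which I would cite rather than re-derive, is the decomposition of \cite{UEC} of the exact completion of an elementary existential doctrine as the $\ex/\reg$ completion of its regular completion, together with the identification $\regularcompdoctrine{\Sub_{\mC}}\equiv\mC$ for regular $\mC$, so that $\exactcomp{\regularcompdoctrine{\Sub_{\mC}}}\equiv\exactcomp{\mC}$.

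The second, and essentially formal, step is to display $\Sub_{\mC}$ as a generalized existential completion. Since $\mC$ is regular it is in particular a finite limit category, so $\Sub_{\mC}$ has full comprehensions, and its comprehensions are precisely the monomorphisms of $\mC$, which are closed under composition; hence $\Sub_{\mC}$ has full composable comprehensions. By Theorem~\ref{theorem P full comp if if ex comp}, $\Sub_{\mC}$ is then the generalized existential completion of the trivial doctrine $\doctrine{\mC}{\trdc}$ with respect to the class $\Lambda_{\mono}$ of all the monomorphisms of $\mC$; equivalently, this follows from Proposition~\ref{proposition caratterizzazione RUC}, since $\Sub_{\mC}$ is elementary, existential and m-variational and satisfies the Rule of Unique Choice. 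Thus $P=\Sub_{\mC}$ is an instance of the existential completion construction, the completion being taken along the full system of monomorphisms.

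Putting the two steps together yields $\exactcomp{\mC}\equiv\tripostotopos{\Sub_{\mC}}$ with $\Sub_{\mC}$ a generalized existential completion of $\trdc_{\mC}$, which is the asserted description of $\exactcomp{\mC}$. I expect the genuine content to sit entirely in the first step rather than the second: checking that $\Sub_{\mC}$ has full composable comprehensions (and so is a generalized existential completion) is routine, whereas the equivalence $\exactcomp{\mC}\equiv\tripostotopos{\Sub_{\mC}}$ packages the nontrivial $\ex/\reg$ decomposition and the recovery $\regularcompdoctrine{\Sub_{\mC}}\equiv\mC$. Once that reduction is granted and the hypotheses on comprehensions are verified, the conclusion is immediate from the cited characterizations.
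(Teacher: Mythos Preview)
Your two-step argument---Remark~\ref{exreg} for $\exactcomp{\mC}\equiv\tripostotopos{\Sub_{\mC}}$, then Proposition~\ref{proposition caratterizzazione RUC} to exhibit $\Sub_{\mC}$ as a generalized existential completion of $\trdc_{\mC}$---is exactly the paper's proof, which cites the same remark and the same proposition (there labelled~\ref{mainsub}).

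One terminological point is worth flagging. The Corollary as stated says \emph{full} existential completion, which in this paper means the generalized completion along the class of \emph{all} morphisms of the base. What Proposition~\ref{mainsub} delivers, and what you correctly record, is that $\Sub_{\mC}$ is the completion of $\trdc_{\mC}$ along the class $\Lambda_{\mono}$ of monomorphisms; the full existential completion of $\trdc_{\mC}$ is instead $\Psi_{\mC}$. Indeed $\Sub_{\mC}$ over a general regular $\mC$ is not a full existential completion: by Theorem~\ref{theorem caract. gen. existential completion} that would force the Extended Rule of Choice, i.e.\ every regular epimorphism of $\mC$ would split. So your argument faithfully reproduces the paper's, and both actually establish that $\exactcomp{\mC}$ is the exact completion of a \emph{generalized} existential completion (along monomorphisms); your phrase ``along the full system of monomorphisms'' is accurate, but does not on its own yield the word ``full'' in the sense of the statement.
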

 \begin{proof}
 As observed in remark ~\ref{exreg} the ex/reg completion ${\cal C}_{\mathrm{ex/reg}}$  of  a regular category $\mC$
  coincides  with  the exact completion of  the elementary existential doctrine of the subobjects doctrine $\Sub_{\cal C}$ and this is the full existential completion with respect to all the morphisms
  of its base category as shown in Proposition \ref{mainsub}. 
\end{proof}

%

Thanks to the result presented in  \cite[Sec. 2]{UEC} and  \ref{theorem maietti rosolini pasquali exact comp} we have that:
\begin{theorem}\label{teorema T_P=reg(P)_exreg}
 For any  elementary existential doctrine $\doctrine{\mC}{P}$ the following equivalence holds
\[\tripostotopos{P}\equiv \exactcomp{\regularcompdoctrine{P}}.\]
\end{theorem}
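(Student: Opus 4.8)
The plan is to read the equivalence as the statement that the free exact completion of $P$ factors as ``first regular, then ex/reg'', and to obtain it from the composition of three left biadjunctions. By Theorem~\ref{theorem maietti rosolini pasquali exact comp} the assignment $\tripostotopos{-}$ is left biadjoint to the subobjects functor $\iota\colon \excat \to \EED$, $\mathcal{X}\mapsto \Sub_{\mathcal{X}}$. By Theorem~\ref{theorem maietti rosolini pasquali regular comp} the assignment $\regularcompdoctrine{-}$ is left biadjoint to the inclusion $\iota_1\colon \Reg \to \EED$, $\mathcal{A}\mapsto \Sub_{\mathcal{A}}$. Finally, Freyd's classical exact completion of a regular category, recorded here in Remark~\ref{exreg}, provides $(-)_{\ex/\reg}$ as a left biadjoint to the inclusion $\iota_2\colon \excat \to \Reg$ of exact categories among regular ones.

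The key observation is then that these right adjoints compose: for an exact category $\mathcal{X}$ one has $\iota_1(\iota_2(\mathcal{X})) = \Sub_{\mathcal{X}} = \iota(\mathcal{X})$, so that $\iota = \iota_1 \circ \iota_2$ as $2$-functors $\excat \to \EED$. By uniqueness of biadjoints, the left biadjoint of $\iota_1\circ\iota_2$ is pseudonaturally equivalent to the composite of the left biadjoints in the opposite order, namely $(-)_{\ex/\reg}\circ \regularcompdoctrine{-}$; hence $\tripostotopos{-}$ is equivalent to $(-)_{\ex/\reg}\circ\regularcompdoctrine{-}$ as $2$-functors $\EED \to \excat$. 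Evaluating at $P$ and recalling that $\exactcomp{\regularcompdoctrine{P}} = (\regularcompdoctrine{P})_{\ex/\reg}$ yields exactly $\tripostotopos{P}\equiv \exactcomp{\regularcompdoctrine{P}}$.

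The step I expect to be the main obstacle is not the abstract composition of adjoints but the bookkeeping that makes it applicable: one must verify that $\Sub_{\mathcal{X}}$ computed as ``subobjects of $\mathcal{X}$ regarded as a regular category'' genuinely coincides with ``subobjects of $\mathcal{X}$ regarded as an exact category'', so that the identity $\iota = \iota_1\circ\iota_2$ holds up to a coherent equivalence, and that all three adjunctions are honest biadjunctions with $2$-dimensionally compatible units and counits, so that uniqueness delivers an \emph{equivalence} of exact categories rather than a mere isomorphism of doctrines. A concrete self-contained alternative, which I would use if the abstract compatibility proved delicate, is to realize the equivalence on explicit models: by Remark~\ref{exreg} it suffices to establish the exact Morita-equivalence $\tripostotopos{P}\equiv \tripostotopos{\Sub_{\regularcompdoctrine{P}}}$, induced by the canonical doctrine morphism $P \to \Sub_{\regularcompdoctrine{P}}$ given by the embedding $P\hookrightarrow P_c$ followed by $\comp{-}$. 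Full and faithfulness of the induced exact functor then reduces to full and faithfulness of this morphism on fibres, in the spirit of Lemma~\ref{lemma P ha fibre isomorfe a R}, while essential surjectivity reduces, using explicit descriptions of the objects of the regular completion analogous to Lemma~\ref{lemma tutti gli oggeti della Reg(Gr) sono nella forma (A,a)---->(B,T)} and Remark~\ref{remark tutti gli oggetti della reg sono ((A,T),E(T)}, to showing that every partial equivalence relation over $\regularcompdoctrine{P}$ is isomorphic in $\tripostotopos{\Sub_{\regularcompdoctrine{P}}}$ to one whose carrier and relation already come from $P$.
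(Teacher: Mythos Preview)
Your argument is correct and matches the paper's approach: the paper does not spell out a proof but simply cites \cite[Sec.~2]{UEC} together with Theorem~\ref{theorem maietti rosolini pasquali exact comp}, and the argument in \cite{UEC} is precisely the composition-of-biadjoints reasoning you give (the right adjoints $\excat\to\Reg\to\EED$ compose to the subobjects inclusion, hence the left biadjoints compose as $\exactcomp{-}\circ\regularcompdoctrine{-}\simeq\tripostotopos{-}$). Your discussion of the bookkeeping and the concrete fallback via the doctrine morphism $P\to\Sub_{\regularcompdoctrine{P}}$ is more than the paper provides, and is not needed for the result as stated.
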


This is indeed a generalization of the well known result from \cite{SFEC,REC,FECLEO} that the exact completion $\exactcomplex{\mD}$ of a category $\mD$ with finite limits is equivalent to the construction $\exactcomp{\regularcomp{\mD}}$.  

Furthermore, as first observed in \cite{UEC} and analogously to what proved for the reg/lex completion in Corollary \ref{corollario regular come  ex lex}, the following holds:
\begin{corollary}\label{corollary exact comp of weak sub 1}  
The exact completion  $\exactcomplex{\mD}$ of a category $\mD$  with finite limits happens to be equivalent to the exact completion $\tripostotopos{\Psi_{\mD}}$ of the doctrine $\doctrine{\mD}{\Psi_{\mD}}$ of weak subobjects of $\mD$.
\end{corollary}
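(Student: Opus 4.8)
The plan is to obtain the claimed equivalence as a short chain of equivalences, exploiting the decomposition of the exact completion of a doctrine into a regular completion followed by Freyd's exact completion of a regular category.

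First I would check that $\Psi_{\mD}$ is an elementary existential doctrine, so that the exact-completion machinery applies to it. Since $\mD$ has finite limits it has in particular finite products and weak pullbacks, whence $\Psi_{\mD}$ is existential by Example \ref{example doctrines}(2), with the left adjoints $\Einv_f$ given by post-composition. Moreover, by Example \ref{theorem weak sub is a existential comp} the doctrine $\Psi_{\mD}$ is exactly the $\Lambda$-weak subobjects doctrine for $\Lambda$ the class of \emph{all} morphisms of $\mD$; since this class contains the diagonals $\Delta_A$, the assignment $\delta_A:=[\Delta_A]$ provides fibre equality and makes $\Psi_{\mD}$ elementary (this is the positive form of the remark following Theorem \ref{theorem elementary existential completion}, which observes that a $\Lambda$-weak subobjects doctrine fails to be elementary only when $\Lambda$ omits the diagonals).

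Next I would apply Theorem \ref{teorema T_P=reg(P)_exreg} to $P=\Psi_{\mD}$, obtaining
\[\tripostotopos{\Psi_{\mD}}\equiv \exactcomp{\regularcompdoctrine{\Psi_{\mD}}}.\]
By Corollary \ref{corollario regular come  ex lex} the regular completion of the weak subobjects doctrine is the reg/lex completion of $\mD$, that is $\regularcompdoctrine{\Psi_{\mD}}\equiv\regularcomp{\mD}$; since the ex/reg completion $(-)_{\ex/\reg}$ is a $2$-functor on regular categories it preserves this equivalence, so $\exactcomp{\regularcompdoctrine{\Psi_{\mD}}}\equiv\exactcomp{\regularcomp{\mD}}$.

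Finally I would invoke the classical decomposition recalled immediately before the statement, namely $\exactcomplex{\mD}\equiv\exactcomp{\regularcomp{\mD}}$ from \cite{SFEC,REC,FECLEO}. Concatenating the three equivalences gives $\tripostotopos{\Psi_{\mD}}\equiv\exactcomplex{\mD}$, as required. The only non-formal point is the verification in the first paragraph that $\Psi_{\mD}$ is genuinely elementary existential; the remaining steps are a bookkeeping concatenation of results already established, together with the routine fact that $(-)_{\ex/\reg}$ respects equivalences of regular categories.
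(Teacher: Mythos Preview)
Your proof is correct and follows essentially the same route the paper indicates: the paper states the corollary without proof, but the surrounding text explicitly points to the decomposition $\tripostotopos{P}\equiv \exactcomp{\regularcompdoctrine{P}}$ (Theorem~\ref{teorema T_P=reg(P)_exreg}), the classical identity $\exactcomplex{\mD}\equiv\exactcomp{\regularcomp{\mD}}$, and the analogy with Corollary~\ref{corollario regular come  ex lex}, which is exactly the chain you assemble. Your preliminary check that $\Psi_{\mD}$ is elementary existential is the only point the paper leaves entirely implicit, and you handle it correctly.
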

Now, by combining these results we obtain the following
analogous result to that  of Theorem \ref{theorem nec. regular morita equiv}:
\begin{theorem}\label{theorem nec. exact morita equiv}
Let $\doctrine{\mC}{P}$ be a elementary and existential doctrine. If $\tripostotopos{P}\equiv\exactcomplex{\mD} $, then $P$ is exact-Morita equivalent to a full existential completion whose base is $\mD$.
\end{theorem}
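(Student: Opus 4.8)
The plan is to mirror verbatim the argument used for the regular case in Theorem~\ref{theorem nec. regular morita equiv}, replacing the regular completion by the exact completion throughout and invoking the exact-completion analogue of Corollary~\ref{corollario regular come  ex lex}. No new construction is needed: everything reduces to chaining together equivalences already established in the preliminaries.

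First I would invoke Corollary~\ref{corollary exact comp of weak sub 1}, which identifies the ex/lex completion of the finite limit category $\mD$ with the exact completion of its weak subobjects doctrine, i.e.\ $\exactcomplex{\mD}\equiv\tripostotopos{\Psi_{\mD}}$. Combining this with the hypothesis $\tripostotopos{P}\equiv\exactcomplex{\mD}$ gives immediately
\[
\tripostotopos{P}\equiv\tripostotopos{\Psi_{\mD}}.
\]
By Definition~\ref{def exact. morita equivalent} this is exactly the assertion that $P$ and $\Psi_{\mD}$ are exact Morita-equivalent. It then remains only to recognize $\Psi_{\mD}$ as a full existential completion based on $\mD$, which is supplied by Example~\ref{theorem weak sub is a existential comp} together with Theorem~\ref{example lambda-weak-sub}: the weak subobjects doctrine $\doctrine{\mD}{\Psi_{\mD}}$ is the generalized existential completion of the trivial doctrine $\doctrine{\mD}{\trdc}$ along the class of all morphisms of $\mD$, i.e.\ precisely a full existential completion whose base category is $\mD$. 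Hence $P$ is exact Morita-equivalent to a full existential completion based on $\mD$, as required.

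Since the argument is purely formal, I do not expect a genuine obstacle; the only point requiring care is that exact Morita-equivalence is defined only between \emph{elementary existential} doctrines, so one must verify that $\Psi_{\mD}$ qualifies. This is guaranteed because $\mD$ has finite limits: $\Psi_{\mD}$ is then existential by Example~\ref{example doctrines}(2), and, being a full existential completion, it carries comprehensive diagonals and equality predicates realized as left adjoints along the diagonals (Corollary~\ref{corollary full ex completion has comprehensive diagonals}), hence is elementary. With this checked, the displayed chain of equivalences is a legitimate instance of exact Morita-equivalence and the proof concludes.
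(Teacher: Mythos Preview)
Your proposal is correct and follows essentially the same approach as the paper: the paper's proof consists only of the sentence ``It follows from Example~\ref{theorem weak sub is a existential comp} and Corollary~\ref{corollary exact comp of weak sub 1},'' which is precisely the chain of equivalences you spell out. Your additional verification that $\Psi_{\mD}$ is elementary and existential is a reasonable piece of bookkeeping the paper leaves implicit.
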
  
\begin{proof}
It follows from Example \ref{theorem weak sub is a existential comp} and Corollary \ref{corollary exact comp of weak sub 1}.
\end{proof}
As in  the case of  the regular completion in Theorem \ref{theorema unicita weak subobject doctrine}, exact-Morita equivalence between weak subobjects doctrines
coincides with their equivalence:
\begin{theorem}\label{theorema unicita weak subobject doctrine 2}
Let $\doctrine{\mC}{\Psi_{\mC}}$ and $\doctrine{\mD}{\Psi_{\mD}}$ be two weak subobjects doctrines whose base categories $\mC$ and $\mD$ have finite limits. Then $\Psi_{\mC}$ and $\Psi_{\mD}$ are exact Morita-equivalent if and only if $\Psi_{\mC}\cong \Psi_{\mD}$.
\end{theorem}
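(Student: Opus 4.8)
The statement is the exact-completion analogue of Theorem~\ref{theorema unicita weak subobject doctrine}, so the plan is to mirror that proof, replacing the regular completion and Carboni's regular-projective characterization by the exact completion $\tripostotopos{-}$ and the Carboni--Vitale characterization of ex/lex completions. The backbone is the identification, via Corollary~\ref{corollary exact comp of weak sub 1}, of $\tripostotopos{\Psi_{\mC}}$ with $\exactcomplex{\mC}$, which lets one pass freely between doctrines of weak subobjects and ex/lex completions of their bases.

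One direction is immediate. If $\Psi_{\mC}\cong\Psi_{\mD}$ as doctrines, then since $P\mapsto\tripostotopos{P}$ is a $2$-functor by Theorem~\ref{theorem maietti rosolini pasquali exact comp}, it sends isomorphic doctrines to equivalent exact categories, whence $\tripostotopos{\Psi_{\mC}}\equiv\tripostotopos{\Psi_{\mD}}$; that is, $\Psi_{\mC}$ and $\Psi_{\mD}$ are exact Morita-equivalent in the sense of Definition~\ref{def exact. morita equivalent}.

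For the converse, assume $\tripostotopos{\Psi_{\mC}}\equiv\tripostotopos{\Psi_{\mD}}$. By Corollary~\ref{corollary exact comp of weak sub 1} we have $\tripostotopos{\Psi_{\mC}}\equiv\exactcomplex{\mC}$ and $\tripostotopos{\Psi_{\mD}}\equiv\exactcomplex{\mD}$, so the hypothesis becomes $\exactcomplex{\mC}\equiv\exactcomplex{\mD}$. I would then invoke the characterization of ex/lex completions in terms of projectives: an exact category of the form $\exactcomplex{\mathcal{B}}$ has enough (regular) projectives, these are closed under finite limits, and the full subcategory they span recovers $\mathcal{B}$ up to finite-limit-preserving equivalence. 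Since projectivity is defined by a lifting property against regular epimorphisms, it is both preserved and reflected by any equivalence of exact categories; hence the given equivalence $\exactcomplex{\mC}\equiv\exactcomplex{\mD}$ restricts to an equivalence between the two projective cores, yielding a lex equivalence $\mC\equiv\mD$.

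Finally I would promote this base equivalence to a doctrine isomorphism. An equivalence $\freccia{\mC}{F}{\mD}$ induces equivalences of slices $\mC/A\simeq\mD/FA$ for every object $A$, hence isomorphisms of their poset reflections $\Psi_{\mC}(A)\cong\Psi_{\mD}(FA)$ compatible with reindexing along morphisms, which is exactly a doctrine isomorphism $\Psi_{\mC}\cong\Psi_{\mD}$. The step I expect to be the main obstacle is the middle one: one must check that the ambient equivalence carries the canonical projectives of $\exactcomplex{\mC}$ precisely onto those of $\exactcomplex{\mD}$ and that this assignment preserves finite limits, so that the recovered $\mC\equiv\mD$ is genuinely an equivalence of \emph{lex} categories and not merely of underlying categories; this is where the Carboni--Vitale closure-under-finite-limits property of the projectives is essential, and it is precisely the input that distinguishes the exact case from the regular one treated in Theorem~\ref{theorema unicita weak subobject doctrine}.
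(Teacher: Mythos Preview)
Your proposal is correct and follows essentially the same approach as the paper: the paper's proof simply invokes the definition of exact Morita-equivalence, Corollary~\ref{corollary exact comp of weak sub 1} to pass to $\exactcomplex{\mC}\equiv\exactcomplex{\mD}$, and Carboni's characterization of ex/lex completions via regular projectives \cite[Lem.~2.1]{SFEC} to recover the base. You spell out more of the details (the promotion from a lex equivalence $\mC\equiv\mD$ to a doctrine isomorphism $\Psi_{\mC}\cong\Psi_{\mD}$, and the reason projectives are preserved under equivalence), but the skeleton is identical to the paper's one-line argument.
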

\begin{proof}
It follows by the notion of exact Morita-equivalence, Corollary \ref{corollary exact comp of weak sub 1}  and the characterization of exact completions in terms of regular projectives due to Carboni \cite[Lem. 2.1]{SFEC}.
\end{proof}
Furthermore, from these facts and Theorem \ref{theorem car. regular comp via ex. comp.}, we directly conclude the following:

\begin{theorem}\label{theorem exact comp generalized existential comp}
Let $\doctrine{\mC}{P}$ be an elementary and existential doctrine, whose base category $\mC$ has finite limits. 
Let $ \doctrine{\mC}{P'}$ be a conjunctive, fibred subdoctrine of $P$.
Then the arrow $\freccia{\tripostotopos{\Psi_{\mG_{P'}} }}{\mathbf{Ex}{(N,n)}}{\tripostotopos{P}}$ is an equivalence, preserving the canonical embeddings of $P'$ into $P$ and $\Psi_{\mG_{P'}}$, if and only if $P$ is the full existential completion of $P'$.
\end{theorem}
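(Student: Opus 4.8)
The plan is to reduce the assertion to the characterization already obtained at the level of regular completions, namely Theorems~\ref{theorem car. regular comp via ex. comp.} and \ref{tchar1}, by exploiting the decomposition $\tripostotopos{Q}\equiv \exactcomp{\regularcompdoctrine{Q}}$ of Theorem~\ref{teorema T_P=reg(P)_exreg}. The first observation I would record is that, since both $\regularcompdoctrine{-}$ (Theorem~\ref{theorem maietti rosolini pasquali regular comp}) and the ex/reg completion $(-)_{\ex/\reg}$ (Theorem~\ref{theorem maietti rosolini pasquali exact comp}) are 2-functors, and $\tripostotopos{-}$ factors through them, the functor $\mathbf{Ex}(N,n)$ coincides, up to the equivalences of Theorem~\ref{teorema T_P=reg(P)_exreg}, with the ex/reg completion of the regular functor $\regularcompdoctrine{N,n}\colon \regularcompdoctrine{\Psi_{\mG_{P'}}}\to \regularcompdoctrine{P}$. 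Because these equivalences are canonical, every diagram expressing compatibility with the embeddings of $P'$ transports along them, so the phrase ``preserving the embeddings of $P'$'' carries the same meaning at the exact level and at the regular level.

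For the implication $P=\fullcompex{P'}\Rightarrow \mathbf{Ex}(N,n)$ equivalence, I would apply Theorem~\ref{theorem car. regular comp via ex. comp.} to $P'$, obtaining that $\regularcompdoctrine{N,n}$ is an equivalence of regular categories preserving the embeddings of $P'$. Since $(-)_{\ex/\reg}$ is a 2-functor it preserves equivalences, so $\mathbf{Ex}(N,n)$, being its value on $\regularcompdoctrine{N,n}$, is an equivalence; the naturality of the canonical inclusions $\regularcompdoctrine{-}\hookrightarrow \exactcomp{\regularcompdoctrine{-}}\equiv\tripostotopos{-}$ ensures that it still preserves the embeddings of $P'$.

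For the converse I would assume that $\mathbf{Ex}(N,n)$, identified with the ex/reg completion of $\regularcompdoctrine{N,n}$, is an equivalence preserving the embeddings of $P'$, and reflect this down to the regular level. Here I would invoke Carboni's characterization of the ex/reg completion through regular projectives (\cite[Lem. 5.1]{SFEC}), already used in the proof of Theorem~\ref{theorema unicita weak subobject doctrine 2}: the canonical inclusion exhibits a regular category as the full subcategory of regular projective objects of its exact completion, and these cover. Regular projectivity being an intrinsic categorical notion, it is preserved by any equivalence; hence $\mathbf{Ex}(N,n)$ restricts to an equivalence between the full subcategories of regular projectives, which are precisely $\regularcompdoctrine{\Psi_{\mG_{P'}}}$ and $\regularcompdoctrine{P}$. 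This restriction is exactly $\regularcompdoctrine{N,n}$, so $\regularcompdoctrine{N,n}$ is an equivalence, and it preserves the embeddings of $P'$ because $\mathbf{Ex}(N,n)$ does. Theorem~\ref{tchar1} then gives $P=\fullcompex{P'}$.

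The main obstacle is exactly this reflection step in the converse: an equivalence of exact completions need not, a priori, descend to an equivalence of the underlying regular categories, and it is the conjunction of Carboni's regular-projective description with the hypothesis that $\mathbf{Ex}(N,n)$ preserves the embeddings of $P'$ that forces its restriction to the projective subcategories to be the equivalence $\regularcompdoctrine{N,n}$. Once this point is secured, the two halves assemble into the stated biconditional. (One could alternatively argue the converse directly, computing $P(A)\cong \Sub_{\tripostotopos{P}}(A,\delta_A)$ and $\Psi_{\mG_{P'}}(A,\top)\cong \Sub_{\tripostotopos{\Psi_{\mG_{P'}}}}((A,\top),\delta_{(A,\top)})$, transporting these identifications across the equivalence $\mathbf{Ex}(N,n)$ to get $P(A)\cong \Psi_{\mG_{P'}}(A,\top)$ naturally and compatibly with $P'$, and then concluding by point~(2) of Theorem~\ref{theorem caract. gen. existential completion}; but the reduction to the regular case is shorter given the results already in hand.)
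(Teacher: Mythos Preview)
Your forward implication (``if'') is correct and matches the paper: apply Theorem~\ref{theorem car. regular comp via ex. comp.} to obtain that $\regularcompdoctrine{N,n}$ is an equivalence, then push through the 2-functor $(-)_{\ex/\reg}$ using Theorem~\ref{teorema T_P=reg(P)_exreg}.

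Your primary argument for the converse, however, has a genuine gap. The claim that ``the canonical inclusion exhibits a regular category as the full subcategory of regular projective objects of its exact completion'' is \emph{false} for $(-)_{\ex/\reg}$ in general. For a regular category $\mA$, the regular projectives of $\exactcomp{\mA}$ are not all of $\mA$; they are (up to retract) only those objects that were already regular projective in $\mA$. Concretely, $\regularcompdoctrine{\Psi_{\mG_{P'}}}\equiv\regularcomp{\mG_{P'}}$, and the regular projectives of its ex/reg completion $\exactcomplex{\mG_{P'}}$ are (retracts of) the image of $\mG_{P'}$, not all of $\regularcomp{\mG_{P'}}$. So restricting $\mathbf{Ex}(N,n)$ to regular projectives does not recover $\regularcompdoctrine{N,n}$, and Carboni's lemma (which in \cite{SFEC} concerns the ex/lex case, where the lex base \emph{does} sit as the projective cover) does not give you what you need here.

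Your parenthetical alternative is in fact the correct route, and it is exactly what the paper does: it says the converse ``has a straightforward proof following the same line and technique used in Theorem~\ref{tchar1}''. That means computing $P(A)\cong \Sub_{\tripostotopos{P}}(A,\delta_A)$ and $\Psi_{\mG_{P'}}(A,\top)\cong \Sub_{\tripostotopos{\Psi_{\mG_{P'}}}}((A,\top),\delta_{(A,\top)})$, transporting across the equivalence $\mathbf{Ex}(N,n)$ (which by hypothesis respects the embeddings of $P'$), and concluding via point~(2) of Theorem~\ref{theorem caract. gen. existential completion}. You should promote that argument to the main line and drop the reflection-through-projectives step.
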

\begin{proof}
It follows from Theorem  \ref{teorema T_P=reg(P)_exreg}, Theorem \ref{theorem car. regular comp via ex. comp.} and Theorem \ref{tchar1}, together with the observation that the \emph{only if} part has a straightforward proof following the same line and technique used in Theorem \ref{tchar1} for the case of the regular completion. 
\end{proof} 

\begin{corollary}\label{theorem exact comp generalized existential comp}
Let $\doctrine{\mC}{P}$ be a \conjdoctrine~whose base category $\mC$ has finite limits. Then we have the equivalence
\[\tripostotopos{\fullcompex{P}}\equiv \exactcomplex{\mG_{P}}.\]
\end{corollary}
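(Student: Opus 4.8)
The plan is to deduce the statement from the decomposition of the exact completion as an ex/reg completion stacked on a reg/lex completion, exploiting that the regular completion of $\fullcompex{P}$ has already been computed. First I would record that $\fullcompex{P}$ is itself an elementary existential doctrine on the finite limit category $\mC$: by Theorem~\ref{theorem generalized existential doctrine} it has left adjoints along all morphisms of $\mC$ satisfying (BCC) and (FR), and by Corollary~\ref{corollary full ex completion has comprehensive diagonals} it has comprehensive diagonals, hence in particular carries an elementary structure (with equality predicate $\Einv_{\Delta_A}(\top_A)$). Consequently $\tripostotopos{\fullcompex{P}}$ is defined and Theorem~\ref{teorema T_P=reg(P)_exreg} applies, giving
\[\tripostotopos{\fullcompex{P}}\equiv \exactcomp{\regularcompdoctrine{\fullcompex{P}}}.\]

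Next I would invoke Theorem~\ref{theorem car. regular comp via ex. comp.}, which yields $\regularcompdoctrine{\fullcompex{P}}\equiv\regularcomp{\mG_P}$. Since the ex/reg completion preserves equivalences of regular categories (immediate from Remark~\ref{exreg} together with the $2$-functoriality of $\tripostotopos{-}$ in Theorem~\ref{theorem maietti rosolini pasquali exact comp}), one obtains
\[\exactcomp{\regularcompdoctrine{\fullcompex{P}}}\equiv \exactcomp{\regularcomp{\mG_P}}.\]
Here $\mG_P$ is a genuine finite limit category by Lemma~\ref{lemma tutti gli oggeti della Reg(Gr) sono nella forma (A,a)---->(B,T)}, so the right-hand side is legitimate. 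Finally, applying the classical decomposition $\exactcomplex{\mD}\equiv\exactcomp{\regularcomp{\mD}}$ (recalled just after Theorem~\ref{teorema T_P=reg(P)_exreg}) to $\mD=\mG_P$ identifies $\exactcomp{\regularcomp{\mG_P}}$ with $\exactcomplex{\mG_P}$, and chaining the three equivalences produces $\tripostotopos{\fullcompex{P}}\equiv \exactcomplex{\mG_P}$, as required.

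A shorter, parallel route bypasses the regular layer: regarding $P$ as a fibred conjunctive subdoctrine of $\fullcompex{P}$ via the canonical injection $(\id_{\mC},\eta_P)$ — which is full and faithful on fibres, hence genuinely a fibred subdoctrine — the doctrine $\fullcompex{P}$ is by construction the full existential completion of $P$, so the immediately preceding theorem (with outer doctrine $\fullcompex{P}$ and inner doctrine $P$) gives $\tripostotopos{\Psi_{\mG_P}}\equiv \tripostotopos{\fullcompex{P}}$, after which Corollary~\ref{corollary exact comp of weak sub 1} supplies $\tripostotopos{\Psi_{\mG_P}}\equiv\exactcomplex{\mG_P}$, closing the argument.

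The only step that is more than bookkeeping, and hence the point I expect to be the main obstacle, is justifying in full that $\fullcompex{P}$ meets the elementary existential hypotheses demanded by the quoted theorems: that the freely adjoined left adjoint along the diagonal really furnishes a bona fide elementary structure, and that $P\hookrightarrow\fullcompex{P}$ is a fibred subdoctrine compatible with that structure. Once this is settled, everything else is a formal concatenation of $2$-functorial equivalences.
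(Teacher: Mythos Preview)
Your proposal is correct and matches the paper's approach: the corollary is stated without separate proof precisely because it follows from the immediately preceding theorem applied with outer doctrine $\fullcompex{P}$ and inner doctrine $P$ (your second route), and that theorem in turn is proved exactly via the decomposition $\tripostotopos{-}\equiv\exactcomp{\regularcompdoctrine{-}}$ together with Theorem~\ref{theorem car. regular comp via ex. comp.} (your first route). Your caveat about verifying that $\fullcompex{P}$ is elementary existential is well taken; the paper leaves this implicit, relying on the fact that having left adjoints along all morphisms with (BCC) and (FR) forces the elementary structure via $\delta_A:=\Einv_{\Delta_A}(\top_A)$.
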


\begin{theorem}\label{main}

Let $\doctrine{\mC}{P}$ be an full existential doctrine, whose base category has finite limits. Then the following are equivalent:

\begin{enumerate}
\item the exact completion $\tripostotopos{P}$ is equivalent to regular completion $\exactcomplex{\mD}$ of a lex category $\mD$;
\item $P$ is exact Morita-equivalent to the weak-subojects doctrine $\doctrine{\mD}{\Psi_{\mD}}$ on  a lex category $\mD$;
\item $P$ is exact Morita-equivalent to a full existential completion $P'$.
\end{enumerate}
\end{theorem}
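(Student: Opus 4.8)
The plan is to prove Theorem \ref{main} by running exactly the same cycle of implications $(1)\Rightarrow(2)\Rightarrow(3)\Rightarrow(1)$ used for the regular analogue in Theorem \ref{theorema caratterizzazione generale regual comp}, but feeding in the exact-completion counterparts of each ingredient that have already been assembled in this section. The guiding observation is that every tool needed for the regular statement has a parallel here: the doctrinal presentation $\exactcomplex{\mD}\equiv\tripostotopos{\Psi_{\mD}}$ from Corollary \ref{corollary exact comp of weak sub 1} replaces $\regularcomp{\mD}\equiv\regularcompdoctrine{\Psi_{\mD}}$, the notion of exact Morita-equivalence (Definition \ref{def exact. morita equivalent}) replaces that of regular Morita-equivalence, and the equivalence $\tripostotopos{\fullcompex{P}}\equiv\exactcomplex{\mG_{P}}$ from Corollary \ref{theorem exact comp generalized existential comp} replaces Theorem \ref{theorem car. regular comp via ex. comp.}. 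Since all three conditions are phrased up to equivalence of the resulting exact categories, and equivalence of categories is transitive, the implications become formal once the right citations are in place.

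For $(1)\Rightarrow(2)$ I would argue as follows. Assuming $\tripostotopos{P}\equiv\exactcomplex{\mD}$ for a lex category $\mD$, Corollary \ref{corollary exact comp of weak sub 1} identifies $\exactcomplex{\mD}$ with $\tripostotopos{\Psi_{\mD}}$, whence $\tripostotopos{P}\equiv\tripostotopos{\Psi_{\mD}}$; by Definition \ref{def exact. morita equivalent} this says precisely that $P$ is exact Morita-equivalent to the weak subobjects doctrine $\Psi_{\mD}$. For $(2)\Rightarrow(3)$ I would invoke Theorem \ref{example lambda-weak-sub} together with Example \ref{theorem weak sub is a existential comp}, which exhibit $\Psi_{\mD}$ as the full existential completion of the trivial doctrine $\doctrine{\mD}{\trdc}$; thus the doctrine to which $P$ is exact Morita-equivalent is itself a full existential completion, giving (3) with $P'=\Psi_{\mD}$.

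The substance lies in $(3)\Rightarrow(1)$. Here $P$ is exact Morita-equivalent to some full existential completion $P'=\fullcompex{Q}$, where $Q$ is a conjunctive doctrine on a lex base. Corollary \ref{theorem exact comp generalized existential comp} then yields $\tripostotopos{P'}=\tripostotopos{\fullcompex{Q}}\equiv\exactcomplex{\mG_{Q}}$, and $\mG_{Q}$ has finite limits by Lemma \ref{lemma tutti gli oggeti della Reg(Gr) sono nella forma (A,a)---->(B,T)}, so taking $\mD:=\mG_{Q}$ and composing with the assumed equivalence $\tripostotopos{P}\equiv\tripostotopos{P'}$ gives $\tripostotopos{P}\equiv\exactcomplex{\mD}$, which is (1). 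I would emphasise that Corollary \ref{theorem exact comp generalized existential comp} is not an independent fact but rests on the decomposition $\tripostotopos{P}\equiv\exactcomp{\regularcompdoctrine{P}}$ of Theorem \ref{teorema T_P=reg(P)_exreg} applied to the regular result Theorem \ref{theorem car. regular comp via ex. comp.}; this is the channel through which the previous section's machinery is transported to the exact setting.

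I do not expect a genuine obstacle, since the argument is a recombination of established results; the only points demanding care are bookkeeping ones. First, one must keep the base categories lex throughout so that each Grothendieck category appearing has finite limits and the exact completion of a lex category is available. Second, because conditions (1)--(3) ask only for \emph{equivalences} of exact categories rather than for embedding-preserving equivalences of doctrines, the proof is lighter than the corresponding rigidity statement Theorem \ref{theorem exact comp generalized existential comp}: there is no need to track the canonical embeddings of $P'$ into $P$ and into $\Psi_{\mG_{P'}}$, and transitivity of $\equiv$ suffices. The place where I would be most careful is simply confirming that the full existential completion produced in $(2)\Rightarrow(3)$ and the one consumed in $(3)\Rightarrow(1)$ are genuinely of the same form, so that Corollary \ref{theorem exact comp generalized existential comp} applies verbatim.
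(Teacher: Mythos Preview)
Your proposal is correct and follows essentially the same approach as the paper: the same cycle $(1)\Rightarrow(2)\Rightarrow(3)\Rightarrow(1)$ with the same citations (Corollary \ref{corollary exact comp of weak sub 1} for $(1)\Rightarrow(2)$, Theorem \ref{example lambda-weak-sub} and Example \ref{theorem weak sub is a existential comp} for $(2)\Rightarrow(3)$, and Corollary \ref{theorem exact comp generalized existential comp} for $(3)\Rightarrow(1)$). Your additional remarks on why $\mG_Q$ is lex and on the provenance of Corollary \ref{theorem exact comp generalized existential comp} are accurate elaborations but not departures from the paper's argument.
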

\begin{proof}
$(1\Rightarrow 2)$ It follows by Corollary \ref{corollary exact comp of weak sub 1} and by definition of exact Morita-equivalence.

$(2\Rightarrow 3)$ It follows again by definition of exact Morita-equivalence and by the fact that every weak subobjects doctrine is a full existential completion, see Theorem \ref{example lambda-weak-sub} and Example \ref{theorem weak sub is a existential comp}.

$(3\Rightarrow 1)$ It follows by Corollary \ref{theorem exact comp generalized existential comp}.
\end{proof}
Furthermore,  we obtain analogous results for  the notion of pure existential completion of an elementary doctrine.
 \begin{theorem}\label{theorem exact comp pure existential comp}
Let $\doctrine{\mC}{P}$ be an pure existential, elementary  doctrine, and 
let $ \doctrine{\mC}{P'}$ be an elementary fibred subdoctrine of $P$.
Then,  the canonical morphism given by the pair $\freccia{ \tripostotopos{\Psi_{\Pred{P'}} }}{\mathbf{Ex}(N,n)}{\tripostotopos{P}}$ is an equivalence, preserving the canonical embeddings of $P'$ into $P$ and $\Psi_{\Pred{P'}}$, if and only if 
$P$ is a pure  existential completion of $P'$.
\end{theorem}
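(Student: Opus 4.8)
The plan is to reduce the statement to the already-established regular analogue by means of the decomposition of the exact completion recalled in Theorem~\ref{teorema T_P=reg(P)_exreg}, namely $\tripostotopos{P}\equiv\exactcomp{\regularcompdoctrine{P}}$. First I would observe that, since $\tripostotopos{-}$ and the intermediate completions are all $2$-functorial (Theorem~\ref{theorem maietti rosolini pasquali exact comp}), the exact functor $\mathbf{Ex}(N,n)$ is identified, up to the equivalences of Theorem~\ref{teorema T_P=reg(P)_exreg}, with the exact functor obtained by applying the ex/reg completion to the regular functor $\regularcompdoctrine{N,n}$. This places the whole argument at the regular level, where Theorem~\ref{theorem car. regular comp via pure ex. comp.} and Theorem~\ref{purextchar1} are available.

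For the \emph{if} direction, assuming $P=\purecompex{P'}$, Theorem~\ref{theorem car. regular comp via pure ex. comp.} gives that $\regularcompdoctrine{N,n}\colon\regularcompdoctrine{\Psi_{\Pred{P'}}}\to\regularcompdoctrine{P}$ is an equivalence preserving the canonical embeddings of $P'$ and of $\Psi_{\Pred{P'}}$. Because the ex/reg completion is a $2$-functor, it carries this equivalence to an equivalence of exact categories, and it preserves the embeddings since these already live at the regular level. Transporting along the equivalences of Theorem~\ref{teorema T_P=reg(P)_exreg} then yields that $\mathbf{Ex}(N,n)$ is an equivalence preserving the canonical embeddings, as required.

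For the \emph{only if} direction I would argue that an equivalence $\mathbf{Ex}(N,n)$ preserving the canonical embeddings restricts to an equivalence of the regular completions. The key fact is Carboni's characterization of the regular completion as the full subcategory of regular-projective objects inside the exact completion, as already used in Theorem~\ref{theorema unicita weak subobject doctrine 2}: the images of $\regularcompdoctrine{\Psi_{\Pred{P'}}}$ and $\regularcompdoctrine{P}$ are exactly the regular projectives of their exact completions, so an exact equivalence respecting the embeddings of $P'$ and $\Psi_{\Pred{P'}}$ sends regular projectives to regular projectives and hence restricts to an equivalence $\regularcompdoctrine{N,n}$ preserving the embeddings. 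Invoking Theorem~\ref{purextchar1} then gives that $P$ is the pure existential completion of $P'$. Alternatively, and more directly, I would mimic the fibrewise computation of Theorem~\ref{purextchar1}: using that the subobjects of the objects $(A,\delta_A)$ in $\tripostotopos{P}$ recover the fibre $P(A)$, and similarly $\Psi_{\Pred{P'}}(A,\top)$ for $\tripostotopos{\Psi_{\Pred{P'}}}$, the equivalence produces a natural isomorphism $P(A)\cong\Psi_{\Pred{P'}}(A,\top)$ compatible with the embedding of $P'$, whence Theorem~\ref{theorem char pure existential compl of elementary doct} concludes.

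The main obstacle I anticipate is precisely the reflection step in the \emph{only if} direction: one must ensure that the equivalence of exact categories, together with preservation of the canonical embeddings, genuinely restricts to an equivalence of the regular completions and that the resulting fibre isomorphisms are natural. This is where the hypothesis of preserving embeddings is essential, and where the peculiar behaviour of projections noted in Theorem~\ref{purextchar1}—namely that $[\pr][g]=\id$ in $\Pred{P'}$ does force the existence of a genuine section of $\pr$ in $\mC$, unlike for arbitrary morphisms—is what keeps the \emph{pure} existential (rather than merely generalized) completion argument intact.
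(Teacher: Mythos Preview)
Your proposal is correct and follows essentially the same approach as the paper: the paper's proof is a one-line invocation of Theorem~\ref{teorema T_P=reg(P)_exreg}, Theorem~\ref{theorem car. regular comp via pure ex. comp.}, and Theorem~\ref{purextchar1}, which is precisely the decomposition-to-regular-level strategy you describe. Your elaboration of the \emph{only if} direction is more detailed than the paper's; the paper (in the analogous full-existential Theorem~\ref{theorem exact comp generalized existential comp}) simply remarks that this direction follows the same fibrewise technique as Theorem~\ref{tchar1}, which is your second alternative, rather than the regular-projective restriction you offer first.
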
 
\begin{proof}
It follows from Theorem \ref{teorema T_P=reg(P)_exreg}, Theorem \ref{theorem car. regular comp via pure ex. comp.} and Theorem \ref{purextchar1}.
\end{proof}
\begin{corollary}
If $\doctrine{\mC}{P}$ is an elementary doctrine then we have the equivalence
\[\tripostotopos{\purecompex{P}}\equiv \exactcomplex{\Pred{P}}.\]
\end{corollary}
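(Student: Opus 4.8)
The plan is to obtain the equivalence purely by chaining three results already established, in the spirit of the proof of the corresponding full-completion statement (Corollary~\ref{theorem exact comp generalized existential comp}). The starting point is the general decomposition of the exact completion of an elementary existential doctrine into the ex/reg completion of its regular completion, namely Theorem~\ref{teorema T_P=reg(P)_exreg}.

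First I would apply Theorem~\ref{teorema T_P=reg(P)_exreg} to the doctrine $\purecompex{P}$. For this to be legitimate one must first check that $\purecompex{P}$ is an elementary existential doctrine: it is existential by the very construction of the pure existential completion, and it is elementary by Theorem~\ref{theorem elementary existential completion}, since $P$ is assumed elementary. This yields
\[\tripostotopos{\purecompex{P}}\equiv \exactcomp{\regularcompdoctrine{\purecompex{P}}}.\]

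Next I would rewrite the regular completion appearing on the right. By Theorem~\ref{theorem car. regular comp via pure ex. comp.} the regular completion of the pure existential completion of $P$ is equivalent to the reg/lex completion of the category of predicates of $P$, i.e.
\[\regularcompdoctrine{\purecompex{P}}\equiv \regularcomp{\Pred{P}}.\]
Since the ex/reg completion is $2$-functorial and hence preserves equivalences of regular categories, this gives $\exactcomp{\regularcompdoctrine{\purecompex{P}}}\equiv \exactcomp{\regularcomp{\Pred{P}}}$.

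Finally I would invoke the classical decomposition of the ex/lex completion of a finite limit category into the ex/reg completion of its reg/lex completion, recalled just after Theorem~\ref{teorema T_P=reg(P)_exreg}; applied to the lex category $\Pred{P}$ it reads $\exactcomplex{\Pred{P}}\equiv \exactcomp{\regularcomp{\Pred{P}}}$. Concatenating the three equivalences produces
\[\tripostotopos{\purecompex{P}}\equiv \exactcomp{\regularcomp{\Pred{P}}}\equiv \exactcomplex{\Pred{P}},\]
as desired. The only genuinely delicate points, and hence where I would spend care, are the verification that $\purecompex{P}$ meets the elementarity hypothesis of Theorem~\ref{teorema T_P=reg(P)_exreg} (handled by Theorem~\ref{theorem elementary existential completion}) and the fact that $\Pred{P}$ is indeed a finite limit category, so that its ex/lex completion is defined and the classical decomposition applies; both are routine given the standing assumptions, so no real obstacle remains beyond the bookkeeping of these hypotheses.
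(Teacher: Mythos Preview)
Your proposal is correct and follows essentially the same route as the paper: the corollary is obtained by chaining Theorem~\ref{teorema T_P=reg(P)_exreg}, Theorem~\ref{theorem car. regular comp via pure ex. comp.}, and the classical decomposition $\exactcomplex{-}\equiv\exactcomp{\regularcomp{-}}$, together with the check (via Theorem~\ref{theorem elementary existential completion}) that $\purecompex{P}$ is elementary existential. The paper merely packages this same chain inside the proof of Theorem~\ref{theorem exact comp pure existential comp} and then reads off the corollary, whereas you run the chain directly; there is no substantive difference.
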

As application of the previous result we obtain as corollary the result regarding doctrines equipped with $\epsilon$-operators and exact completion presented in \cite[Thm. 6.2(iii)]{TECH}.
\begin{corollary}
An elementary existential doctrine $\doctrine{\mC}{P}$ is equipped with Hilbert's $\epsilon$-operators if and only if the arrow $\freccia{ \tripostotopos{\Psi_{\Pred{P}} }}{\mathbf{Ex}(N,n)}{\tripostotopos{P}}$ is an equivalence.
\end{corollary}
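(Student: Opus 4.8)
The plan is to recognize the statement as the instance $P'=P$ of Theorem \ref{theorem exact comp pure existential comp}, bridged to the notion of Hilbert's $\epsilon$-operators through Theorem \ref{theorem P iso P^ex if and only if}. Indeed, upon setting $P'=P$ the canonical functor of Theorem \ref{theorem exact comp pure existential comp} becomes precisely the arrow $\freccia{\tripostotopos{\Psi_{\Pred{P}}}}{\mathbf{Ex}(N,n)}{\tripostotopos{P}}$ appearing in the corollary, while the condition ``$P$ is a pure existential completion of $P'=P$'' reads as $P\cong\purecompex{P}$, which by Theorem \ref{theorem P iso P^ex if and only if} is exactly equivalent to $P$ being equipped with $\epsilon$-operators. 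So the whole corollary is the chain of biconditionals
\[\text{$P$ has $\epsilon$-operators}\iff P\cong\purecompex{P}\iff \mathbf{Ex}(N,n)\text{ is an equivalence.}\]
I would also remark at the outset that the specific functor $\mathbf{Ex}(N,n)$ preserves the canonical embeddings by its very construction, so that for this functor the bare condition ``is an equivalence'' of the corollary coincides with the condition ``is an equivalence preserving the embeddings'' of Theorem \ref{theorem exact comp pure existential comp}.

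For the direction from $\epsilon$-operators to the equivalence the argument is routine once the bridge is in place. Assuming $P$ is equipped with $\epsilon$-operators, Theorem \ref{theorem P iso P^ex if and only if} gives $P\cong\purecompex{P}$; in particular $P$ is (isomorphic to) a pure existential doctrine, so the standing hypothesis of Theorem \ref{theorem exact comp pure existential comp} is met with $P'=P$. Since $P$ is then a pure existential completion of $P'=P$, that theorem immediately yields that $\mathbf{Ex}(N,n)$ is an equivalence.

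The converse is the delicate part and where the main obstacle lies. A naive attempt would invoke the ``only if'' half of Theorem \ref{theorem exact comp pure existential comp}, but that theorem presupposes $P$ to be pure existential --- precisely what we are trying to establish --- so it cannot be applied directly to an arbitrary elementary existential $P$. Instead I would reconstruct the fibrewise comparison by hand, following the argument in the proof of Theorem \ref{tchar1} (as reused in Theorem \ref{purextchar1}): from the equivalence $\mathbf{Ex}(N,n)$, which preserves the embeddings, one extracts a natural family of isomorphisms $P(A)\cong\Psi_{\Pred{P}}(A,\top)$ commuting with the canonical injections, by passing through subobjects of the images of the embeddings in the two exact completions. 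This is exactly condition (2) of Theorem \ref{theorem char pure existential compl of elementary doct} for $P'=P$, and the crucial point that sidesteps the obstacle is that the $(2)\Rightarrow(1)$ implication of that theorem requires only that $P$ be existential elementary, not already pure existential. Hence we conclude $P=\purecompex{P}$, and a final appeal to Theorem \ref{theorem P iso P^ex if and only if} delivers the $\epsilon$-operators.

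Finally, I would note that this recovers \cite[Thm. 6.2(iii)]{TECH}: under the equivalence $\tripostotopos{\Psi_{\Pred{P}}}\equiv\exactcomplex{\Pred{P}}$ obtained from Corollary \ref{corollary exact comp of weak sub 1} with $\mD=\Pred{P}$, the functor $\mathbf{Ex}(N,n)$ is identified with the canonical functor $\exactcomplex{\Pred{P}}\to\tripostotopos{P}$ induced by the embedding $\Pred{P}\hookrightarrow\tripostotopos{P}$, so that the corollary states precisely that $P$ carries $\epsilon$-operators if and only if $\tripostotopos{P}$ agrees with $\exactcomplex{\Pred{P}}$ via that canonical functor.
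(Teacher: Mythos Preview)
Your proof is correct and, in its skeleton, matches the paper's: use Theorem \ref{theorem P iso P^ex if and only if} to identify ``has $\epsilon$-operators'' with ``$P\cong\purecompex{P}$'', then invoke Theorem \ref{theorem exact comp pure existential comp} with $P'=P$.

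However, the obstacle you identify in the converse direction is not real. You worry that Theorem \ref{theorem exact comp pure existential comp} presupposes $P$ to be \emph{pure existential}, but in this paper ``pure existential'' is simply the specialization of $\Lambda$-existential to $\Lambda=\{\text{projections}\}$, which is exactly the notion of ``existential'' from Definition \ref{def existential doctrine}. Hence any elementary existential doctrine is automatically pure existential, and the standing hypothesis of Theorem \ref{theorem exact comp pure existential comp} is met from the outset. The paper therefore applies that theorem directly in both directions, without the detour through Theorem \ref{theorem char pure existential compl of elementary doct} that you propose for the converse. Your workaround is harmless and valid, but unnecessary.
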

\begin{proof}
By Theorem \ref{theorem P iso P^ex if and only if} we have that a doctrine is equipped with Hilbert's Theorem $\epsilon$-operators if and only if it is the pure existential completion of itself. Then, applying Theorem \ref{theorem exact comp pure existential comp} we can conclude.
\end{proof}
\begin{remark}
A first obvious  example of application of theorem \ref{theorem exact comp generalized existential comp}  is  the construction of the ex/lex completion of
 a finite limit category $\mD$ itsefl. Indeed, after recalling that $\exactcomp{\mD}\equiv\tripostotopos{\Psi_{\mD}}$   from corollary~\ref{corollario regular come  ex lex}
and that  the weak subobjects doctrine is a full existential completions of the  trivial doctrine $\doctrine{\mD}{\trdc}$  from \ref{theorem weak sub is a existential comp},
we can obviously apply Theorem \ref{theorem exact comp generalized existential comp}to  $\Psi_{\mD}$ 
by getting
$\exactcomplex{\mD}\equiv \tripostotopos{\Psi_{\mD}}\equiv \tripostotopos{\fullcompex{\trdc}}\equiv \exactcomplex{\mG_{\trdc}}$.
This chain of equivalences  does not yield more information, since by Theorem \ref{theorema unicita weak subobject doctrine 2}  we conclude that $\mD\equiv \mG_{\trdc}$ as one can immediately check independently.

\end{remark}

\begin{example}\label{Example realizability topos as exact com}
A relevant exact category having the presentation of Theorem \ref{theorem exact comp generalized existential comp} is provided by the realizability topos $\Eff (\pca{A})$ associated to a pca $\pca{A}$ \cite{TT,RosoliniRobinson90,HYLAND1982165}. In particular, given a realizability doctrine $\doctrine{\Set}{\mP}$ associated to a pca $\pca{A}$, we have
that combining Theorem \ref{theorem assemnlies and regularization realiz. tripos}, Lemma \ref{par}, and  Theorem \ref{theorem exact comp generalized existential comp} we can conclude that
\[\tripostotopos{\mP}=\tripostotopos{\fullcompex{\mP^{\singleton}}}\equiv \exactcomplex{\mG_{\mP^{\singleton}}}\equiv \exactcomplex{\parassembly{\pca{A}}}\equiv \Eff (\pca{A}).\]
 
\end{example}
\begin{example}
Notice that the result presented in Theorem \ref{theorem exact comp generalized existential comp} provides also a useful instrument to study subcategories of $\mathpzc{Eff}$ which are instances of $\exactcomplex{-}$ completion.
In particular, let us consider the doctrine $\doctrine{\Rec}{\trdc}$, where $\Rec$ denotes the category (with finite limits) whose objects are subsets of $\mathbb{N}$, and whose arrows are total, recursive functions. Following the notation used in Theorem \ref{theorem weak sub is a existential comp}, the functor $\trdc$ denotes the trivial doctrine, i.e. $\trdc (A)=\{\top\}$ for every object $A$ of $\Rec$. It is easy to see that the doctrine $\doctrine{\Rec}{\trdc}$ is a sub-doctrine of the doctrine $\doctrine{\Set}{\mP^{\singleton}}$ of singletons of the realizability tripos, and then we have that $\exactcomplex{\Rec}$ is a sub-category of $\mathpzc{Eff}$.
\end{example}

\begin{example}\label{example topos of sheaves over supercomp locale}
As observed in \cite[Ex. 2.15]{TT}, the topos $\tripostotopos{\loc^{(-)}}$ obtained by the localic doctrine associated to a local $\loc$ has to be equivalent to the category $\sheaves{\loc}$ of canonical sheaves over $\loc$. Now, given a inf-semilattice $\supercomp$, we have a canonical way to construct a topos from $\supercomp$, which is $\sheaves{D(\supercomp)}$. By Theorem \ref{theorem caract localic doct}  the full existential completion $\fullcompex{\supercomp^{(-)}}$ is exactly the localic doctrine $D(\supercomp)^{(-)}$
and hence by applying the tripos-to-topos construction to both from Theorem \ref{theorem exact comp generalized existential comp}  we conclude

\[\sheaves{D(\supercomp)}\equiv \tripostotopos{D(\supercomp)^{(-)}} \equiv \tripostotopos{\fullcompex{\supercomp^{(-)
}}} \equiv \exactcomplex{\mG_{\supercomp^{(-)}} }\]
 Hence, for every supercoherent local $\loc$, we have
\[\tripostotopos{\loc}\equiv \sheaves{\loc}\equiv \exactcomplex{\mG_{\supercomp^{(-)}}}\]
where $\supercomp$ is the inf-semilattice of the supercompact elements of $\loc$.

\end{example}
Recall from \cite{Menni2003} that given a local $\loc$ the category of presheaves  $\presheaves{\loc}$ over $\loc$ is defined as $\exactcomplex{\loc_+}$, where $\loc_+$ is the coproduct completion of $\loc$. Employing our main results we can prove the following theorem, showing that every category of presheaves on a given locale is equivalent to the category of sheaves on the locale provided by the free completion which adds arbitrary sups to a locale.
\begin{theorem}
Let $\loc$ be a locale. Then we have the following equivalence
\[\sheaves{D(\loc)}\equiv \presheaves{\loc}\]
where $D(\loc)$ is the supercoherent locale constructed from $\loc$.
\end{theorem}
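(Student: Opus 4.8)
The plan is to reduce the claimed equivalence to the chain of equivalences already established in Example~\ref{example topos of sheaves over supercomp locale}, applied to the locale $\loc$ regarded merely as an inf-semilattice, together with Menni's definition of $\presheaves{\loc}$ as $\exactcomplex{\loc_+}$. First I would recall that, by \cite{Menni2003}, the category of presheaves is by definition the ex/lex completion of the coproduct completion, namely $\presheaves{\loc}=\exactcomplex{\loc_+}$, so that it suffices to produce an equivalence $\sheaves{D(\loc)}\equiv\exactcomplex{\loc_+}$.

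Next I would apply Example~\ref{example topos of sheaves over supercomp locale} with the inf-semilattice $\supercomp$ taken to be the underlying meet-semilattice of $\loc$ itself. Since a locale is in particular an inf-semilattice, this is legitimate: forgetting the original joins of $\loc$ and freely re-adding all joins produces the supercoherent locale $D(\loc)$, whose supercompact elements are exactly the principal down-sets $\downarrow(a)$ for $a\in\loc$, which form a copy of $\loc$. Hence the doctrine of supercompact predicates of $D(\loc)$ is $\loc^{(-)}$, and the example yields
\[\sheaves{D(\loc)}\equiv\tripostotopos{D(\loc)^{(-)}}\equiv\tripostotopos{\fullcompex{\loc^{(-)}}}\equiv\exactcomplex{\mG_{\loc^{(-)}}}.\]

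The remaining, and only genuinely new, step is to identify the Grothendieck category $\mG_{\loc^{(-)}}$ with the coproduct completion $\loc_+$. Here an object $(I,\alpha)$ of $\mG_{\loc^{(-)}}$, with $\alpha\colon I\to\loc$, corresponds to the set-indexed family $(\alpha(i))_{i\in I}$ of elements of $\loc$, and a morphism $\freccia{(I,\alpha)}{f}{(J,\beta)}$, i.e. a function $f$ with $\alpha\leq\loc^{(-)}_f(\beta)=\beta\circ f$, corresponds precisely to a function $f\colon I\to J$ together with the unique comparisons $\alpha(i)\leq\beta(f(i))$, which is exactly a morphism of $\loc_+$. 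This assignment is clearly functorial and invertible, so $\mG_{\loc^{(-)}}\cong\loc_+$. Combining the three displayed equivalences with this identification and Menni's definition gives
\[\sheaves{D(\loc)}\equiv\exactcomplex{\mG_{\loc^{(-)}}}\equiv\exactcomplex{\loc_+}\equiv\presheaves{\loc},\]
as required.

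The main point requiring care is checking that the coproduct (family) completion of the poset $\loc$ coincides with the Grothendieck construction on the pointwise doctrine $\loc^{(-)}$; this is precisely the step I expect to be the crux, but since $\loc$ is a poset all hom-comparisons between families are unique whenever they exist, so the two descriptions of objects and morphisms match verbatim and the identification is a routine verification rather than a real obstacle.
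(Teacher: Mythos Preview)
Your proof is correct and follows essentially the same route as the paper: identify $\loc_+$ with the Grothendieck category $\mG_{\loc^{(-)}}$, and then invoke the equivalence $\sheaves{D(\loc)}\equiv\exactcomplex{\mG_{\loc^{(-)}}}$ coming from the localic doctrine being a full existential completion. The only cosmetic difference is that you cite the packaged Example~\ref{example topos of sheaves over supercomp locale}, whereas the paper cites the underlying Theorem~\ref{theorem exact comp generalized existential comp} and Theorem~\ref{theorem caract localic doct} directly; you also spell out the identification $\mG_{\loc^{(-)}}\cong\loc_+$ in more detail than the paper, which simply asserts it.
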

\begin{proof}
Observe that the category called $\loc_+$ is exactly the Grothendieck category $\mG_{\loc^{(-)}}$, where $\doctrine{\Set}{\loc^{(-)}}$ is the localic tripos. Therefore, by Theorem \ref{theorem exact comp generalized existential comp} and Theorem \ref{theorem caract localic doct}, we can conclude that 
\[\sheaves{D(\loc)}\equiv \exactcomplex{\mG_{\loc^{(-)}}}\equiv \presheaves{\loc}\]
where $D(\loc)$ is the supercoherent locale constructed from $\loc$.
\end{proof}

\section{Conclusions}
We have employed the tool of generalized existential completions to study properties of regular and exact completions.
We conclude providing  pictures summarizing our principal results and examples.

Let $\doctrine{\mC}{P}$ be a tripos, which is the full existential completion of a fibred subdoctrine $P'$, and let $\doctrine{\mG_{P'}}{\Psi_{\mG_{P'}}}$ be the weak subobjects doctrine on the Grothendieck category $\mG_{P'}$.  Then we have the following morphisms of doctrines and categories
\[\xymatrix@+1pc{
P\cong\fullcompex{P'}\ar[d] \ar[r]& P_c \ar[r] \ar@/_1pc/[d]& \regularcompdoctrine{P}\ar[r] \ar@3{-}[d]&\tripostotopos{P}\ar@3{-}[d]\\
\Psi_{\mG_{P'}}\ar[r]& (\Psi_{\mG_{P'}})_c \ar[r]\ar@/_1pc/[u] &\regularcomp{\mG_{P'}}\ar[r] & \exactcomplex{\mG_{P'}}.
}\]
The previous diagram can be then specialized for realizability triposes  and supercoherent localic triposes as follows.

Let $\pca{A}$ be a pca and let us consider the realizability tripos associated to $\pca{A}$,  which we know it is the full existenttial completion $\fullcompex{\mP^{\singleton}}$ of the
conjuncitve doctrine of singletons. Then let us consider its realizability topos $ \Eff(\pca{A})$
and also the weak subobjects doctrine $\doctrine{\parassembly{\pca{A}}}{\Psi_{\parassembly{\pca{A}}}}$ on the category of partitioned assemblies over $\pca{A}$. In this case, we obtain the following diagram
\[\xymatrix@+1pc{
\mP\cong\fullcompex{\mP^{\singleton}}\ar[d] \ar[r]& \mP_c \ar[r] \ar@/_1pc/[d]& \regularcompdoctrine{\mP}\ar[r] \ar@3{-}[d]&\Eff(\pca{A}) \ar@3{-}[d]\\
\Psi_{\parassembly{\pca{A}}}\ar[r]& (\Psi_{\parassembly{\pca{A}}})_c \ar[r]\ar@/_1pc/[u] &\regularcomp{\parassembly{\pca{A}}}\equiv \assembly{\pca{A}} \ar[r] & \exactcomplex{\parassembly{\pca{A}}}.
}\]

For any  supercoherent locale $\loc$, where $\mN$ the inf-semilattice of its supercompact elements, and the localic tripos $\doctrine{\Set}{\loc^{(-)}}$ associated to $\loc$
we obtain:

\[\xymatrix@+1pc{
\loc^{(-)}\cong\fullcompex{\mN^{(-)}}\ar[d] \ar[r]& \loc^{(-)}_c \ar[r] \ar@/_1pc/[d]& \regularcompdoctrine{\loc^{(-)}}\ar[r] \ar@3{-}[d]&\tripostotopos{\loc^{(-)}}\equiv \sheaves{\loc}\ar@3{-}[d]\\
\Psi_{\mG_{\supercomp^{(-)}}}\ar[r]& (\Psi_{\mG_{\supercomp^{(-)}}})_c \ar[r]\ar@/_1pc/[u] &\regularcomp{\mG_{\supercomp^{(-)}}}\ar[r] & \exactcomplex{\mG_{\supercomp^{(-)}} }
}\]

\section{Future work}


A preliminary version of the characterization of generalized existential completion presented here  has already been fruitfully employed in recent works  \cite{hofstra2011,TSPGF,trottapasquale2020,dialecticalogicalprinciples} to  give a categorical version of G\"odel's dialectica interpretation \cite{goedel1986} in terms of quantifier-completions.
As future work, we intend to further apply  the tools and results developed here to the categorification of  G{\"o}del  dialectica interpretation and study their connections with dialectica triposes
in \cite{Biering_dialecticainterpretations} and modified realizability triposes \cite{Hofstra06,VANOOSTEN1997273}.




\section*{Acknowledgements}

We acknowledge fruitful discussions with Fabio Pasquali and Pino Rosolini on topics presented in this paper.

We also thanks Valeria de Paiva, Martin Hyland and Paul Taylor for useful discussions and comments regarding  applications of our results to G{\"o}del dialectica interpretation.

\bibliography{biblio_tesi_PHD}  

\begin{thebibliography}{10}
\expandafter\ifx\csname url\endcsname\relax
  \def\url#1{\texttt{#1}}\fi
\expandafter\ifx\csname urlprefix\endcsname\relax\def\urlprefix{URL }\fi
\expandafter\ifx\csname href\endcsname\relax
  \def\href#1#2{#2} \def\path#1{#1}\fi

\bibitem{SFEC}
A.~Carboni, Some free constructions in realizability and proof theory, J. Pure
  Appl. Algebra 103 (1995) 117--148.

\bibitem{REC}
A.~Carboni, E.~Vitale, Regular and exact completions, J. Pure Appl. Algebra.
  125 (1998) 79--117.

\bibitem{Lack1999}
S.~Lack, A note on the exact completion of a regular category, and its
  infinitary generalizations, Theory and Applications of Categories 5 (1999)
  70--80.

\bibitem{phdMenni}
M.~Menni, Exact completions and toposes, Ph.D. thesis (12 2000).

\bibitem{MENNI2002187}
M.~Menni, More exact completions that are toposes, Annals of Pure and Applied
  Logic 116~(1) (2002) 187--203.

\bibitem{QCFF}
M.~Maietti, G.~Rosolini, Quotient completion for the foundation of constructive
  mathematics, Log. Univers. 7~(3) (2013) 371--402.

\bibitem{UEC}
M.~Maietti, G.~Rosolini, Unifying exact completions, Appl. Categ. Structures 23
  (2013) 43--52.

\bibitem{TT}
J.~Hyland, P.~Johnstone, A.~Pitts, Tripos theory, Math. Proc. Camb. Phil. Soc.
  88 (1980) 205--232.

\bibitem{TTT}
A.~M. Pitts, Tripos theory in retrospect, Math. Struct. in Comp. Science 12
  (2002) 265--279.

\bibitem{FECLEO}
A.~Carboni, R.~C. Magno, The free exact category on a left exact one, J. Aust.
  Math. Soc. 33 (1982) 295–301.

\bibitem{TECH}
M.~Maietti, F.~Pasquali, G.~Rosolini, Triposes, exact completions, and
  hilbert's $\varepsilon$-operator, Tbilisi Mathematica journal 10~(3) (2017)
  141--166.

\bibitem{EQC}
M.~Maietti, G.~Rosolini, Elementary quotient completion, Theory App. Categ.
  27~(17) (2013) 445--463.

\bibitem{ECRT}
D.~Trotta, The existential completion, Theory and Applications of Categories 35
  (2020) 1576--1607.

\bibitem{RCICPM}
J.~Cockett, S.~Lack, Restriction categories i: categories of partial maps,
  Theoret. Comput. Sci. 270~(1) (2002) 223--259.

\bibitem{DDC}
G.~Rosolini, Domains and dominical categories, Riv. Mat. Univ. Parma 11 (1985)
  387--397.

\bibitem{BANASCHEWSKI199145}
B.~Banaschewski, S.~Niefield,
  \href{https://www.sciencedirect.com/science/article/pii/002240499190005M}{Projective
  and supercoherent frames}, Journal of Pure and Applied Algebra 70~(1) (1991)
  45--51.
\newblock \href
  {http://dx.doi.org/https://doi.org/10.1016/0022-4049(91)90005-M}
  {\path{doi:https://doi.org/10.1016/0022-4049(91)90005-M}}.
\newline\urlprefix\url{https://www.sciencedirect.com/science/article/pii/002240499190005M}

\bibitem{SAE}
P.~Johnstone, Sketches of an elephant: a topos theory compendium, Vol.~2 of
  Studies in Logic and the foundations of mathematics, Oxford Univ. Press,
  2002.

\bibitem{Frey2020}
J.~Frey, Categories of partial equivalence relations as localizations,
  preprint.

\bibitem{hofstra2011}
P.~Hofstra, The dialectica monad and its cousins, Models, logics, and
  higherdimensional categories: A tribute to the work of {M}ih{\'a}ly {M}akkai
  53 (2011) 107--139.

\bibitem{TSPGF}
D.~Trotta, M.~Spadetto, V.~de~Paiva,
  \href{https://drops.dagstuhl.de/opus/volltexte/2021/14527}{{The G\"{o}del
  Fibration}}, in: F.~Bonchi, S.~J. Puglisi (Eds.), 46th International
  Symposium on Mathematical Foundations of Computer Science (MFCS 2021), Vol.
  202 of Leibniz International Proceedings in Informatics (LIPIcs), Schloss
  Dagstuhl -- Leibniz-Zentrum f{\"u}r Informatik, Dagstuhl, Germany, 2021, pp.
  87:1--87:16.
\newblock \href {http://dx.doi.org/10.4230/LIPIcs.MFCS.2021.87}
  {\path{doi:10.4230/LIPIcs.MFCS.2021.87}}.
\newline\urlprefix\url{https://drops.dagstuhl.de/opus/volltexte/2021/14527}

\bibitem{trottapasquale2020}
D.~Trotta, M.~Spadetto, \href{https://arxiv.org/abs/2010.09111}{Quantifier
  completions, choice principles and applications}.
\newline\urlprefix\url{https://arxiv.org/abs/2010.09111}

\bibitem{dialecticalogicalprinciples}
D.~Trotta, M.~Spadetto, V.~de~Paiva, Dialectica logical principles, in: to
  appear in Logical Foundations in Computer Science, 2021.

\bibitem{goedel1986}
K.~G\"odel, S.~Feferman, et~al., Kurt G{\"o}del: Collected Works: Volume II:
  Publications 1938-1974, Vol.~2, Oxford University Press, 1986.

\bibitem{PHDHofstra}
P.~Hofstra, Completions in realizability, Ph.D. thesis, University of Utrecht
  (2003).

\bibitem{Hofstra06}
P.~Hofstra, All realizability is relative, Mathematical Proceedings of the
  Cambridge Philosophical Society 141.
\newblock \href {http://dx.doi.org/10.1017/S0305004106009352}
  {\path{doi:10.1017/S0305004106009352}}.

\bibitem{AF}
F.~Lawvere, Adjointness in foundations, Dialectica 23 (1969) 281--296.

\bibitem{EHCSAF}
F.~Lawvere, Equality in hyperdoctrines and comprehension schema as an adjoint
  functor, in: A.~Heller (Ed.), New York Symposium on Application of
  Categorical Algebra, Vol.~2, American Mathematical Society, 1970, p. 1–14.

\bibitem{EMMENEGGER2020}
J.~Emmenegger, F.~Pasquali, G.~Rosolini, Elementary doctrines as coalgebras,
  Journal of Pure and Applied Algebra 224~(12) (2020) 106445.

\bibitem{FSF}
J.~Hughes, B.~Jacobs, Factorization systems and fibrations: toward a fibered
  birkhoff variety theorem, Electron. Notes Theor. Comp. Sci. 69 (2003)
  156--182.

\bibitem{CLP}
A.~M. Pitts, Categorical logic, in: S.~Abramsky, D.~M. Gabbay, T.~S.~E. Maibaum
  (Eds.), Handbook of Logic in Computer Science, Vol.~6, Oxford Univ. Press,
  1995, pp. 39--.129.

\bibitem{CLTT}
B.~Jacobs, Categorical Logic and Type Theory, Vol. 141 of Studies in Logic and
  the foundations of mathematics, North Holland Publishing Company, 1999.

\bibitem{vandalen_logic}
D.~Dalen, Logic and Structure, Universitext (1979), Springer, 2004.

\bibitem{CET}
G.~Rosolini, Continuity and effectiveness in topoi, Ph.D. thesis, Oxford
  University (1986).

\bibitem{CPM}
E.~Robinson, G.~Rosolini, Categories of partial maps, Inform. Comput. 79 (1988)
  94--130.

\bibitem{PMCPCCC}
P.~Mulry, Partial map classifiers and partial cartesian closed categories,
  Theoret. Comput. Sci. 99 (1992) 141--155.

\bibitem{NRRFS}
G.~Kelly, A note on relations relative to a factorisation system, in: I.~C.~P.
  A.~Carboni, G.~Rosolini (Eds.), Category Theory '90, Vol. 1488,
  Springer-Verlag, 1992, pp. 249--261.

\bibitem{van_Oosten_realizability}
J.~van Oosten,
  \href{https://books.google.it/books?id=0Fvvurmr7FsC}{Realizability: An
  Introduction to its Categorical Side}, ISSN, Elsevier Science, 2008.
\newline\urlprefix\url{https://books.google.it/books?id=0Fvvurmr7FsC}

\bibitem{realizability}
T.~Streicher,
  \href{https://www2.mathematik.tu-darmstadt.de/~streicher/REAL/REAL.pdf}{Realizability}
  (18).
\newline\urlprefix\url{https://www2.mathematik.tu-darmstadt.de/~streicher/REAL/REAL.pdf}

\bibitem{johnstone1982stonespaces}
P.~Johnstone, Stone Spaces, Cambridge Studies in Advanced Mathematics,
  Cambridge University Press, 1982.

\bibitem{MCBDTTCIPT}
M.~E. Maietti, Modular correspondence between dependent type theories and
  categories including pretopoi and topoi, Mathematical Structures in Computer
  Science 15~(6) (2005) 1089--1149.

\bibitem{RosoliniRobinson90}
E.~Robinson, G.~Rosolini, Colimit completions and the effective topos, The
  Journal of Symbolic Logic 55~(2) (1990) 678--699.

\bibitem{HYLAND1982165}
J.~Hyland, The effective topos, in: A.~Troelstra, D.~{van Dalen} (Eds.), The L.
  E. J. Brouwer Centenary Symposium, Vol. 110 of Studies in Logic and the
  Foundations of Mathematics, Elsevier, 1982, pp. 165--216.

\bibitem{EQCCP}
M.~E. Maietti, F.~Pasquali, G.~Rosolini, Elementary quotient completions,
  church's thesis, and partitioned assemblies, Log. Methods Comput. Sci.
  15~(2).

\bibitem{CA}
P.~Freyd, A.~Scedrov, Categories, Allegories, Vol.~39 of Mathematical Library,
  North-Holland, 1990.

\bibitem{Menni2003}
M.~Menni, A characterization of the left exact categories whose exact
  completions are toposes, Journal of Pure and Applied Algebra 177 (2003)
  287--301.

\bibitem{Biering_dialecticainterpretations}
B.~Biering, Dialectica interpretations -- a categorical analysis, Ph.D. thesis
  (2008).

\bibitem{VANOOSTEN1997273}
J.~{van Oosten}, The modified realizability topos, Journal of Pure and Applied
  Algebra 116~(1) (1997) 273--289.

\end{thebibliography}

\end{document}